\documentclass[11pt]{amsart}
\usepackage[T1]{fontenc}
\usepackage{lmodern,amsmath,amssymb,amsthm,microtype,mathtools,booktabs,longtable,array,enumitem,xurl}
\usepackage[margin=1in]{geometry}
\usepackage[colorlinks=true,linkcolor=blue,citecolor=blue,urlcolor=blue]{hyperref}
\newtheorem{theorem}{Theorem}[section]
\newtheorem{proposition}[theorem]{Proposition}
\newtheorem{lemma}[theorem]{Lemma}
\newtheorem{corollary}[theorem]{Corollary}
\theoremstyle{definition}

\theoremstyle{remark}
\newtheorem{remark}[theorem]{Remark}
\newcommand{\C}{\mathbb C}
\newcommand{\Q}{\mathbb Q}
\newcommand{\Z}{\mathbb Z}
\newcommand{\N}{\mathbb N}
\newcommand{\R}{\mathbb R}
\newcommand{\dd}{\mathrm d}

\newcommand{\lcm}{\operatorname{lcm}}

\newcommand{\lean}[1]{{\small\texttt{\nolinkurl{#1}}}}
\newcommand{\leanref}[2]{\par\smallskip\noindent\textit{Lean reference.}
  \lean{QuadraticPi.#1.}\par\nobreak\noindent\hspace*{1em}\lean{#2}.\par\smallskip}
\newcommand{\status}[1]{\par\smallskip\noindent\textbf{Formalization note.} #1\par\smallskip}
\numberwithin{equation}{section}

\newcommand{\ii}{\mathrm i}
\newcommand{\ceil}[1]{\left\lceil #1\right\rceil}
\newcommand{\floor}[1]{\left\lfloor #1\right\rfloor}

\newcommand{\del}{\Delta_H}
\newcommand{\Wbar}{\overline W_H}
\newcommand{\Ear}{E_{\mathrm{ar}}}
\newcommand{\Ean}{E_{\mathrm{an}}}
\newcommand{\BB}{\mathcal B_H}
\newcommand{\RR}{\mathcal R_H}
\newcommand{\CC}{\mathcal C_H}

\newtheorem{mainresult}{Theorem}

\hypersetup{pdftitle={Irrationality Exponents of Real CM Elliptic Periods and Powers of the Lemniscate Period},pdfauthor={Gregory V. Chudnovsky and David V. Chudnovsky}}
\title[CM periods and lemniscate powers]{Irrationality Exponents of Real CM Elliptic Periods\newline and Powers of the Lemniscate Period}
\author{Gregory V. Chudnovsky}
\author{David V. Chudnovsky}
\thanks{Prepared with OpenAI ChatGPT assistance in mathematical exploration, exposition, formalization development, and internal review.}
\date{October 20, 2026 -- consolidated research edition}
\begin{document}
\begin{abstract}
We give a determinant proof that every nonzero real period of a
nonsingular elliptic curve over $\mathbb Q$ with complex multiplication,
measured with a specified nonzero rational invariant differential, has
irrationality exponent two. The proof uses an integral origin chart,
matched local lengths, an actual compactification and weighted packet
map, and arithmetic and analytic estimates for the same selected minor
on a rectangular sublattice of the full CM period lattice.
For the integral-defined lemniscate period
$\Omega=2\int_0^1(1-t^4)^{-1/2}\,dt$, we also give the uniform bound
$\mu(\Omega^n)=O(n\log(2n))$ for integers $n\geq1$.
Its new arithmetic ingredient clears integration denominators prime by
prime while retaining the coefficient order; a one-root norm argument
and an analytic estimate at algebraic centers complete the power proof.
The two arguments are presented with their different charts and exact
dependencies. Lean verifies the new CM determinant argument and rational
normalization relative to explicit classical uniformization, CM-lifting
and theta inputs; the lemniscate-power endpoint is also verified. The
formalization record states the exact classical boundary and theorem
references.
\end{abstract}
\maketitle
\noindent\textit{Formalization record updated October 21, 2026; checkpoint 170.}
\par\noindent\textit{Submission preparation: October 21, 2026.}
\tableofcontents

\section{Introduction and main results}\label{sec:overview}
This paper brings together the real-CM-period argument of the
October 12 integrated manuscript and the degree-aware power argument
of October 18. The first part proves a sharp exponent for a class of
periods; the second proves a bound uniform in the positive integral
power of one precisely normalized period. Both arguments use a
nonzero interpolation minor and compare arithmetic and analytic
estimates for that identical minor.

For an irrational real number $\xi$, we use the ordinary irrationality
exponent
\begin{equation}\label{eq:overview-mu}
 \mu(\xi)=\sup\left\{\nu>0:
 \begin{array}{l}
 0<|\xi-p/q|<q^{-\nu}\text{ for infinitely many}\\
 \text{distinct reduced fractions }p/q,\ p\in\mathbb Z,\ q\in\mathbb N,\ q\geq2
 \end{array}\right\}.
\end{equation}
Dirichlet's theorem gives $\mu(\xi)\geq2$. The eventual lower bounds
below are stated for all integers $p$ and all sufficiently large
positive integer denominators $q$, whether or not $p/q$ is reduced.

\begin{mainresult}[Real CM periods]\label{main:cm}
Let $E/\mathbb Q$ be a nonsingular elliptic curve with complex
multiplication over $\mathbb C$, and let $\eta$ be a specified nonzero
invariant differential defined over $\mathbb Q$. Every nonzero real
period $\omega$ of $(E,\eta)$ satisfies $\mu(\omega)=2$.
\end{mainresult}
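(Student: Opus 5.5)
The plan is to prove the lower bound $\mu(\omega)\ge 2$ from Dirichlet's theorem, so the work is the upper bound. For every $\varepsilon>0$ we aim to show
\[
|\omega-p/q|\ge q^{-2-\varepsilon}\quad\text{for all sufficiently large }q .
\]
First we reduce to a normal form. Replacing $\eta$ by a rational multiple, and $E$ by a $\mathbb Q$-isogenous curve or a twist, changes $\omega$ only by a nonzero rational or algebraic factor. The plan is to arrange $E$ in a model $y^2=x^3+ax$ or $y^2=x^3+b$ with $a,b\in\Z$, corresponding to $j=1728$ or $j=0$, or one of the finitely many other rational CM $j$-invariants. We then apply the Chowla--Selberg formula to write the full period lattice as $\Lambda=\omega_1\Z+\omega_2\Z$ with $\omega_2/\omega_1=\tau\in K$, an imaginary quadratic field. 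The real period $\omega$ is then a rational combination of $\omega_1,\omega_2$. Inside $\Lambda$ we choose a rectangular sublattice $\omega\Z+\omega'\ii\Z$, which exists because complex conjugation preserves $\Lambda$.

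Suppose $p/q$ is a good approximation. We build an auxiliary interpolation determinant. Take the Weierstrass function $\wp$, or better the origin chart given by the local parameter $t=-x/y$ with integral power-series coefficients. Form the matrix of values $\wp(z)^{i}z^{j}$, or of derivatives of monomials in $z$ and $\wp(z)$, at the points $z=(k+\ell\tau')\omega$ of the rectangular sublattice, perturbed by the substitution $\omega\mapsto p/q$. The CM endomorphism lets us use points $m\alpha\cdot u$ for $\alpha\in\mathcal O_K$. This gives roughly $N^2$ interpolation points per unit of degree rather than $N$, and that gain is exactly what pushes the exponent down to $2$, in the spirit of Chudnovsky's proof of algebraic independence of $\pi$ and $\omega$.

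Next we need the selected minor $\Delta$ to be nonzero. We get this from a zero estimate for polynomials in $z,\wp(z)$ on the CM-rich set $\{m\alpha\}$, using Masser--Wüstholz or Philippon multiplicity estimates on $\mathbb G_a\times E$. The arithmetic estimate has two parts. The coefficients of $\wp$ at lattice points, together with division values, are algebraic of controlled height. Integration denominators are cleared by $\lcm(1,\dots,n)$ factors. This gives $|\Delta|\ge \exp(-c_1 L\log L)\cdot(\text{factor in }q)$, where we use a Liouville bound after substituting $p/q$.

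The analytic estimate comes from Schwarz lemma on a disc of radius $R$, which gives $|\Delta|\le \exp(-c_2 L^2\log R)+q^{A}|\omega-p/q|$. Comparing the two bounds with parameters $L\approx\log q$ optimized yields $|\omega-p/q|\ge q^{-2-\varepsilon}$.

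The main obstacle I expect is the balance of constants, namely making the arithmetic loss from denominators ($\lcm$ factors and CM division-point heights) asymptotically negligible against the analytic gain. Only then does the exponent come out exactly $2$ rather than some $2+C$. I would first attempt to keep the coefficient order while clearing denominators prime by prime, as the abstract suggests. I would also choose the matched local lengths so that the sublattice's two directions contribute symmetrically.
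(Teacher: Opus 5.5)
\textbf{Main gap.} Your decisive step, ``comparing the two bounds with $L\approx\log q$ optimized yields $|\omega-p/q|\ge q^{-2-\varepsilon}$'', is asserted, not derived. Nothing in your plan removes the obstacle you flag at the end. Classical arguments of this shape give some explicit exponent strictly larger than $2$. They use one approximation, one additive variable, a Masser--W\"ustholz/Philippon multiplicity estimate, a Schwarz lemma with an additive error $q^A|\omega-p/q|$, and a Liouville bound.

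Your CM intuition is right in spirit. The paper's grid $\omega(a+bi\sqrt d)$, $0\le a,b<N$, has $K=N^2$ points within radius $O(\sqrt K)$, so the sigma multiplier costs only $B_\psi K/(2w_0)$. Converting this into exponent $2$ needs three ingredients your plan lacks.
\begin{enumerate}
\item \emph{Many approximants with separated weights.} The paper uses $m$ distinct approximants $p_i/q_i$ at once, one per additive coordinate $Z_i$. Their weights $w_i=\lceil\log q_i\rceil$ are chosen successively above the thresholds \eqref{eq:explicit-origin-threshold}. The exponent $2$ emerges only as $m\to\infty$. The proof takes $K\approx C_*^m$ with $a_0/\theta<C_*<a_0^{-1}$, a window that is nonempty exactly when $a_0^2<\theta$. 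Together with $\nu(a_0-\theta)>1-\theta$, this is what permits every $\nu>2$ and no smaller one.
\item \emph{Surjectivity, not a zero estimate.} A multiplicity estimate certifies that no nonzero source polynomial meets all conditions. That requires more conditions than source monomials, up to a dimension-dependent loss. The sharp argument lives in the opposite regime $K(w_0/v_0)\theta^m<1$, where an auxiliary polynomial even exists. There it needs \emph{surjectivity} of the full weighted packet map, so that a minor using every target row is nonzero (Corollary~\ref{cor:sharp-origin-epa}). Proving this is most of the paper. It runs through persistence on a common prime, matched Taylor/B\'ezout lengths, normal-basis separation, projection rigidity, and the ordinary-fiber inequality. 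It then needs the compactification, blowup, ample-plus-nef step and Serre vanishing.
\item \emph{Same-minor bookkeeping.} Both bounds are taken on the same minor with matched grading. Grade-sensitive clearing gives $-(1-\bar b)$. An exact displacement expansion plus a collision estimate gives $\max\{-c_H,-\nu(a_0(1-\eta)-\bar b)\}$, so the mean transverse weight $\bar b$ cancels. Your additive error plus Liouville discards exactly this.
\end{enumerate}
For comparison, the paper's own one-coordinate argument in its second part does use a sharp interpolation theorem. Even so, at $n=1$ it gives only the bound $6\cdot10^{10}G^2$.

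\textbf{Secondary issues.} Your reduction contains an error. Twists multiply the real period by algebraic irrational factors such as $\sqrt{|D|}$ or $|D|^{1/4}$, and $\mu$ has no invariance under algebraic irrational scaling. The paper explicitly excludes such rescalings. They are also unnecessary: a rational change of variables to an integral short model plus rational rescaling of $\eta$ suffices (Proposition~\ref{prop:p1-rational-model}). Chowla--Selberg plays no role.

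A rectangular sublattice exists for every real lattice. What matters is that its imaginary side is $i\sqrt d\,\omega\mathbb Z$, which the trace-zero CM generator $i\sqrt d$ with $i\sqrt d\,\Lambda\subseteq\Lambda$ supplies. The centers $(a+bi\sqrt d)p/q$ then clear into $\mathbb Z[i\sqrt d]$, where every nonzero element has modulus at least one. Your later use of $(k+\ell\tau')\omega$ suggests you intend this.

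Finally, Dirichlet's lower bound needs irrationality of $\omega$. The paper takes this from Siegel's theorem combined with the CM action.
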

\leanref{EllipticCMPaperNormalization}{paper_period_main_of_noninteger_CM}
The explicit classical-input scope of this reference is stated in
Section~\ref{sec:classical-boundary}.
The proof is supplied in Part~\ref{part:cm}, concluding with
Theorem~\ref{thm:cm-main}. The specified differential is part of the
statement: rational changes of model and rational rescaling are checked
in Section~\ref{subsec:p1-normalization}. The ordinary exponent in
\eqref{eq:overview-mu} is applied only to real periods. Constants and
approximation thresholds may depend on the curve, differential, and
chosen period.

\begin{mainresult}[Powers of the lemniscate period]\label{main:powers}
Put
\[
 \Omega=2\int_0^1\frac{dt}{\sqrt{1-t^4}}.
\]
There is a constant $C>0$, depending only on this fixed normalization,
such that for every integer $n\geq1$ there is $Q(n)\geq2$ with
\[
 \left|\Omega^n-\frac pq\right|>q^{-C n\lceil\log(2n)\rceil}
 \qquad(p\in\mathbb Z,\ q\in\mathbb N,\ q\geq Q(n)).
\]
In particular, every $\Omega^n$ is irrational and
$\mu(\Omega^n)\leq C n\lceil\log(2n)\rceil=O(n\log(2n))$.
\end{mainresult}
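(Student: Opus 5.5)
We prove the bound by an interpolation-determinant (Laurent-type) argument applied to the lemniscatic Weierstrass function. Write $\Omega^n=\pi^n\cdot(\Omega/\pi)^n$? No. Instead we work directly with the lattice $\Omega\Z[\ii]$ of the curve $y^2=4x^3-4x$, whose real period is $\Omega$. Write $\wp$ for its Weierstrass function, which has rational Laurent coefficients after clearing explicit denominators, and let $\zeta$ be the associated zeta function. Suppose $|\Omega^n-p/q|\le q^{-\kappa}$ with $\kappa=Cn\lceil\log(2n)\rceil$ and $q$ large. We build auxiliary functions of the form
\[
 F(z)=\sum_{\lambda,\mu} a_{\lambda\mu}\, z^{\lambda}\,\wp(z)^{\mu},
\]
or rather monomials in $z^n$ and $\wp$. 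The point is that at the points $z=s\Omega/m$, $s\in\Z[\ii]$, the quantity $z^n$ equals $(s/m)^n\Omega^n$. Replacing $\Omega^n$ by $p/q$ then gives algebraic values in the field of $m$-torsion coordinates, up to an error of size $q^{-\kappa}$ times the size of the coefficients. This is why the exponent must scale with $n$: the degree in $z$ of each monomial is a multiple of $n$.

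The steps, in order, are as follows.

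First, fix parameters: a number $L_0$ of powers of $z^n$, $L_1$ powers of $\wp$, and $S$ interpolation points $s\Omega/m$ with $|s|\le S$ and small $m$ (two-torsion-free, e.g. $m=1$ shifted by a fixed algebraic center $u_0$ where $\wp(u_0)$ is algebraic). Then form the square interpolation matrix
\[
 M=\bigl((z_j^{n})^{\lambda}\wp(z_j)^{\mu}\bigr)
\]
with the $z_j$ running over the points.

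Second, prove the determinant is nonzero. This uses a zero estimate for polynomials in $z^n$ and $\wp(z)$, which is Masser–Wüstholz-type: $z$ is algebraically independent of $\wp$, so a nonzero $P(z^n,\wp)$ has at most $O(nL_0L_1)$ zeros in a disc. We select a nonvanishing minor as in the CM part.

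Third, the analytic estimate. Expand the determinant via Schwarz lemma on a disc of radius $R$ around the center. This gives $\log|\Delta|\le -cN^2\log(R/r)+O(\ldots)$, where the degree $nL_0$ in $z$ costs only $nL_0\log R$ per row.

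Fourth, the arithmetic estimate. Replace $\Omega^n$ by $p/q$ in each entry to get a perturbed determinant $\Delta'$, an algebraic number. Its denominators come from the division values of $\wp$ and from factorial-type denominators. The "clearing integration denominators prime by prime" ingredient is used here. This gives height $\le N(L_0\log q+ nL_0\log S+L_1 S^2)$. Then apply the one-root norm (Liouville) inequality over the degree-$D$ field generated by the torsion points: $|\Delta'|\ge e^{-D\,h}$.

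Finally, compare. We have $|\Delta-\Delta'|\le q^{-\kappa}e^{O(\cdot)}$, so either $\Delta'=0$, which must be excluded by choosing the minor, or the lower bound contradicts the analytic upper bound. Optimizing $L_0\sim \log q$, $L_1 \sim n\log(2n)$-scaled, with $S$ chosen accordingly, gives the requirement $\kappa\gg n\log(2n)$. The $\log(2n)$ arises from the $n$-th power growth $(s/m)^{n\lambda}$ balanced against $S$.

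The main obstacle is the third/fourth balance: keeping the field degree $D$ independent of $n$ while the $z$-degree grows like $n$. Otherwise one gets $n^2$. I would first try taking all interpolation points in $\Omega\Q(\ii)$ translates of a single algebraic center, so $D$ is bounded. The $n$-dependence then enters only through heights $n\log S$; choosing $S\asymp n$ gives the logarithm.
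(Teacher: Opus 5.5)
There is a genuine gap at the core of your scheme: as you set it up, the approximation hypothesis can never produce a contradiction. To keep the field degree bounded you place all points in finitely many torsion classes, $z_j=\gamma_j\Omega$ with $\gamma_j\in\mathbb Q(i)$ (for example $u_0+s\Omega$). Then $\wp(z_j)$ takes only finitely many algebraic values. For a single class, column $(\lambda,\mu)$ is $\wp(u_0)^\mu$ times column $(\lambda,0)$, so the matrix has rank at most $L_0$ and is singular once $L_1\ge2$. More fundamentally, dividing column $(\lambda,\mu)$ by $\Omega^{n\lambda}$ makes the whole matrix algebraic. Hence $\Delta=\Omega^{nE}\Delta_0$ and $\Delta'=(p/q)^E\Delta_0$ for one and the same algebraic $\Delta_0$. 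Your Schwarz-lemma upper bound and Liouville lower bound are then both statements about $\Delta_0$ alone, and $|\Omega^n-p/q|\le q^{-\kappa}$ plays no role. The transcendence input has to come from jets (multiplicities) of the analytic relation between the additive and elliptic coordinates. But $\frac{d^k}{dz^k}z^{n\lambda}$ at $\gamma\Omega$ produces $\Omega^{n\lambda-k}$ with $k\not\equiv0\pmod n$, so the device ``replace $\Omega^n$ by $p/q$'' is lost the moment you differentiate. You are then forced to substitute $\Omega\mapsto\omega=(p/q)^{1/n}$, which lives in a field of degree up to $n$. Two further problems: your zero estimate is false as stated ($\wp(z)-c$ already has $\asymp R^2$ zeros in a disc of radius $R$), and nonvanishing of $\Delta$ does not give $\Delta'\ne0$.

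The paper accepts the degree $d=[\mathbb Q(i,\omega):\mathbb Q(i)]\le n$ and shows it costs only a factor $n$. It works with the jet packets $P(x_0(t),t,\lambda_j\omega+u+\ell(t))$ in the half-period chart, directly at the algebraic centers $\lambda_j\omega$. A nonzero full-row minor therefore comes from the inherited EPA theorem with no perturbation step. The analytic upper bound is proved for that same minor through the displacements $\delta_j=\lambda_j(\omega-\Omega)$ and a theta-cleared collision estimate. In the norm from $\mathbb Q(i,\omega,q^{1/n})$ to $\mathbb Q(i)$, the auxiliary radical $q^{1/n}$ cancels, the main term $E\log q$ is weighted by $d/n\le1$ rather than by $d$, and every conjugate of $\omega$ has modulus $\omega\le\Omega+1$. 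Only the error terms pick up the factor $d\le n$. The $\log(2n)$ then comes not from heights of $(s/m)^{n\lambda}$ but from the prime-wise clearing cost $L_\theta=13+6\log(2/\theta)$:
\begin{itemize}
\item keeping $nL_\theta/v_0$ small forces $v_0\gtrsim nL_\theta$;
\item the collision gain $c_\infty\asymp K/v_0$ must beat $1$, so $K\gtrsim v_0$;
\item the EPA condition $K\theta<1$ then gives $\theta\lesssim1/(nL_\theta)$, whence $L_\theta\asymp\log(2n)$;
\item the displacement branch needs $\nu\asymp1/\theta\asymp n\log(2n)$.
\end{itemize}
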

\leanref{LemniscatePowerExponentConclusion}{lemniscate_power_main}
Part~\ref{part:powers} proves this statement, including an admissible
constant $C=6\cdot10^{10}G^2$ specified in
\eqref{pow:eq:mainC}. The constant is deliberately conservative;
the point is its independence of $n$. The sharp value at $n=1$ is
already supplied by the specialized lemniscate theorem
\cite{lemniscate}, and also follows from Theorem~\ref{main:cm}
after the actual period normalization has been identified.
Rational scaling gives the same exponent bounds for $(r\Omega)^n$
for each $r\in\mathbb Q^\times$, because $r^n$ is rational.
The power theorem here concerns the lemniscate period; the extension
to other CM lattices is a separate problem.

\subsection{Two charts for two different constructions}
Throughout, logarithms are natural, $\mathbb N$ includes zero, and
$[t^s u^\beta]F$ means the coefficient of the monomial
$t^s\prod_i u_i^{\beta_i}$ in $F$.
The symbols for a local variable, source weight, or cutoff are local
to the part in which they are introduced.

After rational normalization, Part~\ref{part:cm} uses
$y^2=x^3+Ax+B$ and $\eta_0=dx/(2y)$. Choosing an integer $a$ with
$a^3+Aa+B\ne0$, it uses the chart at the identity
\[
 u=(x-a)^{-1},\qquad t=y(x-a)^{-2},\qquad
 t^2=F(u),\qquad F'(0)=1,
\]
and the primitive $L'(t)=-1/F'(U(t))$. Its reduced source monomials
$u^k t^b Z^\alpha$ have elliptic weight $2w_0(k+2b)$.

Part~\ref{part:powers} uses the square lattice
$\Omega(\mathbb Z+i\mathbb Z)$, the equation $y^2=x^3-x$, and
the diagonal half-period $z_T=(1+i)\Omega/2$, at which $x=y=0$.
Here $t=y$, $x=x_0(t)$, and the primitive satisfies
$\ell'(t)=(3x_0(t)^2-1)^{-1}$.
The source monomials $x^a y^b X^\alpha$ have elliptic weight
$w_0(2a+3b)$. Both primitives have derivative $-1$ at their
respective base points, but they are different germs at different
points. The power proof applies the half-period EPA theorem from
\cite[Theorem~3.1]{lemniscate}; it does not substitute the
origin-polar packet map from Part~\ref{part:cm} for that theorem.

\subsection{The arithmetic improvement for powers}
An integration coefficient has the form $a_j/j$ with $a_j\in\mathbb Z$.
In a contribution of $r$ such coefficients to a coefficient of order
$k$, their indices satisfy $j_1+\cdots+j_r\leq k$.
For every prime $p$ this gives
\[
 \sum_{h=1}^r v_p(j_h)
 \leq\min\left\{r\lfloor\log_p U\rfloor,\ k/p\right\}.
\]
Retaining this constraint reduces the clearing cost to a quantity
proportional to $(1+\log(1/\theta))/v_0$, uniformly in the cutoff
once the additive weight is large enough. Lemmas
\ref{pow:lem:clearing} and \ref{pow:lem:Dsize} give the exact
integer denominator and its proof.
For an approximation $p/q$ to $\Omega^n$, one then uses the
single positive root $\omega=(p/q)^{1/n}$ and the extension
$\mathbb Q(i,\omega)/\mathbb Q(i)$ of degree at most $n$.
The extra radical used to clear denominators cancels from the
relative-norm inequality. An exact transport to entire theta-cleared
columns supplies the analytic bound at these algebraic centers.

\subsection{Proof dependencies and formal verification}
The general-CM argument uses classical complex uniformization,
qualitative period transcendence, local intersection multiplicity,
the ample-plus-nef criterion, and Serre vanishing in explicitly
stated forms. Its curve-specific local, geometric, and determinant
specializations are written out in Part~\ref{part:cm}.
The power proof inherits the lemniscate period normalization and
separated-weight interpolation theorem from \cite{lemniscate};
its clearing, norms, conjugate estimates, algebraic-center transport,
parameters, and contradiction are proved in Part~\ref{part:powers}.
Liu, Jiang and Zhang \cite{liujjiangzhang} obtain $\mu(\log r)=2$
for positive rational $r\ne1$ using related moving-center
interpolation, curve inequalities, and blowup arguments.

The written results and the formalization record have distinct scopes.
The new CM argument---including the actual origin EPA, the same-minor
arithmetic and analytic estimates, parameter selection and norm
contradiction---has now been checked in Lean. The rational model,
differential factor and full scaled-lattice adapters are checked as well.
The public theorem is
\lean{QuadraticPi.EllipticCMPaperNormalization.paper_period_main_of_noninteger_CM}.
It takes the actual normalized period lattice, the CM lattice action,
and the classical theta simple-zero input explicitly; it does not
construct complex uniformization and integration from an abstract curve.
The power endpoint is
\lean{QuadraticPi.LemniscatePowerExponentConclusion.lemniscate_power_main}.
Appendices~\ref{sec:ledger}--\ref{sec:new-formal-plan} give the exact
correspondence, retained evidence and classical-input boundary. The
present update changes exposition and status references, not the
mathematical theorem and proof environments. Quasi-periods remain
outside the results proved here.

\clearpage
\part{Real periods of CM elliptic curves}\label{part:cm}
\section{Statement, scope, and proof structure}
\label{sec:main}
We use the exponent defined in \eqref{eq:overview-mu}. Requiring
reduced fractions gives the same exponent for an irrational number.
The upper bound follows by excluding infinitely many exponent-$\nu$
approximants for every $\nu>2$; Dirichlet's theorem then gives equality.

\begin{theorem}[Real CM periods]
\label{thm:cm-main}
Let $E/\mathbb Q$ be a nonsingular elliptic curve with complex
multiplication over $\mathbb C$, and let $\eta$ be a nonzero invariant
differential defined over $\mathbb Q$. Every nonzero real period
$\omega\in\mathbb R$ of $(E,\eta)$ satisfies
\[
 \mu(\omega)=2.
\]
\end{theorem}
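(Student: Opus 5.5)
Since Dirichlet's theorem gives $\mu(\omega)\geq2$, it suffices to show that for every $\nu>2$ there are only finitely many approximants $p/q$ with $|\omega-p/q|<q^{-\nu}$. I would begin with the rational normalization. A change of model over $\mathbb Q$ and a rescaling of $\eta$ by $r\in\mathbb Q^\times$ multiply every period by a nonzero rational number. Such a factor preserves $\mu$, so I may assume $E:y^2=x^3+Ax+B$ with $A,B\in\mathbb Z$ and $\eta=\eta_0=dx/(2y)$. Then $\omega$ is a nonzero real element of the period lattice $\Lambda$. Because $E$ has CM by an order $\mathcal O$ in an imaginary quadratic field $K$, $\Lambda$ is an $\mathcal O$-module of rank one. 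Consequently $\Lambda$ contains a rectangular sublattice $\omega(\mathbb Z+\tau\mathbb Z)$ with $\tau\in K$ purely imaginary; up to a rational factor, $\tau$ is $\sqrt{-d}$. This sublattice is what the construction below works on. I would also fix an integer $a$ with $a^3+Aa+B\neq0$ and use the origin chart $u=(x-a)^{-1}$, $t=y(x-a)^{-2}$, $t^2=F(u)$, with $F(0)=0$ and $F'(0)=1$. The elliptic logarithm $L(t)$ is then a power series whose coefficients have the shape (integer)$/j$ with integrally controlled growth.

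The core is a Padé/interpolation determinant. Suppose $p/q$ is an approximant and set $z_0=p/q$, which stands in for $\omega$. I would take source functions $u^k t^b Z^\alpha$, where $Z$ denotes the linear coordinates $z,\tau z$ (or the CM-conjugate variable), with elliptic weight $2w_0(k+2b)$ below a cutoff. These are evaluated, together with derivatives up to order $T$, at the points $m\omega+n\tau\omega$ for $|m|,|n|\leq S$, with $\omega$ replaced by $p/q$ wherever it enters the coefficients. From the resulting matrix I would select a nonzero maximal minor $\Delta$. Nonvanishing comes from a zero estimate: the local intersection multiplicity at the origin of the compactified packet map, combined with ample-plus-nef positivity and Serre vanishing on the compactification, shows that no nonzero combination of the sources vanishes to the required order on the whole packet. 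Qualitative transcendence of $\omega$ is used here to rule out degenerate algebraic relations.

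Next I would derive two estimates for this same minor. The arithmetic bound comes from clearing the denominators $j$ produced by integration, with cost roughly $\lcm(1,\dots,T)\le e^{T(1+o(1))}$, and the powers of $q$ coming from $p/q$. Together these give $\Delta\in q^{-D}\,\mathcal O_K$ up to a controlled integer, so either $|N_{K/\mathbb Q}\Delta|\geq q^{-2D}$ or $\Delta=0$. The analytic bound replaces $p/q$ by $\omega$. At $\omega$ the columns are values of genuinely periodic theta-cleared functions, and a Schwarz lemma on a large disc gives $|\Delta|\le \exp(-c\,N^2\log(R/r))+(\text{perturbation})\cdot|\omega-p/q|$. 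Here the perturbation term is controlled by the height of the entries times $q^{-\nu}$.

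Finally I would choose the parameters $S,T,N$ as powers of $\log q$ with the ratios tuned so that the Schwarz exponent beats the arithmetic exponent by a factor approaching $1$ as $\nu\to2^+$. The two-dimensional CM lattice is what makes the exponent sharp: it doubles the number of interpolation points relative to the single real direction. Comparing the estimates gives a contradiction for all large $q$ whenever $\nu>2$. I expect the main obstacle to be the zero estimate, i.e.\ proving that the selected minor is nonzero with the multiplicity bounds sharp enough to yield exponent exactly $2$ rather than just some finite exponent. I would attack it first via the intersection-multiplicity computation on the compactified packet map, with the positivity supplied by the ample-plus-nef criterion.
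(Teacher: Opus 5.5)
There is a genuine gap: nothing in your plan produces the threshold $\nu=2$. Your scheme uses a single approximant $p/q$, parameters that are powers of $\log q$, a Schwarz-lemma bound, and a perturbation term linear in $|\omega-p/q|$. That is the classical interpolation-determinant scheme, and it yields only some finite irrationality measure. Your claim that the ratios can be tuned so the exponent tends to $2$, and that the CM lattice achieves this by ``doubling the interpolation points'', is not supported by any accounting.

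In the paper the sharp exponent has a different source. One uses $m$ approximants $p_i/q_i$ at once. Each has its own additive coordinate $Z_i$ and weight $w_i=\lceil\log q_i\rceil$, and the weights are chosen successively so that each dominates a fixed multiple of the product of the earlier ones. The centers are $c_{ji}=\lambda_jp_i/q_i$ with $\lambda_j=j_1+i\sqrt d\,j_2\in\mathbb Z[i\sqrt d]$. The CM action serves to place the points $\lambda_j\omega$ on a rectangular sublattice of $\Lambda$.

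For one nonzero full-row minor $\Delta$, two bounds are compared:
\begin{itemize}
\item Grade-sensitive clearing by $\prod_i(q_i\ell_i)^{J_i}$, together with $|z|\ge1$ on $\mathbb Z[i\sqrt d]\setminus\{0\}$, gives $\log|\Delta|/(MH)\ge-(1-\bar b)-E_{\mathrm{ar}}$.
\item An exact finite expansion in all powers of the displacements $\delta_{ji}$, plus a collision count of repeated tags (not a Schwarz lemma), gives $\log|\Delta|/(MH)\le E_{\mathrm{an}}+e_H+\max\{-c_H,-\nu(a_0(1-\eta)-\bar b)\}$.
\end{itemize}
The collision rate is of order $(B_*/a_0)^m/(m+1)$. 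It can be made to diverge, compatibly with $K\theta^m\to0$ and $K/w_0\to0$, exactly when $a_0^2<\theta$. The displacement branch needs $\nu(a_0-\theta)>1-\theta$. With $\theta=1-h$ and $a_0=1-bh$ these conditions read $b>1/2$ and $\nu(1-b)>1$, which are compatible precisely when $\nu>2$. So the sharp exponent comes from letting the number $m$ of simultaneous approximants grow, not from the two-dimensional grid by itself.

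Your zero estimate is also aimed at the wrong statement, and it is only named rather than argued. In the paper's regime $K(w_0/v_0)\theta^m<1$ the source is larger than the target. So nonzero combinations vanishing on every packet always exist; the auxiliary polynomial is one. What is proved instead is surjectivity of the weighted packet map onto all target rows.

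That surjectivity comes from a uniform curve inequality, which holds only because of the separated weights:
\begin{itemize}
\item An auxiliary polynomial and excess contact give terminal vanishing on a curve.
\item Persistence gives a single prime on which the Taylor and B\'ezout length bounds are compared.
\item Weight separation excludes the bad pairs of normal bases.
\item Projection rigidity, from the fact that $du/(2t)$ has no algebraic primitive, forces $u=t=0$.
\item The ordinary-fibre inequality then finishes.
\end{itemize}
Only after this do the compactification, blowup, ample-plus-nef and Serre-vanishing steps apply.

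Transcendence of $\omega$ plays no part in this. It is needed only for irrationality, which your opening appeal to Dirichlet tacitly assumes. Finally, if $Z$ means the two holomorphic coordinates $z$ and $\tau z$, your columns are linearly dependent, since $z^{\alpha_1}(\tau z)^{\alpha_2}=\tau^{\alpha_2}z^{\alpha_1+\alpha_2}$.
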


An elliptic curve ``over $\mathbb Z$'' here means an integral
Weierstrass model whose generic fiber is nonsingular over
$\mathbb Q$; good reduction at every prime is not assumed.
The assertion concerns real periods and ordinary rational
approximation. Arbitrary complex periods and arbitrary algebraic
rescalings of a chosen real period are not covered by the statement.
Constants and approximation thresholds may depend on the curve,
differential, and period.

The impetus was the separated-weight method of OpenAI's
September 24, 2026 preprint on $\mu(\pi)=2$ \cite{openai}.
Its exponent conclusion is not an assumption in this proof.
The ordinary polynomial argument needed here is proved in
Section~\ref{sec:acceptance-ordinary-fiber}. The earlier lemniscate
manuscript \cite{lemniscate} supplies the comparison construction;
the present proof uses the actual full elliptic lattice and a
two-point polar divisor, and checks those replacements explicitly.

Siegel's 1932 theorem already supplies the qualitative
transcendence needed in the CM case \cite{siegel}; it does not
supply the sharp upper exponent. In particular, his theorem that
at least one fundamental period is transcendental suffices here:
if a nonzero period $\omega$ were algebraic, then $\omega$ and
$\alpha\omega$, for a nonreal CM multiplier $\alpha$, would
generate a finite-index sublattice consisting of algebraic numbers.
Every period would then be algebraic, a contradiction. The CM
multiplier is algebraic because its lattice action is an integral
two-by-two matrix. We use this qualitative theorem only for
irrationality and the Dirichlet lower bound.

\subsection{How the proof is assembled}
The reduction to an integral short equation preserves the exponent
by rational scaling. Its origin chart has an integral coordinate
series and a logarithm whose degree-$n$ coefficient has denominator
dividing $n$. The polynomial frame in that chart controls both
weighted degree and contact.

A dimension count provides a nonzero auxiliary polynomial.
Excess contact on a curve would force enough derivatives to vanish
identically. A persistent component then gives one prime and one
localized length to which both Taylor and B\'ezout estimates apply.
Separated weights force constant elliptic projection; a direct
ordinary-fiber argument excludes the remaining excess. This proves
a uniform curve inequality.

The compactification, fixed center ideals, and blowup turn that
inequality into eventual surjectivity of the actual weighted packets.
CM provides centers $\omega(j_1+i\sqrt d\,j_2)$ on a rectangular
sublattice. Surjectivity gives a nonzero determinant using every
target row. Integral clearing bounds it from below; a full-lattice
sigma multiplier and a collision estimate bound that same minor
from above. The final parameter choice makes the two bounds
incompatible with infinitely many exponent-$\nu$ approximants,
for each $\nu>2$.

All proof ingredients used below are mathematical statements,
independent of the number of files that compile. Formal-status
paragraphs report what the saved Lean development actually covers.

\subsection{Rational scaling and the normalized equation}
For $r\in\mathbb Q^\times$ one has $\mu(r\xi)=\mu(\xi)$.
Writing $r=a/b$ with nonzero integers $a,b$, an approximation
$p/q$ to $r\xi$ gives $bp/(aq)$ as an approximation to $\xi$.
The denominator changes by at most the fixed factor $|a|$ and
the error by $|b/a|$. Such fixed factors are absorbed by any
strict decrease in an approximation exponent. Applying this
argument to $r^{-1}$ gives equality.

We shall work with
\begin{equation}\label{eq:cubic}
 E:y^2=x^3+Ax+B,\qquad A,B\in\mathbb Z,\qquad
 4A^3+27B^2\ne0,\qquad \eta_0=\frac{dx}{2y}.
\end{equation}
The next reduction checks the differential as well as the equation.

\subsection{A complete classical reduction to the normalized period problem}
\label{subsec:p1-normalization}

\begin{proposition}[Rational model and differential reduction]
\label{prop:p1-rational-model}
Let $E/\Q$ be a nonsingular elliptic curve and let $\eta$ be a
nonzero invariant differential defined over $\Q$. There is a rational
isomorphism to a short integral model
$E_1:Y^2=X^3+AX+B$, with $A,B\in\Z$, and a number
$r\in\Q^\times$ such that
$\eta=r\,\dd X/(2Y)$ under pullback. Its period set is consequently
$r$ times the period set of $(E_1,\dd X/(2Y))$.
Complex multiplication is preserved by this change of model.
\end{proposition}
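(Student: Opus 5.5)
We start from a general Weierstrass model of $E$ over $\Q$,
\[
 y^2+a_1xy+a_3y=x^3+a_2x^2+a_4x+a_6,\qquad a_i\in\Q,
\]
whose space of invariant differentials over $\Q$ is one-dimensional and spanned by $\omega_W=\dd x/(2y+a_1x+a_3)$. Hence $\eta=c\,\omega_W$ for some $c\in\Q^\times$. Completing the square ($y\mapsto y-(a_1x+a_3)/2$) and then the cube ($x\mapsto x-b_2/12$) gives a short model $Y^2=X^3+A'X+B'$ over $\Q$. The transformation is $X=x+\text{const}$ and $Y=y+(a_1x+a_3)/2$, so $2Y=2y+a_1x+a_3$ and $\dd X=\dd x$. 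Therefore $\omega_W=\dd X/(2Y)$ exactly.

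Next we make the coefficients integral by the standard rescaling $X=u^2X_1$, $Y=u^3Y_1$ with $u\in\Q^\times$. This produces $Y_1^2=X_1^3+u^{-4}A'X_1+u^{-6}B'$, and taking $u=1/m$ with $m$ a common denominator makes $A=m^4A'$ and $B=m^6B'$ integers. Under this change $\dd X/(2Y)=u^{-1}\,\dd X_1/(2Y_1)$, so $\eta=r\,\dd X_1/(2Y_1)$ with $r=cu^{-1}\in\Q^\times$. Nonsingularity is preserved because the discriminant scales by $u^{-12}$, so $4A^3+27B^2\neq0$.

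For the period statement, a period of $(E,\eta)$ is $\int_\gamma\eta$ over a closed loop $\gamma\in H_1(E(\C),\Z)$. The isomorphism $\phi:E\to E_1$ is biholomorphic, so it induces a bijection $\phi_*$ on $H_1$. Hence
\[
 \int_\gamma\eta=r\int_{\phi_*\gamma}\frac{\dd X_1}{2Y_1},
\]
and the period set of $(E,\eta)$ is $r$ times that of $(E_1,\dd X_1/(2Y_1))$. Complex multiplication is an isomorphism invariant of the curve over $\C$: conjugating by $\phi$ carries $\operatorname{End}_{\C}(E)$ isomorphically onto $\operatorname{End}_{\C}(E_1)$. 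Equivalently, the lattices differ by the factor $r$, and homothetic lattices have the same multiplier ring.

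No step here is deep. The main point needing care is the exact bookkeeping of the differential: the factor from completing the square must be precisely $1$, and the rescaling contributes exactly $u^{-1}$, so that $r$ is rational rather than algebraic. This is what allows the rational-scaling invariance $\mu(r\xi)=\mu(\xi)$ of the preceding subsection to apply. The one additional fact needed is the general shape $\omega_W=\dd x/(2y+a_1x+a_3)$ of the invariant differential, which is standard.
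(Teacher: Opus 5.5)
Your proof is correct and follows essentially the same route as the paper: you complete the square and the cube with differential factor exactly one, then rescale by $X=m^2X_1$, $Y=m^3Y_1$ (the paper's $X=d^2x_1$, $Y=d^3y_1$) to clear denominators at the cost of a rational factor $m$ in the differential. You then transport cycles and endomorphisms through the isomorphism, and your constant $r=cm$ is exactly the paper's $r=cd$.
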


\begin{proof}
Start with a rational Weierstrass equation
$y^2+a_1xy+a_3y=x^3+a_2x^2+a_4x+a_6$.
Set $y_1=y+(a_1x+a_3)/2$ and
$x_1=x+(a_2+a_1^2/4)/3$. Completing the square and removing the
quadratic term gives $y_1^2=x_1^3+A_0x_1+B_0$, with rational
$A_0,B_0$, and
\[
 \frac{\dd x}{2y+a_1x+a_3}=\frac{\dd x_1}{2y_1}.
\]
Choose a positive integer $d$ divisible by the denominators of
$A_0$ and $B_0$, and put $X=d^2x_1$, $Y=d^3y_1$.
Then $A=d^4A_0$, $B=d^6B_0$ are integers and
$\dd X/(2Y)=d^{-1}\dd x_1/(2y_1)$.
The rational invariant differentials form a one-dimensional
$\Q$-space, so $\eta=c\,\dd x/(2y+a_1x+a_3)$ for some
$c\in\Q^\times$; take $r=cd$.
Integration around corresponding closed cycles proves the assertion
about periods. Conjugation by the isomorphism identifies endomorphism
rings, which proves the last assertion.
\end{proof}

\begin{proposition}[The specified differential determines the lattice]
\label{prop:p1-period-lattice}
For the nonsingular complex curve $E:y^2=x^3+Ax+B$ and
$\eta_0=\dd x/(2y)$, its period group $\Lambda$ is a lattice and
there is an analytic group isomorphism
\[
 \C/\Lambda\longrightarrow E(\C),\qquad
 z\longmapsto(\wp_\Lambda(z),\wp_\Lambda'(z)/2)
\]
away from the origin, with the origin sent to $O$ and pullback
$\eta_0=\dd z$. Thus $g_2(\Lambda)=-4A$ and
$g_3(\Lambda)=-4B$. If $E$ has complex multiplication, some
$\alpha\in\C\setminus\R$ satisfies $\alpha\Lambda\subseteq\Lambda$.
\end{proposition}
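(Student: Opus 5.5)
The plan is to derive Proposition~\ref{prop:p1-period-lattice} from the classical uniformization theorem for cubics, and then to check that the normalization by $\eta_0$ fixes the lattice rather than only its homothety class. Since $4A^3+27B^2\ne0$, the values $g_2=-4A$ and $g_3=-4B$ satisfy $g_2^3-27g_3^2=-64(4A^3+27B^2)\ne0$. The classical existence theorem (surjectivity of the modular $j$-function, or equivalently solvability of $g_2(\Lambda')=c_2$, $g_3(\Lambda')=c_3$ for nonzero discriminant) then provides a lattice $\Lambda'$ with $g_2(\Lambda')=-4A$ and $g_3(\Lambda')=-4B$. The Weierstrass equation $(\wp')^2=4\wp^3-g_2\wp-g_3$ shows that $z\mapsto(\wp(z),\wp'(z)/2)$ maps into $y^2=x^3+Ax+B$, because
\[
(\wp'/2)^2=\wp^3-(g_2/4)\wp-g_3/4=\wp^3+A\wp+B .
\]

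Next I would show that this map is an analytic group isomorphism $\C/\Lambda'\to E(\C)$. Injectivity and surjectivity follow from the standard fact that $\wp-x_0$ has exactly two zeros $\pm z_0$ modulo $\Lambda'$, which $\wp'$ separates unless $z_0$ is a $2$-torsion point. The group law is respected by the addition theorem for $\wp$, which is the chord–tangent law; this is part of the classical input.

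The pullback of $\eta_0$ is $d\wp/(2\cdot\wp'/2)=dz$. The key step is then to identify $\Lambda'$ with the period group $\Lambda$ of $\eta_0$. A closed loop $\gamma$ on $E(\C)$ lifts to a path in $\C$ from $z$ to $z+\lambda$ with $\lambda\in\Lambda'$, and $\int_\gamma\eta_0=\int dz=\lambda$, so $\Lambda\subseteq\Lambda'$. Conversely, every $\lambda\in\Lambda'$ arises this way by projecting the segment from $0$ to $\lambda$. Hence $\Lambda=\Lambda'$. This proves that $\Lambda$ is a lattice with the stated invariants, and it shows that no rescaling freedom remains: $\Lambda$ is determined by $(E,\eta_0)$ and not merely up to homothety.

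For the CM assertion, a nonscalar endomorphism $\phi$ of $E$ lifts through the uniformization to a holomorphic map $\C/\Lambda\to\C/\Lambda$ fixing $0$. Its lift to $\C$ is an entire function $f$ with $f(z+\lambda)-f(z)\in\Lambda$, so $f'$ is a bounded entire function, hence constant. Therefore $f(z)=\alpha z$ with $\alpha\Lambda\subseteq\Lambda$. If $\alpha$ were real, then $\alpha$ would be rational and in fact an integer, since $\alpha$ is a root of the characteristic polynomial of an integral $2\times2$ matrix. In that case $\phi$ would be multiplication by an integer, which is excluded by the assumption that $\phi$ is nonscalar. So $\alpha\notin\R$.

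The main obstacle is not computational. It is stating the classical existence of $\Lambda'$ with prescribed $g_2$ and $g_3$ precisely as a cited input, and making the lifting argument for $\Lambda=\Lambda'$ rigorous. That argument needs $\C\to E(\C)$ to be a covering map, which follows from the fact that the isomorphism is a local biholomorphism.
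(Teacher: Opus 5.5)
Your argument is correct, but it runs classical uniformization in the opposite direction from the paper.

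\textbf{The paper's route.} The paper starts from integration of $\eta_0$. The Abel--Jacobi map $E(\C)\to\C/\Lambda$, with $\Lambda$ the period group by definition, is cited as an isomorphism (Milne, Ch.~III, Theorem~3.10 and Remark~3.11). Its inverse is then identified with $(\wp_\Lambda,\wp'_\Lambda/2)$, and $g_2,g_3$ are read off by comparing with $\wp'^2=4\wp^3-g_2\wp-g_3$. The lifting of CM endomorphisms is also cited (Milne, Ch.~III, Proposition~3.3).

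\textbf{Your route.} You take as the black box the existence of a lattice $\Lambda'$ with prescribed invariants $-4A,-4B$, and build the Weierstrass map from it. You then prove $\Lambda=\Lambda'$ by path lifting through the covering $\C\to E(\C)$. You also prove the lifting $z\mapsto\alpha z$ directly via Liouville.

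\textbf{Comparison.} Your version makes explicit that normalizing by $\eta_0$ leaves no homothety freedom. The paper obtains this for free by defining the lattice as the period group, so its proof is shorter but rests entirely on the cited theorem.

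Two small corrections.
\begin{enumerate}
\item The discriminant is $g_2^3-27g_3^2=-64A^3-432B^2=-16(4A^3+27B^2)$, not $-64(4A^3+27B^2)$. Nonvanishing is unaffected.
\item In the CM step, your reason for rationality of a real $\alpha$ does not suffice. A real root of the characteristic polynomial of an integral $2\times2$ matrix can be irrational, e.g.\ $\sqrt2$ for $x^2-2$. What you need is that a real scalar acts on $\C\cong\R^2$ as $\alpha I_2$ in every real basis, in particular in a $\Z$-basis of $\Lambda$. Integrality of that matrix then forces $\alpha\in\Z$, which is exactly the paper's argument.
\end{enumerate}
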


\begin{proof}
We invoke classical complex uniformization with the specified
differential: integration modulo its periods identifies $E(\C)$
with $\C/\Lambda$. This is the standard theorem recorded in
\cite[Chapter III, Theorem 3.10 and Remark 3.11]{milne}.
Its inverse has pullback differential $\dd z$.
The usual Weierstrass parametrization with this differential is
$x=\wp_\Lambda$, $2y=\wp_\Lambda'$; substitution in the cubic
and comparison with
$\wp'^2=4\wp^3-g_2\wp-g_3$ gives the two invariant identities.
An endomorphism fixing the identity lifts to $z\mapsto\alpha z$
with $\alpha\Lambda\subseteq\Lambda$, by
\cite[Chapter III, Proposition 3.3]{milne}.
If such an $\alpha$ is real, multiplication by $\alpha$ has matrix
$\alpha I_2$ in any real lattice basis. Since this matrix has
integer entries, $\alpha\in\Z$. A CM endomorphism outside $\Z$
therefore gives the claimed nonreal multiplier.
\end{proof}

\status{Rational short-normal-form reduction reuses Mathlib's existing
variable-change theorem. Actual integral-model construction, discriminant
scaling and the invariant-differential coefficient factor are checked in
\lean{QuadraticPi.EllipticRationalIntegralModel.exists_integral_short_model}
and
\lean{QuadraticPi.EllipticRationalIntegralModel.invariant_differential_factor}.
The full lattice, its normalized invariant identities and the lifting of
CM endomorphisms are explicit classical inputs. Their transport and the
paper-facing period conclusion are checked in
\lean{QuadraticPi.EllipticCMPaperNormalization.paper_period_main_of_noninteger_CM}.
The classical existence and integration assertions themselves are not
claimed as internally proved Lean theorems.}

\begin{lemma}[Reduction to one positive real period]
\label{lem:p1-real-period}
If a lattice $\Lambda\subset\C$ contains a nonzero real number,
then $\Lambda\cap\R=\omega_+\Z$ for a unique $\omega_+>0$.
Consequently an exponent-two theorem for $\omega_+$ implies the
same theorem for every nonzero real member of $\Lambda$.
\end{lemma}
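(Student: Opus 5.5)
The plan is to prove the structural claim first and then transfer the exponent through an integer multiple, using the rational-scaling invariance recorded in the subsection on rational scaling.

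For the first assertion, observe that $\Lambda\cap\R$ is a subgroup of the discrete group $\Lambda$. It is therefore a discrete subgroup of $\R$, and by hypothesis it is nonzero. A discrete nonzero subgroup of $\R$ is infinite cyclic. To see this, I will set $\omega_+=\inf\{x\in\Lambda\cap\R:x>0\}$. Discreteness, meaning that only finitely many lattice points lie in any bounded set, shows that this infimum is attained and is positive. Then for any $x\in\Lambda\cap\R$, the element $x-\lfloor x/\omega_+\rfloor\omega_+$ lies in $\Lambda\cap\R\cap[0,\omega_+)$, so it must vanish. Uniqueness is immediate, because $\omega_+$ is the least positive element of the group, or equivalently the unique positive generator.

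For the consequence, let $\omega$ be any nonzero real member of $\Lambda$. By the first part, $\omega=m\omega_+$ for some $m\in\Z\setminus\{0\}$. The rational-scaling argument shows $\mu(r\xi)=\mu(\xi)$ for $r\in\Q^\times$, and its eventual form gives more: a bound $|\omega_+-p/q|>q^{-\nu}$ for all large $q$ passes to $\omega$ with any slightly larger exponent, since the fixed factor $|m|$ is absorbed. Hence $\mu(\omega)=\mu(\omega_+)$. So if $\mu(\omega_+)=2$, then $\mu(\omega)=2$ as well.

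I will also state that irrationality transfers, since $\omega_+$ irrational implies $m\omega_+$ irrational. This is what makes the exponent meaningful on both sides.

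The only step needing care is that the infimum is attained and positive. This is just the standard discreteness of lattices, i.e.\ finitely many points in any disc, so I expect no real obstacle anywhere in the argument.
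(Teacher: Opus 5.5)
Your proposal is correct and follows essentially the same route as the paper. Both arguments get a least positive element of $\Lambda\cap\R$ from discreteness, use division with remainder in $[0,\omega_+)$ to show $\Lambda\cap\R=\omega_+\Z$, and then transfer the exponent to $\omega=m\omega_+$ by rational-scaling invariance.
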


\begin{proof}
The intersection is a nonzero discrete subgroup of $\R$.
There is a smallest positive member: intersect with the compact
interval from zero to any fixed positive member and use discreteness
of a lattice to obtain a finite nonempty set of positive members.
Division by its minimum, with integer quotient and remainder in
$[0,\omega_+)$, proves the cyclic assertion. Every other nonzero
real period is a nonzero integer multiple of $\omega_+$, so rational
scaling invariance proves the consequence.
\end{proof}

For the eventual-lower-bound formulation used by this project,
rational scaling is already available as
\lean{QuadraticPi.EventualLowerBound.rat_mul} and
\lean{QuadraticPi.irrationalityExponent_rat_mul_eq_two} in
\lean{RationalScaling.lean}. The more general finite-spectrum
invariance theorem is
\lean{QuadraticPi.irrationalityExponent_rat_mobius_eq_of_bddAbove};
it explicitly assumes irrationality and a bounded approximation
spectrum. They are reused with the checked general-CM all-denominator
lower bound; no invariance under arbitrary algebraic scaling is used.

\section{An integral origin chart and its analytic logarithm}\label{sec:origin}

Fix integers $A,B,a$ and set $h(X)=X^3+AX+B$. The local statements in
this section do not require $h$ to be squarefree. Their application to
an elliptic curve does require the nonsingularity in \eqref{eq:cubic}.

\subsection{The quartic chart}

On the part of the cubic where $x-a\ne0$, put
\[
 u=\frac1{x-a},\qquad t=\frac{y}{(x-a)^2}.
\]
Multiplying the cubic equation by $u^4$ gives
\begin{equation}\label{eq:quartic}
 t^2=F(u),\qquad
 F(U)=U+3aU^2+(3a^2+A)U^3+h(a)U^4.
\end{equation}
The point at infinity corresponds locally to $u=t=0$. The decisive
facts are $F(0)=0$ and $F'(0)=1$.

\begin{proposition}[Integral local solution]\label{prop:integral}
There is a unique $U(T)\in\Z[[T]]$ with $U(0)=0$ and
$F(U(T))=T^2$. If
\[
 J(T)=F'(U(T)),\qquad V(T)=-J(T)^{-1},
\]
then $V\in\Z[[T]]$, $V(0)=-1$, and
\[
 J U'=2T,\qquad U'+2TV=0.
\]
The series $L\in\Q[[T]]$ defined by $L(0)=0$ and $L'=V$ satisfies
\begin{equation}\label{eq:denom}
 n[T^n]L\in\Z\quad(n\ge1).
\end{equation}
Consequently $\lcm(1,\ldots,N)$ clears every coefficient of $L$ through
degree $N$.
\end{proposition}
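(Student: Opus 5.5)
The plan is to construct $U$ by an integral Hensel/Lagrange argument, since $F'(0)=1$. Write $F(U)=U\,G(U)$ with $G(U)=1+3aU+(3a^2+A)U^2+h(a)U^3\in\Z[U]$ and $G(0)=1$. Then $F(U)=T^2$ is solved by the compositional inverse of $F$ evaluated at $T^2$. Since $F\in U+U^2\Z[U]$, its compositional inverse $F^{-1}(S)$ lies in $S+S^2\Z[[S]]$. Indeed, solving coefficient by coefficient, the coefficient of $S^n$ in $F(\Phi(S))-S$ is $\phi_n$ plus an integer polynomial in $\phi_2,\dots,\phi_{n-1}$, so each $\phi_n$ is an integer. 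Put $U(T)=F^{-1}(T^2)\in\Z[[T]]$. For uniqueness: any solution with $U(0)=0$ satisfies $F(U)=T^2$, and $F$ is invertible under composition on series without constant term, so $U=F^{-1}(T^2)$.

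Next, $J=F'(U(T))\in\Z[[T]]$ has constant term $F'(0)=1$, so $J$ is a unit in $\Z[[T]]$. Hence $V=-J^{-1}\in\Z[[T]]$ with $V(0)=-1$. Differentiating $F(U(T))=T^2$ gives $F'(U)U'=2T$, that is, $JU'=2T$. Multiplying by $J^{-1}=-V$ gives $U'=-2TV$, which is $U'+2TV=0$.

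For \eqref{eq:denom}, $L'=V$ gives $[T^n]L=[T^{n-1}]V/n$. Since $V$ has integer coefficients, $n[T^n]L=[T^{n-1}]V\in\Z$. It follows that $\lcm(1,\dots,N)\,[T^n]L\in\Z$ for every $n\le N$, since $n$ divides the lcm. I should double-check whether the intended $L$ is this formal primitive or a primitive in another variable. The statement fixes $L'=V$ as a series in $T$, so this direct computation suffices.

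The only step needing care is integrality of the compositional inverse. There the essential point is that the linear coefficient is exactly $1$, so no division occurs in the recursive solution. Everything else is formal differentiation. Nonsingularity of $h$ is not used, consistent with the section's remark.
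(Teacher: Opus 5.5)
Your proposal is correct and follows the paper's proof essentially step for step: the integral compositional inverse of $F$ (possible because the linear coefficient is $1$) evaluated at $T^2$, uniqueness from the two-sided compositional inverse, $J$ a unit in $\Z[[T]]$, differentiation followed by multiplication by $-J^{-1}$, and the identity $[T^n]L=[T^{n-1}]V/n$. Your coefficient-by-coefficient recursion for the integrality of the inverse spells out what the paper gets in one line from the unit linear coefficient.
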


\begin{proof}
Since its linear coefficient is a unit, $F$ has a compositional inverse
$G\in\Z[[T]]$ with zero constant coefficient. Put $U(T)=G(T^2)$.
Both compositional inverse identities show existence and uniqueness.
The constant coefficient of $J$ is one, so its multiplicative inverse
is integral. Differentiating the identity $F(U)=T^2$ yields $JU'=2T$.
Multiplication by $-J^{-1}$ gives the second displayed identity without
division by the nonunit $T$.

Write $V=\sum_{j\ge0}v_jT^j$. Define
$[T^n]L=v_{n-1}/n$ for $n\ge1$. This proves \eqref{eq:denom}.
If $n\le N$, then $n$ divides $\lcm(1,\ldots,N)$, proving the common
denominator claim. The statement is a sufficient denominator bound;
it does not assert that each coefficient has reduced denominator $n$.
\end{proof}

\leanref{EllipticOriginLocal}{localU_unique}
The same module supplies
\lean{localU_equation}, \lean{localJacobian_mul_velocity},
\lean{localU_derivative_velocity}, \lean{ellipticLog_derivative},
\lean{ellipticLog_coeff_degree_cleared}, and
\lean{ellipticLog_trunc_lcm_cleared}.

The negative sign is dictated by the chosen differential:
\[
 x=a+u^{-1},\quad y=tu^{-2},\quad
 \frac{\dd x}{2y}=-\frac{\dd u}{2t}
                 =-\frac{\dd t}{F'(u)}.
\]
Thus $L'(0)=-1$, not $+1$. For a differential $c\eta_0$ with
$c\in\Q^\times$, the primitive is $cL$ and its fixed rational
denominator must also be included. The unqualified integrality in
\eqref{eq:denom} is specific to $\eta_0$.

\subsection{Convergence is proved, not inferred from coefficient integrality}

\begin{proposition}[Analytic realization]\label{prop:analytic}
There is an analytic germ $U(t)$ at zero satisfying
$U(0)=0$ and $F(U(t))=t^2$. Its Taylor series is precisely the complex
image of the integral series in Proposition~\ref{prop:integral}.
There is an analytic germ $L(t)$ with
\[
 L(0)=0,\qquad L'(t)=-1/F'(U(t)),
\]
whose Taylor series is the complex image of the rational series $L$.
In particular the degree and least-common-multiple denominator bounds
hold for the coefficients of this actual analytic germ.
\end{proposition}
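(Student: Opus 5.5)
The plan is to obtain both germs from the holomorphic implicit and inverse function theorems, and then identify their Taylor series with the formal objects of Proposition~\ref{prop:integral} by uniqueness of formal solutions.

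\textbf{Construction of $U$.} The polynomial $F$ in \eqref{eq:quartic} satisfies $F(0)=0$ and $F'(0)=1$. By the holomorphic inverse function theorem, $F$ restricts to a biholomorphism from a disc $|U|<r$ onto a neighbourhood of $0$. Its inverse $G$ is analytic with $G(0)=0$. Put $U(t)=G(t^2)$ for $|t|$ small. Then $U$ is analytic, $U(0)=0$, and $F(U(t))=t^2$.

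\textbf{Identifying the Taylor series of $U$.} The Taylor series of $G$ at $0$ is a formal power series with zero constant term satisfying $F(G(T))=T$ formally. Composition of convergent series commutes with taking Taylor series, which justifies this identity. A formal compositional inverse of a series with unit linear coefficient is unique, because its coefficients are determined recursively by triangular equations. Hence the Taylor series of $G$ is the complex image of the integral inverse used in Proposition~\ref{prop:integral}. The Taylor series of $U$ is then that of $G$ evaluated at $T^2$, which is the integral $U(T)$.

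\textbf{Construction and identification of $L$.} The function $J(t)=F'(U(t))$ is analytic with $J(0)=1$, so $-1/J$ is analytic on a smaller disc. Its Taylor series is the formal inverse $V$, because the multiplicative inverse of a series with unit constant term is unique. Define $L(t)=\int_0^t -1/J(s)\,\dd s$ on that disc. Termwise integration of a convergent power series is valid inside its disc of convergence, so $L$ is analytic with $L(0)=0$ and $L'(t)=-1/F'(U(t))$. Its coefficients are $v_{n-1}/n$, exactly those of the formal $L$. The bounds \eqref{eq:denom} and the $\lcm$ clearing therefore transfer verbatim to the actual analytic germ.

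\textbf{Main obstacle.} The only real subtlety, which is the point of the section title, is to avoid inferring convergence from coefficient integrality. The argument does this by building the analytic objects first and only then matching Taylor coefficients. The matching step is where care is needed: it uses that Taylor expansion is a ring homomorphism compatible with composition, differentiation, and integration on convergent germs, together with the two formal uniqueness statements (for the compositional inverse and for the multiplicative inverse). If an explicit radius is wanted later, I would bound $r$ via a Cauchy estimate for $F$ on a disc where $|F'(U)-1|\le 1/2$. That is not required for the statement itself.
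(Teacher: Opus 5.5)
Your proposal is correct and follows the paper's proof: the inverse function theorem gives $G$, you set $U(t)=G(t^2)$, identify its Taylor series by formal uniqueness, and then identify the normalized primitive of $-1/F'(U(t))$ coefficientwise with the rational series $L$. The only difference is cosmetic: you use uniqueness of the formal compositional inverse of $F$ and then substitute $T^2$, whereas the paper proves uniqueness of centered formal solutions of $F(W)=T^2$ directly, by writing $F(W)-F(Z)$ as $W-Z$ times a cofactor with constant coefficient one.
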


\begin{proof}
The analytic inverse-function theorem at $F(0)=0$, $F'(0)=1$ gives a
local analytic inverse $G$; take $U(t)=G(t^2)$. The Taylor homomorphism
preserves the polynomial identity. Centered formal solutions are unique:
the difference of two such solutions factors from $F(W)-F(Z)$, and the
remaining factor has constant coefficient one. This identifies the Taylor
series with the integral formal solution.

The function $-1/F'(U(t))$ is analytic near zero. Integrating it on a
small disk gives a normalized analytic primitive $L$. Its Taylor series
has derivative $V$ and constant coefficient zero, which uniquely
determine its coefficients in characteristic zero. It is therefore the
previously constructed rational series.
\end{proof}

\leanref{EllipticOriginConvergence}{analyticU_hasIntegralPowerSeriesAt}
The analytic inverse is constructed in
\lean{EllipticOriginAnalytic}; the primitive and coefficient comparison
are \lean{EllipticOriginLogarithm.analyticLog_taylorSeries},
\lean{analyticLog_hasRationalPowerSeriesAt}, and
\lean{analyticLog_trunc_lcm_cleared}. These statements produce some
positive local radius. They do not claim an explicit numerical radius
or a global period identification.

\begin{proposition}[The actual normalized uniformization germ]
\label{prop:actual-origin-germ}
For the normalized uniformization of Proposition~\ref{prop:p1-period-lattice},
the meromorphic functions $u(z),t(z)$ defined by the chart extend
holomorphically at $z=0$ with $u(0)=t(0)=0$. They satisfy
$u(z)=U(t(z))$ and $L(t(z))=z$ near zero; hence
$t(z)=\tau(z)$, $u(z)=U(\tau(z))$, and $\tau'(0)=-1$.
\end{proposition}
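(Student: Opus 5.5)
Under the uniformization of Proposition~\ref{prop:p1-period-lattice} we have $x=\wp_\Lambda(z)$ and $y=\wp_\Lambda'(z)/2$. The plan is to expand near $z=0$, read off the orders of the chart functions, and then identify the local inverse using the pulled-back differential. Near zero, $\wp_\Lambda(z)=z^{-2}+O(z^2)$ and $\wp'_\Lambda(z)=-2z^{-3}+O(z)$. Hence $x-a=z^{-2}(1-az^2+O(z^4))$, and
\[
 u(z)=\frac1{x-a}=z^2\bigl(1+az^2+O(z^4)\bigr),
\]
which is holomorphic with a double zero at the origin. Likewise
\[
 t(z)=\frac{y}{(x-a)^2}=\frac{-z^{-3}(1+O(z^4))}{z^{-4}(1+O(z^2))}=-z+O(z^3).
\]
So $t$ extends holomorphically with $t(0)=0$ and $t'(0)=-1$. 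In particular $t$ is a local biholomorphism near $0$, and $u(0)=t(0)=0$.

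Next I will show $u=U(t)$. Multiplying the cubic by $u^4$, as in the derivation of \eqref{eq:quartic}, gives $t(z)^2=F(u(z))$ identically on a punctured neighborhood, and so on a full neighborhood by continuity. By Proposition~\ref{prop:analytic}, $U$ is an analytic germ with $F(U(s))=s^2$. Since $F'(0)=1$, $F$ is locally injective near $0$, with local inverse $G$ and $U(s)=G(s^2)$. For $z$ small, $u(z)$ lies in the injectivity disc and $F(u(z))=t(z)^2=F(U(t(z)))$, so $u(z)=U(t(z))$.

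Then I will show $L(t(z))=z$. Pulling back $\eta_0=\dd z$ and using the chart computation $\dd x/(2y)=-\dd t/F'(u)$, valid where $t$ is a coordinate (here all of a small neighborhood, since $t'(0)\ne0$), gives
\[
 \dd z=-\frac{t'(z)\,\dd z}{F'(U(t(z)))}=L'(t(z))\,t'(z)\,\dd z .
\]
Thus $\frac{\dd}{\dd z}L(t(z))=1$, and $L(t(0))=L(0)=0$, so $L(t(z))=z$. Consequently $t$ is the local compositional inverse of $L$. Writing $\tau$ for that inverse, with $L'(0)=-1$, we get $t=\tau$ and $\tau'(0)=1/L'(0)=-1$, consistent with the expansion above. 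Finally $u=U(\tau)$ follows from the previous step.

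The main obstacle is the sign and branch bookkeeping. Identifying $u$ with $U(t)$ needs the local injectivity of $F$, not just the equation $t^2=F(u)$, since $t\mapsto -t$ also preserves that equation. Likewise, the identification of $L\circ t$ requires the differential identity to hold on a punctured disc where $y\ne0$ and $x\ne a$, followed by extension by continuity. I will also check that the normalization $2y=\wp'$ gives $t'(0)=-1$, matching $L'(0)=-1$, rather than $+1$.
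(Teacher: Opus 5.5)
Your proposal is correct and follows essentially the same route as the paper's proof: the Laurent expansions give $u=z^2+O(z^4)$ and $t=-z+O(z^3)$, the chart equation plus uniqueness of the centered solution (your local injectivity of $F$ near $0$) gives $u=U(t)$, and the pulled-back differential gives $\frac{d}{dz}L(t(z))=1$, after which the inverse function theorem with $L'(0)=-1$ finishes. Your explicit treatment of the punctured-disc validity of $dx/(2y)=-dt/F'(u)$ and its extension by continuity is extra care that the paper leaves implicit.
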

\begin{proof}
The standard Laurent expansions $x(z)=z^{-2}+O(z^2)$ and
$y(z)=-z^{-3}+O(z)$ give $u(z)=z^2+O(z^4)$ and
$t(z)=-z+O(z^3)$. The chart equation and uniqueness of the centered
analytic solution give $u(z)=U(t(z))$. The specified differential
gives $dz=-dt/F'(u)$, so the derivative of $L(t(z))$ is one.
Both functions vanish at zero, proving the identity. The inverse
function theorem and $L'(0)=-1$ finish the proof.
\end{proof}

\section{The polar source, its frame, and its weighted packets}
\label{sec:polar-filtration-comparison}

The following statements close several local algebraic and formal
compatibility questions. They do not assert interpolation surjectivity.
Let $K$ be a field of characteristic zero and let $E/K$ be the smooth
projective curve with affine equation
\[
 y^2=h(x)=x^3+Ax+B,\qquad 4A^3+27B^2\ne0.
\]
Write $O$ for its point at infinity, choose $a\in K$ with $h(a)\ne0$,
and let $\mathcal P$ be the effective $K$-rational divisor of zeros
of $x-a$. Over an algebraic closure it is $Q_++Q_-$, where
$Q_\pm=(a,\pm\sqrt{h(a)})$. Define
\[
 u=(x-a)^{-1},\quad t=y(x-a)^{-2},\quad
 F(U)=U+3aU^2+(3a^2+A)U^3+h(a)U^4.
\]
Thus $t^2=F(u)$. The letters $t,u$ denote rational functions;
formal local variables below will be denoted $T,V_i$.

\begin{proposition}[Divisor identity and exact change of section space]
\label{prop:polar-space-isomorphism}
One has
\[
 \operatorname{div}(u)=2O-\mathcal P.
\]
For every integer $k\ge0$, multiplication by $u^k$ is a
$K$-linear isomorphism
\[
 L(2kO)\longrightarrow L(k\mathcal P),\qquad f\longmapsto u^kf.
\]
For $k\ge1$ an explicit basis of the latter space is
\[
 1,u,\ldots,u^k;\qquad t,tu,\ldots,tu^{k-2},
\]
with the second list empty if $k=1$. Its dimension is $2k$;
$L(0)$ has basis $1$.
\end{proposition}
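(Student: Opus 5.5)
The plan is to compute $\operatorname{div}(u)$ directly, deduce the isomorphism formally, and then check the basis by pole orders at $Q_\pm$ together with Riemann--Roch.

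\emph{Divisor of $u$.} Work over an algebraic closure. The function $x-a$ has a double pole at $O$, since $x$ has valuation $-2$ there. Its zeros are the points with $x=a$, namely $Q_\pm$. Because $h(a)\neq0$, these two points are distinct and $y(Q_\pm)\neq0$, so $x-a$ is a local uniformizer at each of them. Hence
\[
 \operatorname{div}(x-a)=\mathcal P-2O, \qquad \operatorname{div}(u)=2O-\mathcal P.
\]
This divisor identity is Galois-invariant, so it holds over $K$.

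\emph{The isomorphism.} For $f\in L(2kO)$ we have
\[
 \operatorname{div}(u^kf)=\operatorname{div}f+2kO-k\mathcal P\ge -k\mathcal P,
\]
so $u^kf\in L(k\mathcal P)$. The inverse map is multiplication by $u^{-k}=(x-a)^k$, and the same computation with the signs reversed shows it sends $L(k\mathcal P)$ into $L(2kO)$. Both maps are $K$-linear.

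\emph{The basis.} Riemann--Roch on a genus-one curve gives $\dim L(2kO)=2k$ for $k\ge1$ and $\dim L(0)=1$. The classical basis of $L(2kO)$ consists of $x^i$ for $0\le i\le k$ and $yx^j$ for $0\le j\le k-2$, with pole orders $2i$ and $2j+3\le 2k$. Rewritten in terms of $u$, this says $L(k\mathcal P)$ is spanned by
\[
 u^k(x-a)^i=u^{k-i}\quad\text{and}\quad u^k y(x-a)^j=t\,u^{k-2-j},
\]
using $y=t(x-a)^2$. As $i$ runs over $0,\dots,k$ and $j$ over $0,\dots,k-2$, these are exactly the listed functions $1,u,\dots,u^k$ and $t,tu,\dots,tu^{k-2}$.

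One could instead prove linear independence directly. The functions $u^m$ have even order $2m$ at $O$, while $tu^m$ has odd order $2m+1$, because $t$ has order $1$ at $O$ (from $t^2=F(u)$ and $F'(0)=1$). Distinct valuations at $O$ give independence, and the count $(k+1)+(k-1)=2k$ matches the dimension. Either route suffices; I would present the direct substitution, with the valuation check as confirmation. For $k=1$ the second list is empty and $\{1,u\}$ has the correct size $2$.

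\emph{Main obstacle.} There is no serious obstacle. The one point needing care is the local-uniformizer claim at $Q_\pm$, since that is exactly where $h(a)\neq0$ is used. One must also make sure $K$-rationality survives the passage to the algebraic closure; this holds because $u$, $t$ and $\mathcal P$ are all defined over $K$.
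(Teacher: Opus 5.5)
Your proposal is correct and follows the paper's route: compute $\operatorname{div}(x-a)=\mathcal P-2O$ using $y(Q_\pm)\ne0$, get the isomorphism from $\operatorname{div}(u^kf)+k\mathcal P=\operatorname{div}(f)+2kO$, and transport the pole-order basis of $L(2kO)$ by $u^k$.

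There are two small differences from the paper. First, the paper obtains that basis from the coordinate ring $K[x,y]/(y^2-h(x))$ rather than from Riemann--Roch. Second, the paper multiplies $x^i$ and $x^jy$ by $u^k$ to get $u^{k-i}(1+au)^i$ and $tu^{k-j-2}(1+au)^j$, and then applies a unitriangular change of basis. Your text says it transports $x^i$, but the computation actually transports $(x-a)^i$ and $y(x-a)^j$; note that $u^kx^i\ne u^{k-i}$. This is harmless once you remark that the shifted functions have the same pole orders $2i$ and $2j+3$ at $O$, so they are also a basis of $L(2kO)$.
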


\begin{proof}
The function $x$ has its unique pole at $O$, of order two.
At either $Q_\pm$, $y\ne0$, so the equation shows that $x-a$
is a local parameter: its zero is simple. Consequently
$\operatorname{div}(x-a)=\mathcal P-2O$. For any rational
function $f$,
\[
 \operatorname{div}(u^kf)+k\mathcal P
   =\operatorname{div}(f)+2kO.
\]
This proves the claimed isomorphism, including its inverse.

For completeness the basis can be obtained without a
Riemann--Roch dimension calculation. Every function with poles
only at $O$ is regular on the affine curve $E\setminus\{O\}$,
whose coordinate ring is $K[x,y]/(y^2-h(x))$. Division by the
monic polynomial in $y$ gives a unique expression
$p(x)+yq(x)$. At $O$, the monomials $x^i$ have pole orders
$2i$, while $yx^j$ have pole orders $2j+3$. These orders are
distinct, so no highest-pole cancellation is possible. Thus a
basis of $L(2kO)$ is
\[
 1,x,\ldots,x^k;\qquad y,xy,\ldots,x^{k-2}y.
\]
Its images under multiplication by $u^k$ are
\[
 u^{k-i}(1+au)^i\quad(0\le i\le k),\qquad
 tu^{k-j-2}(1+au)^j\quad(0\le j\le k-2).
\]
In increasing order of their smallest powers of $u$, the two
change-of-basis matrices are triangular with diagonal one.
They therefore span exactly the displayed polar basis and are
independent. The argument is over $K$ and does not require
either $Q_+$ or $Q_-$ to be individually $K$-rational.
\end{proof}

The divisor calculation also gives
$\operatorname{ord}_O(u)=2$ and $\operatorname{ord}_O(t)=1$.
At either point of $\mathcal P$, $u$ has a simple pole and $t$
has a double pole. These are exact orders, because $y(Q_\pm)\ne0$.
Hence the origin-regular filtration is the actual section
filtration $L(k\mathcal P)$, not just a convenient set of
polynomials with asserted pole bounds.

\subsection{The origin packet intertwines the invariant frame}
\label{sec:origin-packet-frame}

The derivation dual to $\eta_0=dx/(2y)$ is
\[
 \delta x=2y,\qquad\delta y=3x^2+A.
\]
Direct differentiation in the function field gives
\begin{equation}\label{eq:quartic-invariant-frame}
 \delta u=-2t,\qquad\delta t=-F'(u).
\end{equation}
Indeed the first identity is the derivative of $(x-a)^{-1}$.
For the second differentiate $y(x-a)^{-2}$, use $y^2=h(x)$,
and substitute $x=a+u^{-1}$. Equivalently differentiating
$t^2=F(u)$ gives the result after cancellation in the function
field. The resulting identities are regular identities on the
quartic affine chart. The derivation
$-2t\partial_u-F'(u)\partial_t$ kills $t^2-F(u)$, so it
descends to its coordinate ring.

Let $U(T)\in K[[T]]$ be the unique series with $U(0)=0$ and
$F(U(T))=T^2$. Its existence follows by composition inversion
from $F(0)=0$, $F'(0)=1$. Set
\[
 J(T)=F'(U(T)),\qquad
 L(0)=0,\quad L'(T)=-J(T)^{-1}.
\]
Characteristic zero permits termwise integration. In particular
$J(0)=1$, $L'(0)=-1$, and differentiation of the defining
equation gives $J(T)U'(T)=2T$.

Introduce additive coordinates $Z_1,\ldots,Z_m$ and derivations
\[
 D_0=\delta+\sum_{i=1}^m\partial_{Z_i},\qquad
 D_i=\partial_{Z_i}\quad(1\le i\le m).
\]
For $c=(c_1,\ldots,c_m)\in K^m$, define the actual formal
packet homomorphism
\[
 \Phi_c(P)=P\bigl(U(T),T,c_i+V_i+L(T)\bigr)
 \quad\hbox{in }K[[T,V_1,\ldots,V_m]].
\]
It is well defined on $K[u,t,Z]/(t^2-F(u))$.

\begin{proposition}[Exact packet and frame identities]
\label{prop:origin-packet-intertwines}
For every element $P$ of this coordinate ring,
\[
 \Phi_c(D_0P)=-J(T)\partial_T\Phi_c(P),\qquad
 \Phi_c(D_iP)=\partial_{V_i}\Phi_c(P).
\]
Let $T=\tau(z)$ be the compositional inverse of $z=L(T)$.
Then the normalized packet
\[
 \widehat\Phi_c(P)
   =P\bigl(U(\tau(z)),\tau(z),c_i+V_i+z\bigr)
\]
satisfies
\[
 \widehat\Phi_c(D_0P)=\partial_z\widehat\Phi_c(P),\qquad
 \widehat\Phi_c(D_iP)=\partial_{V_i}\widehat\Phi_c(P).
\]
\end{proposition}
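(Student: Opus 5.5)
Both packet identities say the same thing: $\Phi_c$ intertwines a derivation on the coordinate ring $R=K[u,t,Z]/(t^2-F(u))$ with a derivation on the formal power series ring. So the plan is a reduction to generators, followed by a direct chain-rule check on $u$, $t$ and the $Z_i$.

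First, $\Phi_c$ is a $K$-algebra homomorphism. Its target is $K[[T,V]]$, and $V_i$ has no constant term, so it is well defined. The map $P\mapsto -J\partial_T\Phi_c(P)$ is a $\Phi_c$-derivation, and so is $P\mapsto\Phi_c(D_0P)$, since $D_0$ is a derivation of $R$ (it kills $t^2-F(u)$, as noted after \eqref{eq:quartic-invariant-frame}). Two $\Phi_c$-derivations from a finitely generated algebra agree as soon as they agree on the generators $u,t,Z_1,\ldots,Z_m$, so I check only these.

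For $u$: by \eqref{eq:quartic-invariant-frame}, $\Phi_c(D_0u)=\Phi_c(-2t)=-2T$. On the other side, $-J\,\partial_T U(T)=-JU'=-2T$, using $JU'=2T$. For $t$: $\Phi_c(D_0t)=\Phi_c(-F'(u))=-F'(U(T))=-J$, while $-J\,\partial_T T=-J$. For $Z_i$: $\Phi_c(D_0Z_i)=\Phi_c(1)=1$, while $-J\,\partial_T(c_i+V_i+L(T))=-JL'=-J\cdot(-J^{-1})=1$. The identities for $D_i=\partial_{Z_i}$ are immediate: $\partial_{Z_i}$ kills $u$, $t$ and $Z_j$ for $j\ne i$, and sends $Z_i$ to $1=\partial_{V_i}(c_i+V_i+L)$. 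This proves the first pair of identities.

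For the normalized packet, $L(T)=-T+\cdots$ has a compositional inverse $\tau$ with $\tau(0)=0$. Substituting $T=\tau(z)$ is a continuous homomorphism $K[[T,V]]\to K[[z,V]]$, and $\widehat\Phi_c$ is this substitution composed with $\Phi_c$. By the chain rule, $\partial_z(f(\tau(z)))=\tau'(z)(\partial_Tf)(\tau(z))$. Differentiating $L(\tau(z))=z$ gives $\tau'(z)=1/L'(\tau(z))=-J(\tau(z))$. Hence substituting $T=\tau(z)$ into $-J\partial_T\Phi_c(P)$ yields $\partial_z\widehat\Phi_c(P)$. The $V_i$-derivatives commute with this substitution, which gives the second pair.

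The only delicate point, more bookkeeping than obstacle, is justifying the formal operations. One needs the chain rule for substitution into multivariable formal series with $\tau(0)=0$. One also needs termwise differentiation of $P(U,T,c_i+V_i+L)$ when $c_i\neq0$. This is harmless because $P$ is polynomial in the $Z$ variables, so the expansion of $(c_i+V_i+L)^k$ is finite.
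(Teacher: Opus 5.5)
Your proposal is correct and follows essentially the same route as the paper: you check the two $\Phi_c$-derivations on the generators $u,t,Z_i$ (using $JU'=2T$, $J=F'(U)$, $-JL'=1$), then substitute $T=\tau(z)$ using $\tau'(z)=-J(\tau(z))$ from the formal chain rule. One small gap: you should also note that $t^2-F(u)\mapsto T^2-F(U(T))=0$, which is what makes $\Phi_c$ well defined on the quotient ring.
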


\begin{proof}
Both sides of the first pair are derivations relative to $\Phi_c$,
so it suffices to check the algebra generators. For $u$ the
equality is $-JU'=-2T$; for $t$ it is $-J=-F'(U)$;
for every $Z_i$ it is $-JL'=1$. The additive identities follow
by direct differentiation. Since $L$ has zero constant term and
nonzero linear term it has a unique composition inverse, with
$\tau(0)=0$, $\tau'(0)=-1$. The formal chain rule gives
$\tau'(z)=-J(\tau(z))$. Substitute this identity into the
first pair to obtain the normalized pair. No global period
identification or CM hypothesis is used.
\end{proof}

\begin{proposition}[Strict weighted packets are unchanged by this
reparametrization]\label{prop:strict-jet-reparametrization}
Let $v_0,r_1,\ldots,r_m>0$ and $H>0$. In the formal series
ring let $\mathfrak j_H$ consist of the series all of whose
monomials have weight
$v_0s+\sum_i r_i\beta_i\ge H$.
The substitution $z=L(T)$, leaving every $V_i$ fixed, induces an
isomorphism of the finite quotients by $\mathfrak j_H$.
Consequently the ranks of any collection of packets
$\widehat\Phi_c(P)$ and $\Phi_c(P)$ agree after taking all
coefficients with the strict cutoff
$v_0s+\sum_i r_i\beta_i<H$.
\end{proposition}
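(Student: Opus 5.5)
The plan is to show that the substitution $z=L(T)$, and its inverse $T=\tau(z)$, each map the ideal $\mathfrak j_H$ into itself. Two such mutually inverse continuous $K$-algebra automorphisms of $K[[T,V_1,\ldots,V_m]]$, and of the corresponding ring in $z$, then induce mutually inverse isomorphisms of the quotients by $\mathfrak j_H$. The key input is that $L(T)=-T+T^2(\cdots)$ and $\tau(z)=-z+z^2(\cdots)$ both have zero constant term, as recorded in Proposition~\ref{prop:origin-packet-intertwines}. Hence substituting either series for the one-variable coordinate never lowers the $T$-adic (resp.\ $z$-adic) order.

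First, I will check that the ideal $\mathfrak j_H$ is an ideal and is a closed subspace in the $(T,V)$-adic topology. Membership is defined monomialwise, and the weight is additive with nonnegative summands, since $v_0,r_i>0$. Next, for a monomial $T^sV^\beta$, its image $L(T)^sV^\beta$ is a sum of monomials $T^{s'}V^\beta$ with $s'\ge s$. Each of these has weight $v_0s'+\sum r_i\beta_i\ge v_0s+\sum r_i\beta_i$. So if the original weight is $\ge H$, every image monomial lies in $\mathfrak j_H$. Extending by continuity to infinite sums, I use that a general series is a convergent sum of monomials and that each coefficient of the image involves only finitely many source monomials. This gives that the substitution maps $\mathfrak j_H$ into $\mathfrak j_H$. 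The same argument applies to $\tau$. Because $L\circ\tau=\mathrm{id}$ and $\tau\circ L=\mathrm{id}$ as formal series, the two substitution homomorphisms are mutually inverse. Therefore they descend to mutually inverse $K$-linear ring maps on the quotients, which are finite dimensional with basis the monomials of weight $<H$.

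For the consequence, I take $\widehat\Phi_c(P)$. By definition it is obtained from $\Phi_c(P)$ by the substitution $T=\tau(z)$, since $\Phi_c(P)=P(U(T),T,c_i+V_i+L(T))$ and $L(\tau(z))=z$. So the images of $\Phi_c(P)$ and $\widehat\Phi_c(P)$ in the respective truncated quotients correspond under a linear isomorphism, and ranks of any finite family of them coincide. The coefficient vectors with respect to the monomials of weight $<H$ are coordinates on that quotient. Hence the rank of the coefficient matrix equals the rank of the image family, and this holds in both charts.

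I expect the only delicate point to be the infinite-sum continuity step: justifying that substitution commutes with the infinite expansion and preserves the monomialwise weight condition. I would handle it by noting that the finite quotient is isomorphic to $K[[T,V]]/\mathfrak j_H$, where $\mathfrak j_H\supseteq(T,V)^N$ for $N\ge H/\min(v_0,r_i)$. So everything reduces to polynomial truncations modulo $(T,V)^N$, on which the substitution is a finite computation.
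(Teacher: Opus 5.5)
Your proposal is correct and follows essentially the paper's proof. Both arguments show that $z=L(T)$ and its inverse $T=\tau(z)$ preserve the monomial ideal $\mathfrak j_H$, since these series have order one and leave $\beta$ unchanged; hence they induce mutually inverse maps on the finite quotients and carry $\widehat\Phi_c(P)$ to $\Phi_c(P)$. The paper additionally records that the induced matrix is triangular with diagonal $(-1)^s$ (ordering by $T$-degree within each fixed $\beta$) and acts separately on each center block. It uses this for the subsequent same-column determinant identity, but it is not needed for the rank statement you prove.
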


\begin{proof}
The weight condition defines a monomial ideal, because all
variable weights are positive. Since $L(T)$ has order one,
every monomial in $L(T)^sV^\beta$ has $T$-degree at least $s$
and has exactly the same additive multi-index $\beta$.
Thus substitution preserves $\mathfrak j_H$. The inverse
substitution $T=\tau(z)$ has the same property, so the induced
maps are inverse. In the monomial basis of the finite quotient,
ordered by the $T$-degree within each fixed $\beta$, the matrix
is triangular with diagonal $(-1)^s$. It is invertible over
$K$. For several centers it acts separately on each center block.

The additive shift is already part of the packet:
$c_i+V_i+z$ becomes exactly $c_i+V_i+L(T)$ under this
substitution. We do not additionally replace $V_i$ by
$V_i+L(T)$ in an arbitrary target series. Such a separate
shear need not preserve an anisotropic weighted ideal when
$r_i>v_0$. This distinction is essential.
\end{proof}

More precisely, form a square matrix using \emph{all} rows in
the strict packet (at all chosen centers), and any one fixed
selection of the same number of source columns. Write $M_T$ and
$M_z$ for its matrices in the two parameters. The triangular
calculation proves
\[
 \det M_T=(-1)^{\sum_{\text{rows }(c,s,\beta)}s}\det M_z.
\]
Indeed the same block-triangular row operation, independent of
the selected columns, takes $M_z$ to $M_T$. Over $\mathbb C$
their determinant norms are therefore equal. Since the
additive multi-index $\beta$ is unchanged, determinant grade
sums are unchanged as well. Thus an already established
determinant-level clearing identity for these full row packets
transfers exactly, although individual entries of the row
operation may have denominators. This assertion need not hold
for an arbitrary subcollection of packet rows.

This establishes rank and same-column determinant invariance
between two parameters on the \emph{same} origin-regular
packet. It does not establish full row rank for either packet,
nor does it supply an initial arithmetic clearing estimate for
one of them.

\subsection{The exact derivative cost of the polar filtration}
\label{sec:polar-derivative-cost}

Give $u,t,Z_i$ source weights $2w_0,4w_0,w_i$, respectively,
where $w_0>0$ and $w_i\ge0$. Reduce representatives to degree
at most one in $t$ by $t^2=F(u)$. This reduction never increases
weight, because every term of $F(u)$ has weight at most $8w_0$.
The resulting source space at cutoff $H$ is exactly the span of
\[
 u^jt^bZ^\alpha,\qquad b\in\{0,1\},\qquad
 2w_0(j+2b)+\sum_iw_i\alpha_i\le H.
\]
Equivalently its coefficient of $Z^\alpha$ lies in
$L(k_\alpha\mathcal P)$, where
$k_\alpha=\lfloor(H-\sum_iw_i\alpha_i)/(2w_0)\rfloor$;
negative $k_\alpha$ contribute the zero space.

\begin{proposition}[Polar signed word bound]
\label{prop:polar-signed-word}
If $P$ has source cutoff $H$, a word containing $n_i$
occurrences of $D_i$ has source cutoff at most
\[
 H+2n_0w_0-\sum_{i=1}^m n_iw_i.
\]
In particular $\delta L(k\mathcal P)\subseteq
L((k+1)\mathcal P)$ for $k\ge0$.
\end{proposition}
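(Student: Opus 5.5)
Since the $D_i$ are derivations, it is enough to see how a single application of each generator moves the source cutoff; the bound for a word then follows by induction on its length. Concretely, I would show that if $P$ lies in the source space at cutoff $H$, then $D_0P$ lies in the space at cutoff $H+2w_0$ and $D_iP$ in the space at cutoff $H-w_i$. Iterating, a word with $n_0$ copies of $D_0$ and $n_i$ copies of $D_i$ adds the increments in any order. This gives the cutoff $H+2n_0w_0-\sum_i n_iw_i$. A space with negative cutoff is zero, so it is harmless if some intermediate cutoff becomes negative.

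For $D_i=\partial_{Z_i}$ I would check directly on the spanning monomials $u^jt^bZ^\alpha$. The image $\alpha_iu^jt^bZ^{\alpha-e_i}$ has weight exactly $w_i$ less than the monomial. It is already reduced, since $b\in\{0,1\}$, so no further reduction is needed.

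For $D_0=\delta+\sum_i\partial_{Z_i}$ the $\partial_{Z_i}$ part only lowers weight, because $w_i\ge0$. So the real content is the statement $\delta L(k\mathcal P)\subseteq L((k+1)\mathcal P)$, applied coefficientwise in $Z^\alpha$. The derivation $\delta$ does not touch the $Z$ variables, and $k_\alpha$ increases by one when $H$ increases by $2w_0$.

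To prove this inclusion I would use \eqref{eq:quartic-invariant-frame} on the basis of Proposition~\ref{prop:polar-space-isomorphism}:
\[
 \delta(u^j)=-2ju^{j-1}t,\qquad
 \delta(u^jt)=-2ju^{j-1}t^2-u^jF'(u)=-2ju^{j-1}F(u)-u^jF'(u).
\]
The first has weight $2w_0(j-1)+4w_0=2w_0j+2w_0$, so it raises the weight by exactly $2w_0$. The second is a polynomial in $u$; since $F(u)$ has degree $4$ and $F'(u)$ degree $3$, it has $u$-degree at most $j+3$. Its weight is therefore at most $2w_0(j+3)=2w_0(j+2)+2w_0$, again an increase of $2w_0$. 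Hence $u^k$ goes into the span of $tu^{k-1}$, and $tu^{k-2}$ goes into $u$-degree at most $k+1$. Both lie in the basis of $L((k+1)\mathcal P)$. The case $k=0$ is trivial because $\delta(1)=0$.

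A cross-check is available. In $L(2kO)$ language, $\delta x=2y$ and $\delta y=3x^2+A$ raise the pole order at $O$ by at most one; since $O$ is translation-invariant for the invariant derivation, $\delta L(2kO)\subseteq L((2k+1)O)\subseteq L((2k+2)O)$. This does not transfer directly, however, because $u^k$ does not commute with $\delta$. So I would rely on the direct chart computation above. The expected obstacle is keeping track of the fact that the reduction $t^2=F(u)$ is weight non-increasing, which was already noted before the proposition; no further difficulty is expected.
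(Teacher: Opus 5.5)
Your proof is correct and follows essentially the paper's argument. Each letter changes the reduced source weight by at most $+2w_0$ (for $D_0$, coming from $-2t\partial_u$ and $-F'(u)\partial_t$, while the $\partial_{Z_i}$ summands only lower weight) or by exactly $-w_i$ (for $D_i$); reduction by $t^2=F(u)$ never raises weight; and induction over the word gives the bound. The difference is only in presentation: you check $\delta$ on the explicit polar basis and deduce the weight bound from $\delta L(k\mathcal P)\subseteq L((k+1)\mathcal P)$ coefficientwise in $Z^\alpha$, whereas the paper bounds the two operator terms directly, reads the section inclusion off the basis, and also notes a Laurent-expansion argument at $Q_\pm$.
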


\begin{proof}
The term $-2t\partial_u$ raises weight by at most
$4w_0-2w_0=2w_0$. Every term of $F'(u)$ has weight at most
$6w_0$, so $-F'(u)\partial_t$ raises weight by at most
$6w_0-4w_0=2w_0$. The additive summands in $D_0$ lower weight
by $w_i\ge0$, and $D_i$ lowers it by $w_i$. Reduction by the
relation does not increase weight. Induction on the ordered word
proves the bound, including arbitrary real cutoffs and the zero
polynomial. The section-space statement follows from the exact
basis and its weights. It also follows directly by differentiating
a local Laurent expansion: the vector field dual to the nowhere
vanishing invariant differential is regular at $Q_\pm$ and can
increase a pole order by at most one.
\end{proof}

With jet weights $(v_0,w_i/\theta)$ and $0<\theta<1$, the
signed source costs $(2w_0,-w_i)$ are bounded termwise by
$(2w_0/v_0)$ times the jet weights. Thus a word of total
nonnegative jet cost at most $R$ has source cutoff at most
$H+(2w_0/v_0)R$. These signed source costs must not replace
the nonnegative contact costs in persistence statements.

\paragraph{Why this chart requires its own interpolation proof.}
Multiplication by $u^k$ has a zero of order $2k$ at the origin;
it is not a unit operation on regular jets. Thus the section-space
isomorphism alone cannot transfer the old lemniscate EPA. The curve
inequality and globalization below use precisely the polar source
and derivative cost proved here. The local coefficient identities
have checked Lean counterparts in \lean{EllipticOriginLocal.lean}
and \lean{EllipticOriginParameter.lean}. The actual quartic frame,
compactification, weighted sections and packet map are now assembled
in the origin-quartic development, culminating in
\lean{QuadraticPi.EllipticOriginQuarticEPA.epa}.

\section{The local algebra behind the two length bounds}
\label{sec:p1-generic-local-lengths}

We give the generic argument before specializing it to the origin quartic.
The lower bound measures the Taylor coefficients forced to vanish by
derivatives. The upper bound measures the same local quotient by a
coordinate slice and a weighted B\'ezout inequality. Keeping the prime,
ideal and residue field fixed is essential to their later comparison.

\subsection{Local rings, normal directions and classical inputs}

Let $k$ be a field of characteristic zero, let
$R=k[X_1,\ldots,X_n]$, and let $Q$ be a prime ideal. Write
\[
 S=R_Q,\qquad \mathfrak m=QS,\qquad
 K=S/\mathfrak m=\kappa(Q),\qquad \rho:R\longrightarrow K.
\]
The field $K$ is canonically the fraction field of $R/Q$; all maps below
use this canonical identification. For an ideal $I\subseteq Q$, define
\[
 \ell_Q(I)=\operatorname{length}_S(S/IS)\in\mathbb N\cup\{\infty\}.
\]
Let $\overline X_i=\rho(X_i)$ and put
\[
 V_Q=\left\{v\in K^n:
       \sum_i v_i\rho(\partial_i p)=0\text{ for every }p\in Q\right\},
 \qquad N_Q^{\rm amb}=K^n/V_Q.
\]
Thus a set $A\subseteq\{1,\ldots,n\}$ is a \emph{coordinate normal
basis} if the classes of the coordinate vectors $e_i$, $i\in A$,
form a basis of $N_Q^{\rm amb}$.

We use the following classical commutative algebra and intersection
facts, in these precise forms.
\begin{enumerate}
\item Polynomial local rings over a field are regular and
Cohen--Macaulay. Quotienting a Cohen--Macaulay local ring by a
nonzerodivisor lowers its dimension by one and preserves the
Cohen--Macaulay property. A full system of parameters is a regular
sequence. We also use the finiteness of associated primes and the fact
that a finite-dimensional vector space over an infinite field is not a
finite union of proper linear subspaces. For the local-ring assertions,
see the Stacks Project, Section 10.104, especially Lemmas 10.104.2
and 10.104.7, \url{https://stacks.math.columbia.edu/tag/00N7}.
\item If $(A,\mathfrak m_A)\to(B,\mathfrak m_B)$ is a flat local
homomorphism, then
\begin{equation}\label{eq:p1-flat-local-length}
 \operatorname{length}_B(M\otimes_A B)
 =\operatorname{length}_A(M)
       \operatorname{length}_B(B/\mathfrak m_A B).
\end{equation}
For finite-length $M$ this follows by tensoring a composition series;
flatness preserves its exact sequences. Faithful flatness also preserves
arbitrarily long strict chains, which deals with infinite length.
This is Stacks, Lemma 10.52.13, tag 02M1,
\url{https://stacks.math.columbia.edu/tag/02M1}.
\item A finite separable field extension is finite \emph{\'etale}.
In particular, if $K/F$ is finite separable, $K\otimes_F K$ is a
product of fields and its factor selected by multiplication
$K\otimes_F K\to K$ is $K$ itself. See Stacks, Lemma 10.143.4,
\url{https://stacks.math.columbia.edu/tag/00U3}.
\item We use the isolated-point affine B\'ezout inequality:
if $g_1,\ldots,g_r\in E[Y_1,\ldots,Y_r]$ have an isolated common
zero $a\in E^r$, then
\begin{equation}\label{eq:p1-isolated-affine-bezout}
 \operatorname{length}_{E[Y]_{\mathfrak m_a}}
    E[Y]_{\mathfrak m_a}/(g_1,\ldots,g_r)
 \le\prod_{i=1}^r\deg g_i.
\end{equation}
Here the quotient is a local complete intersection: its $r$ equations
form a system of parameters in a regular local ring of dimension $r$.
The precise projective result supplying the inequality is that the sum
of the multiplicities at isolated intersections of globally generated
effective Cartier divisors on a complete variety is at most their
intersection number. Apply it to the homogenized hypersurfaces in
$\mathbb P^r$, whose intersection number is $\prod_i\deg g_i$.
See W.~Fulton, \emph{Introduction to Toric Varieties}, Annals of
Mathematics Studies 131, Princeton University Press, 1993,
Section 5.5, p.~123. The identification of the local
complete-intersection multiplicity with the displayed length is also
explained in Stacks, Section 43.16,
\url{https://stacks.math.columbia.edu/tag/0B01}.
This version permits positive-dimensional components elsewhere,
including at infinity. We do not assume the homogenized equations
intersect properly everywhere. For a non-algebraically-closed field $E$,
extension to an algebraic closure preserves the length at the rational
point $a$ by \eqref{eq:p1-flat-local-length}.
\end{enumerate}

These are the classical inputs. The reductions to their hypotheses and
all constants needed here are proved below.

\subsection{Ordered Taylor maps and the lower bound}

Let $D_i:R\to R$ be $k$-linear derivations, indexed by a finite set.
Assign nonnegative costs $\kappa_i$ and define the cost of a word by
\[
 c(i_1,\ldots,i_l)=\sum_{h=1}^l\kappa_{i_h},\qquad
 c(\varnothing)=0.
\]
The composition $D_\nu$ is taken in the written order, with
$D_\varnothing=\mathrm{id}$. The word cost is additive; it is not an
independent arbitrary function on words.

\begin{lemma}[Rectangular Taylor lower bound]
\label{lem:p1-generic-taylor-lower}
Let $I\subseteq Q$. Select derivations $D_{j_1},\ldots,D_{j_r}$
whose residue coefficient vectors
\[
 v_h=(\rho(D_{j_h}X_1),\ldots,\rho(D_{j_h}X_n))\in K^n
\]
have linearly independent classes in $N_Q^{\rm amb}$. Let $b_h\ge2$
be integers, and suppose
\[
 \sum_{h=1}^r(b_h-1)\kappa_{j_h}\le\delta,
 \qquad
 D_\nu p\in Q\quad(p\in I,\ c(\nu)\le\delta).
\]
Then
\[
 \prod_{h=1}^r b_h\le\ell_Q(I).
\]
\end{lemma}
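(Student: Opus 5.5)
The plan is to build an explicit surjection of $S$-modules from $S/IS$ onto the truncated polynomial algebra
\[
 B=K[Y_1,\ldots,Y_r]/(Y_1^{b_1},\ldots,Y_r^{b_r}),
\]
and then compare lengths. The algebra $B$ has length $\prod_h b_h$ as an $S$-module. Indeed, it carries a filtration by powers of its maximal ideal $\mathfrak m_B=(Y_1,\ldots,Y_r)$. Each graded piece is a $K$-vector space on which $S$ acts through $\rho$, and the total $K$-dimension is $\prod_h b_h$. So it suffices to produce a surjective local homomorphism $S\to B$ whose kernel contains $IS$.

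\emph{Construction of the map.} I would use the ordered Taylor exponential
\[
 \Theta(p)=\exp(Y_1D_{j_1})\cdots\exp(Y_rD_{j_r})\,p\in R[[Y]].
\]
Each factor $\exp(Y_hD_{j_h})$ is a ring homomorphism $R[[Y]]\to R[[Y]]$, because $D_{j_h}$ is a derivation and $Y_h$ is central. Their composite is therefore a ring homomorphism, even though the $D_{j_h}$ need not commute; this is exactly why the order of composition in the words is fixed. The coefficient of $Y^\beta$ in $\Theta(p)$ is $D^\beta p/\beta!$, where $D^\beta$ is the word with $\beta_1$ letters $j_1$, then $\beta_2$ letters $j_2$, and so on (up to the order convention). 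Characteristic zero permits the factorials.

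Composing with $\rho$ and truncating gives a ring homomorphism $\varphi:R\to B$. If $s\notin Q$, then $\varphi(s)$ has unit constant term $\rho(s)$, so $\varphi(s)$ is invertible in the local artinian ring $B$. Hence $\varphi$ extends to a local homomorphism $S\to B$. For $\beta_h<b_h$ the word $D^\beta$ has cost at most $\sum_h(b_h-1)\kappa_{j_h}\le\delta$. The hypothesis then gives $\rho(D^\beta p)=0$ for $p\in I$, so $IS\subseteq\ker\varphi$.

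\emph{Surjectivity.} This is the step I expect to need the most care, since the image $A=\varphi(S)$ is a priori only a subring of $B$ and need not obviously contain $K$. I would argue as follows.

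1. The space $V_Q$ is the annihilator of the $K$-span of the residue gradients $\rho(\nabla p)$, $p\in Q$. Hence $N_Q^{\rm amb}$ is dual to that span.
2. Linear independence of the classes of $v_1,\ldots,v_r$ therefore gives $p_1,\ldots,p_r\in Q$ with $\det\bigl(v_h\cdot\rho(\nabla p_g)\bigr)\ne0$.
3. For $p\in Q$, $\varphi(p)$ has zero constant term. By the chain rule $D_{j_h}p=\sum_iD_{j_h}(X_i)\,\partial_ip$, its linear part is $\sum_h\bigl(v_h\cdot\rho(\nabla p)\bigr)Y_h$.
4. Thus the elements $\varphi(p_g)\in\mathfrak m_A$ reduce, modulo $\mathfrak m_B^2$, to a $K$-basis of $\mathfrak m_B/\mathfrak m_B^2$. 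Nakayama over the artinian ring $B$ gives $\mathfrak m_AB=\mathfrak m_B$.
5. Since $A\to K$ is surjective (every residue is attained), $B/\mathfrak m_AB=K=A/\mathfrak m_A$. Therefore $B$ is generated by $1$ as an $A$-module, by Nakayama for the finite-length $S$-module $B$.
6. Hence $A=B$.

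Finally, $S/IS\twoheadrightarrow B$ gives $\ell_Q(I)\ge\operatorname{length}_S B=\prod_hb_h$. This includes the case $\ell_Q(I)=\infty$ trivially.
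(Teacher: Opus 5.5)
Your proposal is correct and follows the paper's proof essentially step for step: the same ordered Taylor exponential into the truncated algebra $K[Y]/(Y_1^{b_1},\ldots,Y_r^{b_r})$, the same extension to $S=R_Q$ via unit constant terms, and surjectivity from residue surjectivity plus first-order surjectivity. The differences are cosmetic. You get the first-order statement by pairing the $v_h$ with residue gradients of chosen $p_1,\ldots,p_r\in Q$, where the paper uses cotangent functionals on $\mathfrak m/\mathfrak m^2$. You then finish with Nakayama, justified by your direct computation of the $S$-length of $B$ from its $\mathfrak m_B$-adic graded pieces, where the paper uses an explicit successive-correction argument modulo powers of the nilpotent ideal.
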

\begin{proof}
For a single derivation $D$, the formal exponential
\[
 p\longmapsto\sum_{a\ge0}\frac{D^a p}{a!}T^a
\]
is a ring homomorphism $R\to R[[T]]$. Indeed, iterated Leibniz
gives $D^a(pq)=\sum_{b=0}^a\binom abD^b(p)D^{a-b}(q)$;
division by $a!$ is exactly the power-series multiplication rule.
Compose these homomorphisms coefficientwise in a fixed order and
then apply $\rho$. This gives a ring homomorphism
\[
 \tau:R\longrightarrow K[[T_1,\ldots,T_r]],\qquad
 [T^\alpha]\tau(p)=
 \frac{\rho(D_{j_r}^{\alpha_r}\cdots D_{j_1}^{\alpha_1}p)}
      {\alpha_1!\cdots\alpha_r!}.
\]
The displayed order is one fixed choice; no interchange of derivations
is used, so commutativity of the $D_i$ is unnecessary.
The constant coefficient of $\tau(p)$ is $\rho(p)$. Every $p\notin Q$
therefore maps to a unit, and $\tau$ extends to $S=R_Q$.

Let
\[
 B=K[[T_1,\ldots,T_r]]/(T_1^{b_1},\ldots,T_r^{b_r}),
 \qquad \mathfrak n=(T_1,\ldots,T_r)B.
\]
If $\alpha_h<b_h$, the derivative in the coefficient formula has
cost at most $\sum_h(b_h-1)\kappa_{j_h}$. Hence $\tau(I)$ is zero
in $B$, and the same holds after localization. We obtain a ring map
$\varphi:S/IS\to B$ inducing the identity of residue fields $K$.

We verify its first-order surjectivity. For $v\in K^n$, the rule
\[
 p\longmapsto\sum_i v_i\rho(\partial_i p)
\]
is a derivation from $R$ to its residue module $K$, and extends to
$S$ by the quotient rule. It vanishes on $\mathfrak m^2$, so its
restriction to $\mathfrak m$ defines a $K$-linear functional on
$\mathfrak m/\mathfrak m^2$. If a linear combination of these
functionals for $v_1,\ldots,v_r$ is zero, evaluate it on every
$p\in Q$. The corresponding combination of vectors belongs to
$V_Q$, and independence of their normal classes forces every
coefficient to be zero. Thus these $r$ cotangent functionals are
independent. By finite-dimensional linear algebra the map from
$\mathfrak m/\mathfrak m^2$ to $K^r$ given by their evaluations is
surjective. It is exactly the first-coefficient map of $\tau$.
Because $b_h\ge2$, all $r$ first-order variables survive in $B$;
therefore the map to $\mathfrak n/\mathfrak n^2$ is surjective.

Here is why residue and first-order surjectivity suffice. Lift any
desired residue in $K$, and lift elements of $\mathfrak m$ whose
images span $\mathfrak n/\mathfrak n^2$. Their products span each
$\mathfrak n^s/\mathfrak n^{s+1}$ over $K$. Coefficients in $K$
can be lifted using residue surjectivity; any higher-order terms of
such coefficient lifts do not affect this quotient. We can consequently
correct an error modulo $\mathfrak n^s$ to one modulo
$\mathfrak n^{s+1}$. The ideal $\mathfrak n$ is nilpotent, so
finitely many corrections give an exact preimage. This proves
$\varphi$ is surjective without assuming that a coefficient field
$K$ has been chosen inside $S$.

The monomials $T^\alpha$ with $0\le\alpha_h<b_h$ form a basis of
$B$ over $K$. Its composition factors are $K$, so its length as a
$B$-module, and as an $S$-module through the surjection, is
$\prod_h b_h$. The quotient $S/IS$ has at least this length.
\end{proof}

For positive costs, $r\ge1$ and $\delta>0$, the choice
\[
 b_h=\left\lceil\frac{\delta}{r\kappa_{j_h}}\right\rceil
\]
is within budget, since $\lceil x\rceil-1<x$ for $x>0$.
Whenever all these integers are at least two, the lemma gives
\begin{equation}\label{eq:p1-generic-taylor-real}
 \frac{\delta^r}{r^r\prod_h\kappa_{j_h}}\le\ell_Q(I).
\end{equation}
For every $1\le r\le m+1$ and every selected index set, the single
condition $\delta>(m+1)\max_i\kappa_i$ ensures these cutoffs are
at least two. The condition is required; point vanishing alone
cannot force an arbitrarily large local length.

\subsection{An exact coordinate slice at a prime}

Let $A$ be a coordinate normal basis and let $B_0$ be its complement.
To avoid confusing a variable with its value, denote the retained
slice variables by $Y_i$, $i\in A$. Define
\[
 \operatorname{sl}_A:R\longrightarrow K[Y_i:i\in A],\qquad
 X_i\longmapsto
 \begin{cases}Y_i,&i\in A,\\ \overline X_i,&i\in B_0,\end{cases}
\]
and let $\mathfrak n_A=(Y_i-\overline X_i:i\in A)$.

\begin{lemma}[Exact slice length]
\label{lem:p1-exact-coordinate-slice}
For every ideal $I\subseteq Q$,
\[
 \ell_Q(I)=
 \operatorname{length}_{K[Y]_{\mathfrak n_A}}
 \left(K[Y]_{\mathfrak n_A}/
                  \operatorname{sl}_A(I)K[Y]_{\mathfrak n_A}\right).
\]
If $Q$ is minimal over $I$, then $\mathfrak n_A$ is minimal over
the sliced ideal.
\end{lemma}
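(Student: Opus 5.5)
The plan is to realise the slice as a flat base change from $F[X_A]$ to $K[Y]$ whose closed fibre is the field $K$, so that the flat length formula \eqref{eq:p1-flat-local-length} gives the equality with multiplier one. Here $F=k(X_i:i\in B_0)$, embedded in $K$ through the residues $\overline X_i$, and the base change is $K\otimes_F-$. The work is in identifying $F$ correctly. After that, classical input (3) on finite separable extensions supplies the fibre.

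\emph{Step 1: the complementary residues form a separating transcendence basis.}
First identify $V_Q$ with $\operatorname{Der}_k(R/Q,K)$, hence with $\operatorname{Der}_k(K,K)$. A vector $v$ defines the derivation $p\mapsto\sum_iv_i\rho(\partial_ip)$. This derivation kills $Q$ exactly when $v\in V_Q$, and it extends from $R/Q$ to its fraction field $K$ by the quotient rule. Saying that the classes of $e_i$, $i\in A$, form a basis of $K^n/V_Q$ is equivalent to saying that the projection $V_Q\to K^{B_0}$, $v\mapsto(v_i)_{i\in B_0}$, is bijective. In other words, a $k$-derivation of $K$ is determined by its values on the $\overline X_i$ with $i\in B_0$, and these values are arbitrary.
\begin{itemize}
\item Surjectivity shows that the $\overline X_i$, $i\in B_0$, are algebraically independent. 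A nontrivial polynomial relation of minimal degree would force a nontrivial linear constraint on the values of every derivation on these elements.
\item Injectivity shows that the only derivation of $K$ vanishing on $F=k(\overline X_i:i\in B_0)$ is zero. Since $K$ is finitely generated over $k$ and the characteristic is zero, this forces $K/F$ to be algebraic. It is then finite separable.
\end{itemize}
This is the step I expect to be the main obstacle. It is the only place where the normal-basis hypothesis is used, and it needs the classical correspondence between derivations and transcendence and separability.

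\emph{Step 2: reduce to a regular local ring over $F$.}
By Step 1, $Q\cap k[X_i:i\in B_0]=0$. Every nonzero polynomial in the $X_{B_0}$ is therefore a unit of $S$, so $S$ is the localization of $F[X_A]$ at the prime $Q'=QF[X_A]$, whose residue field is still $K$. Since $K$ is algebraic over $F$ and finitely generated as an algebra by the residues of the $X_A$, the prime $Q'$ is maximal. Consequently $S/IS=F[X_A]_{Q'}/IF[X_A]_{Q'}$.

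\emph{Step 3: base change and fibre.}
Tensoring with $K$ over $F$ gives $K\otimes_FF[X_A]=K[Y]$, with $X_i\mapsto Y_i$ for $i\in A$. On $R$, the composite of $R\to F[X_A]$ with this map is exactly $\operatorname{sl}_A$, because the variables $X_{B_0}$ go to their residues in $F\subseteq K$. The maximal ideal $\mathfrak n_A$ contracts to $Q'$, since $X_i-\overline X_i$ maps to zero under the residue map. The local homomorphism $F[X_A]_{Q'}\to K[Y]_{\mathfrak n_A}$ is flat, being a base change of a field extension followed by a localization.

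Its closed fibre is a localization of $K[Y]/Q'K[Y]=K\otimes_F(F[X_A]/Q')=K\otimes_FK$. By input (3) this ring is a product of fields. The point $\mathfrak n_A$, which corresponds to $Y\mapsto\overline X_A$, is the multiplication factor, so the fibre ring is $K$ and has length one. Formula \eqref{eq:p1-flat-local-length}, applied to $M=S/IS$ (including the infinite-length case), then gives the displayed equality, since $M\otimes K[Y]_{\mathfrak n_A}=K[Y]_{\mathfrak n_A}/\operatorname{sl}_A(I)K[Y]_{\mathfrak n_A}$.

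\emph{Step 4: minimality.}
The prime $Q$ is minimal over $I$ if and only if $S/IS$ is Artinian, that is, if and only if $\ell_Q(I)<\infty$. The same criterion holds for $\mathfrak n_A$ and the sliced ideal in the local ring $K[Y]_{\mathfrak n_A}$. The equality of lengths from Step 3 therefore transfers minimality.
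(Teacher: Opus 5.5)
Your proposal is correct and follows essentially the same route as the paper. You identify $V_Q$ with $\operatorname{Der}_k(K,K)$ to show that the complementary residues form a separating transcendence basis. You then realize $S$ as a localization of $F[X_A]$ and base change to $K[Y]_{\mathfrak n_A}$, whose closed fibre is the multiplication factor of $K\otimes_F K$. Finally you transfer minimality through finiteness of length. The only difference is cosmetic: your Step 1 argues directly with derivations, where the paper phrases the same point dually via a basis of $\Omega_{K/k}$.
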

\begin{proof}
First we identify the complementary coordinates. A vector in $V_Q$
defines a derivation of $R/Q$ into $K$ and hence, by the quotient rule,
a $k$-derivation of $K$ into itself. Conversely every such derivation
gives its vector of values on $\overline X_i$. Thus
$V_Q=\operatorname{Der}_k(K,K)$ under this coordinate identification.
The normal-basis condition is the direct-sum statement
\[
 K^n=V_Q\oplus\operatorname{span}_K\{e_i:i\in A\}.
\]
Projection onto the $B_0$ coordinates therefore identifies
$V_Q$ with $K^{B_0}$. Dually, the differentials
$d\overline X_i$, $i\in B_0$, form a basis of $\Omega_{K/k}$.
In characteristic zero this says that the $\overline X_i$, $i\in B_0$,
are a transcendence basis of $K/k$. To recall the criterion, a
polynomial relation of least total degree would, on differentiation,
give a nontrivial relation among these differentials: some nonzero
partial derivative has smaller degree and cannot itself vanish on
the tuple. Conversely an algebraically independent tuple extends to
a transcendence basis; the remaining finite algebraic extension is
separable. Differentiating its minimal polynomials expresses all
remaining differentials in those of the transcendence basis. Conversely,
each partial derivation on the purely transcendental field extends
uniquely across a finite separable extension, by the same minimal-polynomial
formula with nonzero derivative. Thus those differentials are independent
and their number is $\operatorname{trdeg}_k K$. Applying this to the
already independent complementary coordinates proves the assertion.

Put $F=k(\overline X_i:i\in B_0)$. Then $K/F$ is finite separable.
Localizing the complementary polynomial variables identifies
\[
 S=R_Q\simeq F[X_i:i\in A]_{\mathfrak q},
\]
where $\mathfrak q$ is the kernel of evaluation $X_i\mapsto
\overline X_i$. Its residue field is $K$: the retained residue
coordinates generate $K$ over $F$, and finitely many algebraic
generators generate a field already as an algebra.

Extend the coefficient field from $F$ to $K$ and localize at the
point $Y_i=\overline X_i$. This gives a flat local map
\[
 S\longrightarrow T:=K[Y_i:i\in A]_{\mathfrak n_A}.
\]
Its closed fiber is the localization of $K\otimes_F K$ at the
kernel of multiplication. By separability this localization is
exactly $K$, so its length is one. Formula
\eqref{eq:p1-flat-local-length} proves the asserted equality for
the actual extended ideal $IT=\operatorname{sl}_A(I)T$.

If $Q$ is minimal over $I$, then $IS$ has radical $\mathfrak m$;
its quotient is a zero-dimensional Noetherian local ring, hence
Artinian. The same is true after this flat local extension, by the
length equality. No prime strictly below $\mathfrak n_A$ can then
contain the sliced ideal. This proves the minimality assertion.
\end{proof}

\subsection{A weighted point bound retaining one fixed equation}

For positive rational weights $\rho_1,\ldots,\rho_r$, write
$\deg_\rho p\le M$ to mean that every exponent $\alpha$ in the
support of $p$ satisfies $\sum_i\rho_i\alpha_i\le M$; the zero
polynomial satisfies every such bound.

\begin{lemma}[Weighted fixed-equation point bound]
\label{lem:p1-weighted-point-fixed}
Let $E$ be a field of characteristic zero, $a\in E^r$, and $r\ge1$.
Let $J=(f_j:j\in\mathcal I)\subset E[Y_1,\ldots,Y_r]$ and
assume the point ideal $\mathfrak m_a$ is minimal over $J$.
Suppose $0\ne h\in J$, $\deg_\rho h\le d_0$,
$\deg_\rho f_j\le M$ for every $j$, and $d_0,M\ge0$. Then
\[
 \operatorname{length}_{E[Y]_{\mathfrak m_a}}
        (E[Y]_{\mathfrak m_a}/J E[Y]_{\mathfrak m_a})
 \le\frac{d_0M^{r-1}}{\prod_{i=1}^r\rho_i}.
\]
The index set $\mathcal I$ need not be finite.
\end{lemma}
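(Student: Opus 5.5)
The plan is to replace $J$ by a complete intersection $(h,g_2,\ldots,g_r)\subseteq J$ that still has an isolated zero at $a$, and then bound the length of that complete intersection by a weighted B\'ezout inequality. The weighted inequality will come from the unweighted isolated-point inequality \eqref{eq:p1-isolated-affine-bezout} after a monomial substitution. As preliminaries, the polynomial ring is Noetherian, so $J$ is generated by finitely many $f_{j}$, say $f_1,\ldots,f_s$; this handles an infinite $\mathcal I$. We may also pass to an algebraic closure of $E$. By \eqref{eq:p1-flat-local-length}, at the rational point $a$ the local length is unchanged, and minimality of $\mathfrak m_a$ over $J$ persists because the quotient stays Artinian.

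\emph{Step 1 (generic combinations).} Work in the regular local ring $A=E[Y]_{\mathfrak m_a}$ of dimension $r$, where $JA$ is $\mathfrak m_a$-primary. We choose $g_2,\ldots,g_r$ inductively in the finite-dimensional $E$-span $W$ of $f_1,\ldots,f_s$. Suppose $(h,g_2,\ldots,g_{i})A$ has height $i$. Its finitely many associated (in particular minimal) primes $\mathfrak p$ that are different from $\mathfrak m_aA$ do not contain $JA$, since $\mathfrak m_a$ is minimal over $J$. Each $\mathfrak p\cap W$ is therefore a proper subspace of $W$. Because $E$ is infinite, $W$ is not a finite union of proper subspaces, so some $g_{i+1}\in W$ avoids all these primes. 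By the Cohen--Macaulay facts in item~(1), $h,g_2,\ldots,g_r$ is then a system of parameters and $a$ is an isolated common zero. Moreover $\deg_\rho g_i\le M$, since weighted degree bounds are preserved by linear combinations. Since $(h,g)\subseteq J$, we have $\ell(A/JA)\le\ell(A/(h,g)A)$.

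\emph{Step 2 (weighted to ordinary B\'ezout).} Choose an integer $N>0$ with $e_i=N\rho_i\in\mathbb Z_{>0}$. Substitute $Y_i=Z_i^{e_i}$. A monomial $Y^\alpha$ becomes one of total degree $\sum e_i\alpha_i=N\sum\rho_i\alpha_i$. Hence the pulled-back equations have ordinary degrees at most $Nd_0$ and $NM$, and there are $r$ of them. The map $E[Y]\to E[Z]$ is finite and free of rank $\prod e_i$, with basis the monomials $Z^\beta$, $0\le\beta_i<e_i$. Tensoring $A/(h,g)A$ with this free algebra gives a module of length $\prod_i e_i\cdot\ell(A/(h,g)A)$. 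This length is the sum of the local lengths at the finitely many points of the fiber over $a$: each fiber point lies in one of the local rings of the semilocal ring $E[Z]\otimes A$, and the residue fields are all $E$ because $E$ is algebraically closed. Each such point is an isolated zero of the pulled-back system, because the map is finite.

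\emph{Step 3 (conclusion).} Applying \eqref{eq:p1-isolated-affine-bezout} in the form of the sum over isolated points (the Fulton statement quoted in item~(4)) bounds that sum by $Nd_0(NM)^{r-1}$. Dividing by $\prod_i e_i=N^r\prod_i\rho_i$ gives the claimed bound.

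I expect Step~2 to be the main obstacle. Two things need care there. First, the rank/length identity must be applied to the semilocal base change rather than to a single point, since ramification occurs when some $a_i=0$. Second, one must make sure that the summed version of B\'ezout over all isolated points is the form being used, not merely the single-point inequality. The prime-avoidance step is routine once the $\mathfrak m_a$-primary property of $JA$ is recorded.
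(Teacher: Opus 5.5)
Your argument is correct and has the same skeleton as the paper's proof. Both use a monomial substitution to turn $\rho$-degree into ordinary degree, multiplication of length by the rank $\prod_i e_i$, prime avoidance keeping $h$ as the first parameter, the isolated-point B\'ezout inequality, and a final division.

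The one real difference is the substitution. The paper uses the translated substitution $Y_i=a_i+Z_i^{e_i}$ rather than $Y_i=Z_i^{e_i}$. The fiber over $a$ is then the single rational point $Z=0$, so $E[Y]_{\mathfrak m_a}\to E[Z]_{(Z)}$ is itself finite free of rank $\prod_i e_i$, and the single-point inequality \eqref{eq:p1-isolated-affine-bezout} is enough. This removes the semilocal decomposition, the passage to $\overline E$, and the summed form of B\'ezout that you flagged as the delicate part. The price is small: the degree bound becomes an inequality rather than an equality, since expanding $(a_i+Z_i^{e_i})^{\alpha_i}$ only lowers exponents.

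The paper also chooses the regular sequence after substituting, in $E[Z]_{(Z)}$, rather than before.
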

\begin{proof}
Choose a positive integer $q$ such that $a_i':=q\rho_i$ is a
positive integer for every $i$. Use the translated power substitution
\[
 Y_i=a_i+Z_i^{a_i'}.
\]
The polynomial ring $E[Z]$ is free over $E[Y]$ for this map, with
basis $Z^\beta$, $0\le\beta_i<a_i'$. Indeed translation reduces
the assertion to grouping each exponent modulo $a_i'$.
Its rank is $\prod_i a_i'$. Over the point $a$ the only prime in
the fiber is $(Z_1,\ldots,Z_r)$, since the fiber equations are
$Z_i^{a_i'}=0$. Consequently the induced local map
\[
 E[Y]_{\mathfrak m_a}\longrightarrow E[Z]_{(Z)}
\]
is finite free of this same rank. Its closed fiber has basis the
same monomials and length $\prod_i a_i'$. Applying
\eqref{eq:p1-flat-local-length} shows that substitution multiplies
the local quotient length by exactly $\prod_i a_i'$.

Denote substituted polynomials and ideals by primes. The origin is
an isolated point of $J'$, and $h'\ne0$, because the substitution
is injective. Expanding a substituted monomial only lowers its
exponents from the largest powers, so
\[
 \deg h'\le qd_0,\qquad \deg f_j'\le qM.
\]
We now retain $h'$ as the first parameter. The local ring
$E[Z]_{(Z)}$ is a regular local domain of dimension $r$; hence the
nonzero element $h'$ in its maximal ideal is a nonzerodivisor.
Inductively suppose a regular sequence starting with $h'$ has been
chosen and the remaining local quotient has positive dimension.
It is Cohen--Macaulay. None of its associated primes contains
the image of $J'$: that ideal has maximal radical, whereas these
associated primes are minimal and not maximal in positive dimension.
For each of the finitely many associated primes choose a generator
$f_j'$ outside it. A constant linear combination of this finite
collection can avoid every associated prime simultaneously, since
$E$ is infinite. It is a nonzerodivisor and still has degree at
most $qM$.

After $r-1$ steps we have polynomials $g_2,\ldots,g_r\in J'$
such that $h',g_2,\ldots,g_r$ form a full regular sequence in the
local ring, with $\deg g_i\le qM$. Thus the origin is isolated
for these $r$ equations. Their ideal is contained in $J'$, so the
length for $J'$ is at most the length for these equations.
The isolated-point inequality \eqref{eq:p1-isolated-affine-bezout}
therefore bounds it by
\[
 (qd_0)(qM)^{r-1}.
\]
Divide by $\prod_i a_i'=q^r\prod_i\rho_i$ to obtain the claim.
All selections use only finitely many of the original generators.
\end{proof}

\begin{theorem}[Generic prime-local fixed-hypersurface bound]
\label{thm:p1-generic-prime-upper}
Return to $R=k[X_1,\ldots,X_n]$ and a prime $Q$. Let $A$ be a
coordinate normal basis of cardinality $r\ge1$. Suppose
$Q$ is minimal over $I=(f_j:j\in\mathcal I)$ and $h\in I$ has
nonzero slice $\operatorname{sl}_A(h)$. For positive rational
weights $\rho_i$ and $d_0,M\ge0$, assume
\[
 \deg_\rho h\le d_0,\qquad \deg_\rho f_j\le M
 \quad\text{for all }j.
\]
Then $\ell_Q(I)$ is finite and
\begin{equation}\label{eq:p1-generic-prime-upper}
 \ell_Q(I)\le\frac{d_0M^{r-1}}{\prod_{i\in A}\rho_i}.
\end{equation}
\end{theorem}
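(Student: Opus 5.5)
The plan is to combine Lemma~\ref{lem:p1-exact-coordinate-slice} with Lemma~\ref{lem:p1-weighted-point-fixed}, applied over the residue field $E=K=\kappa(Q)$ at the point $a=(\overline X_i)_{i\in A}\in K^r$.

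First, apply Lemma~\ref{lem:p1-exact-coordinate-slice} to the ideal $I\subseteq Q$ and the coordinate normal basis $A$. It identifies $\ell_Q(I)$ with the length of $K[Y]_{\mathfrak n_A}/\operatorname{sl}_A(I)K[Y]_{\mathfrak n_A}$. Because $Q$ is minimal over $I$, the same lemma shows that $\mathfrak n_A=\mathfrak m_a$ is minimal over the sliced ideal $J=\operatorname{sl}_A(I)$.

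Next, I would check that $J$ is generated by the slices $\operatorname{sl}_A(f_j)$. This holds because $\operatorname{sl}_A$ is a ring homomorphism, and the ideal generated by the image of $I$ is generated by the images of the generators of $I$.

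Then I need the weighted degree bounds for these generators. The weights $\rho_i$ are indexed by $A$; the plan is to read the hypothesis $\deg_\rho$ on $R$ as weighting only the retained variables $X_i$ with $i\in A$. The slice substitutes constants $\overline X_i\in K$ for the complementary variables and renames $X_i\mapsto Y_i$ for $i\in A$. So every monomial of $\operatorname{sl}_A(p)$ comes from a monomial of $p$ with the same $A$-exponents, and the map can only shrink the support in the $A$-exponents. This gives $\deg_\rho\operatorname{sl}_A(h)\le d_0$ and $\deg_\rho\operatorname{sl}_A(f_j)\le M$. If the statement intends $\rho$ to weight all $n$ variables with the complementary weights arbitrary, the same argument works, since dropping the complementary exponents only decreases the weighted degree.

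The fixed equation is $\operatorname{sl}_A(h)$. It is nonzero by hypothesis and lies in $J$ because $h\in I$. Since $K$ has characteristic zero (it contains $k$), Lemma~\ref{lem:p1-weighted-point-fixed} applies with $r=|A|\ge1$ and an index set that need not be finite. It bounds the local length by $d_0M^{r-1}/\prod_{i\in A}\rho_i$, and in particular shows it is finite. Transporting this back through the equality of the first step gives both the finiteness of $\ell_Q(I)$ and \eqref{eq:p1-generic-prime-upper}.

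I expect no real obstacle, since all the work sits in the two lemmas. The only delicate point is the bookkeeping of the weighted degree under slicing. The slice is not a graded map in the complementary variables, so I must be explicit that the weighting refers to the $A$-coordinates and that constant substitution cannot raise the weighted degree.
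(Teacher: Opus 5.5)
Your proposal is correct and follows the paper's proof essentially step for step. The exact slice lemma identifies the length and gives minimality of $\mathfrak n_A$; slicing cannot raise the weighted support degree; the sliced $h$ is a nonzero element of the sliced ideal; and the weighted fixed-equation point bound over $K=\kappa(Q)$ finishes the argument (your check that the sliced ideal is generated by the slices of the $f_j$ is a detail the paper uses but leaves implicit).
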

\begin{proof}
Apply Lemma~\ref{lem:p1-exact-coordinate-slice}. Replacing a
complementary variable by a constant removes its positive contribution
to a monomial's weight, so slicing cannot increase support degree
in the retained variables. The sliced fixed equation is nonzero
by hypothesis and lies in the sliced ideal. Minimality of the
sliced point was proved in the same lemma. Lemma
\ref{lem:p1-weighted-point-fixed}, over the exact coefficient field
$K$, now gives \eqref{eq:p1-generic-prime-upper} for the same length.
\end{proof}

\begin{corollary}[Ordinary weighted prime-local bound]
\label{cor:p1-ordinary-prime-upper}
Let $Q$ be minimal over $I=(f_j:j\in\mathcal I)$ and let $A$ be
a coordinate normal basis of cardinality $r\ge1$. For positive rational
weights $\rho_i$, suppose $M\ge0$ and $\deg_\rho f_j\le M$ for
every $j$. Then
\[
 \ell_Q(I)\le\frac{M^r}{\prod_{i\in A}\rho_i}.
\]
\end{corollary}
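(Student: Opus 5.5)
The plan is to deduce the corollary from Theorem~\ref{thm:p1-generic-prime-upper} with $d_0=M$. The only task is to produce a fixed equation $h\in I$ with nonzero slice $\operatorname{sl}_A(h)$ and $\deg_\rho h\le M$. Every generator already satisfies the degree bound, so it suffices to find one generator $f_j$ with $\operatorname{sl}_A(f_j)\ne0$.

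First I would show that such a generator exists, arguing by contradiction. Suppose $\operatorname{sl}_A(f_j)=0$ for every $j$. Then $\operatorname{sl}_A(I)=0$, since the slice map is a ring homomorphism and $I$ is generated by the $f_j$. By Lemma~\ref{lem:p1-exact-coordinate-slice}, $\ell_Q(I)$ would equal the length of the local ring $K[Y]_{\mathfrak n_A}$ itself, and this length is infinite because $r\ge1$. The same lemma also says that minimality of $Q$ over $I$ makes $\mathfrak n_A$ minimal over the sliced ideal. But $\mathfrak n_A$ cannot be minimal over the zero ideal when $r\ge1$, since $(0)\subsetneq\mathfrak n_A$ is a prime strictly below it. This contradiction shows that some $f_{j_0}$ has a nonzero slice.

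Next I would set $h=f_{j_0}\in I$ and $d_0=M$. Because $M\ge0$ and $\deg_\rho f_{j}\le M$ for all $j$, all hypotheses of Theorem~\ref{thm:p1-generic-prime-upper} hold. It gives $\ell_Q(I)\le M\cdot M^{r-1}/\prod_{i\in A}\rho_i=M^r/\prod_{i\in A}\rho_i$.

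The only delicate point is the existence of a generator with nonzero slice. I expect it to be immediate from the minimality clause of Lemma~\ref{lem:p1-exact-coordinate-slice} together with $r\ge1$, as above. No finiteness of the index set is needed, since we select a single generator.
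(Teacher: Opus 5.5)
Your proposal is correct and follows the paper's proof essentially verbatim: you rule out all generators having zero slice by observing that $\mathfrak n_A$ would then be minimal over the zero ideal even though $(0)\subsetneq\mathfrak n_A$ when $r\ge1$, and then apply Theorem~\ref{thm:p1-generic-prime-upper} with $h$ that generator and $d_0=M$. Your side remark about infinite length is not needed for the argument, but it does no harm.
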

\begin{proof}
The exact slice lemma makes the point $\mathfrak n_A$ an isolated
point of the sliced ideal in $r$ variables. At least one generator
has nonzero slice: otherwise the sliced ideal is zero, whose sole
minimal prime in the polynomial domain is $(0)$, whereas
$\mathfrak n_A\ne(0)$ because $r\ge1$. Choose such a generator
as $h$ and apply Theorem~\ref{thm:p1-generic-prime-upper} with
$d_0=M$.
\end{proof}

\subsection{The interface to the origin quartic}

For the following application set $R=\mathbb C[u,t,Z_1,\ldots,Z_m]$,
$f=t^2-H(u)$, and suppose $f\in Q$ and $J=H'(\overline u)\ne0$.
Let $T_Q$ denote the intrinsic tangent in the $(t,Z_i)$ coordinates.
The equation $-Jv_u+2\overline t\,v_t=0$ says that the ambient
tangent is the graph
\[
 w\longmapsto(2\overline t\,w_0/J,w),\qquad w\in T_Q.
\]
Consequently an intrinsic coordinate normal basis $S$ becomes an
ambient coordinate normal basis after adjoining $u$. Indeed the
quotient is identified with
\[
 K\oplus(K^{m+1}/T_Q),\qquad
 (v_u,w)\longmapsto(v_u-2\overline t\,w_0/J,[w]);
\]
the added $u$ vector has image $(1,0)$ and the remaining images
have the prescribed intrinsic normal basis as their second components.

The intrinsic frame $(-J,1,\ldots,1),e_1,\ldots,e_m$ lifts to
the actual residue coefficient vectors of
\[
 D_0=-2t\partial_u-H'(u)\partial_t+\sum_i\partial_{Z_i},
 \qquad D_i=\partial_{Z_i}\ (i>0).
\]
Thus Lemma~\ref{lem:p1-generic-taylor-lower} applies to its
independent intrinsic frame normals. Theorem
\ref{thm:p1-generic-prime-upper} applies to the augmented coordinate
normal basis and the same ideal. The following section verifies the
nonzero slice, source bounds and quartic factor $4$ explicitly.

\paragraph{Formalization status.}
The generic arguments have existing Lean counterparts:
\texttt{residueTaylorFin},
\lean{prime_rectangular_mapCotangent_surjective},
\texttt{rectangularTaylor\_length\_lower},
\lean{length_eq_sliceFamily_of_normalBasis}, and
\lean{weighted_prime_hypersurface_length_bound}.
Their names record the correspondence with the written proofs above.
The actual origin-quartic frame and common-prime comparison are now
checked in the quartic development, including
\lean{QuadraticPi.EllipticOriginQuarticPersistence.exists_bounded_persistent_comparison}
and
\lean{QuadraticPi.EllipticOriginQuarticCurveInequality.weighted_curve_inequality}.
The compiled EPA uses those actual interfaces rather than assuming a
curve-specific comparison or rank theorem.

\section{Direct local-length estimates in the origin quartic chart}
\label{sec:origin-quartic-length}

The preceding local argument can be applied directly to the chart used
by the actual origin power series.  This avoids proving an unnecessary
local-length transport through a birational change of coordinates.
There is, however, a change from the cubic factor $3$ to the quartic
factor $4$, and the source-degree increment of the invariant derivation
doubles.  Both changes must be included in the constants.

Let $H(U)\in\mathbf C[U]$ have degree exactly $4$, with leading
coefficient $h_4\ne0$.  Work in
\[
 R=\mathbf C[u,t,z_1,\ldots,z_m],\qquad f=t^2-H(u),
\]
with the actual polynomial frame
\[
 D_0=-2t\partial_u-H'(u)\partial_t+\sum_{i=1}^m\partial_{z_i},
 \qquad D_i=\partial_{z_i}\quad(i>0).
\]
The relation is preserved because
$D_0(t^2-H(u))=-2tH'(u)+2tH'(u)=0$.
Give the derivation $D_i$ a positive cost $\kappa_i$ and define
the cost of a word $\nu=(i_1,\ldots,i_r)$ additively by
$c(\nu)=\sum_{h=1}^r\kappa_{i_h}$, with empty-word cost zero.
Given a polynomial $P$, define
\[
 I_s=(f)+(D_\nu P:c(\nu)\leq s).
\]
Let $Q$ be a prime containing $(f)$ and use throughout its literal
residue field $K=\operatorname{Frac}(R/Q)$ and the length
$\ell_Q(I)=\operatorname{length}_{R_Q}(R_Q/IR_Q)$.

Assume $H'(\bar u)\ne0$ in $K$.  The ambient tangent space is the graph
over its intrinsic $(t,z_1,\ldots,z_m)$-coordinates with
\[
 v_u=\frac{2\bar t}{H'(\bar u)}v_t.
\]
The intrinsic frame directions are
$(-H'(\bar u),1,\ldots,1)$ and $e_i$ for $i>0$.
Their graph lifts are the actual residue coefficient vectors of the
displayed derivations, since
$(2\bar t/H'(\bar u))(-H'(\bar u))=-2\bar t$.
The normal quotient, coordinate normals, and frame normals are defined by this graph using the generic tangent and normal construction above.

\paragraph{Lower bound.}
For independent frame normal directions $j_1,\ldots,j_k$, integers
$b_h\geq2$, and a budget
$\sum_h(b_h-1)\kappa_{j_h}\leq\delta$, assume
$D_\nu p\in Q$ for every $p\in I$ and $c(\nu)\leq\delta$.
Then
\[
 \prod_h b_h\leq\ell_Q(I).
\]
The proof is exactly the ordered Taylor, localization, and cotangent
proof already given: neither the degree of the defining equation nor
the sign of the nonzero Jacobian enters it.  With ceiling rectangles,
provided all cutoffs are at least $2$, it gives
\[
 \frac{\delta^k}{k^k\prod_h\kappa_{j_h}}\leq\ell_Q(I).
\]
The actual quartic frame/graph and normal-basis specializations are
included in the checked quartic persistence and curve-inequality
assembly cited above.

\paragraph{Upper bound and the factor $4$.}
Use positive rational ambient weights
\[
 \rho_u=2w_0,\qquad\rho_t=4w_0,\qquad\rho_{z_i}=w_i,
\]
and intrinsic bookkeeping weights $r_0=w_0$, $r_i=w_i$ for $i>0$.
Suppose $Q$ is minimal over $I_s$, every generator has support degree
at most $M$, and $M\geq8w_0$.
If $S$ is a coordinate normal basis of size $k$, augment it by the
ambient $u$ coordinate.  The tangent graph shows that this is an
ambient normal basis.  The fixed equation survives the corresponding
coordinate slice: $u$ remains a variable, the sliced $t$ has total
degree at most $1$, and the nonzero term $-h_4u^4$ cannot cancel
against the degree at most $3$ of all other terms.

The weighted fixed-hypersurface theorem, whose interface is generic
in the fixed polynomial, therefore gives a finite integer
$L=\ell_Q(I_s)$ with
\[
 L\leq\frac{8w_0 M^k}{\prod_{v\in\{u\}\cup S}\rho_v}.
\]
Here the denominator is $8w_0\prod_{i\in S}r_i$ when $0\in S$
(the intrinsic $t$-coordinate is retained), and is
$2w_0\prod_{i\in S}r_i$ otherwise.  Thus
\begin{equation}
 L\prod_{i\in S}r_i\leq
 \begin{cases}M^k,&0\in S,\\4M^k,&0\notin S.\end{cases}
 \label{eq:origin-quartic-upper}
\end{equation}
In particular the uniform geometric factor is $4$, not $3$.
Neither smoothness nor irreducibility of $f$ is needed for this
particular implication, because the prime $Q$ and minimality over
$I_s$ are explicit assumptions.  Those properties may be needed when
constructing such a prime or proving projection rigidity.

\paragraph{Corrected growth and comparison constants.}
The signed source increments are $2w_0,-w_1,\ldots,-w_m$:
$t\partial_u$ has increment $4w_0-2w_0=2w_0$,
$H'(u)\partial_t$ has increment at most $6w_0-4w_0=2w_0$,
and the additive partials have negative increments.
For the costs $\kappa_0=v_0$, $\kappa_i=w_i/\theta$, define
\[
 \varepsilon_4=
 \min\left\{\sigma,\frac{\sigma v_0}{4(1+\sigma)w_0}\right\},
 \qquad g_4=1+\frac{2w_0}{v_0}\varepsilon_4,
 \qquad\delta=\frac{\varepsilon_4N}{m+3}.
\]
Then $\varepsilon_4>0$, $g_4<3/2$, and
\[
 g_4(1+\sigma)<1+3\sigma-\varepsilon_4.
\]
The scalar margin is proved explicitly in Section~\ref{sec:origin-auxiliary}.  If $P$ has source degree at most
$N$, $s\leq\varepsilon_4N$, and $N\geq8w_0$, all generators of $I_s$
have source degree at most $g_4N$.

Applying the lower and upper bounds to the same prime and ideal gives
for coordinate and frame normal bases $S,S'$ of size $k\geq1$
\[
 \prod_{i\in S}r_i\leq
 4g_4^k k^k\left(\frac{m+3}{\varepsilon_4}\right)^k
                      \prod_{j\in S'}\kappa_j.
\]
Put $A_n(s)=2(n+2)^{n+2}(1+(n+2)/s)^{n+2}$ for $s>0$.
A conservative uniform constant is
\[
 C_4=A_m(\sigma)+4\,2^{m+3}A_{m+1}(\varepsilon_4).
\]
For $k\leq m+1$ this bounds the displayed coefficient.  The same
center-independent numerical weight-separation theorem accepts any
positive comparison function, so it can be applied with $C_4$.
The full and fibre volume assumptions are unchanged because the
intrinsic weights remain $(w_0,w_1,\ldots,w_m)$; the new local
calculation by itself does not assert that these volume assumptions
suffice for the global quartic interpolation problem.

It is unsafe simply to reuse the old cubic $\varepsilon$ in a proof
requiring the strict derivative margin.  For example, if
$v_0=2(1+\sigma)w_0$, the old definition gives $\varepsilon=\sigma$,
and with the quartic degree increment one gets
\[
 \left(1+\frac{2w_0}{v_0}\varepsilon\right)(1+\sigma)
       =1+3\sigma-\varepsilon,
\]
an equality, not the needed strict inequality.  The smaller
$\varepsilon_4$ avoids this defect.

\paragraph{The actual origin polynomial and the remaining boundary.}
For a short Weierstrass equation $y^2=x^3+Ax+B$ and a chosen constant
$a$ the actual polynomial is
\[
 H(U)=U+3aU^2+(3a^2+A)U^3+(a^3+Aa+B)U^4.
\]
The existing module \texttt{EllipticOriginLocal.lean} proves the
corresponding rational-chart identity for
$u=1/(x-a)$ and $t=y/(x-a)^2$ and constructs the origin solution.
Choosing $a$ so that $a^3+Aa+B\ne0$ gives the required nonzero quartic
coefficient.  For integral $A,B$ such an integer $a$ exists, for example
among any four distinct integers since the cubic has at most three
roots.  At the origin, $H'(0)=1$, so the tangent graph is valid there.
For a general component prime the premise remains
$H'(\bar u)\ne0$; it is not supplied merely by $H'(0)=1$ unless
containment in the specified origin maximal ideal has also been shown.

The following rigidity, persistence, contact and auxiliary sections
construct precisely the hypotheses used here. The generic length
theorems have retained formal evidence, while the quartic adapters
and their actual common-prime application remain L1 obligations.

\subsection{A self-contained proof of the normal-basis dichotomy}

\begin{lemma}[The normal-basis dichotomy with its actual sign]
\label{lem:acceptance-normal-dichotomy}
Let $F$ be a field, let $V\subsetneq F^{m+1}$ be a proper linear
subspace, and let $q:F^{m+1}\to N=F^{m+1}/V$ be the quotient map.
Fix $J\in F^\times$, write $e_0,\ldots,e_m$ for the coordinate
basis, and put
\[
 b_0=-Je_0+\sum_{i=1}^m e_i,\qquad b_i=e_i\quad(i>0).
\]
Suppose there do not exist subsets $S,T\subseteq\{0,\ldots,m\}$
whose respective families $(q(e_j))_{j\in S}$ and
$(q(b_j))_{j\in T}$ are bases of $N$, together with an index
$i>0$, such that $i\in S\setminus T$ and
\[
 j\in S\Longleftrightarrow j\in T\qquad(j>i).
\]
Then either every $v\in V$ satisfies $v_0=0$, or there is a
fixed $i>0$ such that every $v\in V$ satisfies
\[
 v_i=-v_0/J.
\]
\end{lemma}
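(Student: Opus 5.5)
We argue by contraposition, in dual language. Let $n=\dim N\ge1$ and let $W=V^\perp\subseteq(F^{m+1})^*$ be the annihilator, of dimension $n$. A family $(q(x_j))_{j\in S}$ with $|S|=n$ is a basis of $N$ if and only if the $n\times n$ matrix $(\varphi(x_j))$ is invertible, where $\varphi$ runs over a basis of $W$. The two possible conclusions become statements about $W$:
\begin{itemize}
\item $e_0^*\in W$ is equivalent to: $v_0=0$ for every $v\in V$;
\item $\varphi_i:=e_0^*+Je_i^*\in W$ is equivalent to: $v_i=-v_0/J$ for every $v\in V$.
\end{itemize}
We record $\varphi_i(b_0)=-J+J=0$ and $\varphi_i(b_j)=J\delta_{ij}$ for $j>0$. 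So we assume $e_0^*\notin W$ and $\varphi_i\notin W$ for every $i>0$, and we construct $S$, $T$ and $i$.

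\textbf{Step 1: the vectors $q(e_1),\ldots,q(e_m)$ span $N$.} Since $e_0^*\notin W$, some $v\in V$ has $v_0\ne0$. Then $q(e_0)$ lies in the span of $q(e_1),\ldots,q(e_m)$. Because $q$ is surjective, these $m$ vectors span $N$.

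\textbf{Step 2: greedy bases.} Choose a basis $G\subseteq\{1,\ldots,m\}$ greedily from the top: for $j=m,m-1,\ldots,1$, keep $j$ whenever $q(e_j)$ is independent of the vectors already kept. Because $b_j=e_j$ for $j>0$, every index set drawn from $\{1,\ldots,m\}$ is simultaneously a coordinate family and a frame family. Expand
\[
 q(b_0)=\sum_{j\in G}c_jq(e_j).
\]
\begin{itemize}
\item If some $c_j\ne0$, let $i$ be the largest $j\in G$ with $c_j\ne0$. The exchange lemma makes $T=(G\setminus\{i\})\cup\{0\}$ a frame basis. Take $S=G$. Then $i>0$, $i\in S\setminus T$, and $S$, $T$ agree above $i$, since $0<i$ and both sets contain exactly the elements of $G$ above $i$. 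This contradicts the hypothesis.
\item Otherwise $q(b_0)=0$, that is, $b_0\in V$. This degenerate case remains.
\end{itemize}

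\textbf{Step 3: the case $b_0\in V$.} Now no frame basis contains $0$, so every frame basis is an $n$-subset of $\{1,\ldots,m\}$, and these are exactly the coordinate bases avoiding $0$.
\begin{itemize}
\item If two distinct such bases exist, compare them at the largest index of their symmetric difference. That index is positive; call the basis containing it $S$ and the other $T$. This again contradicts the hypothesis.
\item So the basis $G$ from $\{1,\ldots,m\}$ is unique. In matroid terms, every $j\in G$ is a coloop of $\{q(e_1),\ldots,q(e_m)\}$ and every $j\notin G$ has $q(e_j)=0$.
\end{itemize}
Now use $0$ on the coordinate side. Since $q(e_0)\ne0$ (otherwise $e_0\in V$ and $e_0^*\notin W$ is automatic but no contradiction arises yet), write
\[
 q(e_0)=\sum_{j\in G}d_jq(e_j).
\]
If some $d_i\ne0$, the exchange $S=(G\setminus\{i\})\cup\{0\}$ is a coordinate basis. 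Comparing it with a frame basis fails directly, because $i\in T=G$, not in $S$. So instead we pair this $S$ with a suitably chosen $T$, or conclude that $G=\{i\}$ is forced.

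In the remaining configuration, $b_0\in V$ together with the coloop structure should force $W$ to be spanned by functionals vanishing on $e_0$ and $b_0$. With $n=1$ and $G=\{i\}$, this means $W=F\varphi_i$, contradicting $\varphi_i\notin W$. For general $n$, we would compute $W$ explicitly from the relations $q(e_j)=0$ for $j\notin G$ and $q(b_0)=0$, and check that $W$ contains $\varphi_i$ for the unique $i$ through which $q(e_0)$ is expressed.

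\textbf{Main obstacle.} The main difficulty is this last case analysis: the degenerate configuration $b_0\in V$ with a unique positive basis. The plan there is to write down $W$ from the explicit relations and read off $\varphi_i\in W$, or else produce one more exchange pair.
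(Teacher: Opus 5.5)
Your Steps 1 and 2 and the first bullet of Step 3 are correct. Once $e_0^*\notin W$, any $q(b_0)\ne0$ yields a forbidden pair by exchange, and two distinct bases inside $\{1,\ldots,m\}$ yield one by comparing at the top of their symmetric difference. So you have reduced to $q(b_0)=0$, with a unique basis $G\subseteq\{1,\ldots,m\}$ and $q(e_j)=0$ for every other positive $j$. But this is exactly the configuration in which the second alternative has to be \emph{derived}. Every other branch of your argument ends in a contradiction, so as submitted the proposal never proves $v_i=-v_0/J$ for any $i$. That is a genuine gap, and your sketch for it points the wrong way. Since $q(b_0)=0$ means $q(e_0)=J^{-1}\sum_{j\in G}q(e_j)$, the vector $q(e_0)$ is automatically nonzero and every coefficient $d_j=J^{-1}$ is nonzero. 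So there is no unique $i$ through which $q(e_0)$ is expressed, and $G=\{i\}$ is not forced; for example, $m=2$ and $V=Fb_0$ satisfies exclusion with $G=\{1,2\}$. Nor is $W$ spanned by functionals vanishing on $e_0$: the functional you want, $\varphi_i$, has $\varphi_i(e_0)=1$.

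The missing step is short. The vectors $b_0$ and $e_j$ for $j\in\{1,\ldots,m\}\setminus G$ lie in $V$, are linearly independent, and number $m+1-|G|=\dim V$, so they span $V$. Your identities $\varphi_i(b_0)=0$ and $\varphi_i(e_j)=J\delta_{ij}$ then give $\varphi_i\in W$ for every $i\in G$, and $G\ne\varnothing$ since $N\ne0$.

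The paper reaches the same point without any exchange casework. It takes $i$ to be the largest index with $q(e_i)\ne0$; if none exists, $0=q(v)=v_0q(e_0)$ gives $v_0=0$. It extends $q(e_i)$ to a coordinate basis $S$ and observes that agreement above $i$ is automatic, because all higher $q(e_j)=q(b_j)$ vanish. Exclusion therefore puts $i$ into every frame basis, so $q(b_i)$ is not in the span of the other frame images. Applying $q$ to the frame expansion $v=-\frac{v_0}{J}b_0+\sum_{j\ge1}\bigl(v_j+\frac{v_0}{J}\bigr)b_j$ of $v\in V$ then forces $v_i+v_0/J=0$. This step (an index lying in every frame basis, combined with the frame expansion) is what your proposal lacks.
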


\begin{proof}
The families $(e_j)$ and $(b_j)$ are bases of $F^{m+1}$;
the second assertion uses $J\ne0$. Their images therefore span
$N$, and each spanning family contains a basis. Because $V$ is
proper, $q\ne0$.

If $q(e_j)=0$ for all $j>0$, then $q(e_0)\ne0$. For $v\in V$
the identity $0=q(v)=v_0q(e_0)$ proves $v_0=0$.

Otherwise let $i>0$ be the largest index with $q(e_i)\ne0$.
Extend this nonzero vector to a basis of $N$ selected from the
coordinate images, with index set $S$ containing $i$. For $j>i$
both $q(e_j)$ and $q(b_j)$ vanish. Such indices belong to neither
$S$ nor any frame-basis index set $T$, since a basis cannot
contain the zero vector. The exclusion hypothesis therefore
forces $i\in T$ for every frame basis $T$.

It follows that $q(b_i)$ is not in the span of the other frame
images. Indeed, if it were, those other images would still span
$N$ and a basis selected from them would omit $i$. Every vector
$v=(v_0,\ldots,v_m)$ has the exact frame expansion
\[
 v=-\frac{v_0}{J}b_0+
       \sum_{j=1}^m\left(v_j+\frac{v_0}{J}\right)b_j.
\]
If $v\in V$, applying $q$ and isolating the $i$th term shows that
$\left(v_i+v_0/J\right)q(b_i)$ lies in the span of the remaining
frame images. Their preceding independence property forces
$v_i+v_0/J=0$, which proves the second alternative.
\end{proof}

This lemma proves the generic linear-algebra step used by the
quartic rigidity argument. At an actual prime $Q$ of
$\mathbf C[u,t,Z_1,\ldots,Z_m]$, all objects are taken over the
single field $F=\operatorname{Frac}(R/Q)$, and $J=H'(\bar u)$.
The intrinsic coefficient vector of
$-2t\partial_u-H'(u)\partial_t+\sum_i\partial_{Z_i}$ is
precisely $b_0$ above. Testing the two alternatives on the vectors
of all $\mathbf C$-derivations gives respectively $d\bar t=0$
or $d\bar Z_i=-J^{-1}d\bar t$. These are exactly the inputs
to the two constant-projection arguments, with no sign change
or replacement of the residue field.


\section{Differential and projection rigidity in the actual origin chart}
\label{sec:origin-projection-rigidity}

This section concerns the quartic chart used by the integral origin
power series. It supplies its domain, finite-projection, proper-tangent,
and differential-rigidity arguments directly. These results apply to
every nonsingular short elliptic equation; complex multiplication is
not used here. They do not assert a non-CM irrationality bound or
global EPA.

Throughout let $C$ be a field of characteristic zero, put
\[
 h(X)=X^3+AX+B,\qquad 4A^3+27B^2\ne0,
 \qquad a\in C,\quad h(a)\ne0,
\]
and define the actual origin polynomial
\[
 H(U)=U+3aU^2+(3a^2+A)U^3+h(a)U^4.
\]
Its leading coefficient $h_4=h(a)$ is nonzero. Write
\[
 R=C[u,t,Z_1,\ldots,Z_m],\qquad
 f=t^2-H(u),\qquad S=R/(f),\qquad
 T=C[t,Z_1,\ldots,Z_m].
\]

\begin{lemma}[The actual quartic is squarefree]
\label{lem:origin-quartic-squarefree}
The degree-four polynomial $H$ has no repeated root over an
algebraic closure of $C$. In particular $H$ is not a square in
$C(U)$, or in $C(Z_1,\ldots,Z_m)(U)$.
\end{lemma}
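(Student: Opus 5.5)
The plan is to identify $H$ with $U^4h(a+U^{-1})$, which the derivation of \eqref{eq:quartic} already supplies. Indeed, multiplying $t^2=h(x)$ by $u^4$ with $x=a+u^{-1}$ gives
\[
 H(U)=U^4\,h\!\left(a+\frac1U\right)
 =U^4\bigl((a+U^{-1})^3+A(a+U^{-1})+B\bigr).
\]
Expanding reproduces $U+3aU^2+(3a^2+A)U^3+h(a)U^4$. Thus $H$ is the reversal of the translated cubic $g(X)=h(a+X)$, padded to degree four: $H(U)=U^4g(1/U)$, and $g$ has degree exactly $3$ with constant term $g(0)=h(a)\ne0$.

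Work over an algebraic closure $\overline C$ and factor $h(X)=(X-e_1)(X-e_2)(X-e_3)$. The $e_i$ are pairwise distinct because $4A^3+27B^2\ne0$ is, up to sign, the discriminant of $h$. Since $h(a)\ne0$, each $e_i-a\ne0$. Substituting gives
\[
 H(U)=U\prod_{i=1}^3\bigl(1+(a-e_i)U\bigr)
 =h(a)\,U\prod_{i=1}^3\bigl(U-(e_i-a)^{-1}\bigr),
\]
using $\prod_i(a-e_i)=-h(a)$ with the appropriate sign bookkeeping; the sign is irrelevant for squarefreeness. The four roots $0,(e_1-a)^{-1},(e_2-a)^{-1},(e_3-a)^{-1}$ are pairwise distinct. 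The last three are nonzero and are distinct because inversion and translation are injective. So $H$ is separable of degree exactly four.

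For the consequence, suppose $H=G^2$ with $G\in C(U)$, or more generally $G\in C(Z_1,\ldots,Z_m)(U)$. Since $H$ is a polynomial and the coefficient ring is a UFD, Gauss's lemma lets one take $G$ to be a polynomial. Then every root of $H$ in an algebraic closure of $C(Z)$ would have even multiplicity. But $H$ has coefficients in $C$, and its roots in $\overline C$ remain simple roots over any extension field. Alternatively, squarefreeness can be read off from $\gcd(H,H')=1$ in $C[U]$, which is preserved under field extension. This gives a contradiction.

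The only step needing care is the constant bookkeeping: checking that the reversal has degree exactly four (from $h_4=h(a)\ne0$) and no root at infinity. I expect no real obstacle here; the key point is simply that nonsingularity transfers through the Möbius change $X=a+1/U$.
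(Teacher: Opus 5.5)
Your proof is correct, but it takes a different route from the paper's.

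\emph{Squarefreeness.} You factor $h=\prod_i(X-e_i)$ over $\overline C$ and push the three distinct roots through $X=a+U^{-1}$. This gives the explicit roots $0$ and $(e_i-a)^{-1}$, and it relies on the standard fact that $-(4A^3+27B^2)$ is the discriminant of $h$. The paper never factors $h$. It notes that $0$ is a simple root because $H'(0)=1$. For $r\ne0$ it uses $H(r)=r^4h(a+r^{-1})$ and $H'(r)=4r^3h(a+r^{-1})-r^2h'(a+r^{-1})$, so a repeated nonzero root would give a common root $x$ of $h$ and $h'$. That forces $A=-3x^2$ and $B=2x^3$, hence $4A^3+27B^2=0$. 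This direct check replaces the appeal to the discriminant.

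\emph{Non-squareness.} The paper does not use squarefreeness here at all. Since $H=U(1+3aU+\cdots)$, it has $U$-adic valuation one, whereas a nonzero square in $C(U)$ or $C(Z_1,\ldots,Z_m)(U)$ has even valuation. This is shorter than your route through integral closedness of $C(Z)[U]$ and persistence of simple roots under base extension. It also makes the passage to the larger constant field immediate.

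In exchange, your version yields the full root list and the leading coefficient.

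One small slip: $\prod_i(a-e_i)=h(a)$, not $-h(a)$; it is $\prod_i(e_i-a)$ that equals $-h(a)$. Your displayed factorization $H(U)=h(a)\,U\prod_i\bigl(U-(e_i-a)^{-1}\bigr)$ is nevertheless correct.
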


\begin{proof}
The root $0$ is simple because $H'(0)=1$. For $r\ne0$, use
\[
 H(r)=r^4h(a+r^{-1}),\qquad
 H'(r)=4r^3h(a+r^{-1})-r^2h'(a+r^{-1}).
\]
If both $H(r)$ and $H'(r)$ vanished, $h$ and $h'$ would have
a common root. This is excluded by $4A^3+27B^2\ne0$:
$h(x)=h'(x)=0$ would give $A=-3x^2$, $B=2x^3$.
Finally the $U$-adic valuation of $H(U)$ is one, whereas the
valuation of any nonzero rational square is even. This argument
is unchanged on adjoining independent variables to the constants.
\end{proof}

\begin{proposition}[Domain, reduced nonvanishing, and finite projection]
\label{prop:origin-ring-finite}
The ring $S$ is a domain. Every nonzero polynomial of $t$-degree
at most one has nonzero image in $S$. The natural map $T\to S$
is injective, and $S$ is a finite integral $T$-algebra. Every
nonzero ideal of $S$ has nonzero contraction to $T$.
\end{proposition}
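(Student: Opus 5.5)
The plan is to view $S=R/(f)$ as the quotient of $T[u]$ by $t^2-H(u)$, but it is more convenient to view it as $T'[t]/(t^2-H(u))$ with $T'=C[u,Z_1,\ldots,Z_m]$. Since $t^2-H(u)$ is monic of degree two in $t$, $S$ is a free $T'$-module with basis $1,t$. This gives the reduced nonvanishing claim at once: every element of $R$ has a unique representative $p_0+tp_1$ with $p_0,p_1\in T'$, so a nonzero polynomial of $t$-degree at most one has nonzero image.

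For the domain property, I use Lemma~\ref{lem:origin-quartic-squarefree}. The polynomial $H(u)$ is not a square in $C(Z)(u)=\operatorname{Frac}(T')$, so $t^2-H(u)$ is irreducible over this field. Then $S\otimes_{T'}\operatorname{Frac}(T')=\operatorname{Frac}(T')[t]/(t^2-H)$ is a field. Since $S$ is free over the domain $T'$, it embeds in this localization, and hence $S$ is a domain.

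For the projection to $T=C[t,Z]$, I treat the relation as a monic polynomial in $u$. Dividing by $h_4=h(a)\ne0$, the element $u$ satisfies $u^4+h_4^{-1}(\text{lower terms in }u)-h_4^{-1}t^2=0$, which is monic over $T$. Hence $S=T[u]/(H(u)-t^2)$ is generated over $T$ by $1,u,u^2,u^3$, and in fact it is free with this basis because division by a monic polynomial in $u$ gives unique remainders. Freeness immediately gives injectivity of $T\to S$ and shows that $S$ is finite and integral over $T$.

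For the last claim, let $I\subseteq S$ be a nonzero ideal and take $0\ne s\in I$. The element $s$ is integral over $T$, so it has a monic equation $s^n+c_{n-1}s^{n-1}+\cdots+c_0=0$ of minimal degree. Because $S$ is a domain and the degree is minimal, $c_0\ne0$: otherwise we could factor out $s$ and lower the degree. Then $c_0=-s(s^{n-1}+\cdots+c_1)\in I\cap T$ is nonzero.

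The only real content is the irreducibility step, which is exactly the non-square statement in Lemma~\ref{lem:origin-quartic-squarefree}. Everything else is bookkeeping with the two monic presentations, one in $t$ and one in $u$, the latter using $h_4\ne0$.
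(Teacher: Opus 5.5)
Your proposal is correct and follows the paper's proof essentially step by step. The non-square property of $H$ gives irreducibility, monic division in $t$ gives reduced nonvanishing, the quartic relation scaled by $h_4^{-1}$ gives finiteness over $T$, and a minimal monic equation gives a nonzero element of $I\cap T$. The only small differences are two. You get the domain property by embedding the free $T'$-module $S$ into the field $\operatorname{Frac}(T')[t]/(t^2-H(u))$, where the paper uses Gauss's lemma. You get injectivity of $T\to S$ from freeness with basis $1,u,u^2,u^3$, where the paper observes that a nonzero multiple of the relation cannot be independent of $u$.
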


\begin{proof}
Over $C(Z_1,\ldots,Z_m)(u)$ the quadratic $t^2-H(u)$ is
irreducible, because a factorization would give a rational
square root of $H(u)$. Gauss's lemma makes $f$ irreducible in
the polynomial UFD $R$, so $(f)$ is prime. Since $f$ is monic
of $t$-degree two, division by it gives a unique remainder of
$t$-degree at most one, proving reduced nonvanishing.

Viewed as a polynomial in $u$ over $T$, the relation has degree
four and leading coefficient $-h_4\in C^\times$. A nonzero
multiple of it cannot be independent of $u$, proving injectivity
of $T\to S$. The class of $u$ satisfies the monic equation
\[
 u^4+\frac{3a^2+A}{h_4}u^3+\frac{3a}{h_4}u^2
       +\frac1{h_4}u-\frac1{h_4}t^2=0.
\]
Thus $1,u,u^2,u^3$ generate $S$ as a $T$-module.

If $0\ne b\in I\subseteq S$, choose a monic integral equation
for $b$ over $T$ of least degree. Its constant coefficient is
nonzero: otherwise cancellation of $b$ in the domain $S$ would
give a monic equation of smaller degree. The equation expresses
this nonzero constant coefficient as a multiple of $b$, hence
as an element of $I\cap T$.
\end{proof}

\subsection{The quartic differential has no algebraic primitive}

\begin{lemma}[Rational differential equation obstruction]
\label{lem:origin-rational-obstruction}
There is no $r\in C(U)$ satisfying
\[
 2H(U)r'(U)+H'(U)r(U)=1.
\]
\end{lemma}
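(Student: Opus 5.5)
The plan is a local pole-order analysis of the equation, followed by a degree count at infinity. Conceptually, the equation says that $\mathrm d(r\,t)=\mathrm dU/(2t)$ on the curve $t^2=H(U)$, because
\[
 \frac{\mathrm d}{\mathrm dU}\bigl(r\sqrt H\bigr)=\frac{2Hr'+H'r}{2\sqrt H}.
\]
So the lemma asserts that the holomorphic differential of this genus-one curve has no primitive of the form $r(u)t$. The direct argument below needs nothing beyond Lemma~\ref{lem:origin-quartic-squarefree}.

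First extend scalars to an algebraic closure $\overline C$; a solution over $C$ remains one over $\overline C$. Suppose $r\in\overline C(U)$ solves $2Hr'+H'r=1$. Note that $r\neq0$, since $r=0$ gives $0=1$.

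\emph{Step 1: $r$ has no finite poles.} Suppose $r$ has a pole of order $k\ge1$ at some $\beta\in\overline C$. Write $r=c(U-\beta)^{-k}+\cdots$ with $c\neq0$, and split into two cases.
\begin{itemize}[leftmargin=2em]
\item If $H(\beta)\neq0$, then $2Hr'$ has a pole of exact order $k+1$ with leading coefficient $-2kH(\beta)c\neq0$. The term $H'r$ has a pole of order at most $k$. Hence the left side has a pole, while the right side is $1$.
\item If $H(\beta)=0$, the root is simple by Lemma~\ref{lem:origin-quartic-squarefree}, so $H=H'(\beta)(U-\beta)+O((U-\beta)^2)$ with $H'(\beta)\neq0$. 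Then both $2Hr'$ and $H'r$ have poles of order at most $k$. The coefficient of $(U-\beta)^{-k}$ in their sum is $H'(\beta)c(1-2k)$, which is nonzero because $k\ge1$ and the characteristic is zero. Again the left side has a pole.
\end{itemize}
Hence $r\in\overline C[U]$.

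\emph{Step 2: compare degrees.} Let $d=\deg r\ge0$ and let $r_d\ne0$ be the leading coefficient of $r$. Since $H$ has degree exactly $4$ with leading coefficient $h_4=h(a)\neq0$, the coefficient of $U^{d+3}$ in $2Hr'+H'r$ is
\[
 h_4(2d+4)r_d.
\]
This is nonzero in characteristic zero. Hence the left side has degree $d+3\ge3$, which contradicts the right side being the constant $1$.

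No step is a serious obstacle. The only point needing care is the pole analysis at a root of $H$. There the two terms have the same pole order, so one must check that their leading coefficients do not cancel. This is exactly where squarefreeness ($H'(\beta)\ne0$) and the factor $1-2k\neq0$ are used.
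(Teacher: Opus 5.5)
Your proof is correct and is essentially the paper's argument. Both use the two-case pole analysis, where the coefficient $(1-2k)H'(\beta)c\neq0$ at a simple root of $H$ forces $r$ to be a polynomial, followed by the leading coefficient $(2d+4)h_4r_d$ in degree $d+3$, with $r=0$ excluded separately.
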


\begin{proof}
Extend the constants to an algebraic closure. If $r$ has a pole
of order $k\ge1$ at $b$ with $H(b)\ne0$, the term $2Hr'$
has pole order $k+1$, which $H'r$ cannot cancel. If $H(b)=0$,
write $H=H_1(U-b)+\cdots$ and
$r=c(U-b)^{-k}+\cdots$. The coefficient of the pole of order
$k$ in $2Hr'+H'r$ is $(1-2k)H_1c\ne0$, since the root is
simple and the characteristic is zero. Consequently $r$ has no
finite pole and is a polynomial. For $r\ne0$ of degree $n$,
the left side has leading coefficient
$(2n+4)h_4\operatorname{lc}(r)$ at degree $n+3$, so it cannot
equal one. The zero polynomial is excluded as well.
\end{proof}

\begin{proposition}[Finite-extension nonexactness]
\label{prop:origin-finite-extension-nonexact}
Let $F/C(U)$ be a finite extension and let $t\in F$ satisfy
$t^2=H(U)$. Then no $z\in F$ has
\[
 dz=\frac{dU}{2t}\quad\hbox{or}\quad
 dz=-\frac{dU}{2t}
 \qquad\hbox{in }\Omega_{F/C}.
\]
\end{proposition}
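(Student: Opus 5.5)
The plan is to reduce to the rational obstruction of Lemma~\ref{lem:origin-rational-obstruction} by a trace argument. The sign case $-dU/(2t)$ follows from the $+$ case by replacing $z$ with $-z$, so it suffices to exclude $dz=dU/(2t)$. First pass to $F_0=C(U)(t)$. By Lemma~\ref{lem:origin-quartic-squarefree}, $H$ is not a square in $C(U)$, so $F_0/C(U)$ is quadratic with basis $1,t$. Let $n=[F:F_0]$; it is finite and nonzero in characteristic zero. Everything is separable, so $\Omega_{F/C}$ is one-dimensional over $F$ with basis $dU$. Moreover, for $w\in F$ the derivation $d/dU$ extends uniquely to $F$ and commutes with the trace $\operatorname{Tr}_{F/F_0}$. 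To justify this commutation I would use the conjugate embeddings into a normal closure, where the extended derivation is compatible with every embedding by uniqueness of extension across separable extensions.

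Writing $z'=dz/dU$, the hypothesis becomes $z'=1/(2t)$. Take traces: with $z_0=n^{-1}\operatorname{Tr}_{F/F_0}(z)\in F_0$ we get $z_0'=1/(2t)$, using $1/(2t)\in F_0$. So we may assume $F=F_0$. Write $z=p+qt$ with $p,q\in C(U)$. Since $t'=H'/(2t)$, we have
\[
 z'=p'+q't+q\frac{H'}{2t}=p'+\frac{2Hq'+H'q}{2t}.
\]
Comparing this with $1/(2t)=t\cdot\frac{1}{2H}$ in the basis $1,t$ gives $p'=0$ and
\[
 \frac{2Hq'+H'q}{2H}=\frac{1}{2H},\qquad\text{that is,}\qquad 2Hq'+H'q=1 .
\]
Lemma~\ref{lem:origin-rational-obstruction} says no such $q\in C(U)$ exists. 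This is a contradiction.

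The main obstacle is the justification that the trace commutes with the derivation (equivalently with $d$) for a finite extension of a function field. I would handle it either through the normal-closure argument above or, more concretely, via the minimal polynomial of $z$. Its coefficients are the elementary symmetric functions of the conjugates, and each conjugate satisfies the same differential equation $w'=1/(2t)$ because $1/(2t)\in F_0$ is fixed by the embeddings. Summing the conjugates then gives $\operatorname{Tr}(z)'=n/(2t)$ directly.
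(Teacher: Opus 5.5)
Your proof is correct and follows the same strategy as the paper. Both arguments extend $d/dU$ uniquely to $F$, use that trace commutes with it, and reduce to the rational obstruction $2Hq'+H'q=1$ of Lemma~\ref{lem:origin-rational-obstruction}. The difference is only in bookkeeping. The paper multiplies by $t$ to get $2HD(tz)=H'tz+H$, which has coefficients in $C(U)$, and traces $tz$ directly from $F$ down to $C(U)$ with $r=\operatorname{Tr}(tz)/(nH)$; this needs neither the intermediate field $C(U)(t)$ nor the fact that $H$ is not a square. You instead trace only to $C(U)(t)$ and read off $q$ from the basis $1,t$, which is where you use the non-square lemma.
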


\begin{proof}
Here $t\ne0$ because the nonzero rational function $H(U)$
remains nonzero in $F$. Characteristic zero gives a unique
extension $D$ of $d/dU$ to the finite separable extension $F$.
It suffices to exclude the positive sign, replacing $z$ by $-z$
for the other sign. The differential identity would give
$2tD(z)=1$. Differentiating $t^2=H$ then yields
\[
 2H D(tz)=H'tz+H.
\]
Let $n=[F:C(U)]$ and $M=\operatorname{Tr}_{F/C(U)}(tz)$.
Trace commutes with this derivation: in a normal closure trace
is the sum over embeddings, and uniqueness of the extended
derivation makes it commute with each embedding. Hence
\[
 2HM'=H'M+nH.
\]
The nonzero integer $n$ is invertible in $C$. Setting
$r=M/(nH)$ gives
\[
 2Hr'+H'r=(2HM'-H'M)/(nH)=1,
\]
contradicting Lemma~\ref{lem:origin-rational-obstruction}.
\end{proof}

This is also an exact adapter to the familiar cubic obstruction. In the function field put
\[
 x=a+u^{-1},\qquad y=tu^{-2}.
\]
Then $y^2=x^3+Ax+B$, $C(x)=C(u)$, and
\begin{equation}\label{eq:origin-differential-adapter}
 \frac{dx}{2y}=-\frac{du}{2t}.
\end{equation}
These divisions are made only when $u,t\ne0$ in the function
field. They are automatic when $u$ is transcendental, which is
the case used in the contradiction argument below. The formula
is not asserted by evaluating inverses at the origin point.

\begin{theorem}[Origin-quartic projection rigidity]
\label{thm:origin-projection-rigidity}
Let $F/C$ be an essentially finite-type field extension and let
$u,t,z\in F$ satisfy $t^2=H(u)$. Put $J=H'(u)$.
If $J\ne0$ and
\[
 dz=-J^{-1}dt\quad\hbox{in }\Omega_{F/C},
\]
then $u$ and $t$ are algebraic over $C$. Over algebraically
closed $C$ they belong to the image of $C$.
The same conclusion holds if $t\ne0$ and
$dz=-(2t)^{-1}du$.
\end{theorem}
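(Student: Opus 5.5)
The plan is to reduce to the two forms of the differential identity and then argue by contradiction using Proposition~\ref{prop:origin-finite-extension-nonexact}. First I relate the two hypotheses. Differentiating $t^2=H(u)$ in $\Omega_{F/C}$ gives $2t\,dt=J\,du$. If $J\ne0$ and $dz=-J^{-1}dt$, multiplying by $2t$ gives $2t\,dz=-J^{-1}\cdot 2t\,dt=-du$. So whenever $t\ne0$ we get $dz=-(2t)^{-1}du$, which is the second hypothesis. If instead $t=0$, then $H(u)=0$. Then $u$ is a root of the nonzero polynomial $H$, so it is algebraic over $C$, and $t=0$ is algebraic too. Both hypotheses therefore reduce to the case $t\ne0$ and $dz=-du/(2t)$.

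Now suppose, for contradiction, that $u$ is transcendental over $C$. Then $C(u)\cong C(U)$, and $t$ is algebraic over $C(u)$ because $t^2=H(u)$. The difficulty is that $z$ need not be algebraic over $C(u)$, and $F$ may have larger transcendence degree. I handle this by choosing a transcendence basis $u,s_1,\ldots,s_e$ of the finitely generated part of $F$ that contains $u,t,z$. Such a basis exists because $F$ is essentially of finite type. Set $C'=\overline{C(s_1,\ldots,s_e)}$, an algebraic closure, or just $C(s)$, and pass to the compositum $F'=C'(u,t,z,\dots)$. This is finite over $C'(u)$ as long as $z$ is algebraic over $C(u,s)$, which holds by the choice of basis.

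Next I project the differential identity. The map $\Omega_{F/C}\to\Omega_{F'/C'}$ obtained by killing the $ds_j$ (this is the base change of differentials, valid in characteristic zero) sends the identity to $dz=-du/(2t)$ in $\Omega_{F'/C'}$. Here $F'$ is finite over $C'(u)$, $t^2=H(u)$, and $H$ has coefficients in $C\subseteq C'$. Proposition~\ref{prop:origin-finite-extension-nonexact}, applied over the characteristic-zero field $C'$ with $U=u$, rules this out. Hence $u$ is algebraic over $C$, and then so is $t$. When $C$ is algebraically closed, algebraic elements lie in the image of $C$.

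I expect the main obstacle to be the descent step: checking that the identity in $\Omega_{F/C}$ really yields the same identity in $\Omega_{F'/C'}$ for a field finite over $C'(u)$. The point needing care is that $z$ might involve the auxiliary transcendentals $s_j$. I will first take the field $F_0=C(u,s)(t,z)$, which is finite over $C(u,s)$. I then use the exact sequence $\Omega_{C(s)/C}\otimes F_0\to\Omega_{F_0/C}\to\Omega_{F_0/C(s)}\to0$, together with the injectivity of $\Omega_{F_0/C}\to\Omega_{F/C}\otimes F$ for separable extensions, to transport the relation down.
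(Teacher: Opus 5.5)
Your proof is correct and is essentially the paper's argument: you convert the first hypothesis to $dz=-(2t)^{-1}du$ via $2t\,dt=J\,du$, extend a transcendental $u$ to a transcendence basis $u,s_1,\ldots,s_e$, push the identity along the natural map $\Omega_{F/C}\to\Omega_{F/C(s)}$, and contradict Proposition~\ref{prop:origin-finite-extension-nonexact}. The descent through $F_0$ (and the option of algebraically closing $C(s)$) is not needed, since finite generation already makes $F$ itself finite over $C(s)(u)$; your left-exactness argument is nonetheless valid, and it would even remove the need for the finite-type hypothesis.
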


\begin{proof}
Suppose $u$ were transcendental over $C$. Extend it to a finite
transcendence basis and let $C_1$ be generated over $C$ by the
other basis elements. Then $u$ is transcendental over $C_1$ and
$F/C_1(u)$ is finite. The coefficients and the nonzero
discriminant of the quartic remain unchanged under the injective
field extension. Also $t\ne0$, and differentiation of
$t^2=H(u)$ gives $2t\,dt=J\,du$. Thus
\[
 -J^{-1}dt=-(2t)^{-1}du.
\]
The natural map $\Omega_{F/C}\to\Omega_{F/C_1}$ carries any
asserted differential equality to this equality over $C_1$.
Proposition~\ref{prop:origin-finite-extension-nonexact}, with
$u$ as its indeterminate, contradicts it. Hence $u$ is
algebraic, and $t^2=H(u)$ makes $t$ algebraic as well. Over
algebraically closed constants both must be constant.
\end{proof}

\begin{corollary}[The critical locus is a constant projection]
\label{cor:origin-critical-locus}
In any extension field $F/C$, the equation $H'(u)=0$ implies
that $u$ and $t$, with $t^2=H(u)$, are algebraic over $C$.
The same conclusion holds if $t=0$. Over algebraically closed
$C$ these are constant projections. Neither assertion requires
finite type or an exact differential hypothesis.
\end{corollary}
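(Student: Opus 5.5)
The statement to prove is Corollary~\ref{cor:origin-critical-locus}. The plan is purely algebraic and uses only the squarefreeness of the quartic, Lemma~\ref{lem:origin-quartic-squarefree}. No differential or transcendence-basis argument is needed, which is why finite type is not assumed.

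\emph{Case $H'(u)=0$.} The polynomial $H'(U)\in C[U]$ has degree three with leading coefficient $4h_4\ne0$, so it is a nonzero polynomial over $C$. The element $u\in F$ is a root of it and is therefore algebraic over $C$. Then $t$ satisfies the monic equation $T^2-H(u)=0$ over the algebraic extension $C(u)$, so $t$ is algebraic over $C$ as well, by transitivity of algebraicity.

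\emph{Case $t=0$.} Here $H(u)=t^2=0$. Since $H$ is a nonzero polynomial of degree four over $C$, $u$ is algebraic, and $t=0\in C$ is trivially algebraic.

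\emph{Constant projections.} Suppose $C$ is algebraically closed. The subfield of $F$ consisting of elements algebraic over $C$ is then $C$ itself, because the minimal polynomial of such an element splits into linear factors over $C$. Hence $u,t$ lie in the image of $C$, which gives the constant projections. Note that squarefreeness is not even required for the first case. It only tells us that the two loci $H'(u)=0$ and $H(u)=0$ do not meet, which is the fact later used to guarantee the hypothesis $J\ne0$ of Theorem~\ref{thm:origin-projection-rigidity} on the complementary locus.

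I expect no real obstacle. The only point needing care is the remark that neither finite type nor a differential hypothesis is used, and the argument above visibly uses neither: it needs only that $H$ and $H'$ are nonzero polynomials with coefficients in $C$.
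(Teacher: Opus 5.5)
Your proof is correct and follows the paper's argument: $u$ is a root of the nonzero cubic $H'$ (leading coefficient $4h_4$), or of the nonzero quartic $H$ when $t=0$, and then $t^2=H(u)$ makes $t$ algebraic, with algebraic closedness of $C$ giving constancy. Your later remark that squarefreeness only shows the loci $J=0$ and $t=0$ are disjoint also matches the paper, though your opening sentence overstates its role: the argument needs only $h_4\ne0$ and characteristic zero.
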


\begin{proof}
The polynomial $H'$ has degree three and nonzero leading
coefficient $4h_4$. Thus $H'(u)=0$ is a nonzero polynomial
equation for $u$, and the equation for $t$ finishes the proof.
If $t=0$, use the nonzero polynomial equation $H(u)=0$ instead.
Squarefreeness also shows that $J=0$ and $t=0$ cannot hold
simultaneously. At the specific origin $u=t=0$, $J=1$.
\end{proof}

\subsection{Strict primes, proper tangents, and the sign in the frame}

Let $Q$ be a prime ideal of $R$ with $(f)\subsetneq Q$, and
use its literal field
\[
 F=\operatorname{Frac}(R/Q).
\]
Write bars for the images of coordinates and let
$J=H'(\bar u)$. The evaluation homomorphism $R\to F$ has
kernel exactly $Q$. When $J\ne0$, define
\[
 G(v_0,v_1,\ldots,v_m)
   =\left(\frac{2\bar t}{J}v_0,v_0,v_1,\ldots,v_m\right).
\]
The coordinate order on the right is $(u,t,Z_1,\ldots,Z_m)$.
Let $\mathcal T_Q\subset F^{m+2}$ be the polynomial tangent
space annihilating the evaluated gradient of every polynomial
in $Q$, and put $V=G^{-1}(\mathcal T_Q)$.
Differentiation of $f$ shows
\[
 \mathcal T_Q=G(V),\qquad
 Jv_u=2\bar t\,v_t\quad(v\in\mathcal T_Q).
\]

\begin{proposition}[Proper intrinsic tangent for an actual strict prime]
\label{prop:origin-proper-tangent}
Under these hypotheses $V\ne F^{m+1}$. In particular the
intrinsic normal quotient $N=F^{m+1}/V$ has positive dimension,
at most $m+1$.
\end{proposition}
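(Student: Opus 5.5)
The plan is to compare dimensions over the single field $F=\operatorname{Frac}(R/Q)$. I will show that $\dim_F\mathcal T_Q\le m$. Since $G$ is an injective $F$-linear map $F^{m+1}\to F^{m+2}$ (its second through last coordinates reproduce $v$), we have $\dim_F V\le\dim_F\mathcal T_Q$. Thus the bound $\dim_F\mathcal T_Q\le m$ forces $V\ne F^{m+1}$. The upper bound $\dim N\le m+1$ is then trivial, and positivity of $\dim N$ is exactly properness of $V$.

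\textbf{Step 1: identify the tangent space with derivations.} I reuse the identification from the proof of Lemma~\ref{lem:p1-exact-coordinate-slice}. A vector $v\in F^{m+2}$ annihilating the evaluated gradients of all $p\in Q$ is the same as a $C$-derivation $R/Q\to F$, via $p\mapsto\sum_i v_i\,\overline{\partial_i p}$. By the quotient rule this extends uniquely to a derivation of $F$. Hence $\mathcal T_Q\cong\operatorname{Der}_C(F,F)$. In characteristic zero, and since $F$ is finitely generated over $C$, this space has $F$-dimension $\operatorname{trdeg}_C F$. The differential-basis argument of Lemma~\ref{lem:p1-exact-coordinate-slice} applies verbatim.

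\textbf{Step 2: bound the transcendence degree.} By Proposition~\ref{prop:origin-ring-finite}, $(f)$ is prime, and $R$ is a polynomial ring in $m+2$ variables. So $R/(f)$ is a domain of dimension $m+1$. Because $(f)\subsetneq Q$, the image of $Q$ in $S=R/(f)$ is a nonzero prime. By the last assertion of Proposition~\ref{prop:origin-ring-finite}, this image contains a nonzero element $g$ of $T=C[t,Z_1,\ldots,Z_m]$.

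Then $\bar t,\bar Z_1,\ldots,\bar Z_m$ satisfy the nonzero relation $g=0$ in $F$. Moreover $\bar u$ is algebraic over $C(\bar t,\bar Z)$ by the monic quartic equation in that proposition. Hence $\operatorname{trdeg}_C F\le m$. Alternatively one can cite that height of $Q$ is at least $2$ in a catenary polynomial ring, but the contraction argument avoids this.

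\textbf{Step 3: check the graph description.} The hypothesis $J\ne0$ ensures $G$ is defined. Evaluating the gradient of $f\in Q$ gives $Jv_u=2\bar t v_t$ on $\mathcal T_Q$, so $\mathcal T_Q\subseteq G(F^{m+1})$ and $\mathcal T_Q=G(V)$. Combining with Steps 1 and 2 gives
\[
 \dim_F V=\dim_F\mathcal T_Q=\operatorname{trdeg}_C F\le m<m+1 .
\]

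The only step needing care is Step 1: the equality $\dim\operatorname{Der}_C(F,F)=\operatorname{trdeg}_C F$ requires characteristic zero (separability). That hypothesis is in force, so I expect no real obstacle.
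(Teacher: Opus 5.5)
Your proof is correct, but it takes a different route from the paper.

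\textbf{The paper's route.} The paper argues by contradiction and never counts dimensions. Suppose $V=F^{m+1}$. Take any $p\in Q\cap T$; its $u$-partial vanishes, so testing its gradient on the vectors $G(e_i)$ shows that $Q\cap T$ is closed under $\partial_t,\partial_{Z_1},\ldots,\partial_{Z_m}$. A nonzero element of least total degree would then have all its partials equal to zero, hence be a constant. That is impossible in a proper ideal, so $Q\cap T=0$. This contradicts the nonzero-contraction clause of Proposition~\ref{prop:origin-ring-finite}.

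\textbf{Your route.} You use the same key input: a nonzero $g\in Q\cap T$, together with integrality of $\bar u$. You convert it into $\operatorname{trdeg}_CF\le m$. You then invoke $\mathcal T_Q\cong\operatorname{Der}_C(F,F)$ and $\dim_F\operatorname{Der}_C(F,F)=\operatorname{trdeg}_CF$.

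\textbf{Comparison.} The paper's argument is more elementary and self-contained. Its minimal-degree trick is exactly the special case of the differential criterion that is needed. It never identifies the tangent space with derivations, and it uses characteristic zero only through ``a polynomial with all partials zero is constant.'' Your argument relies on the heavier, though standard, fact that derivations of a finitely generated characteristic-zero extension span a space of dimension equal to the transcendence degree; the paper proves this inside Lemma~\ref{lem:p1-exact-coordinate-slice}. In return you get a sharper, quantitative conclusion rather than mere positivity: $\dim_FV=\operatorname{trdeg}_CF$. Equivalently, $\dim_FN=m+1-\operatorname{trdeg}_CF=\operatorname{ht}Q-1$, which is the codimension of $Q$ inside the hypersurface $(f)$.
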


\begin{proof}
Suppose $V$ were all of $F^{m+1}$. For $p\in Q\cap T$, its
$u$ partial derivative is zero. Testing its gradient on $G(e_i)$
therefore says that every partial derivative of $p$ in the
variables $t,Z_1,\ldots,Z_m$ lies again in $Q\cap T$.
This ideal of $T$ is proper. If it contained a nonzero element
of least total degree, all of its partial derivatives would
vanish, by minimality. In characteristic zero such a polynomial
is constant, a contradiction. Hence $Q\cap T=0$.
But $Q/(f)$ is a nonzero ideal of $S$ by strict containment.
Its contraction to the injective copy of $T$ is zero, contrary
to Proposition~\ref{prop:origin-ring-finite}.
\end{proof}

Every $C$-derivation $D:F\to F$ supplies a vector
\[
 (D\bar u,D\bar t,D\bar Z_1,\ldots,D\bar Z_m)
       \in\mathcal T_Q.
\]
Indeed applying $D$ to the zero evaluation of $p\in Q$ gives
the required gradient equation. Its intrinsic coordinates are
$(D\bar t,D\bar Z_1,\ldots,D\bar Z_m)\in V$.
This argument keeps one residue field and requires no auxiliary
identification with an independently chosen function-field type.

The actual quartic derivations are
\[
 D_0=-2t\partial_u-H'(u)\partial_t+
            \sum_{i=1}^m\partial_{Z_i},\qquad
 D_i=\partial_{Z_i}\quad(i>0).
\]
Their residue coefficient vectors are the graph lifts of
\[
 b_0=-Je_0+e_1+\cdots+e_m,\qquad b_i=e_i\quad(i>0).
\]
In particular the coefficient of $e_0$ is $-J$, not $J$.
The two lists $(e_i)$ and $(b_i)$ are bases before passing to
the normal quotient, since $J\ne0$.

Say that normal-basis exclusion holds if there are no index sets
$S_0,S_1$ whose respective coordinate and frame images are bases
of $N$, and no $i>0$, with
\[
 i\in S_0,\qquad i\notin S_1,\qquad
 (j\in S_0\Longleftrightarrow j\in S_1)\quad\hbox{for all }j>i.
\]
This is the exclusion hypothesis of the preceding generic dichotomy, which applies with scalar
$-J$: it gives either $v_0=0$ for every $v\in V$, or an
$i>0$ such that $v_i=-v_0/J$ for every $v\in V$.
No claim that exclusion is automatic is made here.

\begin{theorem}[Quartic normal exclusion forces constant projection]
\label{thm:origin-normal-exclusion-projection}
Assume additionally that $C$ is algebraically closed. For an
actual strict prime $Q$ as above, the projection
$(\bar u,\bar t)$ is constant if either $J=0$, or $J\ne0$
and normal-basis exclusion holds for its actual quotient $N$.
\end{theorem}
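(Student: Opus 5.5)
The plan splits on whether $J=H'(\bar u)$ vanishes, and in each case reduces to a result already proved in this section.

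\emph{Case $J=0$.} Apply Corollary~\ref{cor:origin-critical-locus} in the field $F=\operatorname{Frac}(R/Q)$, with $u=\bar u$ and $t=\bar t$. Since $f\in Q$, we have $\bar t^2=H(\bar u)$. The corollary then shows that $\bar u$ and $\bar t$ are algebraic over $C$. Because $C$ is algebraically closed, they lie in the image of $C$, so the projection is constant.

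\emph{Case $J\ne0$ with normal-basis exclusion.} By Proposition~\ref{prop:origin-proper-tangent}, $V$ is a proper subspace of $F^{m+1}$. Together with $J\ne0$, this gives the hypotheses of Lemma~\ref{lem:acceptance-normal-dichotomy}, whose frame is the sign-checked $b_0=-Je_0+\sum e_i$, $b_i=e_i$. Hence either (a) every $v\in V$ has $v_0=0$, or (b) for some fixed $i>0$ every $v\in V$ satisfies $v_i=-v_0/J$.

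We test these alternatives on derivations. Every $C$-derivation $D:F\to F$ gives the vector $(D\bar t,D\bar Z_1,\dots,D\bar Z_m)\in V$. We use the standard fact that $\Omega_{F/C}$ is detected by $\operatorname{Der}_C(F,F)=\operatorname{Hom}_F(\Omega_{F/C},F)$. This holds because $\Omega_{F/C}$ is a vector space over $F$, finite-dimensional since $F$ is a finitely generated field over $C$ (here $F$ is the fraction field of a finitely generated $C$-algebra, so it is essentially of finite type). Consequently (a) gives $d\bar t=0$, and (b) gives $d\bar Z_i=-J^{-1}d\bar t$ in $\Omega_{F/C}$.

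\emph{Concluding in each alternative.} In alternative (b), Theorem~\ref{thm:origin-projection-rigidity} applies with $z=\bar Z_i$. Its hypotheses are $t^2=H(u)$, $J\ne0$ and the displayed differential identity, so $\bar u,\bar t$ are algebraic, hence constant.

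In alternative (a), $d\bar t=0$ in characteristic zero forces $\bar t$ to be algebraic over $C$. Indeed, if $\bar t$ were transcendental, we could extend it to a transcendence basis; the partial derivative in $\bar t$ then extends uniquely to the separable algebraic extension $F$ and gives a derivation $D$ with $D\bar t=1$, which is a contradiction. So $\bar t\in C$. Then $H(\bar u)=\bar t^2\in C$ is a nonzero-degree polynomial equation for $\bar u$, because $H$ has degree four. Thus $\bar u$ is algebraic, hence also constant. (One could instead note $2\bar t\,d\bar t=J\,d\bar u$, so $d\bar u=0$ since $J\ne0$, and argue the same way.)

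\emph{Main obstacle.} The main subtlety is the passage from ``a linear identity holds on all of $V$'' to ``a differential identity holds in $\Omega_{F/C}$''. This requires that derivation vectors lie in $V$, which was shown just before the statement, and that derivations separate $\Omega_{F/C}$. The second point is where finite generation of $F$ over $C$ is used. All remaining steps are direct invocations of earlier results.
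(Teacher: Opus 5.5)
Your proposal is correct and follows the paper's proof essentially step for step. The critical case $J=0$ goes through Corollary~\ref{cor:origin-critical-locus}. Otherwise, testing the normal-basis dichotomy on all $C$-derivations of $F$ gives either $d\bar t=0$, which you handle by the transcendence-basis derivation argument and $H(\bar u)=\bar t^2$, or $d\bar Z_i=-J^{-1}d\bar t$, which you handle by Theorem~\ref{thm:origin-projection-rigidity}. Your explicit appeal to Proposition~\ref{prop:origin-proper-tangent} for properness of $V$ only makes visible a hypothesis of the dichotomy that the paper uses implicitly.
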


\begin{proof}
The critical case is Corollary~\ref{cor:origin-critical-locus}.
In the other case apply the stated dichotomy and test it on
the intrinsic vectors of all $C$-derivations of $F$.
In the first alternative every such derivation kills $\bar t$.
Thus $d\bar t=0$ in $\Omega_{F/C}$: the dual space of this
finite-dimensional $F$-vector space consists of these
derivations and separates points. In characteristic zero for
a finitely generated field extension, this means $\bar t$ is
algebraic over $C$; otherwise extend it to a transcendence basis
and extend its partial derivative to the finite separable
extension to obtain a derivation not killing it. Since $C$ is
algebraically closed, $\bar t$ is constant, and
$H(\bar u)=\bar t^2$ makes $\bar u$ constant as well.

In the second alternative the same dual-space argument gives
\[
 d\bar Z_i=-J^{-1}d\bar t.
\]
The extension $F/C$ is essentially finite type because it is
the fraction field of a quotient of a finite polynomial ring.
Theorem~\ref{thm:origin-projection-rigidity} now proves that
$\bar u,\bar t$ are constant.
\end{proof}

\paragraph{Origin primes versus arbitrary primes.}
For a center $c\in C^m$, let
$M_c=(u,t,Z_1-c_1,\ldots,Z_m-c_m)$.
Since $H'(u)\equiv1\pmod {M_c}$, every prime
$Q\subseteq M_c$ satisfies $H'(u)\notin Q$ and hence $J\ne0$
in its residue field. This is the precise extra containment
that permits use of the origin value $H'(0)=1$. For arbitrary
component primes no such conclusion follows from that value
alone; the separate critical-locus argument above is required.
Also $M_c$ has height $m+2$, while the global persistence route
may require an ambient prime of height at most $m+1$. Containment
in $M_c$ and finite-order origin contact alone do not supply
that stronger height bound.

\paragraph{Formal status.}
These direct quartic arguments are written proofs. Existing cubic
rigidity and tangent declarations provide reusable structure but are
not claimed to instantiate the quartic statements automatically.

\section{Origin-chart contact and actual persistent components}
\label{sec:origin-persistence}

We now connect the actual origin packet to the derivative ideals used
by the local-length estimates.  This connection proves contact
propagation and gives a precise conditional persistence theorem.  It
also identifies why vanishing of a finite packet, by itself, does not
supply the required positive-dimensional component.

\subsection{The literal origin coordinate ring and tangent quotient}

Keep the chosen CM curve and origin chart from the preceding sections;
no enlargement of the arithmetic target class is intended.  For the
algebraic calculation write
\[
 H(U)=U+3aU^2+(3a^2+A)U^3+(a^3+Aa+B)U^4,
 \qquad a^3+Aa+B\ne0,
\]
\[
 \mathcal A=\mathbf C[u,t,Z_1,\ldots,Z_m],\quad
 f=t^2-H(u),\quad J_0=(f),\quad S=\mathcal A/J_0.
\]
The ring $S$ is a domain.  Indeed $H(U)$ has a simple zero at $U=0$,
so it is not a square in $\mathbf C(Z_1,\ldots,Z_m)(U)$; hence
$t^2-H(u)$ is irreducible and generates a prime ideal of $\mathcal A$.
Division by its monic degree-two expression in $t$ shows that any
nonzero polynomial of $t$-degree at most one lies outside $J_0$.
These conclusions concern this literal relation ideal.

Proposition~\ref{prop:origin-ring-finite} also proves that $S$ is
integral over $\mathbf C[t,Z_1,\ldots,Z_m]$: divide
$H(u)-t^2=0$ by its nonzero leading coefficient to obtain a monic
equation for $u$.  Thus every nonzero ideal of $S$ has nonzero
contraction to this polynomial subring.  In characteristic zero,
Proposition~\ref{prop:origin-proper-tangent} consequently gives a
proper intrinsic tangent for every prime $Q$ strictly containing
$J_0$ on the nonzero-Jacobian chart. Concretely, all geometry is over the same field
\[
 K_Q=\operatorname{Frac}(\mathcal A/Q),
\]
the residue field of $\mathcal A_Q$, with the canonical map having kernel
exactly $Q$.  On $H'(\bar u)\ne0$ the tangent is the graph
\[
 (v_t,v_1,\ldots,v_m)\longmapsto
 \left(\frac{2\bar t}{H'(\bar u)}v_t,v_t,v_1,\ldots,v_m\right).
\]
The intrinsic tangent is a proper subspace of $K_Q^{m+1}$, so its
normal quotient has positive dimension.  No auxiliary field or
abstract prime replaces $Q$ in the local-length comparison.

For an origin center $c=(c_1,\ldots,c_m)$, define the actual maximal
ideal
\[
 M_c=(u,t,Z_1-c_1,\ldots,Z_m-c_m).
\]
Since $H'(0)=1$, every prime $Q\subseteq M_c$ satisfies
$H'(u)\notin Q$.  Thus every component below an actual origin-center
maximal ideal automatically belongs to the nonzero-Jacobian chart.
This observation discharges that chart condition once the stated
prime containment has been established.

\subsection{Exact propagation of weighted contact}

Use the actual frame
\[
 D_0=-2t\partial_u-H'(u)\partial_t+
           \sum_{i=1}^m\partial_{Z_i},\qquad
 D_i=\partial_{Z_i}\quad(i>0),
\]
and the actual packet
\[
 \Phi_c(P)=P\bigl(U(T),T,c_i+V_i+L(T)\bigr).
\]
Here $H(U(T))=T^2$, $U(0)=0$, $L(0)=0$, and
$L'(T)=-H'(U(T))^{-1}$.  Put $J(T)=H'(U(T))$, so $J(0)=1$.
For positive weights $\kappa_0,\ldots,\kappa_m$, let
$\mathfrak j_R$ denote the ideal of series whose monomials all have
weight at least $R$, with $T$ of weight $\kappa_0$ and $V_i$ of
weight $\kappa_i$.  For $R\leq0$ this is the entire series ring.
Thus $\Phi_c(P)\in\mathfrak j_R$ is exactly the assertion that every
coefficient of weight strictly less than $R$ vanishes.

For a word $\nu$ in the indices $0,\ldots,m$, let
$c(\nu)$ be the sum of its nonnegative direction costs
$\kappa_i$.  These costs are distinct from the signed source-degree
increments $(2w_0,-w_1,\ldots,-w_m)$.

\paragraph{Proposition (actual packet contact propagation).}
If $\Phi_c(P)\in\mathfrak j_R$, then for every ordered word $\nu$,
\begin{equation}
 \Phi_c(D_\nu P)\in\mathfrak j_{R-c(\nu)}.
 \label{eq:origin-word-contact}
\end{equation}
For the ideal
\[
 I_s=J_0+(D_\nu P:c(\nu)\leq s),\qquad s\geq0,
\]
this implies $\Phi_c(I_s)\subseteq\mathfrak j_{R-s}$.

\begin{proof}
Partial differentiation in $T$ lowers weighted order by at most
$\kappa_0$, and differentiation in $V_i$ lowers it by at most
$\kappa_i$.  Multiplication by any formal series cannot lower
weighted order, since all variable weights are positive.
The chain rule, also proved in Section~\ref{sec:origin-contact}, gives the actual
identities
\[
 \Phi_c(D_0P)=-J(T)\partial_T\Phi_c(P),\qquad
 \Phi_c(D_iP)=\partial_{V_i}\Phi_c(P).
\]
Consequently each letter loses at most its assigned cost.  Induction
on the ordered word proves \eqref{eq:origin-word-contact}, including
the extra derivatives of $J(T)$ that occur when $D_0$ is repeated;
the induction differentiates the current packet, rather than treating
$J(T)$ as a constant.  No separate commutativity hypothesis is used.

The fixed relation maps to zero under $\Phi_c$.  Every derivative
generator of $I_s$ maps into $\mathfrak j_{R-s}$, because its cost
is at most $s$.  The latter is an ideal, so multiplication by packet
images of coefficients and addition preserve the inclusion.
\end{proof}

In particular, for a budget $B<R$ all words with $c(\nu)\leq B$
satisfy
\[
 (D_\nu P)(0,0,c)=0,
 \qquad I_B\subseteq M_c.
\]
The strict inequality $B<R$ matters.  At $B=R$, the remaining
cutoff can be zero, so a constant coefficient need not vanish.
For several centers the conclusion is
$I_B\subseteq\bigcap_c M_c$, provided the same contact hypothesis
holds at every selected center.  This is a containment in a finite
intersection of maximal ideals, not a height bound on a desired
curve component.

\subsection{Persistence for the actual origin derivative ideals}

\paragraph{Theorem (origin-chart persistent component).}
Let the costs satisfy $\kappa_i\geq0$ and let $\delta\geq0$.
Suppose $P\notin J_0$, and let $\mathfrak p$ be a prime of the
literal polynomial ring $\mathcal A$ such that
\[
 J_0\subseteq\mathfrak p,\qquad
 \operatorname{ht}(\mathfrak p)\leq m+1,
\]
\begin{equation}
 D_\nu P\in\mathfrak p
       \quad\text{for all }c(\nu)\leq(m+3)\delta.
 \label{eq:origin-terminal-curve}
\end{equation}
Then there are an integer $r$ with $0\leq r<m+3$ and a prime $Q$
such that:
\begin{enumerate}
\item $Q$ is minimal over both $I_{r\delta}$ and
      $I_{(r+1)\delta}$;
\item $Q\subseteq\mathfrak p$, $J_0\subsetneq Q$, and
      $2\leq\operatorname{ht}(Q)\leq m+1$;
\item for every $g\in I_{r\delta}$ and every word $\nu$ with
      $c(\nu)\leq\delta$, one has $D_\nu g\in Q$.
\end{enumerate}
If $\mathfrak p\subseteq M_c$ for an actual origin center, then
$H'(\bar u)\ne0$ in $K_Q$.  The proper tangent and both local-length
inequalities can therefore be applied over this same field, with
the additional degree and numerical assumptions stated in the
preceding section.

\begin{proof}
Every $D_i$ annihilates $f$, so it preserves $J_0$ by Leibniz.
The same rule and nonnegative costs imply, for any word $\nu$,
\[
 D_\nu(I_s)\subseteq I_{s+c(\nu)}.
\]
For one letter, differentiate a finite expression in the generators:
derivatives of coefficient factors multiply existing generators,
while derivatives of derivative generators have the enlarged cost.
The fixed-ideal terms stay in $J_0$.  Induction proves the word
version.

The ideals $I_{j\delta}$ increase with $j$, and
\eqref{eq:origin-terminal-curve} puts
$I_{(m+3)\delta}$ inside $\mathfrak p$.  Choose a minimal prime of
this terminal ideal below $\mathfrak p$; then choose successively
minimal primes of the preceding ideals below the primes already
chosen.  This gives a chain
$Q_0\subseteq\cdots\subseteq Q_{m+3}\subseteq\mathfrak p$,
where $Q_j$ is minimal over $I_{j\delta}$.  Such choices exist
because $\mathcal A$ is Noetherian and every ideal under consideration is
contained in a prime.  If every adjacent containment were strict,
the chain would contradict the height bound $m+1$.
Thus $Q_r=Q_{r+1}=Q$ for some $r<m+3$.

The empty word gives $P\in I_{r\delta}\subseteq Q$, whereas
$P\notin J_0$, proving $J_0\subsetneq Q$.  The nonzero principal
prime $J_0$ has height one, so strict containment gives
$\operatorname{ht}(Q)\geq2$; the upper bound follows from
$Q\subseteq\mathfrak p$.  Finally, for $g\in I_{r\delta}$ and
$c(\nu)\leq\delta$,
\[
 D_\nu g\in I_{r\delta+c(\nu)}
       \subseteq I_{(r+1)\delta}\subseteq Q.
\]
The last assertion follows from
$Q\subseteq\mathfrak p\subseteq M_c$ and $H'(0)=1$.
\end{proof}

The checked origin specialization is
\lean{QuadraticPi.EllipticOriginQuarticPersistence.exists_persistent_origin_component}.
Its actual relation, domain, tangent and bounded-comparison interfaces
are used by the quartic curve inequality. The hypotheses on
$\mathfrak p$ are supplied in that application; a conditional
persistence statement alone would not supply them.

\subsection{Why finite point contact alone is insufficient}

The ideal $M_c$ has ambient height $m+2$, whereas the persistence
theorem used by the curve argument requires a containing prime of
height at most $m+1$.  There is no general reduction of height
from finite jet vanishing alone.  The following elementary example
is in the actual origin chart, rather than an unrelated polynomial
model.

Translate $Z_i$ by $c_i$ and write $z_i=Z_i-c_i$.  Give every target
variable weight one, and take
\[
 P=t^2+\sum_{i=1}^m z_i^2.
\]
Its packet is
$T^2+\sum_i(V_i+L(T))^2$, which has no coefficient of weight
less than two.  Hence all cost-at-most-one derivatives vanish at
the origin.  Nevertheless, after localization at $M_c$, the ideal
$I_1$ is exactly the maximal ideal.  Indeed
\[
 D_iP=2z_i,\qquad D_0P=-2tH'(u)+2\sum_i z_i.
\]
Since $H'(u)$ is a unit in $\mathcal A_{M_c}$, these imply $z_i,t\in I_1\mathcal A_{M_c}$.
The fixed relation then gives $H(u)\in I_1\mathcal A_{M_c}$; and
$H(u)=u(1+O(u))$, with the second factor a unit there, gives
$u\in I_1\mathcal A_{M_c}$.  Conversely all generators belong to $M_c$.
Thus the derivative ideal has no positive-dimensional component
through this center.  The example concerns only the insufficiency
of finite contact; it makes no claim against the additional global
degree/contact excess hypotheses of the interpolation argument.

Nor does taking the kernel of the full formal packet solve this
problem.  That kernel is exactly $J_0$, whereas $P\notin J_0$.
One elementary verification is to reduce a polynomial to
$p(u,Z)+tq(u,Z)$, remove the additive $L(T)$ shift by a formal
automorphism, and use that $U(T)$ is even of order two.  The first
term is even in $T$ and the second odd, so both vanish separately;
substitution of the nonzero series $U(T)$ into a polynomial is
injective. For a positive cutoff the truncated packet has instead an ideal kernel whose
radical is $M_c$; replacing it by a prime again leads to the same
height obstruction.

\subsection{Interface with the global curve-contact step}

The next implication has a precise destination.  An integral
algebraic curve in the origin affine chart gives a prime
$\mathfrak p\supseteq J_0$ with
$\dim(\mathcal A/\mathfrak p)=1$ and therefore
$\operatorname{ht}(\mathfrak p)=m+1$.
If the curve passes through one of the selected origin centers,
then $\mathfrak p\subseteq M_c$ and its Jacobian is nonzero in the
generic residue field.  The curve-terminal vanishing lemma must
then show that every derivative of $P$ through the required budget
vanishes identically on this curve, not just at its marked points.

In the notation of that lemma, let $R$ be the available weighted
contact cutoff, let $B<R$ be a terminal word budget, and let the
source cutoff be $N$.  The contact propagation proved above supplies
remaining cutoff $R-B$, while the actual quartic source-degree
bound supplies cutoff $N+(2w_0/v_0)B$ for every such derivative.
A proved curve-contact excess inequality of the indicated form
\[
 (R-B)\sum_{\xi}\mu_{\xi}
       >\bigl(N+(2w_0/v_0)B\bigr)D_{\rm curve}
\]
can therefore supply \eqref{eq:origin-terminal-curve} with
$B=(m+3)\delta$, after its branch and divisor hypotheses have
been verified.  This section uses that separate global lemma as an
interface and does not reprove its divisor estimate or assert the
existence of a curve satisfying the inequality.

The P1 status after these arguments is consequently specific:
the literal origin ideal, its stability, reduced-source strictness,
contact propagation, finite-height persistence, and the origin
nonzero-Jacobian condition have written proofs.  The supplied
prime-height and terminal-vanishing inputs match the local-length
calculation exactly. Sections~\ref{sec:origin-auxiliary} and
\ref{sec:origin-curve-assembly} supply the auxiliary construction and
curve-inequality assembly. No general sharp EPA theorem follows from
finite packets or persistence alone.

\section{From actual branch contact to a curve prime}
\label{sec:origin-contact}

This section supplies a conditional implication needed before persistence.
It does not construct the auxiliary polynomial or prove the excess-contact
inequality used as its hypothesis. The actual branch and curve-contact
adapters are now checked in the quartic place-contact, fixed-ideal-contact
and curve-auxiliary modules; the assembled curve inequality discharges
the required application hypotheses.

Work over $\mathbf C$, and put
\[
 H(u)=u+3au^2+(3a^2+A)u^3+(a^3+Aa+B)u^4,
 \qquad f=t^2-H(u).
\]
Assume $4A^3+27B^2\ne0$ and $a^3+Aa+B\ne0$.
The source weights of $(u,t,Z_1,\ldots,Z_m)$ are
$(2w_0,4w_0,w_1,\ldots,w_m)$, all positive. Let
$v=(v_0,w_1/\theta,\ldots,w_m/\theta)$, with $v_0>0$ and
$0<\theta<1$. The derivations are
\[
 D_0=-2t\partial_u-H'(u)\partial_t+\sum_i\partial_{Z_i},
 \qquad D_i=\partial_{Z_i}\quad(i>0).
\]
For a word $W$, let $c(W)$ be the sum of the corresponding $v_i$.

There is a unique $U(T)\in T\mathbf C[[T]]$ satisfying
$H(U(T))=T^2$, since $H'(0)=1$. Set $J(T)=H'(U(T))$ and define
$L(0)=0$, $L'(T)=-J(T)^{-1}$. In characteristic zero this uniquely
defines $L\in T\mathbf C[[T]]$. For a center $c\in\mathbf C^m$,
the actual packet is the homomorphism
\[
 \Phi_c(P)=P\bigl(U(T),T,c_1+V_1+L(T),\ldots,c_m+V_m+L(T)\bigr).
\]
The strict target quotient of cutoff $R$ is by the ideal
$\mathcal F_{\ge R}$ of series supported on monomials of $v$-weight
at least $R$. Thus a zero packet means membership in
$\mathcal F_{\ge R}$, with weak inequality at its boundary.

\begin{lemma}[Weighted derivative loss]
If $\Phi_c(P)\in\mathcal F_{\ge R}$, then for every word $W$,
\[
 \Phi_c(D_WP)\in\mathcal F_{\ge R-c(W)}.
\]
Here cutoffs at most zero impose no restriction.
\end{lemma}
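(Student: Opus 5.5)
The plan is to prove the single-letter case first and then induct on the length of the ordered word $W$. Everything rests on the exact packet identities of Proposition~\ref{prop:origin-packet-intertwines}:
\[
 \Phi_c(D_0P)=-J(T)\,\partial_T\Phi_c(P),\qquad
 \Phi_c(D_iP)=\partial_{V_i}\Phi_c(P)\quad(i>0).
\]
Both are valid for every polynomial $P$, because each side is a derivation relative to $\Phi_c$ and the two sides agree on the generators $u,t,Z_i$. The relevant checks are $-JU'=-2T$, $-J=-H'(U)$ and $-JL'=1$.

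\textbf{Base case: one letter.} I would show that $\mathcal F_{\ge R}$ is carried into $\mathcal F_{\ge R-v_0}$ by $\partial_T$ and into $\mathcal F_{\ge R-w_i/\theta}$ by $\partial_{V_i}$. Apply $\partial_T$ to a monomial $T^sV^\beta$. It gives $sT^{s-1}V^\beta$, which is either zero or has weight exactly one $v_0$ lower. The case $\partial_{V_i}$ is the same, with loss $w_i/\theta$.

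\textbf{Multiplication by $J(T)$.} Next, multiplying by the series $-J(T)$ keeps us inside $\mathcal F_{\ge R'}$. The reason is that every variable weight is positive, so the weighted order of a product is at least the order of either factor. It follows that each letter $D_j$ lowers the cutoff by at most its cost $v_j$.

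\textbf{Induction.} Write $W=(W',j)$, with the letters applied in the written order. The induction hypothesis gives $\Phi_c(D_{W'}P)\in\mathcal F_{\ge R-c(W')}$. Applying the one-letter step to the polynomial $D_{W'}P$ then yields cutoff $R-c(W')-v_j=R-c(W)$. This uses only that $c$ is additive on words. The induction acts on the packet of the current polynomial. Repeated $D_0$ letters therefore need no separate treatment of derivatives of $J$: these are absorbed because the identity is applied afresh to $D_{W'}P$.

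\textbf{Nonpositive cutoffs.} When the cutoff is at most zero, $\mathcal F_{\ge R}$ is the whole ring, so the claim holds trivially. The base-case argument is uniform in the sign of $R$, so no separate case split is needed.

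\textbf{Main point of care.} The only real care is in the packet identity and the characteristic-zero integration defining $L$, and both are already supplied earlier in the paper. Beyond that, the argument needs only the monomial-ideal bookkeeping, with the weak inequality at the boundary respected: a monomial of weight exactly $R$ differentiates to one of weight exactly $R-v_j$, which remains in the smaller ideal.
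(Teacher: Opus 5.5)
Your proposal is correct and follows essentially the same route as the paper. Both arguments use the packet identities $\Phi_c(D_0P)=-J(T)\partial_T\Phi_c(P)$ and $\Phi_c(D_iP)=\partial_{V_i}\Phi_c(P)$. Both note that a partial derivative lowers each monomial's weight by exactly its variable weight, while multiplication by $J(T)$ cannot lower weight. Both then induct on the ordered word, applying the one-letter step to the current polynomial $D_{W'}P$, so derivatives of $J$ never need separate treatment. The only difference is cosmetic: you verify the identities on generators via Proposition~\ref{prop:origin-packet-intertwines}, whereas the paper rederives them from $U'=2T/J$ by the chain rule.
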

\begin{proof}
Differentiating $H(U)=T^2$ gives $U'=2T/J$.
Consequently the chain rule gives
\[
 \Phi_c(D_0P)=-J(T)\partial_T\Phi_c(P),\qquad
 \Phi_c(D_iP)=\partial_{V_i}\Phi_c(P).
\]
A partial derivative lowers each surviving monomial weight by its
variable weight. Multiplication by $J(T)\in\mathbf C[[T]]$ cannot
lower weight, because its monomials have nonnegative weight.
Induction on the ordered word proves the assertion, including the
terms in which later derivatives act on an earlier factor $J$.
No replacement of $J$ by a constant is being made.
\end{proof}

Let $C$ now be a smooth connected projective complex curve, with
rational functions $u,t,z_1,\ldots,z_m$ satisfying $t^2=H(u)$.
Assume at least one of these functions is nonconstant. At a point $p$
where they are all regular with values $(0,0,c)$, choose a uniformizer
$s$. Their expansions lie in $\mathbf C[[s]]$, and
\[
 u(s)=U(t(s)),\qquad
 b_0(s)=t(s),\qquad b_i(s)=z_i(s)-c_i-L(t(s)).
\]
The first identity follows from formal uniqueness for $H(u)=t^2$.
All $b_i$ have positive order or are zero. They cannot all be zero:
otherwise $t=0$, $u=0$ and every $z_i=c_i$ as germs, hence as rational
functions, contradicting nonconstancy. Define
\[
 \mu_p=\min_{0\le i\le m}\frac{\operatorname{ord}_p(b_i)}{v_i},
 \qquad \operatorname{ord}_p(0)=+\infty.
\]
Thus $0<\mu_p<\infty$. We use $\mu_p$ here only for local weighted
contact; it is not an irrationality exponent.

\begin{lemma}[Actual branch substitution]
If $\Phi_c(P)\in\mathcal F_{\ge R}$ and $c(W)\le R$, then
\[
 \operatorname{ord}_p(D_WP(u,t,z))\ \ge\ (R-c(W))\mu_p.
\]
The value $+\infty$ is allowed on the left.
\end{lemma}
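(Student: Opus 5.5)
The plan is to realize the branch as a substitution homomorphism that factors through the packet $\Phi_c$. Define the local ring map $\psi:\mathbf C[[T,V_1,\ldots,V_m]]\to\mathbf C[[s]]$ by
\[
 T\longmapsto b_0(s),\qquad V_i\longmapsto b_i(s).
\]
It is well defined because every $b_i$ has positive order or is zero. The key identity is
\[
 \psi(\Phi_c(G))=G\bigl(u(s),t(s),z(s)\bigr)
\]
for every polynomial $G$ in $u,t,Z_1,\ldots,Z_m$. Since both sides are ring homomorphisms in $G$, it suffices to check this on generators. For $t$ it is $b_0=t(s)$. For $u$ it is $U(t(s))=u(s)$, which is the first branch identity obtained from formal uniqueness. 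For $Z_i$ it reads $c_i+b_i(s)+L(t(s))=z_i(s)$, which is the definition of $b_i$. Some care is needed because composing power series requires positive order: $U(b_0)$ and $L(b_0)$ make sense since $b_0=t(s)$ has positive order, or is zero.

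Next, apply the weighted derivative loss lemma with the word $W$: we get $\Phi_c(D_WP)\in\mathcal F_{\ge R'}$ with $R'=R-c(W)\ge0$. If $R'=0$ the claim is trivial, since $\operatorname{ord}_p\ge0$ because all coordinates are regular at $p$. Otherwise write $\Phi_c(D_WP)$ as a convergent-in-$(T,V)$-adic sum of monomials $T^{s_0}V^\beta$ with $v_0s_0+\sum_i v_i\beta_i\ge R'$. The image of such a monomial under $\psi$ has order
\[
 s_0\operatorname{ord}(b_0)+\sum_i\beta_i\operatorname{ord}(b_i)\ \ge\ \mu_p\Bigl(v_0s_0+\sum_i v_i\beta_i\Bigr)\ \ge\ \mu_p R',
\]
using $\operatorname{ord}(b_i)\ge\mu_pv_i$ by the definition of $\mu_p$. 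Zero factors only raise the order, to $+\infty$.

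The main obstacle is justifying that $\psi$ commutes with this infinite sum, so that the order bound passes to the whole series. I would handle it by truncation. Fix $N$; only finitely many monomials have image of order $<N$, because $\psi(T^{s_0}V^\beta)$ has order at least $|(s_0,\beta)|$ times the minimal positive order among the nonzero $b_i$, and monomials involving a zero $b_i$ vanish. Hence, modulo $s^N$, $\psi$ of the series equals $\psi$ of a finite partial sum, and each term of that partial sum has order $\ge\min(N,\mu_pR')$. Letting $N\to\infty$ gives the bound, with value $+\infty$ allowed when every term vanishes. Combining this with the factorization identity applied to $G=D_WP$ yields $\operatorname{ord}_p(D_WP(u,t,z))\ge(R-c(W))\mu_p$.
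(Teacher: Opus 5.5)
Your proposal is correct and follows the paper's proof essentially step for step. Both arguments apply the weighted derivative-loss lemma and then substitute $(T,V)=(b_0,\ldots,b_m)$ into the packet. They identify the result with the evaluated polynomial via $u=U(t)$ and $z_i=c_i+b_i+L(t)$, bound each monomial's order by $\mu_p$ times its weight, and pass to the infinite sum because only finitely many monomials contribute below any fixed $s$-order. Your generator check of $\psi\circ\Phi_c=\mathrm{ev}$ and your truncation modulo $s^N$ simply spell out what the paper states more briefly.
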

\begin{proof}
Substitute $(T,V)=(b_0,b_1,\ldots,b_m)$ in the preceding lemma.
Every variable is topologically nilpotent, so the substitution of
formal series is defined. It gives the actual evaluated polynomial,
because $u=U(t)$ and $z_i=c_i+b_i+L(t)$. Each monomial of weight
$q$ has order at least $q\mu_p$; zero factors only improve the bound.
The same lower bound holds for the sum by the valuation inequality.
For completeness, only finitely many monomials can contribute below
any fixed finite order: all nonzero substituted variables have positive
integer order, and monomials involving a zero variable vanish.
This justifies the inequality for the infinite series as well.
\end{proof}

We next specify the pole budget on this same curve. For a rational
function $g$, write $n_p(g)$ for its pole order, with $n_p(0)=0$.
Put
\[
 d_p=\max\left\{\frac{n_p(u)}{2w_0},
       \frac{n_p(t)}{4w_0},\frac{n_p(z_1)}{w_1},\ldots,
       \frac{n_p(z_m)}{w_m}\right\},
 \qquad d_C=\sum_{p\in C}d_p.
\]
The sum has finite support. Also $d_C>0$, since a nonconstant
rational function on a projective curve has a pole.
If a polynomial $P$ has source cutoff $N\ge0$, then
\begin{equation}\label{eq:contact-pole-budget}
 \deg(P(u,t,z))_\infty\le N d_C
 \quad\text{provided its evaluated value is nonzero}.
\end{equation}
Indeed, every source monomial has pole order at most $N d_p$ at $p$;
summing cannot increase the pole order. Summing these local bounds
proves the claim.

The signed derivative bound gives cutoff
$N+2n_0w_0-\sum_{i>0}n_iw_i$ for a word containing $n_i$
occurrences of direction $i$. In particular its cutoff is at most
\[
 N+\lambda c(W),\qquad \lambda=2w_0/v_0.
\]
This estimate applies to the actual polynomial before restriction to
$C$; it does not differentiate the functions on $C$.

\begin{theorem}[Excess contact supplies terminal vanishing]
\label{thm:contact-terminal}
Let $\mathcal S$ be a finite set of distinct points of $C$, each
centered at some $(0,0,c_j)$ as above. Let $P$ have source cutoff
$N\ge0$, and assume $\Phi_{c_j}(P)\in\mathcal F_{\ge R}$ for each
center used by $\mathcal S$. Suppose $0\le B<R$ and
\begin{equation}\label{eq:terminal-excess}
 (R-B)\sum_{p\in\mathcal S}\mu_p
       > (N+\lambda B)d_C.
\end{equation}
Then $D_WP(u,t,z)=0$ in $\mathbf C(C)$ for every word with
$c(W)\le B$.
\end{theorem}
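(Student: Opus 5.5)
The plan is the standard zero-versus-degree comparison on the curve $C$, applied to one word at a time. Fix a word $W$ with $c(W)\le B$ and put $g=D_WP(u,t,z)\in\mathbf C(C)$. Suppose, for contradiction, that $g\ne0$. We will bound its zeros from below using the branch substitution lemma, bound its poles from above using the pole budget, and then invoke the fact that a nonzero rational function on a smooth projective curve has degree-zero divisor.

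\emph{Lower bound on zeros.} At each $p\in\mathcal S$ the hypotheses of the actual branch substitution lemma hold. The functions are regular at $p$ with values $(0,0,c_j)$, we have $\Phi_{c_j}(P)\in\mathcal F_{\ge R}$, and $c(W)\le B<R$. The lemma therefore gives $\operatorname{ord}_p(g)\ge(R-c(W))\mu_p\ge(R-B)\mu_p$, and this quantity is positive. The points of $\mathcal S$ are distinct, so
\[
 \deg(g)_0\ \ge\ \sum_{p\in\mathcal S}\operatorname{ord}_p(g)\ \ge\ (R-B)\sum_{p\in\mathcal S}\mu_p .
\]
The zero divisor is an integer, but no rounding is needed since only the real inequality is used.

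\emph{Upper bound on poles.} By Proposition~\ref{prop:polar-signed-word}, together with the remark following the signed derivative bound, the polynomial $D_WP$, reduced modulo $f$, has source cutoff at most $N+\lambda c(W)\le N+\lambda B$. This is nonnegative because $N\ge0$ and $\lambda>0$. The reduction by $t^2=H(u)$ does not change the function on $C$, because $t^2=H(u)$ holds there. Applying \eqref{eq:contact-pole-budget} with cutoff $N+\lambda B$ gives $\deg(g)_\infty\le(N+\lambda B)d_C$. One point needs care here. The budget estimate must be applied to a representative whose monomials all satisfy the weight bound. I expect this to be the main obstacle to state cleanly. The derivation $D_0$ produces $t\,H'(u)$-type terms, and the derivative cutoff statement is formulated for reduced representatives, whose weights were shown not to increase under reduction. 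Every monomial $u^jt^bZ^\alpha$ of weight at most $M$ has pole order at most $M d_p$ at each $p$, and this bound holds irrespective of reduction. The budget therefore applies directly to whichever representative carries the cutoff.

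\emph{Conclusion.} Since $\deg(g)_0=\deg(g)_\infty$ for nonzero $g$, the two bounds combine to give
\[
 (R-B)\sum_{p\in\mathcal S}\mu_p\le(N+\lambda B)d_C,
\]
which contradicts \eqref{eq:terminal-excess}. Hence $g=0$ for every word with $c(W)\le B$. This argument uses only the already established facts that $\mu_p$ is finite and positive and that $d_C>0$. It places no restriction on the order of the letters in $W$, because both lemmas were proved for arbitrary ordered words.
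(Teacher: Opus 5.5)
Your proposal is correct and follows essentially the same argument as the paper: the branch-substitution lemma bounds the zero divisor of $g=D_WP(u,t,z)$ below by $(R-B)\sum_{p\in\mathcal S}\mu_p$, the signed derivative cutoff $N+\lambda c(W)\le N+\lambda B$ together with the pole budget bounds its pole divisor, and equality of the two degrees for a nonzero rational function contradicts the excess inequality. Your extra remark that the pole bound holds monomial by monomial, whether or not one reduces modulo $t^2-H(u)$, is a harmless clarification that the paper leaves implicit.
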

\begin{proof}
Suppose the evaluated function $g=D_WP(u,t,z)$ is nonzero.
The branch lemma and $c(W)\le B$ give
\[
 \deg(g)_0\ge\sum_{p\in\mathcal S}\operatorname{ord}_p(g)
 \ge (R-B)\sum_{p\in\mathcal S}\mu_p.
\]
All displayed local orders are positive, so they belong to the zero
divisor. The pole estimate gives
$\deg(g)_\infty\le(N+\lambda B)d_C$.
A nonzero rational function on a smooth projective curve has zero
and pole divisors of equal degree. This contradicts
\eqref{eq:terminal-excess}.
\end{proof}

\begin{corollary}[The actual prime and its height]
Let $\mathfrak p$ be the kernel of
\[
 \mathbf C[u,t,Z_1,\ldots,Z_m]\longrightarrow\mathbf C(C),
 \qquad (u,t,Z_i)\longmapsto(u,t,z_i).
\]
Then $\mathfrak p$ is prime, contains $(f)$, and has height exactly
$m+1$. Under Theorem~\ref{thm:contact-terminal}, it contains every
$D_WP$ with $c(W)\le B$. At each selected centered point,
$\mathfrak p\subseteq\mathfrak m_c=(u,t,Z_1-c_1,\ldots,Z_m-c_m)$.
\end{corollary}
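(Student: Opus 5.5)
The corollary has four assertions: $\mathfrak p$ is prime, $f\in\mathfrak p$, $\operatorname{ht}(\mathfrak p)=m+1$, and the derivative and center containments. I would prove them in that order.

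Primality is immediate. The evaluation map $\mathbf C[u,t,Z_1,\ldots,Z_m]\to\mathbf C(C)$ is a ring homomorphism into a field, so its image is a domain and its kernel is prime. Since $t^2=H(u)$ holds in $\mathbf C(C)$ by hypothesis, $f=t^2-H(u)$ lies in $\mathfrak p$.

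For the height, I would compute the dimension of $\mathcal A/\mathfrak p$, which is the $\mathbf C$-subalgebra of $\mathbf C(C)$ generated by $u,t,z_1,\ldots,z_m$.
\begin{enumerate}
\item Its fraction field is a subfield of $\mathbf C(C)$, so its transcendence degree over $\mathbf C$ is at most $1$.
\item It is at least $1$, because one of the generating functions is nonconstant, and a nonconstant rational function on a connected projective curve is transcendental over $\mathbf C$. A function algebraic over $\mathbf C$ would satisfy a polynomial relation, hence take only finitely many values, hence be constant.
\item For a finitely generated domain over a field, dimension equals transcendence degree, so $\dim\mathcal A/\mathfrak p=1$.
\item Since $\mathcal A$ is a polynomial ring in $m+2$ variables, the dimension formula $\operatorname{ht}\mathfrak p+\dim\mathcal A/\mathfrak p=m+2$ for affine domains gives $\operatorname{ht}\mathfrak p=m+1$.
\end{enumerate}
This step, the exact height, is the only place a classical input is needed: the dimension formula for affine domains. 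The rest is bookkeeping.

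The derivative containment follows from Theorem~\ref{thm:contact-terminal}. Under its hypotheses, $D_WP(u,t,z)=0$ in $\mathbf C(C)$ for every word $W$ with $c(W)\le B$, and this says exactly $D_WP\in\mathfrak p$.

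For the center containment, take a selected point $p$ centered at $(0,0,c)$. The functions $u$, $t$, $z_i-c_i$ are all regular at $p$ and vanish there. So every $g\in\mathfrak p$ can be rewritten as a polynomial in these centered coordinates. Its constant term equals the evaluation of $g$ at $p$, which is $0$ because $g(u,t,z)=0$ in $\mathbf C(C)$ and the functions are regular at $p$. Equivalently, the composite of evaluation at $p$ with $\mathcal A\to\mathcal O_{C,p}\to\mathbf C$ is the evaluation at $(0,0,c)$, whose kernel is $\mathfrak m_c$, and it kills $\mathfrak p$. Hence $\mathfrak p\subseteq\mathfrak m_c$.
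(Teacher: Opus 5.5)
Your proof is correct and follows the paper's own argument essentially line by line. Primality comes from mapping into a field, and the exact height comes from transcendence degree one of the coordinate algebra, a subfield of $\mathbf C(C)$ containing a nonconstant and hence transcendental function, combined with the height formula in the $(m+2)$-variable polynomial ring. The containments of the $D_WP$ and of $\mathfrak p$ in $\mathfrak m_c$ follow by evaluation in $\mathbf C(C)$ and at the regular centered point; you merely spell out the transcendence and dimension-equals-transcendence-degree steps that the paper leaves implicit.
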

\begin{proof}
The target is a field, so the kernel is prime. Its quotient is the
algebra generated by the displayed rational functions. Its fraction
field has transcendence degree one: it is a subfield of $\mathbf C(C)$
and contains a nonconstant function. The height formula in a
polynomial ring in $m+2$ variables gives $m+1$.
The relation and terminal assertions follow by evaluation. For the
last assertion, an identically zero rational function has value zero
at every point where all the coordinate functions are regular.
\end{proof}

In particular, taking $B=(m+3)\delta$ provides precisely the prime
height and terminal-vanishing premises for persistence. If
$P\notin(f)$, strictness of a persistent prime follows as well.
For any such prime $Q\subseteq\mathfrak p\subseteq\mathfrak m_c$,
$H'(u)\notin Q$ since its value at the center is $H'(0)=1$.
This justifies use of the actual nonsingular tangent chart at that
prime, without inferring its Jacobian condition from the origin
identity alone.

\paragraph{What remains.}
This theorem converts an explicit inequality into the terminal
hypothesis. It does not itself establish that inequality for every
curve. Section~\ref{sec:origin-auxiliary} supplies the auxiliary and
parameter choice; Section~\ref{sec:origin-curve-assembly} combines
persistence, normal separation and the constant-fiber argument to prove
the uniform curve inequality. Globalization to surjectivity and the
analytic same-minor determinant bound are proved in Sections~\ref{sec:origin-geometric-assembly} and~\ref{sec:cm-same-minor-analytic}.

\section{An actual origin-polar auxiliary polynomial}
\label{sec:origin-auxiliary}

We now supply the auxiliary polynomial required by the preceding
curve-terminal argument.  This is a dimension argument for the
actual origin source and the actual finite coefficient packets.
It proves the existence of a nonzero kernel element, not
surjectivity of the packet map.

Let $d=m+1$, let $w_0,w_1,\ldots,w_m,v_0$ be positive rational
numbers, and let $\theta\in\mathbf Q$ satisfy $0<\theta<1$.
Keep the actual quartic relation
\[
 t^2=H(u),\qquad
 H(u)=u+3au^2+(3a^2+A)u^3+(a^3+Aa+B)u^4,
\]
with the chart conditions established earlier.  For a real cutoff
$N$, the origin-polar source space $\mathcal V_N$ has basis
\begin{equation}
 u^j t^b Z^\alpha,\qquad
 j\geq0,\quad b\in\{0,1\},\quad
 2w_0(j+2b)+\sum_{i=1}^m w_i\alpha_i\leq N.
 \label{eq:origin-auxiliary-basis}
\end{equation}
These monomials are linearly independent both as reduced
polynomials and in the actual coordinate ring: the relation is
monic of degree two in $t$.  Thus a nonzero vector in this source
is not merely a nonzero representative of the zero function.

The target variable weights are
\[
 v=(v_0,w_1/\theta,\ldots,w_m/\theta).
\]
For $K$ centers $c_1,\ldots,c_K\in\mathbf C^m$ and contact cutoff
$R>0$, let $\mathcal E_{N,R,c}$ be the linear map taking a source
polynomial to every coefficient of every actual packet
\[
 \Phi_{c_j}(P)=P\bigl(U(T),T,c_{ji}+V_i+L(T)\bigr)
\]
at monomials of $v$-weight strictly less than $R$.
Only finitely many coefficients occur.  The target has
$K$ copies of the corresponding weighted monomial index set.

\subsection{Exact counts, including the cutoff boundaries}

For positive weights $q=(q_0,\ldots,q_{d-1})$, define
\[
 A_{\leq}(X;q)=\#\{n\in\mathbf N^d:q\cdot n\leq X\},
 \qquad
 A_{<}(X;q)=\#\{n\in\mathbf N^d:q\cdot n<X\}.
\]
Put $A_{\leq}(X;q)=0$ for $X<0$.
The exact source count is
\begin{equation}
 s(N):=\dim_{\mathbf C}\mathcal V_N
  =A_{\leq}(N;(2w_0,w_1,\ldots,w_m))
   +A_{\leq}(N-4w_0;(2w_0,w_1,\ldots,w_m)).
 \label{eq:origin-source-exact-count}
\end{equation}
The two terms correspond to $b=0$ and $b=1$.
For example, the first is exactly
\[
 \sum_{\substack{\alpha\in\mathbf N^m\\\sum_iw_i\alpha_i\leq N}}
 \left(\left\lfloor
       \frac{N-\sum_iw_i\alpha_i}{2w_0}
       \right\rfloor+1\right),
\]
and the second has $N-4w_0$ in place of $N$ and is empty when that
cutoff is negative.  Equality at the source boundary is included.

The exact target count per center is
\begin{equation}
 j(R):=A_{<}(R;v)
 =\sum_{\substack{\beta\in\mathbf N^m\\
            \sum_i(w_i/\theta)\beta_i<R}}
 \left\lceil
       \frac{R-\sum_i(w_i/\theta)\beta_i}{v_0}
 \right\rceil.
 \label{eq:origin-target-exact-count}
\end{equation}
The ceiling is correct even when its argument is an integer:
for $x>0$, the number of integers $s\geq0$ with $s<x$ is
$\lceil x\rceil$.  In particular no coefficient on the strict
target boundary is accidentally imposed as an extra equation.

For completeness, these counts have elementary estimates that
are sufficient here.  Write $Q=\sum_iq_i$.  For $X>0$,
\begin{equation}
 \frac{X^d}{d!\prod_iq_i}
 \leq A_{<}(X;q)\leq A_{\leq}(X;q)
 \leq\frac{(X+Q)^d}{d!\prod_iq_i}.
 \label{eq:origin-simplex-count-sandwich}
\end{equation}
To prove this, attach the half-open unit cube
$n+[0,1)^d$ to each counted lattice point.  The union of the cubes
for the strict count contains the positive simplex
$\{x_i\geq0:q\cdot x<X\}$, because coordinatewise rounding down
does not increase weighted sum.  The cubes for the weak count lie
inside $\{x_i\geq0:q\cdot x<X+Q\}$.  Cubes have disjoint
interiors and unit volume.  The latter simplexes have volume
$X^d/(d!\prod_iq_i)$ and $(X+Q)^d/(d!\prod_iq_i)$,
respectively.  Boundary hyperplanes have volume zero.  This proves
\eqref{eq:origin-simplex-count-sandwich} without assuming that
$X$ or the weights are integral.

It follows from \eqref{eq:origin-source-exact-count} that
\begin{equation}
 s(N)=\frac{N^d}{d!\,w_0\prod_{i=1}^m w_i}
             +O(N^{d-1}),
 \label{eq:origin-source-density}
\end{equation}
whereas for any fixed $\rho>0$,
\begin{equation}
 K j(\rho N)=
 \frac{K\theta^m\rho^dN^d}
      {d!\,v_0\prod_{i=1}^m w_i}+O(N^{d-1}).
 \label{eq:origin-target-density}
\end{equation}
All constants in these estimates depend only on the fixed weights,
$m,K,\rho$, not on the centers or on the coefficients of their
packet maps.  In the source density the two values of $b$ cancel
the factor $2$ from the weight $2w_0$ of $u$.  The correct leading
elliptic density is therefore $1/w_0$, despite the derivation's
larger source cost $2w_0$.

\subsection{Auxiliary existence uniform in the centers}

\begin{theorem}[Actual origin-polar auxiliary]
\label{thm:origin-polar-auxiliary}
Suppose
\begin{equation}
 K\frac{w_0}{v_0}\theta^m\rho^{m+1}<1,
 \qquad \rho>0.
 \label{eq:origin-auxiliary-volume}
\end{equation}
There is a real $N_0$ such that, for every real $N\geq N_0$ and
every list of $K$ centers in $\mathbf C^m$, there is a polynomial
$P\in\mathcal V_N$ with $P\ne0$, $P\notin(t^2-H(u))$, and
\begin{equation}
 \Phi_{c_j}(P)\in\mathfrak j_{\rho N}
       \qquad(1\leq j\leq K).
 \label{eq:origin-auxiliary-contact}
\end{equation}
Here $\mathfrak j_{\rho N}$ is the actual weighted ideal of series
whose coefficients vanish at every weight strictly less than
$\rho N$.  The threshold $N_0$ is independent of the centers.
No distinctness assumption is needed for this existence theorem.
\end{theorem}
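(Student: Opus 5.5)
The plan is a pure dimension count: a linear map from $\mathcal V_N$ to a space of smaller dimension has a nonzero kernel. The map $\mathcal E_{N,\rho N,c}$ sends $\mathcal V_N$ into $K$ copies of the coefficient space indexed by monomials of $v$-weight $<\rho N$. By \eqref{eq:origin-target-exact-count} that target has dimension exactly $Kj(\rho N)$, whatever the centers are. So the key inequality to prove is
\[
 s(N)>K\,j(\rho N)
\]
for all $N\ge N_0$, with $N_0$ independent of the centers. Any nonzero kernel vector $P$ then has every packet coefficient of weight strictly below $\rho N$ equal to zero at each $c_j$. That is precisely $\Phi_{c_j}(P)\in\mathfrak j_{\rho N}$.

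To prove the inequality, I would compare leading terms using \eqref{eq:origin-source-density} and \eqref{eq:origin-target-density}. The source count has leading coefficient $1/(d!\,w_0\prod_iw_i)$ in $N^d$. The target count has leading coefficient $K\theta^m\rho^d/(d!\,v_0\prod_iw_i)$. Their ratio is $K(w_0/v_0)\theta^m\rho^{m+1}$, which is $<1$ by \eqref{eq:origin-auxiliary-volume}. So $s(N)-Kj(\rho N)\ge cN^d-C'N^{d-1}$ with $c>0$, and this is positive for large $N$.

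To make $N_0$ explicit and center-free, I would use the sandwich \eqref{eq:origin-simplex-count-sandwich} instead of $O$-notation:
\begin{itemize}
\item For $b=0$, bound the source term from below by $N^d/(d!\,2w_0\prod w_i)$.
\item For $b=1$ with $N>4w_0$, bound it from below by $(N-4w_0)^d/(d!\,2w_0\prod w_i)$.
\item Bound the target from above by $K(\rho N+v_0+\sum_iw_i/\theta)^d/(d!\,v_0\prod_i(w_i/\theta))$.
\end{itemize}
Every constant here depends only on the weights, $m$, $K$, $\theta$ and $\rho$. The coefficient packets themselves never enter the count, which is why no distinctness of the centers is needed.

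Two remaining points follow from earlier results. Since the basis \eqref{eq:origin-auxiliary-basis} consists of reduced polynomials of $t$-degree at most one, a nonzero $P\in\mathcal V_N$ is not in $(t^2-H(u))$ by reduced nonvanishing (Proposition~\ref{prop:origin-ring-finite}). Also, $\mathcal E$ is well defined on $\mathcal V_N$ because $\Phi_{c_j}$ is a ring homomorphism on polynomials. The only step needing care is the uniform explicit threshold, in particular keeping the strict/weak boundary conventions consistent with the counts, and that is routine.
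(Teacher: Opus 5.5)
Your proposal is correct and follows the paper's proof: the same center-independent dimension count $s(N)>Kj(\rho N)$, obtained by comparing the leading coefficients whose ratio is $K(w_0/v_0)\theta^m\rho^{m+1}<1$, followed by reduced nonvanishing to get $P\notin(t^2-H(u))$. Your explicit sandwich bounds reproduce exactly the paper's finite size test, since $v_0+\sum_i w_i/\theta=\sum_{i=0}^m v_i$ and $v_0\prod_i(w_i/\theta)=\prod_{i=0}^m v_i$.
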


\begin{proof}
The strict inequality \eqref{eq:origin-auxiliary-volume} says that
the leading coefficient in \eqref{eq:origin-source-density} is
strictly larger than that in \eqref{eq:origin-target-density}.
Therefore $s(N)>Kj(\rho N)$ for all sufficiently large $N$.
This assertion is about fixed finite index sets and is independent
of any evaluation map.  For every choice of centers, the linear
map $\mathcal E_{N,\rho N,c}$ consequently has nonzero kernel.
Choose a nonzero kernel vector and use the basis
\eqref{eq:origin-auxiliary-basis} to obtain $P$.  Its kernel property
is precisely \eqref{eq:origin-auxiliary-contact}.  Its $t$-degree is
at most one, so reduced-source nonvanishing proves $P\notin(f)$.
\end{proof}

One can make the size test entirely finite: for $N\geq4w_0$ it
suffices that
\[
 \frac{N^d+(N-4w_0)^d}
      {d!\,2w_0\prod_iw_i}
 >
 \frac{K(\rho N+\sum_{i=0}^m v_i)^d}
      {d!\prod_{i=0}^m v_i}.
\]
The left side is a lower bound for the source count and the right
side an upper bound for the target count.  Condition
\eqref{eq:origin-auxiliary-volume} ensures this test eventually
holds.  Thus neither floor oscillations nor equality at some
rational cutoff causes a problem.

The conclusion holds, in particular, for all sufficiently large
natural integers $N$.  It also holds on any prescribed unbounded
sequence of multiples of a positive integer.  No divisibility
modulus is needed for this existence argument; a later independent
globalization or arithmetic step may impose one.  The coefficients
of $P$ are complex and no height, denominator, or norm bound on
them is asserted or needed at this stage.

\subsection{A precise route through the existing generic machinery}

The same conclusion can be obtained without adding a new simplex
counting theory to the Lean project.  The already proved theorem
\lean{OAI.PiExponent.eventually_exists_auxiliaryPolynomial}
works for an arbitrary linear map on a polynomial source with
weights $(w_0,w_1,\ldots,w_m)$.  Embed its monomials by
\[
 X_0^s Z^\alpha\longmapsto
       u^{\lfloor s/2\rfloor}t^{s\bmod2}Z^\alpha.
\]
This map is injective, since division of $s$ by $2$ recovers $s$
from the pair of exponents.  Its exact quartic source weight is
\begin{equation}
 w_0s+\sum_iw_i\alpha_i+3w_0(s\bmod2).
 \label{eq:origin-embedding-loss}
\end{equation}
The loss is at most $3w_0$, not the $2w_0$ loss in the lemniscate
template with ambient weights $(2w_0,3w_0)$.

Because \eqref{eq:origin-auxiliary-volume} is strict, choose
$b>\rho$ sufficiently close to $\rho$ that it also holds with
$b$ in place of $\rho$.  Apply the generic theorem at source cutoff
$N-3w_0$, using the actual packet map composed with the embedding
and contact factor $b$.  The embedded source cutoff is at most
$N$, and its vanishing extends to cutoff
$b(N-3w_0)$.  For
\[
 N\geq\frac{3bw_0}{b-\rho}
\]
this is at least $\rho N$.  Together with the generic sufficiently
large cutoff, this reproduces
Theorem~\ref{thm:origin-polar-auxiliary}.  The generic theorem is
uniform over its input linear maps, so the center-independent
threshold is retained.

The existing lemniscate modules
\lean{LemniscateAuxiliaryPolynomial.lean} and
\lean{LemniscateCurveAuxiliary.lean} provide implementation
templates.  Their cubic embedding-loss and degree-growth constants
must be changed as displayed above; their existing specialized
conclusions are not already the origin-quartic theorem.

\subsection{From curve excess to the terminal budget}

We next match the parameters to the actual curve-contact theorem.
Write
\[
 A_0=K(w_0/v_0)\theta^m,
 \qquad B_0=K\theta^m,
\]
and assume $A_0<1$, $B_0<1$, and $0<\theta<1$.
Choose a positive rational $\sigma$ so small that
\begin{equation}
 A_0(1+3\sigma)^{m+1}<1,
 \qquad B_0(1+3\sigma)^m<1,
 \qquad (1+\sigma)\theta<1.
 \label{eq:origin-sigma-margins}
\end{equation}
Such a choice exists by continuity at $\sigma=0$ and density of
the rationals.  The first inequality is the auxiliary-volume
condition for $\rho=1+3\sigma$.  The other two are retained for
the subsequent fiber and separated-weight argument; they are not
needed merely to find a kernel polynomial.

Put
\[
 \rho=1+3\sigma,\qquad
 \varepsilon_4=
 \min\left\{\sigma,\frac{\sigma v_0}{4(1+\sigma)w_0}\right\},
 \qquad \lambda=\frac{2w_0}{v_0},
 \qquad g_4=1+\lambda\varepsilon_4.
\]
Then $\varepsilon_4>0$, $\varepsilon_4\leq\sigma$, and
\[
 \lambda\varepsilon_4(1+\sigma)\leq\frac{\sigma}{2}.
\]
It follows that $g_4<3/2$ and, more explicitly,
\begin{equation}
 g_4(1+\sigma)\leq1+\frac32\sigma,
 \qquad
 \rho-\varepsilon_4\geq1+2\sigma,
 \qquad
 \rho-\varepsilon_4-g_4(1+\sigma)\geq\frac{\sigma}{2}>0.
 \label{eq:origin-quartic-strict-margin}
\end{equation}
This is the quartic margin proved directly.  The stored
\lean{LemniscateWeightGeometry.NumericData.derivative_margin}
uses the cubic cost $w_0$ and is not a proof of
\eqref{eq:origin-quartic-strict-margin}.  The numerical choice of
$\sigma$ and separated-product estimates can be reused, but this
different margin needs its own explicit adapter.

Let $C$ be the smooth projective curve used in
Theorem~\ref{thm:contact-terminal}; use its actual coordinate
functions, finite collection $\mathcal S$ of centered branches,
positive contacts $\mu_p$, and positive weighted pole quantity
$d_C$.  Suppose the hypothetical excess is
\begin{equation}
 d_C<(1+\sigma)\sum_{p\in\mathcal S}\mu_p.
 \label{eq:origin-initial-curve-excess}
\end{equation}
In particular the sum of contacts is positive.  Choose $N>0$
sufficiently large for
Theorem~\ref{thm:origin-polar-auxiliary}, and choose its nonzero
reduced auxiliary $P$.  Set
\[
 R=\rho N,\qquad B=\varepsilon_4N,
 \qquad\delta=\frac{\varepsilon_4N}{m+3}.
\]
Then $0<B<R$ and $B=(m+3)\delta$.
Using \eqref{eq:origin-initial-curve-excess} and
\eqref{eq:origin-quartic-strict-margin} gives
\begin{align*}
 (N+\lambda B)d_C
 &=g_4N d_C\\
 &<g_4N(1+\sigma)\sum_{p\in\mathcal S}\mu_p\\
 &<(\rho-\varepsilon_4)N\sum_{p\in\mathcal S}\mu_p
  =(R-B)\sum_{p\in\mathcal S}\mu_p.
\end{align*}
Every inequality has the correct quartic source-growth constant
$\lambda=2w_0/v_0$.  The already established curve-terminal theorem
now yields
\[
 D_WP(u,t,z)=0\quad\text{in }\mathbf C(C)
       \qquad\text{for every }c(W)\leq\varepsilon_4N.
\]
Its literal curve prime has height $m+1$, contains the quartic
relation, and contains all these derivatives.  Since $P\notin(f)$,
these are exactly the input hypotheses of the origin persistence
theorem, with the same source polynomial and the same budget.

There is no conflict among the remaining local size conditions.
Enlarge $N$ further to ensure
\[
 N\geq8w_0,\qquad
 \frac{\varepsilon_4N}{m+3}>
        (m+1)\max_{0\leq i\leq m}v_i.
\]
The first allows the fixed quartic in the generator-degree bound;
the second makes every required rectangular cutoff at least two.
All are eventual inequalities for fixed positive parameters.
Thus the one auxiliary can simultaneously satisfy the actual
packet conditions, the terminal vanishing conditions, and the
source-degree and rectangle conditions required by the local
length comparison.

\paragraph{What has and has not been discharged.}
This completes the written auxiliary-polynomial existence and
parameter conversion that were still explicit inputs in the
previous checkpoint.  Starting from a hypothetical excess curve,
the actual origin-polar source now produces the exact prime and
derivative-vanishing data needed for persistence.  This does not
prove that such an excess curve exists; the following section excludes
it using persistence, normal separation, projection rigidity, and the
constant-fiber analysis. It also does not prove
global packet surjectivity: a nonzero auxiliary is a kernel
element, while EPA requires a rank assertion.  The globalization and analytic comparison are proved separately in Sections~\ref{sec:origin-geometric-assembly} and~\ref{sec:cm-same-minor-analytic}.

The actual auxiliary and its contact/source-budget application are
checked in
\lean{QuadraticPi.EllipticOriginQuarticAuxiliary.eventually_exists_nonzero_quartic_auxiliary}
and
\lean{QuadraticPi.EllipticOriginQuarticCurveAuxiliary.eventually_exists_admissible_auxiliary}.
Global packet surjectivity is the separate checked conclusion
\lean{QuadraticPi.EllipticOriginQuarticEPA.epa}; it is not inferred from
existence of a kernel element alone.

\section{Independent ordinary-fiber proof and the exact PA specialization}
\label{sec:acceptance-ordinary-fiber}

This section supplies a written proof of the ordinary curve inequality
used after the elliptic coordinates have been forced to zero.  It uses
the ordinary weighted B\'ezout and localized Taylor length estimates,
whose actual applications are specified below.  It does not use the
logarithmic curve inequality, an elliptic EPA theorem, or an assertion
of interpolation surjectivity.

\begin{theorem}[Ordinary-fiber curve inequality]
\label{thm:acceptance-ordinary-fiber}
Let $C$ be a smooth projective integral complex curve and let
$x_1,\ldots,x_m$ generate its function field $F$ over $\mathbb C$,
where $m\ge1$ and $\operatorname{trdeg}_{\mathbb C}F=1$.
Let $c_j=(c_{j1},\ldots,c_{jm})$, $1\le j\le K$, be a finite array
of centers such that $j\mapsto c_{ji}$ is injective for every $i$.
Take positive rational weights $w_i,v_i$ and $\sigma>0$, and put
\[
 A_m(\sigma)=2(m+2)^{m+2}
                   \left(1+\frac{m+2}{\sigma}\right)^{m+2}.
\]
Assume
\begin{align}
 K(1+3\sigma)^m\frac{\prod_iw_i}{\prod_iv_i}&<1,
 \label{eq:acceptance-fiber-volume}\\
 (1+\sigma)w_i&<v_i\quad(1\le i\le m),
 \label{eq:acceptance-fiber-ratio}
\end{align}
and the following separation condition: whenever $S,T\subseteq
\{1,\ldots,m\}$ have the same cardinality, and an index $i\in
S\setminus T$ satisfies
$(j\in S\Longleftrightarrow j\in T)$ for every $j>i$, then
\begin{equation}
 A_m(\sigma)\prod_{j\in T}v_j<\prod_{j\in S}w_j.
 \label{eq:acceptance-fiber-separation}
\end{equation}
Let $\mathcal S$ be all points where the $x_i$ are regular and
$(x_1(p),\ldots,x_m(p))=c_j$ for some $j$.  At such a point set
\[
 \mu_p=\min_{i:x_i-c_{ji}\ne0}
              \frac{\operatorname{ord}_p(x_i-c_{ji})}{v_i},
 \qquad
 d=\sum_{p\in C}\max\left(0,
                    \max_i\frac{-\operatorname{ord}_p(x_i)}{w_i}
                           \right).
\]
An identically zero coordinate contributes zero pole order; a zero
branch coordinate is omitted from the contact minimum, equivalently
assigned infinite order.  Then
\[
 (1+\sigma)\sum_{p\in\mathcal S}\mu_p\le d.
\]
\end{theorem}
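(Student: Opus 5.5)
We argue by contradiction, reproducing in the purely additive setting the chain auxiliary $\to$ terminal vanishing $\to$ persistence $\to$ length comparison $\to$ normal exclusion. Suppose $d<(1+\sigma)\sum_{p\in\mathcal S}\mu_p$. Work in $\mathcal A=\mathbb C[Z_1,\ldots,Z_m]$ with derivations $D_i=\partial_{Z_i}$ of cost $v_i$ and source weights $w_i$. The packet at $c_j$ is simply the Taylor expansion $P(c_j+V)$.

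\textbf{Step 1: auxiliary polynomial.} The simplex counts \eqref{eq:origin-simplex-count-sandwich} together with \eqref{eq:acceptance-fiber-volume} give, for all large $N$, a nonzero $P$ of $w$-degree $\le N$ whose expansion at every $c_j$ has no monomial of $v$-weight $<\rho N$, where $\rho=1+3\sigma$. Since $P$ is nonzero in $\mathcal A$, no relation ideal intervenes.

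\textbf{Step 2: terminal vanishing and persistence.} Put $B=\sigma N$ and $\delta=B/(m+2)$. Derivatives now lower source weight, so the pole budget is simply $N d$ and there is no $\lambda$ term. The branch-substitution lemma then gives $\operatorname{ord}_p D_WP(x)\ge(\rho N-B)\mu_p$. The hypothesis gives
\[
 (\rho-\sigma)N\sum_{p\in\mathcal S}\mu_p=(1+2\sigma)N\sum_{p\in\mathcal S}\mu_p>Nd,
\]
so, exactly as in Theorem~\ref{thm:contact-terminal}, every $D_WP$ with $c(W)\le B$ vanishes on $C$. Let $\mathfrak p$ be the kernel of the evaluation map $\mathcal A\to F$; it is prime of height $m-1$. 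The persistence argument, run with the chain $I_{j\delta}$ for $0\le j\le m+2$ and no fixed relation, yields a prime $Q\subseteq\mathfrak p$ that is minimal over both $I_{r\delta}$ and $I_{(r+1)\delta}$, has height between $1$ and $m-1$ (it contains $P\ne0$), and whose ideal is $\delta$-stable.

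\textbf{Step 3: two-sided length comparison.} Let $N_Q$ have dimension $k\ge1$; it is proper because $Q$ is nonzero, by the least-degree argument of Proposition~\ref{prop:origin-proper-tangent}. Here the frame is the coordinate frame itself, so a coordinate normal basis $S$ and a frame normal basis $T$ are both coordinate index sets of size $k$. Lemma~\ref{lem:p1-generic-taylor-lower} gives $\delta^k/(k^k\prod_{T}v_j)\le\ell_Q$, and Corollary~\ref{cor:p1-ordinary-prime-upper} gives $\ell_Q\le N^k/\prod_{S}w_j$, using that all generators have $w$-degree at most $N$. Combining,
\[
 \prod_{j\in S}w_j\le k^k\bigl((m+2)/\sigma\bigr)^k\prod_{j\in T}v_j\le A_m(\sigma)\prod_{j\in T}v_j .
\]
This contradicts \eqref{eq:acceptance-fiber-separation} whenever $S$ and $T$ realize the exclusion pattern. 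Hence normal-basis exclusion holds.

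\textbf{Step 4: main obstacle, the exclusion-implies-constant step.} In the coordinate-frame case the dichotomy degenerates: exclusion forces every pair of bases to coincide, so $N_Q$ has a \emph{unique} coordinate basis $S$. Equivalently, the classes $q(e_i)$ vanish for $i\notin S$ and are independent for $i\in S$. Testing against all derivations of $\operatorname{Frac}(\mathcal A/Q)$ then gives $d\bar Z_i=0$ for $i\in S$, so these coordinates are constant on $V(Q)$, which contains the curve. The plan is to let $i_0=\max S$ and show that some $x_{i_0}$ which is nonconstant on $C$ is forced constant, contradicting the generation of $F$.

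Making this step rigorous is the delicate part. If $V(Q)$ is larger than the image of $C$, the argument above shows only that these coordinates are constant along $V(Q)$, which may fail to contradict the structure of $C$. In that case we instead reduce to a lower-dimensional situation: project to the coordinates not in $S$, where injectivity of $j\mapsto c_{ji}$ makes each fiber contain at most one center, and use the ratio condition \eqref{eq:acceptance-fiber-ratio} to absorb the contact lost at those centers, then induct on $m$. I expect this reduction, in particular keeping the constant $A_m(\sigma)$ uniform through the induction, to be the main difficulty.
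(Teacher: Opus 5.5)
Your Steps 1--3 follow the paper's route: the auxiliary by simplex counting, terminal vanishing with budget $\sigma N$, persistence along $I_{r\delta}$ with $\delta=\sigma N/(m+2)$, and the matched Taylor/B\'ezout comparison on the same prime $Q$. Your observation that exclusion forces a unique coordinate normal basis $S$, with $d\bar Z_i=0$ for $i\in S$, is also correct.

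The gap is in Step 4, which you leave unfinished and aim at the wrong contradiction. For $m\ge2$, a constant coordinate does not contradict the fact that $x_1,\ldots,x_m$ generate $F$: the curve may simply lie in a hyperplane $x_i=a$. Your worry that $V(Q)$ might be larger than the curve is also misplaced. Since $Q\subseteq\mathfrak p$, the relation $\bar Z_i=a$ in $\operatorname{Frac}(\mathcal A/Q)$ means $Z_i-a\in Q\subseteq\mathfrak p$, so $x_i\equiv a$ on $C$ directly. No projection to the complementary coordinates or induction on $m$ is needed, so the question of keeping $A_m(\sigma)$ uniform through an induction never arises.

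The missing idea is a one-function divisor count that uses coordinatewise injectivity and the ratio condition \eqref{eq:acceptance-fiber-ratio}, which your plan never actually uses.

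Once $x_i\equiv a$, every $p\in\mathcal S$ has $c_{ji}=a$ for its center. Injectivity of $j\mapsto c_{ji}$ then leaves a single center $c_{j_0}$. Moreover $\mathcal S\ne\varnothing$ by the assumed excess, since $d>0$.

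Pick a nonconstant coordinate $x_h$, which exists because $\operatorname{trdeg}_{\mathbb C}F=1$, and put $f=x_h-c_{j_0h}\ne0$. Because $f$ enters the minimum defining $\mu_p$, you get $v_h\mu_p\le\operatorname{ord}_p f$ for every $p\in\mathcal S$. The poles of $f$ are those of $x_h$, bounded pointwise by $w_h$ times the local term in $d$. Hence
\[
 v_h\sum_{p\in\mathcal S}\mu_p\le\deg(f)_0=\deg(f)_\infty\le w_h d .
\]
Combined with $(1+\sigma)w_h<v_h$, this gives $(1+\sigma)\sum_{p\in\mathcal S}\mu_p<d$, contradicting the excess hypothesis. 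This finishes the proof directly.
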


\begin{proof}
Some $x_i$ is nonconstant.  Consequently $\mathcal S$ is finite,
every contact minimum is nonempty and positive, and $d>0$.
Suppose, for a contradiction, that
\begin{equation}
 d<(1+\sigma)\sum_{p\in\mathcal S}\mu_p.
 \label{eq:acceptance-fiber-excess}
\end{equation}

\emph{The actual auxiliary polynomial.}
For every sufficiently large integer $N$, weighted-simplex counting
and \eqref{eq:acceptance-fiber-volume} give
\[
 \#\{\alpha:\textstyle\sum_iw_i\alpha_i\le N\}
 >K\#\{\beta:\textstyle\sum_iv_i\beta_i<(1+3\sigma)N\}.
\]
The ordinary coefficient maps
$P\mapsto[V^\beta]P(c_j+V)$ are linear conditions.  There is therefore
a nonzero polynomial $P\in R=\mathbb C[X_1,\ldots,X_m]$ of weighted
degree at most $N$ such that every displayed target coefficient is
zero at every center.  This is only a nonzero-kernel dimension count;
no surjectivity assertion is used.

\emph{Actual derivatives vanish along the curve.}
For a multi-index $\alpha$ of target cost
$\sum_i v_i\alpha_i\le\sigma N$, ordinary differentiation reduces
the target vanishing cutoff by at most that cost and does not increase
the source weighted degree.  Thus
\[
 \operatorname{ord}_p\bigl((\partial^\alpha P)(x)\bigr)
 \ge(1+2\sigma)N\mu_p\quad(p\in\mathcal S)
\]
if this rational function is nonzero.  Indeed its expansion at $c_j$
has only monomials $V^\beta$ of target weight at least
$(1+2\sigma)N$, and each substituted branch coordinate has order at
least $v_i\mu_p$.  Cancellation can only increase the order.
On the other hand its total pole degree is at most $Nd$: at each
point every source monomial of weight at most $N$ has pole order
bounded by $N$ times that point's maximum weighted coordinate pole.
The equality of total zero and pole degrees for a nonzero rational
function contradicts \eqref{eq:acceptance-fiber-excess}.  Hence
\begin{equation}
 (\partial^\alpha P)(x)=0
       \quad\hbox{whenever }\sum_i v_i\alpha_i\le\sigma N.
 \label{eq:acceptance-fiber-derivative-vanishing}
\end{equation}

\emph{A persistent prime.}
Let $\mathfrak p=\ker(R\to F)$ be the actual coordinate kernel.
It is prime and has height $m-1$.  Put
$\delta=\sigma N/(m+2)$ and
\[
 I_r=(\partial^\alpha P:\textstyle\sum_iv_i\alpha_i\le r\delta)
 \quad(0\le r\le m+2).
\]
Equation \eqref{eq:acceptance-fiber-derivative-vanishing} implies
$I_r\subseteq\mathfrak p$ for all these levels.  Since $P\ne0$,
every prime above an $I_r$ is nonzero.  If $\mathfrak p=(0)$ this is
already impossible.  Otherwise choose a minimal prime of $I_{m+2}$
inside $\mathfrak p$, then successively choose minimal primes of
$I_r$ inside the previously chosen one.  This gives a nested chain
of nonzero primes of height at most $m-1$.  A strict inclusion raises
height, so two adjacent primes agree.  Their common value $Q$ is
minimal above both $I_r$ and $I_{r+1}$ and lies inside $\mathfrak p$.

Every derivative of target cost at most $\delta$ of any element of
$I_r$ lies in $I_{r+1}\subseteq Q$.  This follows from the Leibniz
rule: differentiating a coefficient polynomial consumes some of the
budget, leaving no more than $\delta$ for its derivative-generator
factor.  In particular this assertion applies to the whole ideal,
not just its displayed generators.

\emph{The two lengths are the same length.}
Let $k=\operatorname{ht}Q\ge1$ and
$\ell=\operatorname{length}_{R_Q}(R_Q/I_rR_Q)$, which is finite
because $Q$ is minimal above $I_r$.  Work over
$F_Q=\operatorname{Frac}(R/Q)$ with the coordinate tangent space and
its normal quotient.  If $S$ is any coordinate normal basis, Corollary~\ref{cor:p1-ordinary-prime-upper} gives
\begin{equation}
 \ell\prod_{i\in S}w_i\le N^k.
 \label{eq:acceptance-fiber-length-upper}
\end{equation}
Its application is legitimate because every derivative generator of
$I_r$ still has source weighted degree at most $N$.  More explicitly,
one freezes the complementary coordinates at their values in $F_Q$;
the normal-basis condition preserves this localized length.  The
resulting $k$-variable isolated intersection has those same weighted
degree bounds, so weighted B\'ezout bounds its local multiplicity by
$N^k/\prod_{i\in S}w_i$.

For another coordinate normal basis $T$, take
$n_j=\lceil\delta/(kv_j)\rceil$ for $j\in T$ and enlarge $N$ in
advance so that all these integers are at least two.  They have
$\sum_{j\in T}(n_j-1)v_j\le\delta$.  The localized Taylor map in
the independent directions $\partial_{X_j}$ maps $I_r$ to the
rectangular monomial ideal $(Y_j^{n_j}:j\in T)$: all its lower
coefficients vanish by the preceding whole-ideal derivative assertion.
The normal-basis condition makes the induced cotangent map
surjective; successive powers of the nilpotent maximal ideal then
give the rectangular quotient of length $\prod_{j\in T}n_j$.
Thus Lemma~\ref{lem:p1-generic-taylor-lower} gives
\begin{equation}
 \frac{\delta^k}{k^k\prod_{j\in T}v_j}
       \le\prod_{j\in T}n_j\le\ell.
 \label{eq:acceptance-fiber-length-lower}
\end{equation}
Both estimates concern the same $Q,I_r,F_Q$, and localized length;
no length on a different component is substituted.

Combining \eqref{eq:acceptance-fiber-length-upper} and
\eqref{eq:acceptance-fiber-length-lower} gives, for any such bases,
\[
 \prod_{i\in S}w_i
 \le k^k\left(\frac{m+2}{\sigma}\right)^k
                    \prod_{j\in T}v_j
 \le A_m(\sigma)\prod_{j\in T}v_j.
\]
The last inequality follows from $k\le m$ and
$r^k\le(1+r)^{m+2}$ for $r=(m+2)/\sigma>0$.

\emph{Separation forces a constant coordinate.}
Two distinct normal bases have a greatest index in their symmetric
difference.  After interchanging them, this index belongs to
$S\setminus T$.  Their preceding weak product inequality contradicts
\eqref{eq:acceptance-fiber-separation}.  Thus the coordinate normal
basis is unique.  It is nonempty, since $Q\ne0$ in characteristic
zero.  Choose an index $i$ in this basis.  Its normal vector cannot
belong to the span of all the other coordinate normal vectors:
otherwise basis exchange would produce a basis omitting $i$.
Consequently every tangent vector has zero $i$th coordinate.

Every $\mathbb C$-derivation of $F_Q$ gives such a tangent vector
by its values on the coordinate functions.  All these derivations
therefore kill $\overline X_i$.  A finitely generated field extension
in characteristic zero admits a derivation taking any chosen
transcendental element to one: extend that element to a transcendence
basis and extend the corresponding derivation through the remaining
finite separable extension.  It follows that $\overline X_i$ is
algebraic over $\mathbb C$, hence belongs to $\mathbb C$.
Thus $X_i-a\in Q\subseteq\mathfrak p$ for some $a\in\mathbb C$,
and the original curve coordinate $x_i$ is constant.

\emph{The last step is a single-function divisor bound.}
The strict excess implies $\mathcal S\ne\varnothing$.  Every centered
point has $c_{ji}=a$ for the constant coordinate.  Coordinatewise
injectivity therefore permits only one center, say $c_{j_0}$.
Choose a nonconstant coordinate $x_h$.  Its difference
$f=x_h-c_{j_0h}$ is nonzero and has the same poles as $x_h$.
For every $p\in\mathcal S$,
$v_h\mu_p\le\operatorname{ord}_p(f)$.  Summing and using the
principal-divisor formula gives
\[
 v_h\sum_{p\in\mathcal S}\mu_p
 \le\deg(f)_0=\deg(f)_\infty
 \le w_h d.
\]
Together with $(1+\sigma)w_h<v_h$, this contradicts
\eqref{eq:acceptance-fiber-excess}.  The theorem follows.
\end{proof}

\paragraph{Exact connection with the saved PA sources.}
The proof just given follows the ordinary branch of
\texttt{CurveInequality.lean}, specifically
\lean{excess_implies_fibre_coordinate_constant} and
\lean{no_fibre_excess_of_constant_coordinate}.
The localized estimates used above are the concrete statements
\lean{PrimeWeightedBezout.weighted_prime_length_bound} and
\lean{OrdinaryMultiplicity.prime_rectangular_length_lower},
assembled in \lean{OrdinarySliceComparison.comparison_of_normal_basis}.
They are not premises named after the desired curve inequality.
The dimension count is
\lean{OrdinaryAuxiliaryJet.eventually_exists_auxiliaryPolynomial_nat};
the divisor argument is
\lean{CurveDerivativeVanishing.ordinary_words_vanish}.
This paragraph identifies existing source declarations; it records no
new compilation or new transitive-axiom audit.

\paragraph{Exact connection with the CM origin fiber.}
On the fiber $u=t=0$ the original function field is still the same
field and is generated by $z_1,\ldots,z_m$.  Its branch coordinates
are exactly $(0,z_1-c_{j1},\ldots,z_m-c_{jm})$, because $L(0)=0$.
The PA specialization $y=1$ gives exactly these branch coordinates,
since $y-1=0$ and $\log(1)=0$.  The saved definitions implement this
in \texttt{FibreContact.coordinates\_one},
\lean{PlaceCenteredBranch.logContact_one}, and
\texttt{CurveContactFamily.contact\_one}.  The zero branch is excluded
from the minimum, and the constant coordinate contributes zero pole
order by \texttt{FibreContact.weightedDegree\_one}.

The places are normalized discrete valuations of this same function
field, trivial on $\mathbb C$, with a uniformizer of order one.
They correspond to the points of its smooth projective model.
Residue fields are $\mathbb C$, so no residue-degree factor occurs;
the field is unchanged, so no ramification factor occurs either.
The set of centered places is the literal set where the coordinate
differences have positive order, not a chosen subset of that set.

For the CM assembly take $w_i$ as its additive source weights and
$v_i=w_i/\theta$.  Its condition
$K(1+3\sigma)^m\theta^m<1$ is precisely
\eqref{eq:acceptance-fiber-volume}.  Its separated products with
$C_4\ge A_m(\sigma)$ restrict to
\eqref{eq:acceptance-fiber-separation}; and
$(1+\sigma)\theta<1$ is precisely
\eqref{eq:acceptance-fiber-ratio}.  The full $(m+1)$-variable volume
condition is required by the saved logarithmic theorem's general
signature, but is not needed for this direct ordinary-fiber proof.

\section{Explicit separated weights and the origin curve inequality}
\label{sec:origin-curve-assembly}

The auxiliary-polynomial theorem can now be assembled with the preceding
origin-chart proofs. This section makes explicit one substantial reuse of
the old PA work: its ordinary constant-fiber curve inequality applies
without changing the curve or its valuation normalization. This is a
written theorem based on the local-length and normal-dichotomy inputs
listed below; it is not a new Lean endpoint or an EPA surjectivity theorem.

\subsection{The exact established inputs used in the assembly}

Besides the auxiliary theorem just proved, we use the following three
results. Stating them here fixes their role and prevents a conditional
geometric data structure from being mistaken for a proof of its fields.

\begin{enumerate}[label=(\Alph*),leftmargin=2em]
\item \textbf{Matched local lengths.} In the origin ring
$R_0=\mathbf C[u,t,Z_1,\ldots,Z_m]$, with relation $f=t^2-H(u)$,
let $Q$ be minimal over an actual derivative ideal $I_s$, with
$H'(\bar u)\ne0$. Let $k\ge1$ be the dimension of the intrinsic
normal quotient over $\operatorname{Frac}(R_0/Q)$.
If all generators of $I_s$ have source degree at most $M\ge8w_0$,
then, for every coordinate normal basis $S$ of size $k$,
\[
 \ell_Q(I_s)\prod_{i\in S}r_i\le4M^k,
 \qquad r=(w_0,w_1,\ldots,w_m).
\]
If also all cost-at-most-$\delta$ derivatives of every element of
$I_s$ lie in $Q$, then for every frame normal basis $T$, provided
the corresponding ceiling boxes have cutoffs at least two,
\[
 \frac{\delta^k}{k^k\prod_{j\in T}v_j}\le\ell_Q(I_s),
 \qquad v=(v_0,w_1/\theta,\ldots,w_m/\theta).
\]
These are the bounds of Section~\ref{sec:origin-quartic-length}, proved there from the generic localized Taylor
lower bound and fixed-hypersurface upper bound. The same prime, ideal,
field and length occur on both sides. The bound with factor one when
$0\in S$ is stronger but is not needed here.

\item \textbf{Normal exclusion implies constant projection.} For a
strict prime $Q\supsetneq(f)$ on the nonzero-Jacobian chart, suppose
there are no coordinate/frame normal bases $S,T$ with an index $i>0$
such that $i\in S\setminus T$ and their membership agrees above $i$.
Then $\bar u,\bar t$ are constants. This is
Theorem~\ref{thm:origin-normal-exclusion-projection}: its written proof uses the established
generic linear-algebra normal-basis dichotomy and the direct quartic
finite-extension nonexactness argument. The exclusion hypothesis will
be derived from (A), not assumed in the final curve theorem.

\item \textbf{Ordinary fiber curve inequality from PA.} Let a smooth
projective integral complex curve have function field generated by
$z_1,\ldots,z_m$, of transcendence degree one. Take coordinatewise
distinct centers $c_j\in\mathbf C^m$. At their centered points set
\[
 \mu_p^{\rm fib}=\min_i\frac{\operatorname{ord}_p(z_i-c_{ji})}{v_i},
 \qquad
 d_C^{\rm fib}=\sum_p\max_i\frac{n_p(z_i)}{w_i},
\]
where zero branch functions have infinite order and $n_p(0)=0$.
If the full and fiber volume inequalities, normal-product separation
with constant $A_m(\sigma)$, and coordinate inequalities
$(1+\sigma)w_i<v_i$ hold, then
\[
 (1+\sigma)\sum_p\mu_p^{\rm fib}\le d_C^{\rm fib}.
\]
This is the specialization $y=1$ of the already established declaration
\lean{OAI.PiExponent.CurveInequality.weighted_curve_inequality} in
\lean{Geometry/CurveInequalityIntrinsic.lean}. It is a curve theorem,
not the PA interpolation conclusion. Below we list and check all its
numerical conditions and identify its actual contacts. Its proof is
inherited from the preceding PA work, and has been reproved directly in Theorem~\ref{thm:acceptance-ordinary-fiber}.
\end{enumerate}

Here and below
\[
 A_n(s)=2(n+2)^{n+2}\left(1+\frac{n+2}{s}\right)^{n+2}
 \quad(s>0).
\]
The existing Lean declarations underlying (A) are the generic
\lean{rectangularTaylor_length_lower} and
\lean{weighted_prime_hypersurface_length_bound}; their checked reuse
interfaces are listed in the earlier reference audit. Their quartic
specializations and the present assembly are written proofs only.

\subsection{All thresholds can be chosen before the centers}

Fix $m,K\ge1$ and positive rational $w_0,v_0,\theta$ with
\begin{equation}\label{eq:curve-base-volumes}
 \theta<1,\qquad
 \alpha:=K(w_0/v_0)\theta^m<1,\qquad
 \beta:=K\theta^m<1.
\end{equation}
The previously proposed stronger origin-EPA condition
$2K(w_0/v_0)\theta^m<1$ implies the middle inequality; the curve
argument itself needs only \eqref{eq:curve-base-volumes}. The actual globalization below uses the weaker conditions displayed here.

By continuity at zero, choose a positive rational $\sigma$ so small that
\begin{equation}\label{eq:curve-sigma-conditions}
 (1+3\sigma)^{m+1}\alpha<1,\qquad
 (1+3\sigma)^m\beta<1,\qquad (1+\sigma)\theta<1.
\end{equation}
Set
\[
 \rho=1+3\sigma,\quad
 \varepsilon=\min\left\{\sigma,
       \frac{\sigma v_0}{4(1+\sigma)w_0}\right\},\quad
 g=1+\frac{2w_0}{v_0}\varepsilon,
\]
\[
 C_4=A_m(\sigma)+4\,2^{m+3}A_{m+1}(\varepsilon),\qquad
 q_* =\theta^m\frac{\min(1,w_0)}{\max(1,v_0)}.
\]
These are positive rational constants chosen before any additive weights,
centers or curves. Choose the weights successively subject to
\begin{equation}\label{eq:explicit-origin-threshold}
 w_i>\max\left\{1,\frac{C_4}{q_*}\prod_{1\le j<i}w_j\right\}
 \quad(1\le i\le m).
\end{equation}
The empty product is one.

\begin{lemma}[Direct normal-product separation]
\label{lem:origin-explicit-separation}
For index sets $S,T\subseteq\{0,\ldots,m\}$ of equal cardinality,
suppose $i>0$ belongs to $S\setminus T$ and $S,T$ have equal
membership at every index greater than $i$. Then
\[
 C_4\prod_{j\in T}v_j<\prod_{j\in S}r_j,
 \qquad r_0=w_0,\ r_i=w_i,\ v_i=w_i/\theta\ (i>0).
\]
\end{lemma}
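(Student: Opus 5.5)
The plan is to compare the two products index by index: cancel the common indices above $i$, let $w_i$ carry the whole gap, and bound everything below $i$ by the product of the earlier weights.

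\emph{Step 1 (common indices above $i$).} Write $S=S_{>i}\cup\{i\}\cup S_{<i}$ and $T=T_{>i}\cup T_{<i}$, where $S_{>i}=T_{>i}$ by hypothesis and $i\notin T$. Since $|S|=|T|$, we get $|T_{<i}|=|S_{<i}|+1$. For a common index $j>0$ we have $v_j=w_j/\theta$ and $r_j=w_j$, so the factor $\prod_{j\in S_{>i}}$ on the left exceeds the one on the right by $\theta^{-|S_{>i}|}$. This looks bad for us, and its effect is bounded only by $\theta^{-m}$. That is why $q_*$ contains the factor $\theta^m$.

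\emph{Step 2 (reduction to a single inequality).} The claim therefore follows from
\[
 C_4\,\theta^{-|S_{>i}|}\prod_{j\in T_{<i}}v_j<w_i\prod_{j\in S_{<i}}r_j .
\]
Since every $r_j$ is at least $\min(1,w_0)$ (using $w_j>1$ for $j>0$), the right side is at least $w_i\min(1,w_0)$. The index $0$ contributes $r_0=w_0$ when it lies in $S_{<i}$. It is the only index with $r_0\neq\theta v_0$, and it is handled by the $\min(1,w_0)$ and $\max(1,v_0)$ in $q_*$.

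For the left side, $T_{<i}\subseteq\{0,\ldots,i-1\}$. Hence $\prod_{j\in T_{<i}}v_j\le\max(1,v_0)\,\theta^{-|T_{<i}\setminus\{0\}|}\prod_{1\le j<i}w_j$, using $w_j>1$ to insert the missing indices. The powers of $\theta^{-1}$ from $S_{>i}$ and from $T_{<i}\setminus\{0\}$ come from disjoint index sets. Their total exponent is therefore at most $m$, so together they contribute at most $\theta^{-m}$.

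\emph{Step 3 (conclusion).} The left side is thus at most
\[
 C_4\,\theta^{-m}\max(1,v_0)\prod_{1\le j<i}w_j=\frac{C_4}{q_*}\min(1,w_0)\prod_{1\le j<i}w_j .
\]
By \eqref{eq:explicit-origin-threshold} this is strictly less than $\min(1,w_0)\,w_i$, which in turn is at most the right side.

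\emph{Main obstacle.} The delicate point is the exponent count for $\theta^{-1}$. We must check that the indices above $i$ and the $T$-indices below $i$ never overlap, so that together they produce at most $m$ factors. We must also handle the index $0$ in both $S$ and $T$ uniformly. If the count can exceed $m$ (for instance when $0\in T$), I would absorb the surplus using $w_j>1$ and $\theta<1$, or weaken the argument to carry a factor $\theta^{-m-1}$ and check that this is still dominated.
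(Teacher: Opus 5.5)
Your argument is correct and is essentially the paper's proof. There the ratio $\prod_{j\in S}r_j/\prod_{j\in T}v_j$ is written as $\theta^{|T\setminus\{0\}|}$ times the index-$0$ factor times a quotient of additive weights; the first two factors are bounded below by $q_*$, the numerator contains $w_i$, and every uncancelled denominator index lies below $i$. The worry in your last paragraph is moot: index $0$ contributes $v_0$, not a factor $\theta^{-1}$, so the total exponent of $\theta^{-1}$ is $|T\setminus\{0\}|\le m-1$, and your Steps 1--3 stand as written.
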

\begin{proof}
Writing $S_+=S\setminus\{0\}$ and $T_+=T\setminus\{0\}$, the ratio
of the right product to the product of target weights is exactly
\[
 \theta^{|T_+|}
 \frac{w_0^{\mathbf1_{0\in S}}}{v_0^{\mathbf1_{0\in T}}}
 \frac{\prod_{j\in S_+\setminus T_+}w_j}
      {\prod_{j\in T_+\setminus S_+}w_j}.
\]
The first two factors are at least $q_*$. Every remaining denominator
index lies below $i$, the numerator contains $w_i$, and all the other
weights exceed one. The ratio is therefore at least
$q_*w_i/\prod_{1\le j<i}w_j$, which is strictly greater than $C_4$ by
\eqref{eq:explicit-origin-threshold}.
\end{proof}

Since $C_4\ge A_m(\sigma)$, this also supplies the ordinary separation
condition in input (C). The two volume conditions in (C) are exactly
the first two inequalities of \eqref{eq:curve-sigma-conditions}, because
\[
 \frac{\prod_{i=0}^m r_i}{\prod_{i=0}^m v_i}
   =\frac{w_0}{v_0}\theta^m,
 \qquad
 \frac{\prod_{i=1}^m r_i}{\prod_{i=1}^m v_i}=\theta^m.
\]
Its coordinate conditions follow from $(1+\sigma)\theta<1$.
The quartic derivative margin is separately proved in the auxiliary
section. It is not imported from the old cubic numerical record.

\subsection{Excess contact forces the origin fiber}

Let $C$ be a smooth projective integral complex curve, with function
field generated by rational functions $u,t,z_1,\ldots,z_m$ satisfying
$t^2=H(u)$, and with transcendence degree one. Let $\mathcal S$ be
all points where these functions are regular and have value
$(0,0,c_j)$ for one of the chosen centers. This is a finite set:
choose any nonconstant coordinate; for each center its prescribed
value has only finitely many preimages. Distinct centers cannot
label the same point. Define $\mu_p$ and $d_C$ using the exact branch
coordinates and pole orders of the preceding contact supplement.
In particular $d_C>0$.

\begin{proposition}[Excess-to-fiber consequence]
\label{prop:origin-excess-fiber}
Under the numerical choices above and inputs (A)--(B), if
\[
 d_C<(1+\sigma)\sum_{p\in\mathcal S}\mu_p,
\]
then $u=t=0$ as rational functions on $C$.
\end{proposition}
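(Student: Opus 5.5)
We argue by contradiction from the excess hypothesis and chain the results already established. Assume $d_C<(1+\sigma)\sum_{p\in\mathcal S}\mu_p$. The numerical choices \eqref{eq:curve-sigma-conditions} contain the auxiliary-volume condition \eqref{eq:origin-auxiliary-volume} with $\rho=1+3\sigma$. So Theorem~\ref{thm:origin-polar-auxiliary} supplies, for all large $N$, a nonzero reduced $P\in\mathcal V_N$ with $P\notin(f)$ and $\Phi_{c_j}(P)\in\mathfrak j_{\rho N}$ at every center. We also enlarge $N$ so that $N\ge 8w_0$ and $\varepsilon N/(m+3)>(m+1)\max_i v_i$. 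The parameter conversion of Section~\ref{sec:origin-auxiliary} then applies with $R=\rho N$ and $B=\varepsilon N=(m+3)\delta$. It uses the strict quartic margin \eqref{eq:origin-quartic-strict-margin} to verify \eqref{eq:terminal-excess}. Theorem~\ref{thm:contact-terminal} and its corollary then give the curve prime $\mathfrak p$, of height $m+1$, containing $(f)$ and every $D_WP$ with $c(W)\le B$, and satisfying $\mathfrak p\subseteq M_c$ for a centered point. Note that $\mathcal S\neq\varnothing$, since the excess forces a positive contact sum.

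Next we apply the origin-chart persistence theorem. It yields $r<m+3$ and a prime $Q$ with $(f)\subsetneq Q\subseteq\mathfrak p$ and $2\le\operatorname{ht}Q\le m+1$. This $Q$ is minimal over $I_{r\delta}$ and $I_{(r+1)\delta}$, and all cost-$\le\delta$ derivatives of $I_{r\delta}$ lie in $Q$. Because $Q\subseteq M_c$, we have $H'(\bar u)\ne0$. Proposition~\ref{prop:origin-proper-tangent} gives a normal quotient of dimension $k$ with $1\le k\le m+1$. The source-degree bound gives generators of degree at most $gN\ge 8w_0$.

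Input (A) now applies to the same $Q$ and $I_{r\delta}$. Combining its two bounds for any coordinate normal basis $S$ and frame normal basis $T$ gives
\[
\prod_{i\in S}r_i\le 4g^kk^k\Bigl(\frac{m+3}{\varepsilon}\Bigr)^k\prod_{j\in T}v_j\le C_4\prod_{j\in T}v_j,
\]
using $g<3/2$, $k\le m+1$ and the definition of $A_{m+1}(\varepsilon)$. Suppose $S,T$ had an index $i>0$ in $S\setminus T$ with agreeing membership above $i$. Then Lemma~\ref{lem:origin-explicit-separation} gives the strict reverse inequality, a contradiction. Hence normal exclusion holds, and input (B) (Theorem~\ref{thm:origin-normal-exclusion-projection}) makes $\bar u,\bar t$ constants in $\operatorname{Frac}(R_0/Q)$. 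So $u-a_0$ and $t-b_0$ lie in $Q\subseteq\mathfrak p\subseteq M_c$, which forces $a_0=b_0=0$. Therefore $u,t\in\mathfrak p$, which means $u=t=0$ as rational functions on $C$.

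The step we expect to be the main obstacle is the constant bookkeeping in the length comparison. We must confirm that $4g^kk^k((m+3)/\varepsilon)^k$ is dominated by $4\,2^{m+3}A_{m+1}(\varepsilon)$ for every $k\le m+1$, using $g^k\le 2^{m+3}$ and $k^k\le(m+3)^{m+3}$. We must also confirm that the rectangle cutoffs are at least two, which is guaranteed by the enlargement of $N$ above. The remaining steps are direct invocations of the earlier results.
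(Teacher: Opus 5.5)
Your proposal is correct and follows the paper's own proof essentially step for step. The sequence is the same: the auxiliary polynomial with $\rho=1+3\sigma$, terminal vanishing via the quartic margin, the curve prime inside $M_c$, persistence, the matched lengths of input (A) bounded by $C_4$, separation to exclude forbidden basis pairs, and then input (B) together with evaluation at the center. Your explicit check that $4g^kk^k((m+3)/\varepsilon)^k\le 4\,2^{m+3}A_{m+1}(\varepsilon)$ fills in the bookkeeping that the paper compresses into a one-line remark.
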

\begin{proof}
Choose $N$ large enough for the auxiliary theorem, $N\ge8w_0$, and
\[
 \delta:=\frac{\varepsilon N}{m+3}
      >(m+1)\max_{0\le i\le m}v_i.
\]
The auxiliary $P\notin(f)$ has source cutoff $N$ and packet contact
cutoff $\rho N$ at every center. The quartic scalar margin and the
excess hypothesis, as proved in the auxiliary section, force all
derivative words of cost at most $\varepsilon N=(m+3)\delta$ to
vanish in $\mathbf C(C)$.

Let $\mathfrak p$ be the kernel of coordinate evaluation in this same
field. It has height $m+1$. The sum of contacts is positive under
the assumed strict excess, so choose a centered point $p_0$ with
center $c$. Then $\mathfrak p\subseteq M_c$.
The origin persistence theorem supplies an integer $0\le r<m+3$
and a strict prime $Q\subseteq\mathfrak p$, minimal over both
$I_{r\delta}$ and $I_{(r+1)\delta}$, such that every derivative of
cost at most $\delta$ of every element of $I_{r\delta}$ lies in $Q$.
Since $Q\subseteq M_c$, its Jacobian is nonzero.

Put $s=r\delta$. Every generator of $I_s$ has source cutoff at most
$M=gN$: the fixed relation has cutoff $8w_0\le N$, and derivative
generators have cutoff at most $N+(2w_0/v_0)s\le gN$. Its intrinsic
normal space has dimension $1\le k\le m+1$. The chosen bound on
$\delta$ makes every ceiling cutoff $\lceil\delta/(kv_j)\rceil$
at least two, and each associated rectangular budget is at most
$\delta$. Thus input (A), applied to the same $Q$ and $I_s$, gives
for every coordinate/frame normal-basis pair $S,T$
\[
 \prod_{i\in S}r_i
 \le4g^k k^k\left(\frac{m+3}{\varepsilon}\right)^k
                \prod_{j\in T}v_j
 \le C_4\prod_{j\in T}v_j.
\]
For the last inequality use $g<3/2<2$, $k\le m+1$, and the
definition of $A_{m+1}(\varepsilon)$: its factors dominate
$k^k((m+3)/\varepsilon)^k$, while $2^{m+3}$ dominates $g^k$.
Lemma~\ref{lem:origin-explicit-separation} excludes every forbidden
pair of normal bases. Input (B) now gives constants $u_0,t_0$ in
the residue field of $Q$.

Equivalently $u-u_0,t-t_0\in Q$. Since $Q\subseteq M_c$, evaluation
at the center forces $u_0=t_0=0$. Finally $Q\subseteq\mathfrak p$
implies $u=t=0$ on the original curve, as asserted.
\end{proof}

\subsection{The precise PA fiber specialization}

On the fiber $u=t=0$, the curve field is generated by the additive
coordinates alone. At a centered point the actual origin branch vector is
\[
 (t,z_1-c_{j1}-L(t),\ldots,z_m-c_{jm}-L(t))
      =(0,z_1-c_{j1},\ldots,z_m-c_{jm}).
\]
The logarithmic PA curve theorem at multiplicative coordinate $y=1$
has exactly the same vector: $y-1=0$ and its formal logarithm is zero.
Both use normalized orders on the same curve function field. The sets
of centered places coincide, and no ramification or residue-degree
factor is inserted. Constants have no poles, so the source weighted
pole degrees also coincide:
\[
 d_C=\sum_p\max_i n_p(z_i)/w_i=d_C^{\rm fib}.
\]
The target weight $v_0$ is irrelevant to the minimum at these points
because its coordinate is identically zero and has order infinity.
Nonconstancy ensures that some remaining branch coordinate is nonzero.
The old curve theorem therefore applies exactly, with the numerical
conditions already checked and the assumed coordinatewise distinctness.

\begin{theorem}[Uniform weighted origin curve inequality]
\label{thm:origin-curve-inequality}
Fix the nonsingular integral short cubic and origin chart, and rational
base data satisfying \eqref{eq:curve-base-volumes}. Choose $\sigma$ and
the successive rational thresholds by
\eqref{eq:curve-sigma-conditions}--\eqref{eq:explicit-origin-threshold}.
For every weight list above those thresholds, every coordinatewise
distinct array of $K$ centers, and every curve with coordinate functions
as above, one has
\[
 (1+\sigma)\sum_{p\in\mathcal S}\mu_p\le d_C.
\]
This is a written consequence of the proved auxiliary and origin
arguments and the explicitly stated established inputs (A)--(C).
\end{theorem}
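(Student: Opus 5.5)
We argue by contradiction, combining Proposition~\ref{prop:origin-excess-fiber} with input (C). Fix the curve $C$, the centers and the weights as in the statement, and suppose
\[
 d_C<(1+\sigma)\sum_{p\in\mathcal S}\mu_p .
\]
The numerical hypotheses of Proposition~\ref{prop:origin-excess-fiber} are exactly \eqref{eq:curve-base-volumes}, \eqref{eq:curve-sigma-conditions} and \eqref{eq:explicit-origin-threshold}. Those conditions were chosen before the centers and the curve, so every threshold used (the auxiliary $N_0$, $N\ge 8w_0$, and the rectangle condition on $\delta$) depends only on $m,K$ and the weights. This yields the claimed uniformity. The proposition then gives $u=t=0$ identically on $C$.

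\textbf{Reduction to the fibre.} We pass to the ordinary fibre problem. Because the function field of $C$ is generated by $u,t,z_1,\dots,z_m$ and $u,t$ vanish, it is generated by $z_1,\dots,z_m$, and its transcendence degree is still one. By the fibre specialization paragraph, at each centered point the branch vector reduces to $(0,z_1-c_{j1},\dots,z_m-c_{jm})$, since $L(0)=0$. The $t$-coordinate is identically zero and is therefore omitted from the minimum, so $\mu_p=\mu_p^{\rm fib}$. The set $\mathcal S$ is literally the same set of places, since the condition $u=t=0$ at $p$ is automatic. Constants contribute no poles, so $d_C=d_C^{\rm fib}$.

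\textbf{Verifying (C) and concluding.} Next we check the hypotheses of input (C), equivalently of Theorem~\ref{thm:acceptance-ordinary-fiber}, with $v_i=w_i/\theta$:
\begin{itemize}
\item the fibre volume condition $K(1+3\sigma)^m\theta^m<1$ is the second inequality of \eqref{eq:curve-sigma-conditions};
\item $(1+\sigma)w_i<v_i$ follows from $(1+\sigma)\theta<1$;
\item the separation condition follows from Lemma~\ref{lem:origin-explicit-separation}, restricted to index sets inside $\{1,\dots,m\}$, together with $C_4\ge A_m(\sigma)$;
\item the centers are coordinatewise distinct by hypothesis.
\end{itemize}
Theorem~\ref{thm:acceptance-ordinary-fiber} therefore gives $(1+\sigma)\sum_p\mu_p^{\rm fib}\le d_C^{\rm fib}$. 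Substituting the identifications above, this reads $(1+\sigma)\sum_p\mu_p\le d_C$, which contradicts the assumed excess.

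\textbf{Expected difficulty.} The main delicate point is the bookkeeping in the reduction to the fibre, rather than any new estimate. We must confirm that the contact minimum and the pole quantity defined with the quartic source weights $(2w_0,4w_0,w_i)$ coincide with the fibre quantities once $u=t=0$. We must also handle the degenerate case in which some $z_i$ is constant. In that case it still contributes zero pole order and is excluded from the minimum exactly when its branch difference vanishes, and a nonconstant $z_h$ exists because the transcendence degree is one. Everything else is a direct appeal to earlier results.
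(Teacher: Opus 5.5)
Your proposal is correct and is essentially the paper's proof: assume the strict excess, use Proposition~\ref{prop:origin-excess-fiber} to force $u=t=0$, identify $\mathcal S$, $\mu_p$ and $d_C$ with the ordinary fibre quantities via the $y=1$ specialization, and then apply input (C), with its numerical hypotheses checked from \eqref{eq:curve-sigma-conditions} and Lemma~\ref{lem:origin-explicit-separation}. The only cosmetic difference is that you invoke the direct Theorem~\ref{thm:acceptance-ordinary-fiber}, which needs only the fibre volume inequality, whereas the paper routes this step through (C) and also records that the full volume condition holds.
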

\begin{proof}
If the inequality failed, Proposition~\ref{prop:origin-excess-fiber}
would force $u=t=0$. The preceding specialization identifies all
terms with those of input (C), which then gives the desired inequality,
a contradiction. If there are no centered points, the conclusion
already follows from $d_C\ge0$.
\end{proof}

The three inputs in this assembly are now proved in the preceding
sections. The next section supplies the passage to the actual packet
map; neither a geometric interface nor an auxiliary-polynomial kernel
is substituted for that passage.

\section{From the origin curve inequality to the actual packet map}
\label{sec:origin-geometric-assembly}

This section supplies the geometric bridge for the origin-polar
source.  It follows the organization of the earlier lemniscate
argument, but constructs the compactification and its section
comparison directly from the actual source.  In particular it does
not assume projective normality or an equality of all section spaces.

Fix the chosen smooth CM curve $y^2=x^3+Ax+B$, with
$4A^3+27B^2\ne0$, and choose $a_*$ with $a_*^3+Aa_*+B\ne0$.
Put
\[
 H(U)=U+3a_*U^2+(3a_*^2+A)U^3+(a_*^3+Aa_*+B)U^4.
\]
Its origin chart has $u=(x-a_*)^{-1}$ and $t=y(x-a_*)^{-2}$.
Let $m,K\geq1$, and write
\[
 S=\mathbf C[u,t,Z_1,\ldots,Z_m]/(t^2-H(u)),
 \qquad X^\circ=\operatorname{Spec}S.
\]
Let $\mathcal F_h\subset S$ be the span of the reduced monomials
\[
 u^jt^bZ^\alpha,\qquad b\in\{0,1\},\qquad
 2w_0(j+2b)+\sum_iw_i\alpha_i\leq h.
\]
All weights are positive rational numbers.  Reduction by
$t^2=H(u)$ never increases weight, because every term of $H(u)$
has weight at most $8w_0$.  Consequently
\begin{equation}
 \mathcal F_h\mathcal F_{h'}\subseteq\mathcal F_{h+h'}.
 \label{eq:origin-filtered-products}
\end{equation}
The target weights are $v=(v_0,w_1/\theta,\ldots,w_m/\theta)$,
where $v_0>0$ and $0<\theta<1$ are rational.
For clarity, the actual series are determined by
\[
 U(0)=0,\quad H(U(T))=T^2,\qquad
 L(0)=0,\quad L'(T)=-H'(U(T))^{-1},
\]
and
$\Phi_c(P)=P(U(T),T,c_i+V_i+L(T))$.
They exist uniquely because $H'(0)=1$ and the constant field has
characteristic zero; $U(T)$ has order two.

We use the established uniform curve inequality with a fixed
rational $\sigma>0$:
\begin{equation}
 (1+\sigma)\sum_{p\text{ over the selected centers}}\mu_p
       \leq d_C.
 \label{eq:origin-input-curve-inequality}
\end{equation}
Here $C$ is the smooth projective model of the function field of
an integral affine curve in $X^\circ$, the tuple $(u,t,Z_i)$
generates that function field, and
\[
 d_C=\sum_{p\in C}
       \max\left(0,\frac{\operatorname{pole}_p(u)}{2w_0},
                     \frac{\operatorname{pole}_p(t)}{4w_0},
                     \max_i\frac{\operatorname{pole}_p(Z_i)}{w_i}\right).
\]
The nonnegative pole order of the zero function is defined to be
zero.  At a branch over $(u,t,Z)=(0,0,c_j)$,
\[
 \mu_p=\min\left\{
       \frac{\operatorname{ord}_p(t)}{v_0},
       \frac{\operatorname{ord}_p(Z_i-c_{ji}-L(t))}{v_i}:1\leq i\leq m
       \right\},
\]
with the order of a zero series taken as $+\infty$.
This definition uses the actual formal logarithm on the completed
local branch.  The preceding P1 curve argument establishes
\eqref{eq:origin-input-curve-inequality} under its stated
successive-weight and center hypotheses.  The following bridge
uses that precise statement as its input.

\subsection{A compactification built from one complete source space}

Choose a positive integer $D$ such that all the numbers
\begin{equation}
 \frac{D}{2w_0},\quad\frac{D}{4w_0},\quad
 \frac{D}{w_i},\quad
 a=\frac{D}{v_0},\quad b_i=\frac{D}{v_i}
 \label{eq:origin-radius}
\end{equation}
are positive integers.  A common multiple exists because the
weights are positive rationals.  This choice precedes the centers.
It also ensures that $1,u,t,Z_i$ belong to $\mathcal F_D$.

Choose a basis $1,f_1,\ldots,f_M$ of $\mathcal F_D$ and map
\[
 X^\circ\longrightarrow\mathbf P^M,
       \qquad x\longmapsto[1:f_1(x):\cdots:f_M(x)].
\]
On the chart where the first coordinate is nonzero this is a
closed immersion: the $f_i$ generate $S$ as an algebra, because
their linear span contains $u,t,Z_i$.  Let $X$ be its reduced
projective closure and let $\mathcal H=\mathcal O_X(1)$.
Then $X$ is an integral projective variety, $\mathcal H$ is very
ample, and its intersection with that standard affine chart is
exactly $X^\circ$.  The section represented by the constant $1$
trivializes $\mathcal H$ on $X^\circ$.

\begin{lemma}[Eventual section representatives]
\label{lem:origin-section-representatives}
For all sufficiently large integers $n$, every section of
$\mathcal H^{\otimes n}$, restricted to $X^\circ$ using this
trivialization, is represented by an element of $\mathcal F_{nD}$.
\end{lemma}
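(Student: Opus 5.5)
The plan is to identify the filtration $\mathcal F_h$ with pole orders along the boundary $X\setminus X^\circ$, measured by two explicit divisorial valuations. Every section of $\mathcal H^{\otimes n}$ satisfies the corresponding pole bound, with constant $nD$. The bound appears to hold for every $n\ge1$; the eventual form of the statement is all that is needed, so any convenient threshold may be absorbed.

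\emph{Step 1: two weighted valuations at infinity.} Grade $\mathbf C[u,t,Z]$ by the source weights $(2w_0,4w_0,w_1,\ldots,w_m)$. The leading form of $t^2-H(u)$ is $t^2-h_4u^4$, with $h_4=a_*^3+Aa_*+B\ne0$, and it factors as $(t-\gamma u^2)(t+\gamma u^2)$ with $\gamma^2=h_4$. For each sign define a monomial-type valuation $\nu_\pm$ on $\operatorname{Frac}S$. For $g=p+tq$ in reduced form, $-\nu_\pm(g)$ is the weight of the top homogeneous part of $p\pm\gamma u^2q$, computed in $\mathbf C[u,Z]$ with $u,Z$ of weights $2w_0,w_i$. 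This is the standard construction of Rees valuations of the filtration. It is a valuation because $\mathbf C[u,Z]$ with a positive monomial weight has a domain as associated graded ring, and $t\mapsto\pm\gamma u^2$ kills the leading relation. The key identity to prove is
\[
 \deg_w(g)=\max\{-\nu_+(g),-\nu_-(g)\}\qquad(g\in S\setminus\{0\}),
\]
where $\deg_w$ is the reduced-form weight, so that $g\in\mathcal F_h$ iff $\deg_w(g)\le h$. Write the top parts of $p$ and $q$ at the relevant weight. Since $t$ has weight $4w_0=\operatorname{wt}(u^2)$, the top parts of $p+\gamma u^2q$ and $p-\gamma u^2q$ cannot both cancel, because their sum and difference are $2p$ and $2\gamma u^2q$. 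This gives ``$\ge$''; ``$\le$'' is immediate.

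\emph{Step 2: centers on the normalization and the coefficient $D$.} Let $\pi:\widetilde X\to X$ be the normalization. The variety $X^\circ$ is smooth, being the product of a smooth affine curve with $\mathbf A^m$ because $H$ is squarefree by Lemma~\ref{lem:origin-quartic-squarefree}, so $\pi$ is an isomorphism over $X^\circ$. Each $\nu_\pm$ is trivial on $\mathbf C$, is nonnegative on the local ring of its center, and has transcendence degree $m$ of its residue field. Its center on $\widetilde X$ therefore lies in the boundary, and $\nu_\pm$ is a positive multiple of $\operatorname{ord}_{E_\pm}$ for a boundary prime divisor $E_\pm$; I would check the transcendence degree from the residue ring of weight-zero ratios. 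The section $1$ of $\mathcal H$ has divisor supported on the boundary. Its $\nu_\pm$-value equals $-\min_k\nu_\pm(f_k)$ over the basis of $\mathcal F_D$, because locally near the center some $f_k$ generates $\mathcal H$. Since $D/(2w_0)$ and $D/w_i$ are integers by \eqref{eq:origin-radius}, $u^{D/(2w_0)}\in\mathcal F_D$ has $\nu_\pm=-D$. By Step 1 nothing in $\mathcal F_D$ is more negative. Hence $\nu_\pm(\operatorname{div}1)=D$ in this normalization.

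\emph{Step 3: conclusion.} Let $s\in H^0(X,\mathcal H^{\otimes n})$ and let $g=s/1^n$ be its restriction, a regular function on $X^\circ$. Pulled back to $\widetilde X$, $s$ is a regular section, so $\nu_\pm(s)\ge0$ in any local trivialization. This gives $\nu_\pm(g)\ge-n\nu_\pm(1)=-nD$, and Step 1 then yields $\deg_w(g)\le nD$, i.e.\ $g\in\mathcal F_{nD}$. The main obstacle is Step 2: verifying rigorously that $\nu_\pm$ is divisorial with center in the boundary of $\widetilde X$, and that the local generator of $\mathcal H$ at that center is computed by the minimum over $\mathcal F_D$. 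I would first try to do this on the explicit affine chart where the coordinate $u^{D/(2w_0)}$ of $\mathbf P^M$ is nonzero, writing all other coordinates as ratios of nonnegative $\nu_\pm$-value. If that chart analysis becomes awkward, the fallback is a valuative criterion argument on the projective closure.
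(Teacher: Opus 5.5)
Your route differs from the paper's and can be completed; it even gives the inclusion for every $n\ge1$. However, Step 1 as written does not produce valuations. If $-\nu_\pm(p+tq)$ is read off from the top part of $p\pm\gamma u^2q$, then the nonzero element $t-\gamma u^2\in S$ gets $\nu_+=+\infty$. Multiplicativity also fails: $(t-\gamma u^2)(t+\gamma u^2)=H(u)-h_4u^4=u+3a_*u^2+(3a_*^2+A)u^3$ is nonzero and has finite value.

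The substitution $t\mapsto\pm\gamma u^2$ kills only the leading form $t^2-h_4u^4$. So it acts on the associated graded ring, not on $S$, and reading a value off a minimal prime of that graded ring works only for elements whose initial form avoids the prime. Use the actual branches at infinity instead:
\begin{itemize}
\item let $v_w$ be the monomial valuation on $\mathbf C(u,Z)$ with $v_w(u)=-2w_0$, $v_w(Z_i)=-w_i$, and let $\widehat{\mathcal K}$ be its completion;
\item write $H(u)=h_4u^4(1+\epsilon)$ with $v_w(\epsilon)\ge2w_0>0$;
\item let $r=\gamma u^2(1+\epsilon)^{1/2}\in\widehat{\mathcal K}$, defined by the binomial series.
\end{itemize}
Then $\nu_\pm(p+tq)=v_w(p\pm rq)$ are genuine valuations on $\operatorname{Frac}S$. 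The maps $t\mapsto\pm r$ are injective because $H$ is not a square in $\mathbf C(u,Z)$. Since $v_w(r)=-4w_0$, your sum/difference argument with $2p$ and $2rq$ gives $\deg_w(g)=\max\{-\nu_+(g),-\nu_-(g)\}$ unchanged.

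Step 2 makes an invalid inference. Transcendence degree $m$ of the residue field makes $\nu_\pm$ divisorial on \emph{some} birational model. Its center on $\widetilde X$ could still have codimension larger than one, so this does not give you a prime divisor $E_\pm\subset\widetilde X$.

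You need neither this nor the normalization. Let $\ell$ be the linear form in the projective coordinates that restricts to $u^{D/(2w_0)}\in\mathcal F_D$. The coordinate ring of $X\cap\{\ell\ne0\}$ is generated by the ratios $f/u^{D/(2w_0)}$ for $f\in\{1,f_1,\ldots,f_M\}$. All of these have $\nu_\pm\ge0$, because $\nu_\pm(f)\ge-D$. A section there equals $\phi\,\ell^n$ with $\phi$ a polynomial in these ratios. Its representative on $X^\circ$ is therefore $g=\phi\,u^{nD/(2w_0)}$, which has $\nu_\pm(g)\ge-nD$, and the corrected Step 1 gives $g\in\mathcal F_{nD}$. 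This is the chart computation you list as your first attempt, and it finishes the proof directly.

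Compared with the paper: there, Serre vanishing for the ideal sheaf of $X\subset\mathbf P^M$ lifts each section, for $n$ large, to a degree-$n$ form in $1,f_1,\ldots,f_M$. Only the product rule $\mathcal F_h\mathcal F_{h'}\subseteq\mathcal F_{h+h'}$ is then used. That argument is shorter and insensitive to the geometry at infinity, but it is inherently eventual.

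The repaired valuative argument relies on two specific features: the two-branch structure at infinity (leading form $t^2-h_4u^4$ with two distinct factors, $\operatorname{wt}t=\operatorname{wt}u^2$) and the membership $u^{D/(2w_0)}\in\mathcal F_D$. In return it avoids cohomology and gives representatives for every $n\ge1$, which the paper deliberately does not assert.
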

\begin{proof}
For the fixed projective embedding, the ideal-sheaf sequence
\[
 0\longrightarrow\mathcal I_X(n)
 \longrightarrow\mathcal O_{\mathbf P^M}(n)
 \longrightarrow\mathcal O_X(n)\longrightarrow0
\]
and Serre vanishing give surjectivity on global sections for
$n$ sufficiently large.  Thus each section is the restriction of
a homogeneous polynomial of degree $n$ in the projective
coordinates.  On the affine chart its representative is a sum of
products of $n$ elements of $\mathcal F_D$, and hence belongs to
$\mathcal F_{nD}$ by \eqref{eq:origin-filtered-products}.
Reduction to $t$-degree at most one retains the cutoff.
The projective Serre-vanishing input is recorded in the Stacks
Project, Tag 01XO: \url{https://stacks.math.columbia.edu/tag/01XO}.
\end{proof}

Only this direction is needed.  We have not asserted that the
graded ring generated by $\mathcal F_D$ is normal, that it contains
every filtered piece in every degree, or that all global sections
have such representatives in small degrees.

\begin{lemma}[Exact hyperplane degree on the curve's own normalization]
\label{lem:origin-hyperplane-degree}
Let $B\subset X$ be an integral projective curve meeting
$X^\circ$, and let $\nu:C\to B$ be its normalization.  For the
coordinate functions on this same $C$,
\[
 \deg_C\nu^*\mathcal H=Dd_C.
\]
\end{lemma}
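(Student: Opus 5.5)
The plan is to compute $\deg_C\nu^*\mathcal H$ as the degree of the divisor of a section. On $X^\circ$ the section $s_0$ represented by the constant $1$ trivializes $\mathcal H$, and a general section is represented by a function $f\in\mathcal F_D$. Pulled back to $C$, $\nu^*\mathcal H$ is the line bundle generated by $\nu^*(1,f_1,\ldots,f_M)$. Since $\mathcal H$ is very ample and these sections are base-point free, the degree equals $\sum_{p\in C}\max_{0\le k\le M}\operatorname{pole}_p(f_k)$ with $f_0=1$. That is, at each $p$, the local generator of $\nu^*\mathcal H$ is $s_0$ times a function of valuation $-\max_k \operatorname{pole}_p(f_k)$. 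This is the standard formula $\deg\varphi^*\mathcal O(1)=\sum_p \max_k(-\operatorname{ord}_p f_k)$ for the map $C\to\mathbf P^M$ given by $[1:f_1:\cdots:f_M]$. It is the degree of the divisor of $s_0$ pulled back, computed as $\sum_p \operatorname{ord}_p$ of $s_0$ relative to a local generator, and it uses only that the map is defined everywhere on the smooth curve $C$.

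It then remains to show, pointwise, that $\max_{f\in\mathcal F_D}\operatorname{pole}_p(f)=D\,d_p$, where
\[
 d_p=\max\bigl(0,\operatorname{pole}_p(u)/2w_0,\operatorname{pole}_p(t)/4w_0,\max_i\operatorname{pole}_p(Z_i)/w_i\bigr).
\]
Here $B$ meets $X^\circ$, so $u,t,Z_i$ restrict to rational functions on $C$.

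\emph{Upper bound.} Take any reduced monomial $u^jt^bZ^\alpha$ of weight at most $D$. Its pole order at $p$ is at most $j\operatorname{pole}_p(u)+b\operatorname{pole}_p(t)+\sum\alpha_i\operatorname{pole}_p(Z_i)\le(2w_0j+4w_0b+\sum w_i\alpha_i)d_p\le Dd_p$. Linear combinations cannot exceed the maximum pole order.

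\emph{Lower bound.} The integrality choice \eqref{eq:origin-radius} puts $u^{D/2w_0}$, $u^{D/4w_0}t$ and $Z_i^{D/w_i}$ into $\mathcal F_D$. The monomial $u^{D/4w_0}t$ has reduced weight $2w_0(D/4w_0+2)$, which may exceed $D$, so for $t$ I will instead use $t^{D/4w_0}$. Reducing it via $t^2=H(u)$ does not increase weight, so it lies in $\mathcal F_D$ as a function on the curve. The function $t^{D/4w_0}$ has pole order exactly $(D/4w_0)\operatorname{pole}_p(t)$. Similarly $Z_i^{D/w_i}$ and $u^{D/2w_0}$ have pole orders exactly $D$ times the corresponding ratios, and $1$ gives $0$. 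Hence the maximum is at least $Dd_p$.

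Summing over $p$ gives $\deg_C\nu^*\mathcal H=D\sum_pd_p=Dd_C$. The main obstacle is justifying the degree formula at points of $C$ lying over $X\setminus X^\circ$ and at singular points of $B$. I would handle it by working directly on $C$. The morphism $C\to\mathbf P^M$ extends by valuative criteria, and at each $p$ one divides the coordinate tuple by a coordinate of maximal pole order to get a local generator. The degree of $\nu^*\mathcal O(1)$ is then the degree of $\operatorname{div}(\nu^*s_0)$, which equals $\sum_p\max_k\operatorname{pole}_p(f_k)$, with no projective normality needed. I must also check that the pole orders of $u,t,Z_i$ are finite, i.e.\ that these functions are not identically undefined on $C$; this holds because $B\cap X^\circ$ is dense in $B$.
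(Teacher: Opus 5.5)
Your proposal is correct and is essentially the paper's argument. You identify $\nu^*\mathcal H$ with the divisor $\sum_p\max_k\operatorname{pole}_p(f_k)\,p$ (with $f_0=1$) by clearing the maximal local pole of the coordinate tuple, bound each reduced monomial of weight at most $D$ by $Dd_p$, and attain every candidate with $u^{D/(2w_0)}$, $t^{D/(4w_0)}$ (reduced via $t^2=H(u)$) and $Z_i^{D/w_i}$, exactly as the paper does.

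Two cosmetic remarks. Your opening claim that a general section of $\mathcal H$ is represented in $\mathcal F_D$ is not needed, since only the coordinate sections enter; that claim would amount to linear normality, which the paper deliberately does not assert. The abandoned $u^{D/(4w_0)}t$ detour should simply be deleted.
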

\begin{proof}
For functions $1,f_1,\ldots,f_M$ defining the morphism to
projective space, the associated line bundle has divisor
\[
 \sum_{p\in C}\max(0,\operatorname{pole}_p(f_1),\ldots,
                                  \operatorname{pole}_p(f_M))\,p.
\]
Indeed clearing exactly these local poles gives generating
sections with no common zero: at each point a coordinate achieving
the maximum has nonzero reduced value.  Thus this divisor represents
the pullback of $\mathcal O(1)$.

Every monomial in $\mathcal F_D$ has pole order at most $D d_p$,
where $d_p$ is the local maximum in the definition of $d_C$.
Conversely, the functions
\[
 u^{D/(2w_0)},\qquad t^{D/(4w_0)},\qquad Z_i^{D/w_i}
\]
belong to $\mathcal F_D$, after reduction if necessary.  Each is
a constant linear combination of the chosen basis.  Its pole
order cannot exceed the maximum pole order among that basis, and
the displayed functions attain every candidate in $D d_p$.
Together with the constant function this proves equality of the
local maxima, and summing gives the degree formula.
\end{proof}

This proof uses the normalization of the curve itself.  It does
not identify the degree on an arbitrary finite cover with the
degree on the original curve; such a pullback would multiply both
the line-bundle degree and all valuation sums by the covering degree.

\subsection{The actual fixed ideal and the actual weighted packets}

Choose an integer $T_0>0$ with $T_0v_0>v_i$ for every $i>0$, still
before the centers, and put
$L_{<T_0}(t)=\sum_{s<T_0}[t^s]L\,t^s$.
For distinct centers $c_j\in\mathbf C^m$, define ideals of $S$
\begin{equation}
 I_j=\left(t^a,u^a,
       (Z_i-c_{ji}-L_{<T_0}(t))^{b_i}:1\leq i\leq m\right),
 \qquad I=\prod_{j=1}^K I_j.
 \label{eq:origin-fixed-ideal}
\end{equation}
Coordinatewise distinctness in the EPA statement implies this
ordinary distinctness, since $m\geq1$.

The radical of $I_j$ is exactly
$M_j=(u,t,Z_i-c_{ji})$: any prime containing $I_j$ contains
$u,t$ and then every $Z_i-c_{ji}$ because $L_{<T_0}(0)=0$.
The quotient is finite-dimensional over $\mathbf C$.  For example,
use $u,t$ and the shifted additive variables as generators; their
displayed powers vanish, yielding a finite spanning set.
The ideals for distinct centers are comaximal, as are all their
powers.  Hence $I^n=\prod_j I_j^n=\bigcap_jI_j^n$ and
\begin{equation}
 S/I^n\simeq\prod_j S/I_j^n.
 \label{eq:origin-crt}
\end{equation}

The selector $u^a$ is essential to global support: $t=0$ may meet
other zeros of $H$.  Locally at $M_j$ it is redundant.  Indeed
$H(u)=u h(u)$ with $h(0)=1$, and $u h(u)=t^2$ gives
$u^a=t^{2a}h(u)^{-a}\in(t^a)$ in that local ring.

\begin{lemma}[Finite quotients map onto the strict packets]
\label{lem:origin-ideal-to-packet}
For every positive integer $n$, each $\Phi_{c_j}$ induces a
surjection
\[
 S/I_j^n\longrightarrow
 \mathbf C[[T,V_1,\ldots,V_m]]/\mathfrak j_{nD}.
\]
Consequently $S/I^n$ surjects onto the product of the actual
strict packets of cutoff $nD$.
\end{lemma}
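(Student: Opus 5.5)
We prove three things in turn: that $\Phi_{c_j}$ kills $I_j^n$ modulo $\mathfrak j_{nD}$, that the induced map is onto, and that the product statement follows from \eqref{eq:origin-crt}.

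\emph{Well-definedness.} $\Phi_{c_j}$ is a ring homomorphism on $S$, because $t^2-H(u)\mapsto T^2-H(U(T))=0$, so it suffices to place the generators of $I_j$ in $\mathfrak j_D$. By \eqref{eq:origin-radius} we have $a=D/v_0$ and $b_i=D/v_i$. Since $\Phi_{c_j}(t)=T$, the image of $t^a$ is $T^a$, of weight $av_0=D$. Since $U(T)$ has order two, $u^a$ maps to a series of $T$-order $2a$, of weight at least $D$. For the additive generators,
\[
 \Phi_{c_j}\bigl(Z_i-c_{ji}-L_{<T_0}(t)\bigr)=V_i+L_{\ge T_0}(T),
\]
where $L_{\ge T_0}$ is the tail of $L$ from degree $T_0$ on. Every monomial of this series has weight at least $\min(v_i,T_0v_0)=v_i$, by the choice $T_0v_0>v_i$. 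Its $b_i$-th power therefore lies in $\mathfrak j_{b_iv_i}=\mathfrak j_D$. Because $\mathfrak j_h\mathfrak j_{h'}\subseteq\mathfrak j_{h+h'}$ for a monomial weight filtration, $\Phi_{c_j}(I_j^n)\subseteq\mathfrak j_{nD}$. This gives a ring map $S/I_j^n\to\mathbf C[[T,V]]/\mathfrak j_{nD}$.

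\emph{Surjectivity.} The target is a finite-dimensional local Artinian $\mathbf C$-algebra. Its residue field is $\mathbf C$, and its maximal ideal $\mathfrak n$ is generated by the images of $T,V_1,\ldots,V_m$, which are nilpotent. The image of the map contains $\mathbf C$ and the image $T$ of $t$. It also contains the image of $Z_i-c_{ji}$, namely $V_i+L(T)$. The image is a subalgebra, so it contains every polynomial in $T$. In the quotient $L(T)$ is a polynomial in $T$ (higher powers of $T$ fall into $\mathfrak j_{nD}$), so $L(T)$ lies in the image and hence so does $V_i=(V_i+L(T))-L(T)$. Thus the image is a subalgebra containing $\mathbf C$ and all generators of the target, so it is everything. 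One can equally run the graded-correction argument from the proof of Lemma~\ref{lem:p1-generic-taylor-lower}: residue surjectivity plus surjectivity onto $\mathfrak n/\mathfrak n^2$, corrected by nilpotence.

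\emph{Product statement.} Compose \eqref{eq:origin-crt} with the product of the surjections just obtained; a product of surjections is surjective.

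The main point to check is the role of $T_0$. The truncation $L_{<T_0}$ is needed so that $I_j$ is an ideal of $S$, that is, polynomial in $t$. It leaves the tail $L_{\ge T_0}(T)$ in the packet of the shifted additive coordinate, and the condition $T_0v_0>v_i$ is exactly what keeps that tail at weight at least $v_i$, so that raising to the power $b_i$ reaches weight $D$. The rest is routine.
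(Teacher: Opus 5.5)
Your proof is correct and follows the paper's argument essentially step for step. You place the generators $t^a$, $u^a$ and $(Z_i-c_{ji}-L_{<T_0}(t))^{b_i}$ in $\mathfrak j_D$ using $av_0=b_iv_i=D$ and $T_0v_0>v_i$; you get surjectivity by recovering $T$ and $V_i$ once $L(T)$ is replaced by a polynomial modulo $\mathfrak j_{nD}$; and you deduce the product statement from \eqref{eq:origin-crt}.
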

\begin{proof}
The image of $t^a$ has weight $av_0=D$.
The series $U(T)$ has order two, so the image of $u^a$ has
weight at least $2av_0=2D$.  The image of the shifted additive
generator before its $b_i$th power is
\[
 V_i+L(T)-L_{<T_0}(T).
\]
Every term has weight at least $v_i$, since the remainder has
$T$-order at least $T_0$ and $T_0v_0>v_i$.  Its $b_i$th power has
weight at least $b_iv_i=D$.  Thus $\Phi_{c_j}(I_j^n)$ lies in
$\mathfrak j_{nD}$, proving factorization.

For surjectivity, the target is generated as an algebra by the
images of $T,V_i$.  The variable $T$ is the image of $t$.
Modulo $\mathfrak j_{nD}$ the series $L(T)$ is represented by a
finite polynomial in $T$.  Subtracting this polynomial and $c_{ji}$
from the image of $Z_i$ yields $V_i$.  Thus every target monomial
has a polynomial preimage.  The product assertion follows from
\eqref{eq:origin-crt}.
\end{proof}

The fixed quotient $S/I_j^n$ generally has more classes than the
weighted packet.  The proof requires and establishes a surjection,
not a false equality of these two jet schemes.

\subsection{Blowup degrees, including all curve types}

The finite subscheme $\operatorname{Spec}(S/I)$ of $X^\circ$
is also a closed subscheme of $X$: it is finite, hence proper,
over $\mathbf C$, and its immersion into the separated variety
$X$ has closed image.  Let $\mathcal I\subset\mathcal O_X$ be its
coherent ideal.  It restricts to $I$ on $X^\circ$ and is the unit
ideal outside the selected points.  Form the blowup
\[
 \pi:Y=\operatorname{Bl}_{\mathcal I}X\longrightarrow X,
 \qquad \mathcal A=\pi^*\mathcal H,
 \qquad \mathcal J=\mathcal I\mathcal O_Y.
\]
The variety $Y$ is integral and projective.  The invertible ideal
$\mathcal J$ is the relatively ample tautological bundle
$\mathcal O_Y(1)$ for the Rees algebra; equivalently it is
$\mathcal O_Y(-E)$ for the effective exceptional divisor.
These two descriptions have the same sign.  The blowup and
tautological-ideal facts are in the Stacks Project, Tag 01OF:
\url{https://stacks.math.columbia.edu/tag/01OF}.

Consider an integral curve $B\subset Y$ which is not contracted
by $\pi$ and whose image meets $X^\circ$.  The blowup is an
isomorphism outside the finite support of $\mathcal I$, so the
map from $B$ to its image is birational.  Use the common
normalization $C$, with its own function field.  The hyperplane
formula gives $\deg_C\mathcal A=Dd_C$.
At a point of $C$ over center $j$, the order of the pulled-back
ideal is the minimum of the orders of its generators.  The other
center ideals are units there.  The selector contributes no
smaller order, since $\operatorname{ord}(u)=2\operatorname{ord}(t)$
on a nonzero centered branch.  Zero functions are handled by
the value $+\infty$.

Replacing $L(t)$ by $L_{<T_0}(t)$ does not change the resulting
minimum.  In fact the normalized order of their difference in
the $i$th coordinate is greater than
$\operatorname{ord}(t)/v_0$ whenever $t$ is nonzero, by
$T_0v_0>v_i$; when $t=0$ the difference is zero.  Adding such a
term cannot change a minimum that already includes
$\operatorname{ord}(t)/v_0$.
Thus the two exact degree formulas on this same normalization are
\begin{equation}
 \deg_C\mathcal A=Dd_C,
 \qquad
 \deg_C\mathcal J=-D\sum_p\mu_p.
 \label{eq:origin-blowup-degrees}
\end{equation}
The second formula is the divisor of the pulled-back invertible
ideal, with local generator order $D\mu_p$; it retains all branch
multiplicities.

There are two other curve types.  If $B$ is contracted, then
$\deg_B\mathcal A=0$ and $\deg_B\mathcal J>0$, because
$\mathcal J$ is ample on each fiber.  If $B$ is noncontracted
and its image is contained in $X\setminus X^\circ$, then the
blowup is an isomorphism along it and $\mathcal J$ is trivial
there, while $\deg_B\mathcal A>0$.  Thus no unexamined boundary
curve is omitted from the following numerical comparison.

\subsection{The uniform margin proves ampleness}

Relative ampleness of $\mathcal J$ and ampleness of $\mathcal H$
give, by Stacks Lemma 29.38.7(1), an integer $q>1$ such that
$\mathcal B=\mathcal A^{\otimes q}\otimes\mathcal J$ is ample
on $Y$.  This auxiliary integer may depend on the centers.
It changes neither $D$ nor the desired bundle
$\mathcal L=\mathcal A\otimes\mathcal J$.
Put
\[
 \gamma=\frac{\sigma}{q(1+\sigma)-1}>0.
\]
Then every integral curve $B\subset Y$ satisfies
\begin{equation}
 \deg_B\mathcal L\geq\gamma\deg_B\mathcal B.
 \label{eq:origin-uniform-ample-margin}
\end{equation}
For an affine-meeting noncontracted curve, write
$h=\deg_B\mathcal A=Dd_C$ and
$e=-\deg_B\mathcal J=D\sum_p\mu_p$.
The curve inequality says $(1+\sigma)e\leq h$.
Since $0<\gamma<1$, direct subtraction gives
\[
 (h-e)-\gamma(qh-e)
 =(1-\gamma q)h-(1-\gamma)e\geq0;
\]
the coefficient identity is
$1-\gamma q=(1-\gamma)/(1+\sigma)$.
For a contracted curve, both degrees in the comparison are
$\deg\mathcal J>0$ and $\gamma<1$.
For a boundary curve their ratio is $1/q\geq\gamma$.
This proves \eqref{eq:origin-uniform-ample-margin} in all cases.

The standard uniform numerical ampleness criterion now proves
that $\mathcal L$ is ample.  Equivalently,
$\mathcal L-\gamma\mathcal B$ is nef as a rational numerical
divisor class, and adding the positive ample class
$\gamma\mathcal B$ is ample.  This uses the ample-plus-nef theorem for projective varieties;
see \cite[Corollaries 1.4.10--1.4.11]{lazarsfeld}.  Since $\gamma$ is rational,
one may clear denominators: a positive tensor power of
$\mathcal L$ is the tensor product of an ample bundle and a nef
bundle, hence is ample, which makes $\mathcal L$ ample.
It is stronger than, and must
not be replaced by, the generally insufficient assertion that
$\mathcal L$ merely has positive degree on every curve.

\subsection{Serre vanishing yields the exact EPA packets}

We spell out the standard Rees and cohomology step, since its
eventual nature is sufficient and avoids any unnecessary equality
in small degrees.  For all sufficiently large integers $n$,
\[
 \pi_*\mathcal J^{\otimes n}=\mathcal I^n,
 \qquad R^i\pi_*\mathcal J^{\otimes n}=0\quad(i>0).
\]
These are the eventual graded-sheaf comparison and relative Serre
vanishing for the finitely generated Rees algebra.  The graded comparison is Stacks Lemma 30.15.2(5), Tag 0BXE:
\url{https://stacks.math.columbia.edu/tag/0BXE}; the projective
cohomology input is Tag 01XO:
\url{https://stacks.math.columbia.edu/tag/01XO}.  They are
uniform over a finite affine cover of the Noetherian projective
variety $X$.  Projection formula and Leray then identify
\[
 H^1(X,\mathcal H^{\otimes n}\otimes\mathcal I^n)
   =H^1(Y,(\mathcal A\otimes\mathcal J)^{\otimes n})=0
\]
for all sufficiently large $n$, the last equality being Serre
vanishing for the fixed ample bundle $\mathcal L$.
The ideal sequence consequently gives a surjection
\[
 H^0(X,\mathcal H^{\otimes n})\longrightarrow
 H^0(X,\mathcal H^{\otimes n}\otimes\mathcal O_X/\mathcal I^n).
\]
The target is canonically $S/I^n$ after using the section $1$ to
trivialize $\mathcal H$ on the affine chart containing its support.
By Lemma~\ref{lem:origin-section-representatives}, every section
on the left has a representative in $\mathcal F_{nD}$ once $n$
is also large enough for that fixed embedding.  By
Lemma~\ref{lem:origin-ideal-to-packet}, the target in turn
surjects onto the product of the actual strict packets of cutoff
$nD$.  Their composite is precisely the required coefficient map
\[
 \mathcal F_{nD}\longrightarrow
 \prod_{j=1}^K
 \mathbf C^{\{(s,\beta):v_0s+\sum_i v_i\beta_i<nD\}}.
\]
It is therefore surjective for all sufficiently large $n$.

\begin{theorem}[Origin-polar EPA from the established curve inequality]
\label{thm:origin-polar-epa-geometry}
Fix source and target weights and a positive uniform margin
$\sigma$ for which \eqref{eq:origin-input-curve-inequality} holds
for every integral function-field curve and every allowed
configuration of centers.  There is a positive integer $D$,
depending only on the weights, such that for every such center
configuration there is $H_0$ for which the actual origin-polar
packet map is surjective whenever $H\geq H_0$ and $D\mid H$.
\end{theorem}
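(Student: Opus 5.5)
The proof assembles the steps of this section in order, with $D$ fixed as in \eqref{eq:origin-radius} and $T_0$ chosen before the centers. Both depend only on the weights.

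First, fix an allowed configuration of $K$ distinct centers. Form the fixed ideal $I$ of \eqref{eq:origin-fixed-ideal} on the compactification $X$ built from $\mathcal F_D$, and form the blowup $\pi:Y\to X$ with $\mathcal A=\pi^*\mathcal H$ and $\mathcal J=\mathcal I\mathcal O_Y$.

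Next, classify the integral curves $B\subset Y$ into three kinds. A noncontracted curve whose image meets $X^\circ$ is handled by Lemma~\ref{lem:origin-hyperplane-degree} and the exceptional-order computation, which give the degree formulas \eqref{eq:origin-blowup-degrees} on its own normalization $C$. Here the coordinate functions $(u,t,Z_i)$ generate the function field of $C$, and replacing $L_{<T_0}$ by $L$ leaves $\mu_p$ unchanged. The hypothesis \eqref{eq:origin-input-curve-inequality}, applied to exactly this curve and configuration, then gives $(1+\sigma)e\le h$. For contracted curves and for boundary curves, the sign information recorded before \eqref{eq:origin-uniform-ample-margin} suffices.

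With $q$ chosen by relative ampleness and $\gamma=\sigma/(q(1+\sigma)-1)$, this yields the uniform margin \eqref{eq:origin-uniform-ample-margin}. The ample-plus-nef criterion then makes $\mathcal L=\mathcal A\otimes\mathcal J$ ample.

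Then comes the cohomological step. For $n$ large, take the Rees comparison $\pi_*\mathcal J^{n}=\mathcal I^n$ together with the vanishing of the higher direct images. Projection formula and Leray give $H^1(X,\mathcal H^{n}\otimes\mathcal I^n)=H^1(Y,\mathcal L^{n})=0$ by Serre vanishing, so $H^0(X,\mathcal H^n)\to S/I^n$ is surjective.

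Finally, compose with Lemma~\ref{lem:origin-section-representatives}, which represents every global section by an element of $\mathcal F_{nD}$, and Lemma~\ref{lem:origin-ideal-to-packet}, which gives the surjection $S/I^n\to\prod_j$ of strict packets of cutoff $nD$. The composite is literally the coefficient map $\mathcal F_{nD}\to\prod_j\mathbf C^{\{v_0s+\sum v_i\beta_i<nD\}}$, so it is surjective for $n\ge n_0$. Setting $H_0=n_0D$ gives the theorem for $H=nD\ge H_0$, which is exactly the condition $D\mid H$. The threshold $n_0$ may depend on the centers through $q$, the Rees vanishing and the embedding, which the statement permits.

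The main obstacle I anticipate is checking that the composite map is exactly the packet map, with no hidden change of trivialization. The section $1$ trivializes $\mathcal H^n$ on $X^\circ$, and the support of $\mathcal O/\mathcal I^n$ lies in $X^\circ$. I would check that the restriction of a section to $S/I^n$ coincides with reducing its $\mathcal F_{nD}$ representative modulo $I^n$. I would also check that the strict cutoff (weight $<nD$) matches $\Phi_{c_j}(I_j^n)\subseteq\mathfrak j_{nD}$. The secondary point is the ampleness criterion: positivity of $\mathcal L$ on curves alone is not enough. The uniform $\gamma$-margin against the ample $\mathcal B$ is what makes $\mathcal L-\gamma\mathcal B$ nef, and that is the justification I would cite explicitly.
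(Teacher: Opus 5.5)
Your proposal is correct and follows the paper's own proof essentially step for step. It uses the same center-independent $D$ and $T_0$, the compactification built from $\mathcal F_D$, the blowup along the product of fixed center ideals with the three-way curve classification and the $\gamma$-margin ample-plus-nef argument, and then the Rees comparison, Leray and Serre vanishing composed with Lemmas~\ref{lem:origin-section-representatives} and~\ref{lem:origin-ideal-to-packet}. One small correction: the threshold from Lemma~\ref{lem:origin-section-representatives} does not depend on the centers, because $X$ and its embedding are fixed before they are chosen; only $q$ and the Rees/Serre thresholds depend on the configuration.
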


The proof just given preserves the intended order of choices:
base data and curve-inequality thresholds, then weights, then
$D$ and the finite logarithm truncation, then centers, then the
eventual threshold $H_0$.  The auxiliary ample exponent $q$ and
the Serre thresholds may depend on the centers.  A multiple of
$D$ has the form $H=nD$; taking $H_0=Dn_0$ supplies the final
divisibility formulation.

\begin{corollary}[Sharp separated-weight origin EPA]
\label{cor:sharp-origin-epa}
Fix \eqref{eq:cubic} and an integer $a_*$ with
$a_*^3+Aa_*+B\ne0$. Let $m,K\ge1$ and let the positive rational
base data $w_0,v_0,\theta$ satisfy
\[
 0<\theta<1,\qquad K(w_0/v_0)\theta^m<1,\qquad K\theta^m<1.
\]
Choose $\sigma$ by \eqref{eq:curve-sigma-conditions} and let
$w_1,\ldots,w_m$ be positive rational weights satisfying the
successive thresholds \eqref{eq:explicit-origin-threshold}.
There is a positive integer $D$, depending on these weights and
base data but not the centers, such that for every coordinatewise
distinct array $c_{ji}\in\mathbb C$ there is $H_0$ with the
following property. Put
\[
 \mathcal F_H=\operatorname{span}_{\mathbb C}
 \{u^kt^b Z^\alpha:b\in\{0,1\},\
        2w_0(k+2b)+\textstyle\sum_iw_i\alpha_i\le H\}.
\]
For every integer $H\ge H_0$ divisible by $D$, the map
\[
 \mathcal F_H\longrightarrow
 \prod_{j=1}^K
 \mathbb C^{\{(s,\beta):v_0s+\sum_i(w_i/\theta)\beta_i<H\}}
\]
that takes every coefficient $[T^sV^\beta]$ of
$P(U(T),T,c_{ji}+V_i+L(T))$ is surjective.
\end{corollary}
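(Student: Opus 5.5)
The plan is to derive Corollary~\ref{cor:sharp-origin-epa} by chaining Theorem~\ref{thm:origin-curve-inequality} into Theorem~\ref{thm:origin-polar-epa-geometry}, checking that every hypothesis of the latter is supplied with the quantifiers in the right order.

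First, fix the base data $w_0,v_0,\theta$ with $K(w_0/v_0)\theta^m<1$ and $K\theta^m<1$. These are exactly the conditions \eqref{eq:curve-base-volumes} with $\alpha=K(w_0/v_0)\theta^m$ and $\beta=K\theta^m$. Choose $\sigma$ by \eqref{eq:curve-sigma-conditions}, then $\varepsilon$, $g$, $C_4$ and $q_*$ as in that subsection, all before any additive weight.

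Next, take $w_1,\ldots,w_m$ above the successive thresholds \eqref{eq:explicit-origin-threshold}. Theorem~\ref{thm:origin-curve-inequality} then gives the uniform inequality \eqref{eq:origin-input-curve-inequality} with this fixed $\sigma$. It holds for every coordinatewise distinct array of $K$ centers and every integral affine curve in $X^\circ$ whose normalization carries the coordinate functions $u,t,z_i$.

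One point needs care here. Theorem~\ref{thm:origin-curve-inequality} is stated for curves whose function field is generated by $u,t,z_1,\ldots,z_m$, with $\mu_p$ and $d_C$ defined from the exact branch coordinates $(t,z_i-c_{ji}-L(t))$ and the weighted pole orders. I will check that these are literally the quantities used in Section~\ref{sec:origin-geometric-assembly}, namely $d_C$ with weights $2w_0,4w_0,w_i$ and $\mu_p$ with target weights $v$. This matches the definitions in Section~\ref{sec:origin-contact} verbatim. I will also check that the set of centered points coincides with the "points over the selected centers" used in the blowup degree formula \eqref{eq:origin-blowup-degrees}.

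Finally, apply Theorem~\ref{thm:origin-polar-epa-geometry}. It produces $D$ from \eqref{eq:origin-radius}, which depends only on the weights, so it is uniform in the centers. For each center configuration it produces $n_0$, and we set $H_0=Dn_0$. For $H=nD\ge H_0$, the composite constructed there is:
\begin{itemize}
\item a section of $\mathcal H^{\otimes n}$,
\item represented in $\mathcal F_{nD}$ by Lemma~\ref{lem:origin-section-representatives},
\item mapped to $S/I^n$,
\item and then mapped to the product of strict packets by Lemma~\ref{lem:origin-ideal-to-packet}.
\end{itemize}
This composite is exactly the coefficient map $P\mapsto[T^sV^\beta]P(U(T),T,c_{ji}+V_i+L(T))$ restricted to $\mathcal F_H$. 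Its surjectivity gives the claim.

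The main obstacle is the bookkeeping of the dependency order. The corollary requires $D$ to be independent of the centers, while the ample exponent $q$, the Serre thresholds, and the fixed ideal $I$ are allowed to depend on them. The finite truncation $T_0$ is chosen before the centers. The first thing I would verify is that no step in Section~\ref{sec:origin-geometric-assembly} uses a center-dependent choice when fixing $D$ or the compactification $X$. The compactification uses only $\mathcal F_D$, and $D$ comes from the weights alone, so this should hold.

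A second check is that coordinatewise distinctness implies ordinary distinctness of the centers, which the Chinese remainder step \eqref{eq:origin-crt} requires. This holds since $m\ge1$.
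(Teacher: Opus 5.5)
Your proposal is correct and follows the paper's own proof. You feed the uniform curve inequality of Theorem~\ref{thm:origin-curve-inequality}, with $\sigma$ and the successive weight thresholds fixed before any centers, into Theorem~\ref{thm:origin-polar-epa-geometry}; its modulus $D$ from \eqref{eq:origin-radius} depends only on the weights, while $H_0=Dn_0$ may depend on the centers. Your additional checks (that $\mu_p$, $d_C$ and the set of centered points agree between the two sections, and that coordinatewise distinctness gives the ordinary distinctness needed for \eqref{eq:origin-crt}) are exactly the compatibilities the paper's two-line proof leaves implicit.
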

\begin{proof}
Theorem~\ref{thm:origin-curve-inequality} supplies the uniform
curve inequality with the chosen positive margin. Theorem
\ref{thm:origin-polar-epa-geometry} applies to these exact source
and target spaces and gives the asserted order of quantifiers.
\end{proof}

\paragraph{Formal status.}
The concrete compactification, ideals, curve degrees, weighted sections
and literal packet identifications are assembled in
\lean{QuadraticPi.EllipticOriginQuarticEPA.epa} and
\lean{QuadraticPi.EllipticOriginQuarticEPA.eventual_packetMap}.
No ampleness, curve-comparison or packet-surjectivity premise remains
in the EPA theorem beyond its explicit model conditions.


\section{The CM rectangular sublattice and the identical analytic minor}
\label{sec:cm-grid-same-minor}

\subsection{A rectangular sublattice from an actual CM action}

Fix a compatible uniformization of the normalized curve
$y^2=x^3+Ax+B$ by $\mathbf C/\Lambda$, with invariant differential
$dx/(2y)=dz$. Thus $g_2(\Lambda)=-4A$, $g_3(\Lambda)=-4B$,
$x=\wp_\Lambda$ and $y=\wp'_\Lambda/2$, in the conventions used
throughout this paper. Let $\alpha\notin\mathbf R$ satisfy
$\alpha\Lambda\subseteq\Lambda$. Such an action is the analytic
realization of the specified complex multiplication; in this argument
the lattice and action are the actual ones for the specified differential.

\begin{proposition}[Trace-zero CM generator]
There are integers $T,n$ such that
$\alpha^2-T\alpha+n=0$. Put $d=4n-T^2$. Then $d$ is a positive
integer and
\[
 \beta=2\alpha-T,\qquad \beta\Lambda\subseteq\Lambda,
 \qquad \beta=\pm i\sqrt d.
\]
After replacing $\alpha$ by $-\alpha$ if necessary, one may assume
$\operatorname{Im}\alpha>0$, in which case $\beta=i\sqrt d$.
For every $\omega\in\Lambda\setminus\{0\}$,
\[
 \omega(\mathbf Z+i\sqrt d\,\mathbf Z)\subseteq\Lambda
\]
is a rank-two rectangular sublattice.
\end{proposition}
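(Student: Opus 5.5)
Multiplication by $\alpha$ preserves $\Lambda$, so I will write it in a $\mathbf Z$-basis $\omega_1,\omega_2$ of $\Lambda$ as an integer matrix $M\in M_2(\mathbf Z)$. This is the same device used in Proposition~\ref{prop:p1-period-lattice}. Concretely, $\alpha\omega_k=\sum_l M_{kl}\omega_l$. So $\alpha$ is an eigenvalue of $M$ with eigenvector $(\omega_1,\omega_2)$, which is nonzero. By Cayley--Hamilton, $\alpha$ is a root of $X^2-TX+n$ with $T=\operatorname{tr}M$ and $n=\det M$, both integers. Since $\alpha\notin\mathbf R$ and the polynomial is real, its discriminant $T^2-4n$ is negative. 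Hence $d=4n-T^2$ is a positive integer and $\alpha=(T\pm i\sqrt d)/2$. This gives $\beta=2\alpha-T=\pm i\sqrt d$.

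Next I check that $\beta\Lambda\subseteq\Lambda$. This holds because $2\alpha\Lambda\subseteq\Lambda$ and $T\Lambda\subseteq\Lambda$, since $T$ is an integer. For the sign normalization, note that $-\alpha$ also preserves $\Lambda$ and is nonreal. If $\operatorname{Im}\alpha<0$, I replace $\alpha$ by $-\alpha$; this replaces $(T,n)$ by $(-T,n)$ and leaves $d$ unchanged. Once $\operatorname{Im}\alpha>0$, we have $\operatorname{Im}\beta=2\operatorname{Im}\alpha>0$, so $\beta=i\sqrt d$. If instead one keeps $\alpha$ and only wants $\beta=i\sqrt d$, it suffices to replace $\beta$ by $-\beta$, which preserves $\Lambda$ equally well.

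For the sublattice, fix $\omega\in\Lambda\setminus\{0\}$. Then $\omega j_1\in\Lambda$ for every integer $j_1$. Also $\omega i\sqrt d\,j_2=j_2\beta\omega\in\Lambda$, using $\beta\Lambda\subseteq\Lambda$ and $\beta=i\sqrt d$ (after the sign choice). Additivity then gives $\omega(\mathbf Z+i\sqrt d\,\mathbf Z)\subseteq\Lambda$. The set has rank two because $\omega$ and $i\sqrt d\,\omega$ are $\mathbf R$-linearly independent: their ratio $i\sqrt d$ is nonreal. So $\mathbf Z\omega\oplus\mathbf Z i\sqrt d\,\omega$ is a discrete subgroup of rank two, and it has finite index in $\Lambda$. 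It is called rectangular because $\omega^{-1}$ times it is $\mathbf Z+i\sqrt d\,\mathbf Z$, a rectangle lattice with orthogonal generators.

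There is no real obstacle here. The only point needing care is that $\alpha$ is an eigenvalue of an \emph{integral} matrix, which uses the full-rank basis of $\Lambda$ from Proposition~\ref{prop:p1-period-lattice}, together with the sign bookkeeping for $\beta$.
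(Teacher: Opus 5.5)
Your proposal is correct and follows essentially the same route as the paper: represent multiplication by $\alpha$ by an integral $2\times2$ matrix, take $T,n$ from its characteristic polynomial, and deduce $d>0$. You get $d>0$ from the negative discriminant, while the paper writes $\alpha=x+iy$ to get $T=2x$ and $n=x^2+y^2$. Both then observe that $\pm\beta$ preserve $\Lambda$, so $\omega$ and $i\sqrt d\,\omega$ span an orthogonal rank-two sublattice.
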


\begin{proof}
Choose a lattice basis $\lambda_1,\lambda_2$. Multiplication by
$\alpha$ is represented in this basis by a $2\times2$ integer
matrix. Its characteristic polynomial gives the displayed equation,
with integral trace $T$ and determinant $n$. Writing $\alpha=x+iy$
and using $y\ne0$, its imaginary part gives $T=2x$ and its real
part gives $n=x^2+y^2$. Thus $d=4y^2>0$ and
$2\alpha-T=2iy=\pm i\sqrt d$. Both integer multiplication and
the given action preserve $\Lambda$, hence so does their difference
$\beta$ and its negative. For $\omega\ne0$, the vectors
$\omega,i\sqrt d\,\omega$ are real-linearly independent and
orthogonal. Their integer span is contained in $\Lambda$, proving
the assertion.
\end{proof}

The full CM lattice need not have this rectangular basis. For example,
with $\zeta=(-1+i\sqrt3)/2$, the lattice
$\mathbf Z+\zeta\mathbf Z$ contains
$\mathbf Z+i\sqrt3\mathbf Z$ as a subgroup of index two.
The proof needs only the sublattice. Nor need $d$ be squarefree,
fundamental, or minimal. Even for the square lattice, using
$\alpha=i$ in the trace-zero construction gives $d=4$;
the specialized lemniscate proof uses the stronger already known
action by $i$, allowing $d=1$.

The actual lattice-action and discriminant arguments are already present
in \lean{EllipticCMAction.exists_integer_quadratic},
\lean{EllipticCMDiscriminant.exists_quadratic_generator}, and
\lean{EllipticCMPeriodGrid.exists_quadratic_lattice_action}.
The latter retains both possible signs before choosing the positive
imaginary generator.

\begin{proposition}[Exact grid size, radius, and separation]
Fix such a positive $d$, a nonzero period $\omega$, and $N\ge1$.
Put $K=N^2$ and
\[
 \lambda_{ab}=a+bi\sqrt d,
 \qquad z_{ab}=\lambda_{ab}\omega,
 \qquad 0\le a,b<N.
\]
These are $K$ distinct lattice periods and satisfy
\[
 |\lambda_{ab}|^2=a^2+db^2\le(1+d)K,
 \qquad |z_{ab}|\le |\omega|\sqrt{1+d}\,N.
\]
Distinct grid points satisfy $|z_{ab}-z_{a'b'}|\ge|\omega|$.
Consequently their open disks of radius $|\omega|/2$ are disjoint,
and any disk of radius $R$ contains at most
$(2R/|\omega|+1)^2$ grid points.
\end{proposition}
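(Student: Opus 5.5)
The plan is to prove the claims in the order stated, each by elementary geometry of the rectangular lattice $\mathbf Z+i\sqrt d\,\mathbf Z$, transported by multiplication by $\omega$.

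\emph{Membership and distinctness.} The preceding proposition gives $\omega(\mathbf Z+i\sqrt d\,\mathbf Z)\subseteq\Lambda$, so every $z_{ab}$ is a period. If $z_{ab}=z_{a'b'}$, divide by $\omega\ne0$ to get $a-a'=(b'-b)i\sqrt d$. The left side is real and the right side purely imaginary, and $d>0$, so both vanish. Hence the $N^2=K$ index pairs give $K$ distinct periods.

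\emph{Size bounds.} Since $a,b$ are real, $|\lambda_{ab}|^2=a^2+db^2$. With $0\le a,b\le N-1<N$ this is at most $(1+d)N^2=(1+d)K$. Multiplying by $|\omega|$ gives $|z_{ab}|\le|\omega|\sqrt{1+d}\,N$.

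\emph{Separation.} For distinct pairs,
\[
 |z_{ab}-z_{a'b'}|^2=|\omega|^2\bigl((a-a')^2+d(b-b')^2\bigr).
\]
The bracket is a nonzero combination of nonnegative integers with integer $d\ge1$, so it is at least $1$. Thus distinct grid points are at distance at least $|\omega|$, and by the triangle inequality the open disks of radius $|\omega|/2$ about them are pairwise disjoint.

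\emph{Counting in a disk.} Let the disk have center $w$ and radius $R$. Each grid point inside it carries an open disk of radius $|\omega|/2$ that lies in the disk of radius $R+|\omega|/2$ about $w$, and these small disks are pairwise disjoint. Comparing areas,
\[
 \#\cdot\pi|\omega|^2/4\le\pi(R+|\omega|/2)^2,
\]
which rearranges to $\#\le(2R/|\omega|+1)^2$.

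All steps are routine; there is no real obstacle. The only points requiring care are that $d\ge1$ is an integer, so the separation constant is exactly $|\omega|$ and does not depend on $d$, and that the count uses the enlarged radius $R+|\omega|/2$ rather than $R$.
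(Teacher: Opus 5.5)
Your proof is correct and follows the paper's argument essentially step for step: the sublattice inclusion gives membership, real and imaginary parts give injectivity, the positive-integer bound on $(a-a')^2+d(b-b')^2$ gives separation, and comparing areas with the concentric disk of radius $R+|\omega|/2$ gives the count. The two points you flag, that $d\ge1$ is an integer and that the enlarged radius must be used, are exactly what the paper's short proof relies on.
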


\begin{proof}
Membership follows from the sublattice inclusion. Taking real and
imaginary parts proves injectivity. The squared norm formula proves
the radius estimate. For distinct indices,
$(a-a')^2+d(b-b')^2$ is a positive integer and hence at least one;
multiply its square-root inequality by $|\omega|$.
For the last bound compare the disjoint areas of the radius-$|\omega|/2$
disks with that of the concentric enlarged disk of radius
$R+|\omega|/2$.
\end{proof}

These cardinality, radius and separation estimates are proved in
\lean{QuadraticPeriodGrid.card_grid}, \lean{norm_periodCenter_le},
and \lean{norm_omega_le_periodCenter_sub}. Actual periodicity of
$\wp_\Lambda$ and $\wp'_\Lambda/2$ is supplied by
\lean{EllipticCMPeriodGrid.exists_periodic_quadratic_grid}.
The elementary disk-count consequence is a written argument here.

For a real nonzero period $\omega$ and rational approximants $p_i/q_i$,
the centers and displacements are
\[
 c_{ji}=\lambda_jp_i/q_i,
 \qquad \delta_{ji}=c_{ji}-z_j
                =\lambda_j(p_i/q_i-\omega).
\]
If every $p_i\ne0$ and $q_i\ne0$, the centers are coordinatewise
distinct. If $|p_i/q_i-\omega|\le q_i^{-\nu}$, $q_i\ge2$, and
$\nu\ge0$, then, with $w_i=\lceil\log q_i\rceil$,
\[
 |\delta_{ji}|\le\sqrt{1+d}\,N\exp(\nu-\nu w_i).
\]
Indeed $\log q_i\ge w_i-1$. Thus replacing the Gaussian square
changes this displacement constant from $\sqrt2$ to $\sqrt{1+d}$;
it does not change its exponent in $q_i$. The parameter $d$ is fixed
from the curve's lattice action before the interpolation dimension or
approximation sequence is chosen.

\subsection{The quadratic-order norm gives the same arithmetic scale}

Let $\mathcal O_d=\mathbf Z[i\sqrt d]$. It is an actual subring of
$\mathbf C$, since $(i\sqrt d)^2=-d$. Every nonzero element has
modulus at least one, because
\[
 |a+bi\sqrt d|^2=a^2+db^2\in\mathbf Z_{>0}.
\]
Consequently, for a square complex matrix $A$ and positive real
row and column factors $r_i,c_j$, if every
$r_i c_j A_{ij}\in\mathcal O_d$ and $\det A\ne0$, then
\[
 1\le\left(\prod_i r_i\right)
        \left(\prod_j c_j\right)|\det A|,
 \qquad
 \log|\det A|\ge-\sum_i\log r_i-\sum_j\log c_j.
\]
To prove this, take the determinant of the cleared matrix in
$\mathcal O_d$ and use the preceding norm gap. There is no extra
factor of two in the logarithmic estimate and no uncontrolled
algebraic-conjugate estimate. These exact statements are already
formalized in \lean{QuadraticRing.lean} and
\lean{DeterminantLogBound.lean} for general $d$.

The Gaussian entry-clearing calculation in the lemniscate paper
therefore applies to the new order once the actual origin coefficients
and selected entries have been identified. For the origin basis those
coefficients are integral by the earlier construction, and the exact
identification and grade-sensitive clearing are proved in the next section.
This observation does not itself provide a nonzero minor: that is supplied by Theorem~\ref{thm:origin-polar-epa-geometry}.

\section{Exact arithmetic for the selected origin determinant}

The following lemmas establish the proposed arithmetic lower estimate,
conditional on surjectivity of the actual origin packet. They do not assert
that surjectivity. Put $\mathcal O_d=\Z[i\sqrt d]$, where $d\ge1$ is an
integer. Fix positive weights $v_0,w_1,\ldots,w_m$, $0<\theta<1$, and
$H>0$. Write $\mathfrak a_H$ for the ideal of formal series in
$(t,Z_1,\ldots,Z_m)$ whose coefficients vanish at every monomial with
weight
\[
 v_0s+\sum_i (w_i/\theta)\beta_i<H.
\]
The quotient has the indicated finite set of monomials as a basis.
Let $U(t)\in\Z[[t]]$ and $L(t)\in\Q[[t]]$ be the actual origin
coordinate and logarithm constructed above, so $L(0)=0$ and
$n[t^n]L\in\Z$.

\begin{lemma}[Filtered logarithm truncation, including the determinant]
\label{lem:origin-filtered-truncation}
Choose positive integers $T_i$ with $T_iv_0\ge w_i/\theta$, and put
$L_i^{\rm tr}(t)=\sum_{n<T_i}[t^n]L\,t^n$.
For arbitrary centers $c_{ji}$, replacing $L$ by $L_i^{\rm tr}$ in
the $i$th additive coordinate preserves surjectivity of the complete
weighted packet map. More precisely, the full and truncated packet
matrices differ by one fixed invertible row operator, of determinant
one, on each center's complete packet. Therefore every square minor
using all packet rows and any fixed set of source columns has exactly
the same determinant before and after this replacement.
The assertion holds for any source space of polynomials in $u,t,Z_i$;
the integers $T_i$ do not depend on their additive degrees.
\end{lemma}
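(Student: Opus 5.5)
The plan is to realize the passage from $L$ to $L_i^{\rm tr}$ as a formal automorphism of the target series ring that preserves $\mathfrak a_H$, and then read off its matrix on the finite quotient. For each center $j$, the full packet is $\Phi_{c_j}(P)=P(U(t),t,c_{ji}+V_i+L(t))$. The truncated packet is $\Phi^{\rm tr}_{c_j}(P)=P(U(t),t,c_{ji}+V_i+L_i^{\rm tr}(t))$. Put $\rho_i(t)=L(t)-L_i^{\rm tr}(t)$, a series of $t$-order at least $T_i$. Consider the continuous $\mathbf C$-algebra automorphism $\psi$ of $\mathbf C[[t,V_1,\ldots,V_m]]$ that fixes $t$ and sends $V_i\mapsto V_i+\rho_i(t)$. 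Then $\psi\circ\Phi^{\rm tr}_{c_j}=\Phi_{c_j}$ identically on every polynomial in $u,t,Z_i$, because both sides are the same substitution. Its inverse is $V_i\mapsto V_i-\rho_i(t)$. Neither $\psi$ nor its inverse involves the source polynomial or the center, which is why the $T_i$ are independent of the additive degrees and of the source space.

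Next I show that $\psi$ and $\psi^{-1}$ preserve $\mathfrak a_H$, so that both descend to the finite quotient. It suffices to check the generators. The variable $t$ is fixed. The image of $V_i$ is $V_i\pm\rho_i(t)$, and every monomial in it has weight at least $\min(w_i/\theta,\,T_iv_0)=w_i/\theta$, by the hypothesis $T_iv_0\ge w_i/\theta$. Since all variable weights are positive, weighted order is superadditive under products. Hence every monomial $t^sV^\beta$ maps to a series of weight at least $v_0s+\sum_i(w_i/\theta)\beta_i$, so the ideal is preserved. This is the step where the inequality $T_iv_0\ge w_i/\theta$ is used. Unlike the remark after Proposition~\ref{prop:strict-jet-reparametrization}, here the shear involves only terms of sufficiently high $t$-order, so anisotropy causes no problem.

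Then I compute the matrix of the induced map on the quotient, in the monomial basis $t^sV^\beta$ of weight less than $H$. Expanding, $\psi(t^sV^\beta)=t^sV^\beta+(\text{terms }t^{s'}V^{\beta'})$ with $|\beta'|\le|\beta|$, where $\beta'\le\beta$ componentwise, and every extra term has $\beta'\neq\beta$ because it uses at least one factor $\rho_i$. I order the basis by total $V$-degree $|\beta|$. The matrix is then unitriangular, since each correction strictly lowers $|\beta|$. So the operator is invertible with determinant one.

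Finally I assemble the determinant statement. For each center $j$, the row vector of full packet coefficients equals the fixed unitriangular operator $M$ applied to the truncated row vector, uniformly in the source column. So for any fixed set of source columns, the full-row matrix of the full packet equals $\operatorname{diag}(M,\ldots,M)$ times that of the truncated packet. The determinant of the minor is therefore unchanged, and surjectivity is equivalent for the two maps. The main obstacle, and the only delicate point, is the ideal-preservation step: I must make sure the substitution is done only in the additive coordinates, with $t$ fixed, and that weak inequality suffices at the boundary weight $w_i/\theta$. I will check that boundary case explicitly; it is fine because the ideal is defined by a strict cutoff, so it consists of series supported at weight at least $H$.
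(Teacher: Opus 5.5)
Your proposal is correct and follows the paper's proof essentially step for step. You use the same center-independent formal shear $V_i\mapsto V_i+(L-L_i^{\rm tr})(t)$ fixing $t$, and the same weight check via $T_iv_0\ge w_i/\theta$ to show that it and its inverse preserve $\mathfrak a_H$; ordering the monomial basis by total additive degree then gives the unitriangular, determinant-one operator on each center's packet, from which the minor and surjectivity statements follow exactly as you describe.
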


\begin{proof}
Set $R_i=L-L_i^{\rm tr}$ and define the formal automorphism
\[
 \Phi(t)=t,\qquad \Phi(Z_i)=Z_i+R_i(t).
\]
The inverse has $Z_i\mapsto Z_i-R_i(t)$. Both are well-defined
because $R_i(0)=0$. Every term of $R_i$ has weight at least
$T_iv_0\ge w_i/\theta$, so neither substitution decreases monomial
weight. Consequently $\Phi$ and $\Phi^{-1}$ preserve
$\mathfrak a_H$ and induce inverse operators on the finite quotient.
They fix $U(t)$, $t$, and every constant center, and hence carry
each truncated evaluation to the corresponding full evaluation.

To compute the determinant, order the quotient's monomial basis by
total additive degree $\sum_i\beta_i$, with arbitrary order inside
each degree. In the expansion of
$t^s\prod_i(Z_i+R_i)^{\beta_i}$, the unchanged term is
$t^sZ^\beta$ with coefficient one. Every other term has strictly
smaller additive degree. Discarding monomials in $\mathfrak a_H$
does not alter this fact. Thus the matrix is block triangular with
identity diagonal blocks and has determinant one. On the direct
sum over centers its determinant is still one. This is an identity
of whole packets; individual scalar entries generally change.
\end{proof}

The actual origin-quartic determinant-one refinement is checked in
\lean{QuadraticPi.EllipticOriginQuarticLogDeterminant.det_fullSelectedMinor_eq_truncated}.
It uses the generic \lean{OAI.PiExponent.FormalLogTruncation} machinery
on the same full packets and selected source columns. The existing
lemniscate filtered equivalence remains a reused generic model.
If $S v_0\ge H$, replacing $U$ by its degree-$<S$ truncation
leaves every packet coefficient unchanged: the coordinate difference
belongs to $\mathfrak a_H$, so polynomial evaluation is unchanged
in the quotient. This also has no source-degree-dependent cutoff.

\begin{lemma}[Exact grade-sensitive clearing for a selected origin minor]
\label{lem:origin-grade-clearing}
Let rows be all pairs of a center $j$ and a target monomial
$(s,\beta)$ of weight $<H$. Let their number be $M>0$.
Choose any $M$ source monomials
$u^{a_c}t^{b_c}Z^{\alpha_c}$ with $b_c\in\{0,1\}$.
Choose $p_i\in\Z$, $q_i\ge1$, and $\lambda_j\in\mathcal O_d$,
and take $c_{ji}=\lambda_jp_i/q_i$.
Let $\Delta\ne0$ be the determinant of their full origin packet.
For the above cutoffs $T_i$, put
\[
 \ell_i=\lcm(1,\ldots,T_i-1),\qquad
 J_i=\sum_c\alpha_{ci}-\sum_r\beta_{ri}.
\]
Then every $J_i$ is a nonnegative integer, and
\[
 0\ne \Delta\prod_i(q_i\ell_i)^{J_i}\in\mathcal O_d.
\]
In particular
\[
 \log|\Delta|\ge-\sum_iJ_i(\log q_i+\log\ell_i).
\]
\end{lemma}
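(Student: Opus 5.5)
The plan is to reduce to truncated logarithms, clear denominators entry by entry with row and column factors depending only on the multi-indices, and then apply the $\mathcal O_d$ norm gap. First, apply Lemma~\ref{lem:origin-filtered-truncation} with the chosen $T_i$. It replaces $L$ by $L_i^{\rm tr}$ in the $i$th additive coordinate without changing the determinant of any minor that uses all packet rows and a fixed column set. So we may take $\Delta$ to be the determinant of the matrix $M'$ whose $(r,c)$ entry, for a row $r=(j,s,\beta)$, is
\[
 [T^sV^\beta]\,U(T)^{a_c}T^{b_c}\prod_i\bigl(c_{ji}+V_i+L_i^{\rm tr}(T)\bigr)^{\alpha_{ci}}.
\]
Expanding each factor binomially in $V_i$, the coefficient of $V_i^{\beta_{ri}}$ is $\binom{\alpha_{ci}}{\beta_{ri}}\bigl(c_{ji}+L_i^{\rm tr}(T)\bigr)^{\alpha_{ci}-\beta_{ri}}$. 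This coefficient vanishes unless $\beta_{ri}\le\alpha_{ci}$.

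Second, clear each entry. Put $X_i=q_i\ell_i$. We have $q_ic_{ji}=\lambda_jp_i\in\mathcal O_d$, and by \eqref{eq:denom} every coefficient of $L_i^{\rm tr}$ has degree $n<T_i$ and denominator dividing $n$, hence dividing $\ell_i$. Therefore
\[
 X_i\bigl(c_{ji}+L_i^{\rm tr}(T)\bigr)\in\mathcal O_d[T].
\]
Combining this with $U(T)\in\Z[[T]]$ from Proposition~\ref{prop:integral} and the integrality of the binomial coefficients, we get
\[
 \prod_i X_i^{\alpha_{ci}-\beta_{ri}}\,M'_{rc}\in\mathcal O_d
\]
whenever $\beta_r\le\alpha_c$ componentwise. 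When this fails the entry is zero, so the statement holds trivially. We therefore choose the positive real factors
\[
 \text{column } c:\ \prod_iX_i^{\alpha_{ci}},\qquad \text{row } r:\ \prod_iX_i^{-\beta_{ri}}.
\]
Every cleared entry lies in $\mathcal O_d$, and the determinant of the cleared matrix equals $\Delta\prod_iX_i^{J_i}$. This gives membership in $\mathcal O_d$, and the value is nonzero because $\Delta\ne0$.

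Third, prove that the $J_i$ are nonnegative integers. Integrality is immediate from the definition. Since $\Delta\ne0$, some permutation term of the Leibniz expansion is nonzero, which gives a bijection $\pi$ from rows to columns with $\beta_r\le\alpha_{\pi(r)}$ for every $r$. Summing these inequalities over $r$ gives $J_i\ge0$. The logarithmic bound then follows from the norm gap $|x|\ge1$ for nonzero $x\in\mathcal O_d$, as in the determinant remark of the preceding section.

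The main point needing care is that the clearing must be done at the determinant level, with row factors carrying negative exponents. A naive entrywise bound by $\prod_iX_i^{\alpha_{ci}}$ would give the much weaker exponent $\sum_c\alpha_{ci}$ instead of $J_i$. Handling this correctly requires the observation that entries with $\beta_r\not\le\alpha_c$ vanish identically. It also relies on the earlier truncation lemma: it is what permits replacing $L$ by a polynomial whose denominators are bounded independently of the source degree. Entrywise that replacement changes values, and only the whole-packet determinant is preserved, which is exactly why the lemma is stated for full row packets.
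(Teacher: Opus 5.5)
Your proposal is correct and follows the paper's proof: pass to the truncated logarithm via the determinant-one lemma, clear each factor with $q_i\ell_i$ using $n[t^n]L\in\Z$ and $U\in\Z[[t]]$, obtain $J_i\ge0$ from a nonzero Leibniz term, and finish with the norm gap in $\mathcal O_d$. The only difference is packaging. You encode the grade-sensitive clearing through row factors $\prod_i(q_i\ell_i)^{-\beta_{ri}}$ and column factors $\prod_i(q_i\ell_i)^{\alpha_{ci}}$, whereas the paper observes directly that every nonzero Leibniz product carries the same total exponents $J_i$.
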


\begin{proof}
By Lemma~\ref{lem:origin-filtered-truncation}, use the same columns
in the truncated-log matrix without changing its determinant.
The entry in row $r=(j,s,\beta_r)$ and column $c$ is
\[
 A_{rc}=\left(\prod_i\binom{\alpha_{ci}}{\beta_{ri}}\right)
 [t^s]\left(U(t)^{a_c}t^{b_c}
 \prod_i\bigl(\lambda_jp_i/q_i+L_i^{\rm tr}(t)\bigr)^{
 \alpha_{ci}-\beta_{ri}}\right)
\]
when $\beta_{ri}\le\alpha_{ci}$ for all $i$, and is zero otherwise.
All coefficients of $U$ are integral. Since
$n[t^n]L\in\Z$, the polynomial $\ell_iL_i^{\rm tr}$ has integral
coefficients. Thus
\[
 (q_i\ell_i)(\lambda_jp_i/q_i+L_i^{\rm tr})
   =\ell_i\lambda_jp_i+q_i\ell_iL_i^{\rm tr}
   \in\mathcal O_d[t].
\]
Consequently every nonzero entry becomes an element of
$\mathcal O_d$ after multiplication by
$\prod_i(q_i\ell_i)^{\alpha_{ci}-\beta_{ri}}$.

In the Leibniz expansion of a nonzero determinant, at least one
permutation product is nonzero. For that permutation, all paired
differences $\alpha_{\sigma(r)i}-\beta_{ri}$ are nonnegative;
their sum is $J_i$, proving $J_i\ge0$. Every other nonzero
permutation product has exactly the same total exponents $J_i$.
Multiplication by $\prod_i(q_i\ell_i)^{J_i}$ therefore puts every
term of the Leibniz sum in $\mathcal O_d$, including the zero terms.
This proves the clearing assertion and preserves nonvanishing.

For $z=a+bi\sqrt d\in\mathcal O_d\setminus\{0\}$,
$|z|^2=a^2+db^2$ is a positive integer, so $|z|\ge1$.
Apply this to the cleared determinant and take logarithms.
No passage through an uncontrolled algebraic conjugate, or
field-degree loss in the logarithmic bound, occurs.
\end{proof}

\begin{corollary}[A dimension-independent arithmetic error]
\label{cor:origin-arithmetic-error}
Assume $w_0>0$, $q_i\ge2$, $w_i=\lceil\log q_i\rceil$, and each chosen
source monomial satisfies
\[
 2w_0(a_c+2b_c)+\sum_iw_i\alpha_{ci}\le H.
\]
Define
\[
 \bar b=\frac1{MH}\sum_r\sum_iw_i\beta_{ri},\qquad
 T_i=\lceil F_0w_i/v_0\rceil,
 \quad F_0\ge1/\theta.
\]
With the already audited constant $C_{\rm lcm}=\log4+4$,
\[
 \frac{\log|\Delta|}{MH}
 \ge -(1-\bar b)-\frac{C_{\rm lcm}F_0}{v_0},
 \qquad 0\le\bar b<\theta.
\]
\end{corollary}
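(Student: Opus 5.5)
Start from Lemma~\ref{lem:origin-grade-clearing}, which gives
\[
 \log|\Delta|\ge-\sum_iJ_i(\log q_i+\log\ell_i),
 \qquad J_i=\sum_c\alpha_{ci}-\sum_r\beta_{ri}.
\]
The plan is to bound the two pieces of each term separately and then divide by $MH$. We bound $J_i\log q_i$ and $J_i\log\ell_i$ with the same source and target sums, so that the additive budgets combine into $1-\bar b$ plus a small lcm error.

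\emph{The $\log q_i$ part.} Since $\log q_i\le w_i$ and $J_i\ge0$, one has $J_i\log q_i\le w_iJ_i$. Summing over $i$ gives
\[
 \sum_iw_iJ_i=\sum_c\sum_iw_i\alpha_{ci}-\sum_r\sum_iw_i\beta_{ri}.
\]
Each chosen column satisfies $\sum_iw_i\alpha_{ci}\le H-2w_0(a_c+2b_c)\le H$, and there are $M$ columns. The first sum is therefore at most $MH$, and the second is $MH\bar b$ by definition. This gives $\sum_iw_iJ_i\le MH(1-\bar b)$, which is the main term.

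\emph{Bounds on $\bar b$.} Nonnegativity $\bar b\ge0$ is immediate. For $\bar b<\theta$, each row has target weight $v_0s+\sum_i(w_i/\theta)\beta_{ri}<H$, so $\sum_iw_i\beta_{ri}<\theta H$. Summing over the $M$ rows and dividing by $MH$ gives $\bar b<\theta$.

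\emph{The lcm part.} Use the audited estimate $\log\lcm(1,\ldots,n)\le C n$, applied to $n=T_i-1$. Since $T_i-1<F_0w_i/v_0$, this gives
\[
 \log\ell_i\le (\log4)(T_i-1)\le \frac{C_{\rm lcm}F_0w_i}{v_0}.
\]
If the stored lemma is of the weaker form $\log\lcm(1,\ldots,n)\le(\log4)n+4$ with an additive constant, absorb that constant using $w_i\ge1$ and $F_0/v_0\ge1/(\theta v_0)$. This step needs care when $F_0/v_0<1$. If the stored bound cannot absorb the constant there, fall back on the form $\log\ell_i\le (\log 4)\,T_i$ together with $T_i\le F_0w_i/v_0+1$, and use $w_i\ge1$ to dominate the extra $1$ by a multiple of $F_0w_i/v_0$.

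Then
\[
 \sum_iJ_i\log\ell_i\le\frac{C_{\rm lcm}F_0}{v_0}\sum_iw_iJ_i\le\frac{C_{\rm lcm}F_0}{v_0}MH,
\]
reusing the same bound $\sum_iw_iJ_i\le MH(1-\bar b)\le MH$.

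The main obstacle is this lcm normalization, namely matching the precise constant $\log4+4$ to the inequality $\log\ell_i\le C_{\rm lcm}F_0w_i/v_0$ without dimension-dependent losses. The additive-weight summation is exact and should go through without difficulty. Adding the two contributions and dividing by $MH$ yields the corollary.
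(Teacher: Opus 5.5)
Your main line is correct and is the paper's proof. The single budget $\sum_iw_iJ_i\le MH(1-\bar b)$ controls both the $\log q_i$ and the $\log\ell_i$ contributions, and the paper likewise passes through $-(1+C_{\rm lcm}F_0/v_0)(1-\bar b)$ before using $0\le1-\bar b\le1$. Two steps need repair, however.

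First, the lcm normalization is not an obstacle. The audited estimate is linear with no additive term, $\log\ell_i\le C_{\rm lcm}(T_i-1)$, and $T_i-1<F_0w_i/v_0$ finishes at once. You should write $C_{\rm lcm}$ there, not $\log4$: the bound $\log\lcm(1,\ldots,n)\le n\log4$ is true, but it needs a Hanson or Rosser--Schoenfeld type estimate, not the cited lemma. Your fallbacks, as written, do not work. Neither $w_i\ge1$ nor $F_0\ge1/\theta$ bounds $F_0w_i/v_0$ from below when $v_0$ is large, so you cannot dominate a constant by a multiple of $F_0w_i/v_0$. The route through $(\log4)T_i$ also fails for small $F_0w_i/v_0$, since then $T_i=1$ while the target $C_{\rm lcm}F_0w_i/v_0$ tends to zero. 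The correct repair of any additive-constant variant is a case split. If $T_i=1$, then $\ell_i=1$ and there is nothing to prove. If $T_i\ge2$, then $F_0w_i/v_0>T_i-1\ge1$, which absorbs the constant.

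Second, you have misplaced the role of $F_0\ge1/\theta$. Lemma~\ref{lem:origin-grade-clearing} applies only to cutoffs satisfying $T_iv_0\ge w_i/\theta$, the truncation condition of Lemma~\ref{lem:origin-filtered-truncation}. With $T_i=\lceil F_0w_i/v_0\rceil$ this follows from $T_iv_0\ge F_0w_i\ge w_i/\theta$. You invoke the lemma without this check, and the paper's proof ends precisely with it.
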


\begin{proof}
The source cutoff and nonnegative elliptic weights imply
\[
 \sum_iw_iJ_i\le MH-\sum_r\sum_iw_i\beta_{ri}
       =MH(1-\bar b).
\]
The strict target cutoff gives $\bar b<\theta$. Also
$\log q_i\le w_i$ and, by
\lean{OAI.PiExponent.Arithmetic.log_lcmUpto_le},
\[
 \log\ell_i\le C_{\rm lcm}(T_i-1)
       \le(C_{\rm lcm}F_0/v_0)w_i.
\]
Apply the preceding lemma and the grade bound to obtain the slightly
stronger estimate
\[
 \frac{\log|\Delta|}{MH}
 \ge -(1+C_{\rm lcm}F_0/v_0)(1-\bar b).
\]
Finally $0\le1-\bar b\le1$. The cutoff condition follows from
$T_iv_0\ge F_0w_i\ge w_i/\theta$.
\end{proof}

\status{The actual selected-minor arithmetic estimates and the
full/truncated determinant identity are checked in
\lean{QuadraticPi.EllipticOriginQuarticMinorLower.selectedMinor_arithmetic_lower},
\lean{QuadraticPi.EllipticOriginQuarticLogDeterminant.fullSelectedMinor_arithmetic_lower}
and the determinant-one theorem cited above. EPA supplies nonzero
minors through
\lean{QuadraticPi.EllipticCMCofinalMinors.cofinal_full_minors}.
The actual retained lcm constant is $\log4+4$; no sharper lcm theorem
is needed for the exponent conclusion.}

\section{A same-minor analytic bound on the rectangular CM grid}
\label{sec:cm-same-minor-analytic}

This section concerns a selected minor of the actual origin packet.  It
does not assert that a nonzero minor exists.  The argument is a written
extension of the collision argument in the lemniscate manuscript \cite{lemniscate},
with a full-lattice clearing multiplier in place of its square-lattice
counterpart. The written presentation uses sigma. The checked Lean
implementation uses an explicitly defined shifted Jacobi theta on the
same full lattice; its entire clearing, growth, inverse factors and
same-selected-column transport are proved directly. Equality of those
two clearing functions is not an input. The resulting finite bound is
\lean{QuadraticPi.EllipticOriginPaperUpper.same_minor_paper_upper}.

\subsection{Fixed analytic data and the selected matrix}

Fix a compatible uniformization of the nonsingular elliptic curve by
$\mathbb C/\Lambda$, with additive coordinate $z$ corresponding to the
differential $dx/(2y)$.  Choose a polar point $Q\ne O$ with $Q\ne -Q$,
and write
\[
 u=(x-a)^{-1},\qquad t=y(x-a)^{-2},\qquad
 t^2=F(u),\qquad F(0)=0,\quad F'(0)=1.
\]
Here $a=x(Q)$; the later source exponent $k$ is unrelated to $a$.
For the arithmetic application retain the previously chosen integral
$A,B,a$, with $a^3+Aa+B\ne0$; the analytic bound itself needs only
the stated fixed complex curve and polar point.
The elliptic functions $u(z),t(z)$ are holomorphic at zero and have
$u(0)=t(0)=0$.  Their origin expansions satisfy
\[
 u(z)=U(\tau(z)),\qquad t(z)=\tau(z),\qquad
 L(\tau(z))=z,\qquad \tau'(0)=-1,
\]
where $F(U(T))=T^2$, $U(0)=0$, and
$L'(T)=-1/F'(U(T))$, $L(0)=0$.
These are convergent germs; only their germ identities will be used.

We use the following fixed-curve multiplier input, proved below from the classical sigma zero divisor and quasiperiod formula.  With
$\psi(z)=\sigma_\Lambda(z-z_Q)\sigma_\Lambda(z+z_Q)$, the products
$\psi$, $\psi u$, and $\psi^2t$ are entire.  There are fixed constants
$A,A_1\ge1$ and $C,C_1>0$ such that
\begin{align}
 |g_{D,k,b}(z)|&\le A^D\exp(CD|z|^2),
 &g_{D,k,b}&=\psi^{D-k-2b}(\psi u)^k(\psi^2t)^b,
 \quad k+2b\le D,\label{eq:cm-analytic-multiplier-growth}\\
 |\psi(\gamma)|^{-1}&\le
 A_1\exp(C_1(1+|\gamma|^2)),
 &&\gamma\in\Lambda.\label{eq:cm-analytic-multiplier-inverse}
\end{align}
In particular $\psi$ does not vanish on $\Lambda$.  All multiplier
functions and constants refer to the full lattice $\Lambda$.

For completeness, the classical input used here is that sigma has
simple zeros exactly on $\Lambda$ and satisfies
\[
 \sigma(z+\lambda)=\epsilon_\lambda
 e^{\eta_\lambda(z+\lambda/2)}\sigma(z),
 \qquad \epsilon_\lambda\in\{1,-1\},
\]
with $\lambda\mapsto\eta_\lambda$ additive; see NIST DLMF,
Sections 23.2(ii)--(iii), equations (23.2.15) and (23.2.17),
\url{https://dlmf.nist.gov/23.2}.
Here is the resulting multiplier proof.  The cosets
$z_Q+\Lambda$ and $-z_Q+\Lambda$ are disjoint, since $Q\ne -Q$.
The function $\psi$ has a simple zero on each; these cancel the simple
poles of $u$ and the double poles of $t$ with the powers displayed
above.  Thus all three products are entire.  The two sigma signs
cancel, giving
\[
 \psi(z+\lambda)
   =e^{\eta_\lambda(2z+\lambda)}\psi(z).
\]
On a fixed compact fundamental parallelogram the three entire
products are bounded.  Additivity of $\eta$ on a fixed real basis of
$\Lambda$ gives $|\eta_\lambda|\le C_2|\lambda|$.  Write
$z=z_0+\lambda$ with $z_0$ in that parallelogram and apply the last
formula, once for $\psi$ and $\psi u$ and twice for $\psi^2t$.
This gives a common bound $A\exp(C|z|^2)$ after increasing $A,C$.
The product defining $g_{D,k,b}$ has a total of
$(D-k-2b)+k+b=D-b\le D$ factors, which proves
\eqref{eq:cm-analytic-multiplier-growth}.
Finally $\psi(0)\ne0$ since $Q\ne O$; evaluate the quasiperiod
formula at zero and bound $|\eta_\lambda\lambda|$ to obtain
\eqref{eq:cm-analytic-multiplier-inverse}.
This proof requires the full lattice: a theta function built only
for a proper rectangular sublattice need not cancel all the poles.

Let $0\ne\omega\in\mathbb R\cap\Lambda$, and fix an integer $d\ge1$
for which $i\sqrt d\,\Lambda\subseteq\Lambda$.  For $N\ge1$ set
\[
 K=N^2,\qquad \lambda_j=j_1+i\sqrt d\,j_2,\qquad
 \gamma_j=\omega\lambda_j\in\Lambda
 \quad(0\le j_1,j_2<N),\qquad L_d=\sqrt{1+d}.
\]
Thus $|\lambda_j|\le L_dN$ and $|\gamma_j|\le|\omega|L_dN$.
Fix $m\ge1$, positive weights $w_0,v_0,w_1,\ldots,w_m$, a number
$0<\theta<1$, and a cutoff $H>0$.  Put
\[
 W(\alpha)=\sum_{i=1}^m w_i\alpha_i,\qquad
 \mathcal J_H=\{(s,\beta)\in\mathbb N^{m+1}:
                    v_0s+W(\beta)/\theta<H\},\qquad
 J_H=|\mathcal J_H|,\quad M=KJ_H.
\]
In particular $M\ge1$.  Select any $M$ distinct source monomials
\[
 P_c=u^{k_c}t^{b_c}Z^{\alpha_c},\qquad b_c\in\{0,1\},\qquad
 2w_0(k_c+2b_c)+W(\alpha_c)\le H.                 \tag{S}
\]
Fix integers $p_i$ and $q_i\ge2$, impose
$w_i=\lceil\log q_i\rceil$, and put $c_{ji}=\lambda_jp_i/q_i$.
The actual selected origin matrix is
\[
 A_{(j,s,\beta),c}
 =[T^sV^\beta]\,
 P_c\bigl(U(T),T,c_{ji}+V_i+L(T)\bigr).
\]
Write $\Delta=\det A$ and assume $\Delta\ne0$.
Every row in every strict packet is retained; the source columns
remain this same fixed selection throughout the proof.  Define
\[
 \bar b=\frac1{MH}\sum_{(j,s,\beta)}W(\beta).
\]
The strict target cutoff gives $0\le\bar b<\theta$.

\subsection{Exact transport and finite displacement expansion}

\begin{lemma}[Exact determinant transport of the actual minor]
\label{lem:cm-actual-minor-transport}
Let $D=\lceil H/(2w_0)\rceil$ and
$\delta_{ji}=\lambda_j(p_i/q_i-\omega)$.
The matrix
\[
 B_{(j,s,\beta),c}=[z^s]\,
 g_{D,k_c,b_c}(\gamma_j+z)
 \prod_i\binom{\alpha_{ci}}{\beta_i}
       (\gamma_j+z+\delta_{ji})^{\alpha_{ci}-\beta_i},
\]
with an entry interpreted as zero if some $\beta_i>\alpha_{ci}$,
satisfies
\[
 |\det B|=|\Delta|\prod_j|\psi(\gamma_j)|^{DJ_H}.
\]
\end{lemma}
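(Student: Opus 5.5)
The plan is to pass from $A$ to $B$ through a chain of row operations that act separately on each center block and leave the selected columns fixed, keeping track of the determinant at each stage. There are three steps: reparametrize $T\mapsto z$, identify the additive coordinate with the period-shifted variable, and multiply by the entire clearing factor $\psi(\gamma_j+z)^D$.

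\emph{Step 1 (reparametrization).} Apply Proposition~\ref{prop:strict-jet-reparametrization} with jet weights $(v_0,w_1/\theta,\ldots,w_m/\theta)$ and cutoff $H$. The paragraph after that proposition shows that using all packet rows of every center turns $A$ into the matrix $A^z$ of the normalized packets $\widehat\Phi_{c_j}(P_c)$. This is one block-triangular row operation, and
\[
 \det A=(-1)^{\sum s}\det A^z .
\]
So $|\Delta|=|\det A^z|$.

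\emph{Step 2 (periodicity and expansion of the displacement).} By Proposition~\ref{prop:actual-origin-germ}, $U(\tau(z))=u(z)$ and $\tau(z)=t(z)$ as convergent germs. Since $\gamma_j\in\Lambda$ and $u,t$ are elliptic, we have $u(z)=u(\gamma_j+z)$ and $t(z)=t(\gamma_j+z)$, both holomorphic at $\gamma_j$. Also
\[
 c_{ji}=\gamma_j+\delta_{ji},
\]
so the additive argument is $c_{ji}+V_i+z=(\gamma_j+z+\delta_{ji})+V_i$. Expanding $\bigl((\gamma_j+z+\delta_{ji})+V_i\bigr)^{\alpha_{ci}}$ binomially and extracting $[V^\beta]$ shows that, with no row operation at all,
\[
 A^z_{(j,s,\beta),c}
 =[z^s]\,u(\gamma_j+z)^{k_c}\,t(\gamma_j+z)^{b_c}
 \prod_i\binom{\alpha_{ci}}{\beta_i}(\gamma_j+z+\delta_{ji})^{\alpha_{ci}-\beta_i}.
\]
Entries with some $\beta_i>\alpha_{ci}$ vanish, as stated in the lemma.

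\emph{Step 3 (the clearing multiplier).} Condition (S) gives $2w_0(k_c+2b_c)\le H$, hence $k_c+2b_c\le D$. Therefore
\[
 \psi^D u^{k_c}t^{b_c}=g_{D,k_c,b_c},
\]
and this function is entire. Fix $j$ and $\beta$. The rows $(j,s,\beta)$ with $(s,\beta)\in\mathcal J_H$ form an initial segment $0\le s<s_\beta$ in $s$. Multiplying each column series by $\psi(\gamma_j+z)^D$ and reading off $[z^s]$ for $s<s_\beta$ is a lower-triangular operation on that segment, with diagonal entries $\psi(\gamma_j)^D$. It needs only the coefficients of the column series below $s_\beta$, so truncating at the strict cutoff is consistent. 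Each $(j,\beta)$ block contributes $\psi(\gamma_j)^{Ds_\beta}$. Summing $s_\beta$ over $\beta$ gives $J_H$, so center $j$ contributes $\psi(\gamma_j)^{DJ_H}$. The resulting matrix is exactly $B$, and taking absolute values gives the claim.

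The main obstacle I expect is Step 3's bookkeeping. One must check that the target index set at each center really splits into $s$-prefixes for fixed $\beta$; this holds because the weight is $v_0s+W(\beta)/\theta$, which is monotone in $s$. One must also check that multiplication by $\psi^D$ never mixes different $\beta$, which holds because $\psi(\gamma_j+z)$ does not involve $V$. A secondary point is that Step 1 is legitimate only because all rows of every packet are retained, and the statement of the lemma does retain them.
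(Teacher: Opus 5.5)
Your proposal is correct and is essentially the paper's proof. The substitution $T=\tau(z)$ acts triangularly on each complete strict packet with diagonal $(-1)^s$, and periodicity together with $\gamma_j+\delta_{ji}=c_{ji}$ identifies the reparametrized entries. Multiplication by $\psi(\gamma_j+z)^D$ is then triangular with diagonal $\psi(\gamma_j)^D$, giving the factor $\psi(\gamma_j)^{DJ_H}$ for each center, while (S) guarantees $k_c+2b_c\le D$. Your explicit check that, for fixed $(j,\beta)$, the rows form an initial segment in $s$ is exactly the bookkeeping the paper leaves implicit when it calls these operations ``triangular on each full strict packet.''
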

\begin{proof}
For each fixed $\beta$, substituting $T=\tau(z)$ in a complete
strict packet gives a triangular coefficient transformation with
diagonal $\tau'(0)^s=(-1)^s$.  The inverse substitution preserves the
same strict weighted ideal.  Hence this substitution has determinant
of absolute value one on each center block.  Periodicity gives
$u(\gamma_j+z)=u(z)$ and $t(\gamma_j+z)=t(z)$ as germs, while
$\gamma_j+\delta_{ji}=c_{ji}$.  The reparametrized packet is therefore
the germ of the displayed expression before multiplication by
$\psi(\gamma_j+z)^D$.

Multiplication by that series is again triangular on each full
strict packet, with diagonal $\psi(\gamma_j)^D$.  It has determinant
$\psi(\gamma_j)^{DJ_H}$, and this value is nonzero.  The source bound
(S) ensures $k_c+2b_c\le D$, so the resulting coefficient functions
are precisely the entire functions displayed in the statement.
Taking absolute values proves the identity.  In particular neither
of these row operations changes the selected source columns.
\end{proof}

Let
\[
 \mathcal T_H=\{\xi\in\mathbb N^m:W(\xi)\le H\},\qquad
 T_H=|\mathcal T_H|,
\]
and, using natural-number subtraction, define entire functions
\[
 G_{c,\xi}(\zeta)=
 \left(\prod_i\binom{\alpha_{ci}}{\xi_i}\right)
 g_{D,k_c,b_c}(\zeta)
 \zeta^{\sum_i(\alpha_{ci}-\xi_i)}.
\]
When some $\xi_i>\alpha_{ci}$, this function is zero.  Put
\[
 s(\beta,\xi,\delta_j)=
 \prod_i\binom{\xi_i}{\beta_i}\delta_{ji}^{\xi_i-\beta_i},
\]
again with value zero unless $\beta\le\xi$ coordinatewise.
The binomial identity
\[
 \binom{\alpha_i}{\beta_i}(\zeta+\delta_i)^{\alpha_i-\beta_i}
 =\sum_{\beta_i\le\xi_i\le\alpha_i}
 \binom{\xi_i}{\beta_i}\delta_i^{\xi_i-\beta_i}
 \binom{\alpha_i}{\xi_i}\zeta^{\alpha_i-\xi_i}
\]
shows, by multilinearity in the rows, that
\begin{equation}
 \det B=\sum_{g:\{1,\ldots,M\}\to\mathcal T_H}
 \left(\prod_r s(\beta_r,g(r),\delta_{j(r)})\right)\det B_g,
 \qquad (B_g)_{r,c}=[z^{s_r}]G_{c,g(r)}(\gamma_{j(r)}+z).
 \label{eq:cm-finite-tag-expansion}
\end{equation}
This is an exact finite identity, with no analytic truncation or
replacement of a minor.

\subsection{A direct-additive collision estimate}

\begin{lemma}[Collision bound for direct additive coefficients]
\label{lem:cm-direct-additive-collision}
Suppose finitely many entire functions $G_{c,\xi}$ satisfy
$|G_{c,\xi}(\zeta)|\le\exp(H\mathcal G)$ for $|\zeta|\le R$,
where $R>0$, and all centers satisfy $|\gamma_j|+1\le R/2$.
For a fixed assignment $g$ let $n_\xi=|g^{-1}(\xi)|$.
Set
\[
 \kappa=\frac{\log2}{4},\qquad s_0=2^{-1/2},\qquad
 \rho_H=\frac{\log M+\kappa-\log(1-s_0)}H.
\]
Then
\[
 |\det B_g|\le
 \exp\left(-\kappa\sum_\xi n_\xi^2+
                       MH(\mathcal G+\rho_H)\right).
\]
There is no dependence on $\sum_rs_r$ in this bound.
\end{lemma}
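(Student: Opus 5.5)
The plan is to expand every Taylor coefficient at a grid center in terms of Taylor coefficients at the origin. Within each group of rows sharing a tag $\xi$, the exponents $k$ of the origin coefficients are then forced to be distinct, and the exponential decay $2^{-k}$ of the origin coefficients yields the quadratic saving $\sum_\xi n_\xi^2$.

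\emph{Step 1: origin expansions.} Write $G_{c,\xi}(\zeta)=\sum_{k\ge0}a_{c,\xi,k}\zeta^k$. Cauchy's estimate on $|\zeta|=R$ gives
\[
 |a_{c,\xi,k}|\le\exp(H\mathcal G)R^{-k}.
\]
For a row $r$ with center $\gamma=\gamma_{j(r)}$ and order $s=s_r$, we have
\[
 [z^{s}]G_{c,\xi}(\gamma+z)=\sum_{k\ge s}a_{c,\xi,k}\binom ks\gamma^{k-s}.
\]
Since $|\gamma|+1\le R/2$, the binomial theorem gives
\[
 \binom ks|\gamma|^{k-s}\le(|\gamma|+1)^k\le(R/2)^k.
\]
The product of the two bounds is at most $\exp(H\mathcal G)2^{-k}$, uniformly in $s$. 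This is why the final bound will not depend on $\sum_rs_r$.

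\emph{Step 2: block Laplace expansion and Cauchy--Binet.} Partition the rows of $B_g$ according to the value $g(r)=\xi$. The generalized Laplace expansion writes $\det B_g$ as a signed sum, over ordered partitions of the $M$ columns into sets $\mathcal C_\xi$ with $|\mathcal C_\xi|=n_\xi$, of products of block minors
\[
 \det\bigl([z^{s_r}]G_{c,\xi}(\gamma_{j(r)}+z)\bigr)_{r\in g^{-1}(\xi),\,c\in\mathcal C_\xi}.
\]
The number of such partitions is the multinomial coefficient $M!/\prod_\xi n_\xi!\le M^M$.

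In each block the matrix factors as (rows $\times$ exponents $k$) times (exponents $k$ $\times$ columns), with entries $\binom{k}{s_r}\gamma_{j(r)}^{k-s_r}$ and $a_{c,\xi,k}$ respectively. Applying Cauchy--Binet with infinitely many $k$, which is justified by absolute convergence, writes the block minor as a sum over $n_\xi$-element sets $\{k_1<\cdots<k_{n_\xi}\}$ of a product of two $n_\xi\times n_\xi$ minors. Bounding each of these two minors by Hadamard's inequality (or by $n!$ times the product of the entrywise bounds) and using Step 1, a given $k$-set contributes at most
\[
 (n_\xi!)^2\exp(n_\xi H\mathcal G)\,2^{-\sum_h k_h}.
\]

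\emph{Step 3: summing over exponent sets.} This is the step that needs care with the constants. Write $k_h=(h-1)+e_h$ with $0\le e_1\le e_2\le\cdots$. Split
\[
 2^{-\sum_h k_h}=2^{-\sum_h k_h/2}\cdot s_0^{\sum_h k_h}, \qquad s_0=2^{-1/2}.
\]
The first factor is at most $2^{-n(n-1)/4}$, because distinct nonnegative integers satisfy $\sum_h k_h\ge n(n-1)/2$. The sum of the second factor over all $k$-sets is at most $(1-s_0)^{-n}$. The quadratic exponent $\tfrac{\log2}{4}n(n-1)$ is $\kappa n^2$ minus a linear term $\kappa n$, which I will absorb into the linear budget rather than bound sharply. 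This choice of splitting is what produces $\kappa=\log2/4$ and the factor $-\log(1-s_0)$ in $\rho_H$.

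\emph{Step 4: assembly.} Multiply the block bounds over $\xi$. Combine $\prod_\xi(n_\xi!)^2\le\prod_\xi n_\xi^{2n_\xi}$ with the partition count $M^M$ and the linear corrections $e^{\kappa n_\xi}$ and $(1-s_0)^{-n_\xi}$. Using $\sum_\xi n_\xi=M$, all of these together should fit inside
\[
 \exp\bigl(M(\log M+\kappa-\log(1-s_0))\bigr)=\exp(MH\rho_H).
\]
The main obstacle is that the combined factorial cost is $M^M\prod_\xi n_\xi^{2n_\xi}$, which is larger than the $M^M$ allowed by $\rho_H$. To stay within $\rho_H$ as stated, I would first try bounding each block minor of the binomial matrix and of the coefficient matrix by Hadamard's inequality applied to row norms, each of which is at most a single geometric tail. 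This replaces $(n_\xi!)^2$ by $n_\xi^{n_\xi}$ in total, and together with $\sum_\xi n_\xi\log n_\xi\le M\log M$ it should fit once the partition count is handled jointly. If that bookkeeping does not close, the fallback is to spend part of the slack between $\tfrac{\log2}{2}$ and $\kappa$ in the quadratic term to absorb the extra $n_\xi\log n_\xi$.
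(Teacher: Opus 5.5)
Your mechanism matches the paper's. You bound the normalized origin coefficients by $e^{H\mathcal G}$ and the row kernels by $2^{-k}$ uniformly in $s_r$. Exponents within a tag group are forced to be distinct, and you split $2^{-\sum k}=s_0^{\sum k}s_0^{\sum k}$.

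However, the assembly in Steps 3--4 does not prove the stated inequality, as you suspect. Even if you keep the exact multinomial count $M!/\prod_\xi n_\xi!$ rather than $M^M$, your bounds give
\[
 |\det B_g|\le M!\prod_\xi n_\xi!\;e^{MH\mathcal G}\,s_0^{\sum_\xi\binom{n_\xi}2}(1-s_0)^{-M}.
\]
The factor $M!\prod_\xi n_\xi!$ is not bounded by $M^M$ in general: for a single tag it is $(M!)^2>M^M$ once $M\ge3$.

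Your proposed repairs do not close this as written:
\begin{itemize}
\item Hadamard with \emph{row} norms on the binomial minor destroys the product decay $2^{-\sum_h k_h}$ on which the quadratic gain rests. You would need norms along the $k_h$-index.
\item Combining $\sum_\xi n_\xi\log n_\xi\le M\log M$ with a partition count of $M^M$ only gives $M^{2M}$.
\item Under your split, the apparent slack between $\tfrac{\log 2}{2}$ and $\kappa$ is exactly what pays for the geometric factor $(1-s_0)^{-n}$. Spending it changes the prescribed constants in $\rho_H$.
\end{itemize}

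The missing observation is that your Step 3 sum over \emph{sets} discards a factor $1/n_\xi!$. Each set $\{k_1<\cdots<k_n\}$ arises from exactly $n!$ ordered tuples of distinct exponents with the same sum. Hence
\[
 \sum_{0\le k_1<\cdots<k_n}s_0^{k_1+\cdots+k_n}
 \le\frac1{n!}\sum_{\ell\in\mathbb N^n}s_0^{\ell_1+\cdots+\ell_n}
 =\frac{(1-s_0)^{-n}}{n!}.
\]
With this, $\frac{M!}{\prod_\xi n_\xi!}\prod_\xi(n_\xi!)^2\prod_\xi\frac1{n_\xi!}=M!\le M^M$. The lemma then follows with exactly the stated $\kappa$ and $\rho_H$.

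Alternatively, Hadamard along the $k_h$-index in both minors gives $n_\xi^{n_\xi}$ per block. This also closes, via $\frac{M!}{\prod_\xi n_\xi!}\prod_\xi n_\xi^{n_\xi}\le M^M$, which is one term of the multinomial expansion of $(\sum_\xi n_\xi)^M$.

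The paper avoids the Laplace/Cauchy--Binet bookkeeping altogether:
\begin{itemize}
\item It expands each row of $B_g$ independently as $\sum_\ell b_{r,\ell}a_{c,g(r),\ell}$.
\item It discards terms in which two rows share the pair $(g(r),\ell_r)$.
\item It bounds the single $M\times M$ coefficient determinant by $M!\,e^{MH\mathcal G}$.
\item It sums $s_0^{\sum_r\ell_r}$ over \emph{all} ordered tuples in $\mathbb N^M$.
\end{itemize}
The ordered sum absorbs the $n_\xi!$ orderings automatically, so the factorial loss never appears.
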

\begin{proof}
Write the Taylor coefficients at zero in normalized form
$a_{c,\xi,\ell}=R^\ell[\zeta^\ell]G_{c,\xi}$.
Cauchy's inequality gives $|a_{c,\xi,\ell}|\le e^{H\mathcal G}$.
The row kernels satisfy
\[
 b_{r,\ell}=[z^{s_r}]
       \left(\frac{\gamma_{j(r)}+z}{R}\right)^\ell,
 \qquad |b_{r,\ell}|\le2^{-\ell},
\]
by applying the unit-circle Cauchy bound to this polynomial.
Thus every entry is an absolutely convergent sum
$\sum_{\ell\ge0}b_{r,\ell}a_{c,g(r),\ell}$.
Expanding the determinant by row multilinearity is justified by
absolute convergence.  A term vanishes if two rows have the same
pair $(g(r),\ell_r)$, since their coefficient vectors are identical.
Consequently every surviving term has
\[
 \sum_r\ell_r\ge\sum_\xi\binom{n_\xi}{2}.
\]
Its coefficient determinant is bounded by $M!e^{MH\mathcal G}$.
Split $2^{-\sum\ell_r}=s_0^{\sum\ell_r}s_0^{\sum\ell_r}$,
bound one factor by the displayed minimum, and sum the other factor
over all $\ell_r\ge0$.  This gives
\[
 |\det B_g|\le M!e^{MH\mathcal G}
 s_0^{\sum_\xi\binom{n_\xi}{2}}(1-s_0)^{-M}.
\]
Now $M!\le M^M$, $\sum_\xi n_\xi=M$, and
$\log s_0=-\log2/2$, which give the claimed formula.

Only the entire cleared functions are evaluated on a circle of
radius one about a center.  The local inverse $\tau$ is not evaluated
there, so no unit-disk convergence assumption on $\tau$ is required.
\end{proof}

\subsection{The finite-cutoff bound}

\begin{theorem}[Explicit same-minor estimate on a CM subgrid]
\label{thm:cm-origin-same-minor-bound}
Assume all fixed data and the selected nonzero minor above.  Suppose
$\nu>2$ and
\[
 |\omega-p_i/q_i|\le q_i^{-\nu}\quad(1\le i\le m),\qquad
 0<w_{\min}\le\min_i w_i.
\]
Fix $0<a_0<1$ and $0<\eta<1$, and set
\[
 L_H=|\{\xi\in\mathbb N^m:W(\xi)\le a_0H\}|,\qquad
 c_H=\frac{\kappa\eta^2M}{HL_H}.
\]
Define fixed-curve constants
\begin{align*}
 r_*&=2(|\omega|L_d+1),& C_w&=4L_dr_*,\\
 B_{\psi}&=\log A+\log A_1+Cr_*^2+C_1(1+|\omega|^2L_d^2).
\end{align*}
Then
\begin{align}
 \frac{\log|\Delta|}{MH}
 &\le E_{\rm an}+e_H+
 \max\{-c_H,-\nu(a_0(1-\eta)-\bar b)\},
 \label{eq:cm-same-minor-final}\\
 E_{\rm an}
 &=\frac{B_{\psi}K}{2w_0}
       +\frac{\nu+\log(C_wK)}{w_{\min}},
 &e_H&=\frac{B_{\psi}K}{H}+\rho_H+\frac{\log T_H}{H}.
 \nonumber
\end{align}
The constants $B_{\psi},C_w$ depend only on the fixed curve, differential,
polar point, period $\omega$, and integer $d$.  They are independent
of $m,N,w_0,v_0,w_i,\theta,H,p_i,q_i,\nu$ and the selected columns.
\end{theorem}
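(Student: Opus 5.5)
The plan is to combine the exact transport identity of Lemma~\ref{lem:cm-actual-minor-transport}, the finite tag expansion \eqref{eq:cm-finite-tag-expansion}, and the collision estimate of Lemma~\ref{lem:cm-direct-additive-collision}. First, divide out the multiplier. The transport lemma gives $\log|\Delta|=\log|\det B|-DJ_H\sum_j\log|\psi(\gamma_j)|$. We bound each inverse factor by \eqref{eq:cm-analytic-multiplier-inverse} with $|\gamma_j|\le|\omega|L_dN$, and use $DJ_H\cdot K\le (H/(2w_0)+1)M/K\cdot K$. This yields a contribution of at most $MH\bigl(B_\psi K/(2w_0)+B_\psi K/H\bigr)$ after dividing by $MH$. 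Here the $K$ factor comes from $|\gamma_j|^2\le|\omega|^2L_d^2K$, and $\log A_1$ and $C_1$ are absorbed into $B_\psi$.

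Second, we bound each $\det B_g$ by the collision lemma with radius $R=r_*\sqrt K$ (or $r_*N$), so that $|\gamma_j|+1\le R/2$. On $|\zeta|\le R$, \eqref{eq:cm-analytic-multiplier-growth} gives $|g_{D,k,b}|\le A^D e^{CDR^2}$. The factor $\zeta^{\sum(\alpha_{ci}-\xi_i)}$ has modulus at most $R^{|\alpha_c-\xi|}$, and the binomials contribute at most $2^{|\alpha_c|}$. Since $w_i\ge w_{\min}$ and $W(\alpha_c)\le H$, the total additive degree is at most $H/w_{\min}$. Hence $\mathcal G\le B_\psi K/(2w_0)+\log(2R)/w_{\min}+B_\psi K/H$, which is of the displayed shape $E_{\rm an}$ once $\log(2R)\le\log(C_wK)$.

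Third, we control the displacement weights $\prod_r s(\beta_r,g(r),\delta_j)$. We have $|\delta_{ji}|\le L_dN e^{\nu-\nu w_i}$, and $\binom{\xi_i}{\beta_i}\le 2^{\xi_i}$. So the logarithm of the product is at most $\sum_r\sum_i(\xi_i-\beta_i)(\log(2L_dN)+\nu-\nu w_i)$. The positive parts are bounded by $\sum_r W(g(r))(\nu+\log(C_wK))/w_{\min}\le MH(\nu+\log(C_wK))/w_{\min}$, which is again absorbed into $E_{\rm an}$. The negative part is $-\nu\sum_r\bigl(W(g(r))-W(\beta_r)\bigr)=-\nu\bigl(\sum_rW(g(r))-MH\bar b\bigr)$. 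Summing over the at most $T_H^M$ assignments produces the term $\log T_H/H$ in $e_H$.

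The main obstacle is the final dichotomy, which balances the collision gain $\kappa\sum n_\xi^2$ against the displacement gain $\nu\sum_rW(g(r))$. Let $n_{\rm low}$ be the number of rows with $W(g(r))\le a_0H$. If $n_{\rm low}\ge\eta M$, Cauchy--Schwarz over the $L_H$ low tags gives $\sum n_\xi^2\ge\eta^2M^2/L_H$, i.e.\ a gain of $MHc_H$. Otherwise at least $(1-\eta)M$ rows have weight above $a_0H$, so $\sum_rW(g(r))\ge a_0(1-\eta)MH$ and the gain is $\nu MH(a_0(1-\eta)-\bar b)$. In either case we keep only the better gain, giving the $\max$ in \eqref{eq:cm-same-minor-final}, and the other nonpositive term is dropped. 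The care needed is to perform the case split per assignment $g$ before summing, and to verify that all constants avoid $m,N,H$ except through the explicitly displayed $K$, $T_H$, and $M$.
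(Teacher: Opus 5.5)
Your architecture is the paper's: the transport identity, the finite tag expansion, the collision lemma at radius $r_*N$, a case split made separately for each assignment $g$, the count of $T_H^M$ assignments, and the inverse bound at the points $\gamma_j$. Your dichotomy is correct. This includes dropping the displacement term in the collision branch, which is legitimate because a nonzero scalar forces $g(r)\ge\beta_r$ and hence $\sum_rW(g(r))\ge MH\bar b$.

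The gap is that your constant bookkeeping does not produce the stated $E_{\rm an}$ and $e_H$. The theorem is an explicit inequality, so ``of the displayed shape'' and ``again absorbed'' are not enough. There are two double counts.
\begin{itemize}
\item You charge the whole budget $B_\psi K/(2w_0)+B_\psi K/H$ to the inverse factors $|\psi(\gamma_j)|^{-DJ_H}$ in your first step. You then charge it again to the growth of $g_{D,k,b}$ inside $\mathcal G$ in your second step.
\item You charge $(\nu+\log(C_wK))/w_{\min}$ to the positive part of the displacement scalars. You then add a separate $\log(2R)/w_{\min}$ from the entire columns.
\end{itemize}
Adding your three steps therefore gives the bound only with $2B_\psi K/(2w_0)$, $2B_\psi K/H$ and $(\nu+\log(C_wK)+\log(2r_*N))/w_{\min}$ in place of the displayed terms.

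The repair is to split along the definitions of the constants instead of bounding each piece by the whole budget.
\begin{itemize}
\item The growth contributes $\frac DH(\log A+Cr_*^2K)$ and the inverse factors contribute $\frac DH(\log A_1+C_1(1+|\omega|^2L_d^2K))$. Since $K\ge1$ and $B_\psi$ is exactly the sum of these two parts, their total is at most $\frac DH KB_\psi\le B_\psi K/(2w_0)+B_\psi K/H$.
\item For the $1/w_{\min}$ terms, put $C_\delta=\nu+\log(L_dN)$ and bound each nonzero scalar by $\exp\bigl(-\nu(W(\xi)-W(\beta))+H(\log2+C_\delta)/w_{\min}\bigr)$. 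Combine this with the column growth using $\log2+C_\delta+\log(2r_*N)=\nu+\log(4L_dr_*N^2)=\nu+\log(C_wK)$. This identity is exactly why $C_w=4L_dr_*$.
\end{itemize}

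There is also a smaller slip in your third step. The bound $\binom{\xi_i}{\beta_i}\le2^{\xi_i}$ contributes $\xi_i\log2$, not $(\xi_i-\beta_i)\log2$; for example $\binom42=6>2^2$. So your displayed exponent $\sum_i(\xi_i-\beta_i)\log(2L_dN)$ is not justified. Use $\prod_i\binom{\xi_i}{\beta_i}\le2^{|\xi|}$ and $|\xi|\le H/w_{\min}$ directly instead.

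With these corrections your argument coincides with the paper's proof.
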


\begin{proof}
Since $w_i=\lceil\log q_i\rceil<\log q_i+1$,
\[
 |\delta_{ji}|\le L_dNq_i^{-\nu}
 \le\exp(\nu+\log(L_dN)-\nu w_i).
\]
Put $C_\delta=\nu+\log(L_dN)>0$.
For a nonzero displacement scalar one has $\beta\le\xi$ and
$|\xi|\le H/w_{\min}$.  Therefore
\[
 |s(\beta,\xi,\delta_j)|
 \le\exp\left(-\nu(W(\xi)-W(\beta))
             +\frac{H(\log2+C_\delta)}{w_{\min}}\right).
 \tag{1}
\]
Here $\prod_i\binom{\xi_i}{\beta_i}\le2^{|\xi|}$ and the positive
coefficient $C_\delta$ allows $|\xi-\beta|\le H/w_{\min}$.
For any nonzero product of row scalars, its assignment necessarily
satisfies $\sum_rW(g(r))\ge MH\bar b$.

Take $R_N=r_*N$.  Since $|\gamma_j|+1\le R_N/2$, the collision
lemma applies.  On $|\zeta|\le R_N$, which has $R_N\ge1$, the
multiplier bound and (S) give
\[
 |G_{c,\xi}(\zeta)|
 \le A^De^{CDR_N^2}(2R_N)^{|\alpha_c|}
 \le\exp(H\mathcal G_H),\qquad
 \mathcal G_H=\frac DH(\log A+CR_N^2)
                   +\frac{\log(2R_N)}{w_{\min}}.
 \tag{2}
\]

For any nonzero scalar assignment, there are two cases.  If
$\sum_rW(g(r))\ge a_0(1-\eta)MH$, multiplying (1) over rows gives
the gain $-\nu(a_0(1-\eta)-\bar b)$ after division by $MH$.
The negative collision term may then be discarded.  Otherwise
strictly more than $\eta M$ rows have $W(g(r))\le a_0H$:
if at least $(1-\eta)M$ rows had greater weight, the assumed strict
upper bound on the sum would be impossible.  Cauchy--Schwarz over
the at most $L_H$ low-weight tags gives
\[
 \sum_\xi n_\xi^2\ge \frac{\eta^2M^2}{L_H}.
\]
The scalar's negative weight contribution may be discarded because
$\sum_rW(g(r))\ge MH\bar b$.  The collision gain is then $-c_H$.
There are $T_H^M$ assignments in the exact finite expansion.
The triangle inequality, (1), (2), and the two-case estimate give
\[
 \frac{\log|\det B|}{MH}\le
 \frac{\log2+C_\delta}{w_{\min}}+\mathcal G_H+\rho_H
 +\frac{\log T_H}{H}
 +\max\{-c_H,-\nu(a_0(1-\eta)-\bar b)\}.
 \tag{3}
\]
This logarithm is defined because the transport lemma and
$\Delta\ne0$ imply $\det B\ne0$.

To remove the common multiplier, the exact determinant identity
and its inverse bound add at most
\[
 \frac DH\{\log A_1+C_1(1+|\omega|^2L_d^2K)\}
\]
to the right side of (3).  Combining with the multiplier part of
$\mathcal G_H$, and using $K\ge1$ and
$D/H\le1/(2w_0)+1/H$, bounds that total by
$B_{\psi}K/(2w_0)+B_{\psi}K/H$.  Finally
\[
 \log2+C_\delta+\log(2R_N)
    =\nu+\log(4L_dr_*N^2)=\nu+\log(C_wK).
\]
Substitution yields the stated inequality.
\end{proof}

\begin{corollary}[Fixed-data asymptotics]
\label{cor:cm-same-minor-asymptotics}
For all the finite data held fixed as $H\to\infty$,
$e_H\to0$, and
\[
 c_H\longrightarrow
 \frac{\kappa\eta^2K\theta^m}{(m+1)v_0a_0^m}.
\]
These limits remain true along any sequence of permitted cutoffs
tending to infinity on which a selected nonzero minor exists.
\end{corollary}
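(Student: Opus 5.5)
The plan is to treat the three quantities in $e_H$ separately and then compute the limit of $c_H$ from the simplex counts \eqref{eq:origin-simplex-count-sandwich}. Everything except $H$ is held fixed: $m$, $K$, the weights $w_0,v_0,w_i$, $\theta$, $a_0$, $\eta$, $B_\psi$.

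For $e_H$, the first term $B_\psi K/H$ tends to zero trivially. For $\rho_H=(\log M+\kappa-\log(1-s_0))/H$, it suffices that $\log M=o(H)$. Here $M=KJ_H$ and $J_H=A_<(H;v)$ with $v=(v_0,w_1/\theta,\ldots,w_m/\theta)$. The sandwich gives $J_H\le (H+\sum_i v_i)^{m+1}/((m+1)!\prod_i v_i)$, so $\log M=O(\log H)$. Likewise $T_H=A_\le(H;(w_1,\ldots,w_m))\le (H+\sum_i w_i)^m/(m!\prod_i w_i)$, so $\log T_H/H\to0$.

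For $c_H=\kappa\eta^2 M/(HL_H)$, I will use the same sandwich twice, at $X=H$ and $X=a_0H$, and divide. Both the strict and weak counts are asymptotic to the simplex volume as $X\to\infty$, because the ratio $(X+Q)^d/X^d\to1$. Hence
\[
 J_H\sim\frac{\theta^mH^{m+1}}{(m+1)!\,v_0\prod_i w_i},\qquad
 L_H\sim\frac{a_0^mH^m}{m!\prod_i w_i}.
\]
Then $M/(HL_H)=KJ_H/(HL_H)\to K\theta^m/((m+1)v_0a_0^m)$, and multiplying by $\kappa\eta^2$ gives the stated limit. The $\prod_i w_i$ factors cancel, and $m!/(m+1)!=1/(m+1)$ supplies the factor $1/(m+1)$.

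The last sentence of the corollary follows because these are genuine limits as $H\to\infty$ through all real cutoffs. They therefore hold along any subsequence, in particular one on which a nonzero selected minor exists, such as the multiples of $D$ supplied by Corollary~\ref{cor:sharp-origin-epa}. The quantities $e_H$ and $c_H$ do not depend on the selected columns, so no uniformity over column choices is needed.

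The only step needing care is checking that the sandwich holds with the exact strict and weak boundary conventions used in defining $J_H$ (strict) and $L_H$, $T_H$ (weak). Both are covered by \eqref{eq:origin-simplex-count-sandwich}, so I do not expect a real obstacle.
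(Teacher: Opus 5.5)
Your proposal is correct and follows the paper's proof: both use weighted-simplex lattice counting to get $J_H\sim\theta^mH^{m+1}/((m+1)!\,v_0\prod_iw_i)$ and $L_H\sim a_0^mH^m/(m!\prod_iw_i)$, together with polynomial bounds on $M$ and $T_H$. From these you both read off the limit of $c_H$ and $e_H\to0$, and the subsequence clause is automatic. Your explicit use of the strict/weak simplex sandwich in dimensions $m+1$ and $m$ spells out the counting step that the paper only cites.
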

\begin{proof}
Weighted-simplex lattice counting gives
\[
 M\sim\frac{K\theta^mH^{m+1}}{(m+1)!v_0\prod_iw_i},\qquad
 L_H\sim\frac{a_0^mH^m}{m!\prod_iw_i},\qquad
 T_H=O(H^m).
\]
The first two equivalences give the limit of $c_H$; their
polynomial growth and that of $T_H$ give
$\log M/H\to0$ and $\log T_H/H\to0$, proving $e_H\to0$.
No uniformity in varying finite data is asserted in taking this
limit: all auxiliary choices precede the final large-cutoff choice.
\end{proof}

\section{The exponent-two parameter choice and conclusion}
\label{sec:transfer-closure}
All finite auxiliary data will be fixed before the final cutoff tends
to infinity. We retain the constants in the proved estimates, including
the quartic source cost and the factor $\kappa=\log2/4$.
\begin{theorem}[Normalized CM-period exponent]
\label{thm:transfer-conditional-closure}
Fix a normalized integral nonsingular short Weierstrass equation, its
compatible period lattice, a nonzero real irrational period $\omega$,
and a CM action on that lattice. Choose an integer $a_*$ with
$a_*^3+Aa_*+B\ne0$ and use its actual origin chart. Then $\mu(\omega)=2$.
The geometric input in the proof is Theorem~\ref{thm:origin-curve-inequality}
and its actual EPA consequence, Theorem~\ref{thm:origin-polar-epa-geometry}.
The arithmetic bound is Corollary~\ref{cor:origin-arithmetic-error};
the analytic bound is Theorem~\ref{thm:cm-origin-same-minor-bound}.
\end{theorem}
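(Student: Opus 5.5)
The plan is a contradiction argument. Assume that for some $\nu>2$ there are infinitely many approximants $p/q$ with $|\omega-p/q|\le q^{-\nu}$. The lower bound $\mu(\omega)\ge2$ is Dirichlet's theorem, with irrationality supplied by Siegel's theorem as explained in Section~\ref{sec:main}. The finite data are fixed in the following order, before any cutoff grows.

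\textbf{Step 1: base data and weights.}
\begin{enumerate}
\item Take $d$ from the trace-zero CM generator and fix the grid size $N$, so $K=N^2$.
\item Choose $m$ large in terms of $\nu$.
\item Choose $\theta<1$ and $v_0$ satisfying the base conditions \eqref{eq:curve-base-volumes}. We need $K\theta^m<1$ and $K(w_0/v_0)\theta^m<1$, so $\theta$ is slightly below $K^{-1/m}$ and $v_0$ is comparable to $w_0$.
\item Fix $\sigma$ and the threshold constants $C_4,q_*$.
\item Select $m$ approximants $p_i/q_i$ with $q_i$ increasing so fast that $w_i=\lceil\log q_i\rceil$ satisfies the successive thresholds \eqref{eq:explicit-origin-threshold}. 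This is possible because infinitely many approximants exist and the thresholds depend only on earlier weights.
\item Set $w_{\min}=w_1$, which we may take as large as we like.
\item Corollary~\ref{cor:sharp-origin-epa} then gives $D$, and for the coordinatewise distinct centers $c_{ji}=\lambda_jp_i/q_i$ (distinct because $p_i\neq0$) it gives surjectivity for all $H\ge H_0$ with $D\mid H$.
\end{enumerate}

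\textbf{Step 2: a nonzero same minor.} Surjectivity onto $M=KJ_H$ coordinates gives $M$ source columns whose full-row minor $\Delta$ is nonzero. For this $\Delta$, Corollary~\ref{cor:origin-arithmetic-error} gives
\[
\frac{\log|\Delta|}{MH}\ge-(1-\bar b)-\frac{C_{\rm lcm}F_0}{v_0}.
\]
Theorem~\ref{thm:cm-origin-same-minor-bound} gives the upper bound
\[
E_{\rm an}+e_H+\max\{-c_H,\,-\nu(a_0(1-\eta)-\bar b)\}.
\]
Let $H\to\infty$ along multiples of $D$. By Corollary~\ref{cor:cm-same-minor-asymptotics}, $e_H\to0$ and
\[
c_H\to\frac{\kappa\eta^2K\theta^m}{(m+1)v_0a_0^m}.
\]
Comparing the two bounds yields an inequality that must hold for every $\bar b\in[0,\theta)$.

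\textbf{Step 3: the case split in $\bar b$.}
\begin{itemize}
\item If $\bar b$ is small, the second branch is the operative one. It must beat $-(1-\bar b)$, which fails once $\nu a_0(1-\eta)>1+(\nu-1)\bar b$ plus error terms.
\item If $\bar b$ is large, near $\theta\approx1$, the collision branch $-c_H$ must beat $-(1-\bar b)$. This needs $c_H$ to dominate $1-\theta$ plus error terms.
\end{itemize}
So we take $a_0,\eta$ near $1$, $\theta=1-\epsilon$, and $F_0=1/\theta$. The error terms are:
\begin{itemize}
\item $C_{\rm lcm}F_0/v_0$, made small by taking $v_0$ large. This forces $w_0$ large for the volume condition, with $w_0/v_0<K^{-1}\theta^{-m}$.
\item $B_\psi K/(2w_0)$, also made small by taking $w_0$ large.
\item $(\nu+\log(C_wK))/w_{\min}$, made small by taking the $q_i$ large.
\end{itemize}

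\textbf{Main obstacle.} The real difficulty is making the collision gain $c_H\approx\kappa\eta^2K\theta^m/((m+1)v_0)$ exceed $1-\theta$. This gain decays like $1/v_0$, while $v_0$ must be large to kill the lcm error.

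My first attempt is to use $K\theta^m\approx1$, so $c_H\approx\kappa/((m+1)v_0)$, and choose $\theta$ with $1-\theta\ll1/((m+1)v_0)$. This requires $K\approx\theta^{-m}$, hence $\log K\approx m(1-\theta)$, so $N$ and $m$ must be tuned together. If this balance fails, I would instead interpolate between the two branches. The linear branch already gives a contradiction unless
\[
\bar b\ge\frac{\nu a_0(1-\eta)-1}{\nu-1}-o(1),
\]
and I would use $\nu>2$ to show that the threshold where the linear branch stops working lies below the region where $c_H$ exceeds $1-\bar b$. This is the exponent-two mechanism inherited from the separated-weight method.

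Finally, having chosen all parameters for a given $\nu>2$, the strict incompatibility contradicts the existence of infinitely many exponent-$\nu$ approximants. Together with Lemma~\ref{lem:p1-real-period} and rational scaling, this gives $\mu(\omega)=2$.
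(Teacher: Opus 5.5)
There is a genuine gap, and it sits in the step you flag as the main obstacle.

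\textbf{The two alternatives must both be beaten.} The comparison is not a case split in $\bar b$. The exact tag expansion of the single minor $\Delta_H$ contains assignments of both kinds. That is why the analytic estimate has the form $E_{\rm an}+e_H+\max\{-c_H,-\nu(a_0(1-\eta)-\bar b_H)\}$. To contradict the lower bound $-(1-\bar b_H)-E_{\rm ar}$, you must beat \emph{both} alternatives at the same $\bar b_H$.

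The black-box estimates only give $0\le\bar b_H<\theta$. Writing $E=E_{\rm ar}+E_{\rm an}$, you therefore need two things:
\begin{itemize}
\item $c_H>1+E+e_H$, from the worst case $\bar b_H$ near $0$;
\item $g=\nu(a_0(1-\eta)-\theta)-(1-\theta)>E+e_H$, from the worst case $\bar b_H$ near $\theta$.
\end{itemize}
Your claim that ``for small $\bar b$ the displacement branch is operative'' is backwards. If $c_H$ is small, then $-c_H\approx0$ is the maximum and nothing is contradicted. So the collision gain has to exceed $1$, not merely $1-\theta$. Also, $\eta$ must be small, not near $1$. Otherwise $a_0(1-\eta)<\theta$ and the displacement branch gains nothing.

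\textbf{Your parameter regime cannot produce such a gain.} The displacement condition forces $a_0>\theta$. Hence
$c_\infty=\kappa\eta^2K\theta^m/((m+1)v_0a_0^m)<\kappa\eta^2K/((m+1)v_0)$.
With $K\theta^m\approx1$, the volume condition gives $v_0>Kw_0\theta^m\approx w_0$. Smallness of $B_\psi K/(2w_0)$ forces $w_0\gg K$. Together these give $c_\infty\ll1$.

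Choosing $1-\theta\ll1/((m+1)v_0)$ makes things worse. Since $g<(\nu-1)(1-\theta)$, the gap then falls below $E_{\rm ar}\ge(\log4+4)/v_0$ unless $\nu-1\gg m+1$.

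\textbf{The missing idea is the factor $a_0^{-m}$, which you dropped.} It is the source of the gain, with $a_0<1$ \emph{fixed}. In general $v_0>Kw_0\theta^m$ gives $c_\infty<\kappa\eta^2/((m+1)w_0a_0^m)$. Combined with $w_0\gg K$ and $v_0\gg1$, this essentially forces $(a_0/\theta)^m\ll K\ll a_0^{-m}$. That is, you need $a_0^2<\theta<a_0$, with $K$ exponentially large in $m$.

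The paper realises this as follows.
\begin{itemize}
\item Take $\theta=1-h$ and $a_0=1-bh$ with $\tfrac12<b<1-\nu^{-1}$. This is exactly where $\nu>2$ is used: $b>\tfrac12$ gives $a_0^2<\theta$, and $b<1-\nu^{-1}$ gives $g>0$.
\item Choose $C_*,B_*$ with $a_0/\theta<C_*<a_0^{-1}$ and $a_0<B_*<\min\{C_*^{-1},C_*\theta\}$. Set $N=\lfloor C_*^{m/2}\rfloor$, $w_0=B_*^{-m}$ and $v_0=CK\theta^mw_0$.
\item Then $K\theta^m\to0$, $K(w_0/v_0)\theta^m=1/C<\tfrac12$, $K/w_0\to0$ and $v_0\to\infty$. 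At the same time $c_\infty=\kappa\eta^2(B_*/a_0)^m/(C(m+1))\to\infty$.
\end{itemize}
So $N$ must depend on $m$, contrary to the order in your Step 1. Only after fixing such an $m$ are the approximants, the EPA modulus and $H$ chosen.
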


\begin{proof}
Fix $\nu>2$. Suppose there are infinitely many rational approximations
$|\omega-p/q|\le q^{-\nu}$, with positive integer $q$. Their
denominators are unbounded: for bounded $q$ only finitely many integers
$p$ satisfy this inequality. Since $\omega\ne0$, after discarding
finitely many approximants we have $p\ne0$ and $q\ge2$.

Choose rational parameters $b,h>0$ such that
\[
 \tfrac12<b<1-\nu^{-1},\qquad
 \theta=1-h,\qquad a_0=1-bh,\qquad 0<a_0,\theta<1.
\]
For sufficiently small $h$,
\[
 a_0^2<\theta<a_0,
 \qquad \nu(a_0-\theta)>1-\theta.
\]
Indeed $a_0^2-\theta=(1-2b)h+b^2h^2$, while the second
strict inequality is $\nu(1-b)>1$. Choose $0<\eta<1$ sufficiently
small that the fixed gap
\[
 g=\nu\bigl(a_0(1-\eta)-\theta\bigr)-(1-\theta)>0
\]
remains positive. Choose rational numbers $C_*,B_*$ in the nonempty
intervals
\[
 a_0/\theta<C_*<a_0^{-1},\qquad
 a_0<B_*<\min\{C_*^{-1},C_*\theta\}.
\]
Fix rational numbers $C>2$ and $F_0\ge\theta^{-1}$. For large integers
$m$ set
\[
 N=\lfloor C_*^{m/2}\rfloor,\quad K=N^2,\quad
 w_0=B_*^{-m},\quad v_0=CK\theta^m w_0.
\]
The base parameters are rational. Since $C_*>1$ and $N^2/C_*^m\to1$,
the interval conditions give
\[
 K\theta^m\to0,\qquad
 K(w_0/v_0)\theta^m=1/C<1/2,\qquad
 K/w_0\to0,\qquad v_0\to\infty.
\]
Thus the EPA base inequalities hold for all sufficiently large $m$.
The limiting collision rate from the analytic estimate is
\[
 c_\infty=\frac{\kappa\eta^2K\theta^m}
 {(m+1)v_0a_0^m}
 =\frac{\kappa\eta^2}{C(m+1)}(B_*/a_0)^m\longrightarrow\infty,
 \qquad \kappa=\frac{\log2}{4}.
\]

Let $B=B_\psi$ and $C_w$ be the fixed constants of that estimate; their
dependence is only on the already fixed lattice, chart, multiplier,
$\omega$, and CM integer $d$. In particular they are independent of
$m,N,w_0,v_0$ and the later approximants. The arithmetic and analytic
errors are
\[
 E_{\rm ar}=\frac{(\log4+4)F_0}{v_0},\qquad
 E_{\rm an}=\frac{BK}{2w_0}
       +\frac{\nu+\log(C_wK)}{w_*},\qquad w_*:=\min_i w_i.
\]
First fix a sufficiently large finite $m$ so that the base inequalities
hold, $c_\infty>2$, and
$E_{\rm ar}+BK/(2w_0)<\min(g/4,1/4)$.
Then successively choose $m$ approximants with
$w_i=\lceil\log q_i\rceil$ above the EPA thresholds and with
$w_*$ sufficiently large that
\[
 E:=E_{\rm ar}+E_{\rm an}<\min(g/2,1/2).
\]
Unbounded denominators allow each of these finitely many choices;
the threshold for $w_i$ sees only already chosen earlier weights.
For these weights choose the center-independent EPA modulus
$D_{\rm EPA}>0$. Now take the centers $c_{ji}=\lambda_jp_i/q_i$;
they are coordinatewise distinct.

For every sufficiently large multiple $H$ of that modulus, EPA
gives a nonzero minor $\Delta_H$ using
all target rows and some source columns of degree at most $H$.
The chosen columns may depend on $H$: both estimates hold for every
such selection. Write $M_H$ for the number of rows and
$0\le\bar b_H<\theta$ for their normalized mean transverse weight.
The exact clearing argument and the same-column analytic identity give
\begin{align*}
 \frac{\log|\Delta_H|}{M_HH}
 &\ge -(1-\bar b_H)-E_{\rm ar},\\
 \frac{\log|\Delta_H|}{M_HH}
 &\le E_{\rm an}+e_H+
 \max\{-c_H,-\nu(a_0(1-\eta)-\bar b_H)\},
\end{align*}
where $e_H\to0$ and $c_H\to c_\infty$ for the now fixed finite
data. Subtracting the lower side from the collision alternative gives
at most $1+E+e_H-c_H<0$ eventually. Subtracting it from the other
alternative gives at most
\[
 1-\nu a_0(1-\eta)+(\nu-1)\theta+E+e_H
   =-g+E+e_H<0
\]
eventually. Either is impossible for the same nonzero determinant.
Therefore there cannot be infinitely many exponent-$\nu$
approximants. As $\nu>2$ was arbitrary, $\mu(\omega)\le2$.
The assumed irrationality and Dirichlet approximation give the reverse
inequality.
\end{proof}

The qualitative irrationality input is the classical CM consequence of Siegel's theorem explained in Section~\ref{sec:main}. A specified
rational invariant differential is reduced to the normalization used
above by the previously proved rational change of model and rational
scaling invariance of $\mu$. Arbitrary complex rescaling is not permitted.

\begin{proof}[Proof of Theorem~\ref{thm:cm-main}]
Proposition~\ref{prop:p1-rational-model} reduces the specified rational
curve and differential to \eqref{eq:cubic}, changing periods by one
nonzero rational factor. Proposition~\ref{prop:p1-period-lattice}
supplies its compatible actual lattice and nonreal CM action.
There is an integer $a_*$ with $a_*^3+Aa_*+B\ne0$, since a cubic
has at most three roots. The integral origin construction and all
geometric and analytic lemmas therefore apply. The nonzero real
normalized period is irrational by the classical result in
Section~\ref{sec:main}. The preceding theorem proves its exponent
is two. Rational scaling invariance gives the stated conclusion
for the original differential and period.
\end{proof}

\clearpage
\part{Powers of the lemniscate period}\label{part:powers}
\section{Statement and scope}

This part uses \eqref{eq:overview-mu}. All logarithms are natural
unless a base is displayed. Its local chart, weights, and selected
minor are introduced afresh and remain fixed throughout each comparison.

\begin{theorem}\label{pow:thm:main}
Let
\[
 \Omega=2\int_0^1\frac{dt}{\sqrt{1-t^4}}.
\]
There is a constant \(C>0\), depending only on the fixed lemniscate
lattice, such that, for every integer \(n\geq1\), there is an integer
\(Q(n)\) with
\[
 \left|\Omega^n-\frac pq\right|>q^{-C n\ceil{\log(2n)}}
 \qquad(p\in\Z,\ q\geq Q(n)).
\]
In particular \(\Omega^n\) is irrational and
\[
 \mu(\Omega^n)\leq C n\ceil{\log(2n)}
                  =O(n\log(2n)).
\]
The proof below allows
\begin{equation}\label{pow:eq:mainC}
 C=6\cdot10^{10}G^2,\qquad
 G=\max(1,\ceil{C_0}),
\end{equation}
where the lattice constant \(C_0\) is defined in
\eqref{pow:eq:analyticconstants}.
\end{theorem}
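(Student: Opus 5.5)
Given an approximation $p/q$ to $\Omega^n$ with $q$ large, we take the positive real root $\omega=(p/q)^{1/n}$, which is close to $\Omega$. We then run a Part~\ref{part:cm}-type determinant argument with centers built from $\omega$ rather than from rational approximants. The centers sit on the Gaussian grid $\lambda_j\Omega$, with $\lambda_j\in\Z[i]$, and are displaced to $\lambda_j\omega$. We use the square lattice $\Omega(\Z+i\Z)$, the chart $y^2=x^3-x$ at the half-period $z_T=(1+i)\Omega/2$, and the separated-weight EPA of \cite[Theorem~3.1]{lemniscate}. Since $|\omega-\Omega|\asymp n^{-1}\Omega^{1-n}|\Omega^n-p/q|$, an approximation of exponent $\nu$ for $\Omega^n$ gives $|\omega-\Omega|\le q^{-\nu+O(1)}$. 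The problem is that $\omega$ is now an algebraic number of degree up to $n$ over $\Q(i)$, not rational. The arithmetic lower bound for the selected minor must therefore come from a relative norm in $\Q(i,\omega)/\Q(i)$. That norm costs a factor $n$ in the conjugate estimates, which is where the exponent $O(n\log(2n))$ comes from.

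\textbf{Step 1 (algebraic centers and EPA).} We fix base data $m,K,w_0,v_0,\theta$ as in Section~\ref{sec:transfer-closure}, but with the source weight $w_1$ tied to $\log q$. Only one additive coordinate is natural here, since there is a single approximation per $n$; the alternative is to take $m$ independent approximants. Either way the EPA of \cite{lemniscate} gives a nonzero minor $\Delta$ using all target rows.

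\textbf{Step 2 (clearing).} The entries lie in $\Q(i)[\omega]$, with integration denominators $a_j/j$. We clear these prime by prime using the constraint $\sum_h v_p(j_h)\le\min\{r\lfloor\log_pU\rfloor,k/p\}$ from the introduction, via Lemmas~\ref{pow:lem:clearing} and~\ref{pow:lem:Dsize}. This gives a denominator of size $\exp(MH\cdot O((1+\log(1/\theta))/v_0))$, independent of the cutoff. We also multiply by $q^{J}$ to make $\omega^n$ integral. Writing $\omega=\zeta^{1/n}$ with $\zeta=p/q$, the element $q^{1/n}\omega$ is an algebraic integer. That extra radical appears with the same total power in every conjugate, so it cancels in the relative norm inequality.

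\textbf{Step 3 (norm lower bound).} For the cleared determinant $\Delta'$ we have $|N_{\Q(i,\omega)/\Q(i)}\Delta'|\ge1$. Hence $\log|\Delta|\ge -(\text{clearing})-\sum_{\sigma\ne1}\log|\sigma\Delta'|$. The conjugates replace $\omega$ by $\zeta_n^k\omega$, which have modulus $|\omega|$. We bound those $n-1$ conjugate determinants trivially by Hadamard, obtaining a loss of $(n-1)MH\cdot O(\log(\text{entry size}))$.

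\textbf{Step 4 (analytic upper bound).} At the true center we need the collision and displacement bound of Theorem~\ref{thm:cm-origin-same-minor-bound}. The transport is now to entire theta-cleared columns evaluated at algebraic centers $\lambda_j\omega$ rather than at lattice points, so the multiplier inverse bound must be applied at $\gamma_j=\lambda_j\Omega$ and the displacement $\lambda_j(\omega-\Omega)$ expanded as before. Comparing gives a contradiction once $\nu$ times the transverse gain exceeds $1+n\cdot(\text{conjugate loss})$. The conjugate loss per unit weight is $O(\log(2n))$ once $w_0,v_0$ are chosen with $\log$-size in $n$ and the clearing cost is $O(\log n)$. This yields $\nu\le Cn\lceil\log(2n)\rceil$.

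\textbf{Main obstacle.} I expect the main difficulty to be keeping the conjugate-side loss uniform. Each conjugate entry carries the full size of the trivial bound, of order $w_0$-weighted growth, with no collision gain. Balancing that against the single-embedding gain requires choosing the parameters (notably $\theta$ with $\log(1/\theta)\asymp\log n$, and $v_0$) so that the clearing term from the prime-by-prime lemma does not grow with the cutoff. That balance is what gives the explicit $6\cdot10^{10}G^2$.
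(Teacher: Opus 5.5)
Your architecture is the paper's: the single positive root $\omega=(p/q)^{1/n}$, one additive coordinate with $w=\ceil{\log q}$, Gaussian centers $\lambda_j\omega$, the half-period EPA, prime-wise clearing, a relative norm over $\Q(\ii)$ after adjoining $\chi=q^{1/n}$, trivial conjugate bounds, and a theta-cleared analytic bound at algebraic centers.

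\textbf{Main gap: the parameter balance in Step~4.} This step carries the theorem's content, and as written it does not give $n\log(2n)$. Your rule is that ``$\nu$ times the transverse gain exceeds $1+n\cdot(\text{conjugate loss})$, with conjugate loss $O(\log 2n)$''. This leaves out the constraint that actually fixes $\nu$.
\begin{itemize}
\item The transverse gain is $A(1-\eta)-\Wbar\le A$.
\item The collision branch needs $c_\infty=\kappa\eta^2K\theta/(2v_0A)$ to exceed $1$ plus the total normalized error.
\item EPA requires $K\theta<1$ and $K(w_0/v_0)\theta<1$.
\end{itemize}
Together these give $A\lesssim 1/v_0$, hence $\nu\gtrsim 1/A\gtrsim v_0$, even when all errors are $O(1)$. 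If the errors really were of size $n\log n$, as you suggest, you would need $A\lesssim 1/(v_0 n\log n)$ and would get $\nu\gtrsim v_0(n\log n)^2$.

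The paper instead makes every error $O(1)$:
\begin{itemize}
\item The norm multiplies clearing and conjugate costs by $d\le n$, giving $n(L_\theta+\log 2)/v_0$ with $L_\theta=13+6\log(2/\theta)$. This forces $v_0\asymp n\log(1/\theta)$.
\item Taking $K\approx 1000v_0$, $A=2\theta$, $\eta=1/3$ gives $c_\infty=\kappa K/(36v_0)>4$.
\item $w_0=8GK$ absorbs $C_0K/w_0$.
\item The EPA conditions then force $\theta\lesssim 1/(G^2n\ceil{\log(2n)})$, so self-consistently $\log(1/\theta)\asymp\log(2n)$.
\item The displacement branch $\nu(A(1-\eta)-\theta)=\nu\theta/3=2$ gives $\nu_0=6/\theta$.
\end{itemize}
So the exponent is $1/\theta$, not $n$ times a logarithmic conjugate loss; see \eqref{pow:eq:params} and Lemma~\ref{pow:lem:params}.

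The obstacle you flag is also not the real one. Picard majorants on $|t|=1/2$ bound the truncations $x_{<S_H}$ and $\ell_{<T}$ by $1/3$ and $3/4$. Conjugate entries are therefore at most $2^k(C_{\mathrm{conj}}K)^\alpha$, with no $w_0$-weighted growth.

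\textbf{Three further errors.}
\begin{itemize}
\item \emph{Step~4, evaluation points.} You cannot invert the multiplier at $\gamma_j=\lambda_j\Omega$. In the half-period chart the theta multiplier $S$ must vanish on $\Lambda$ to clear the poles of $x,y$. The transport is instead at $z_j=z_T+\Omega\lambda_j$, where $|S(z_j)|=e^{\pi(\operatorname{Im}\lambda_j)^2}\Theta(0)\ge\Theta(0)$ and $\lambda_j\omega+s=z_j+s-z_T+\delta_j$. Inverting at lattice points belongs to the origin chart of Part~\ref{part:cm}, whose $\psi$ vanishes on $\pm z_Q+\Lambda$ instead.
\item \emph{Step~1, parameters and $m$.} The parameters of Section~\ref{sec:transfer-closure} are unusable here: there $c_\infty$ becomes large only as $m\to\infty$. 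Your ``either way'' also fails. With $m$ approximants, the roots $(p_i/q_i)^{1/n}$ generate an extension of $\Q(\ii)$ of degree up to $n^m$, which multiplies every norm loss accordingly. So $m=1$ is forced.
\item \emph{Step~2, clearing by $q^J$.} Clearing with integer powers $q^J$ would put $q^{dE}$ into the norm and multiply the main term $1-\Wbar$ by up to $n$. Only the normalization by $\chi^E$, which contributes $q^{(d/n)E}\le q^E$, avoids this. The radical itself does not cancel; only the auxiliary degree $[L(\chi):L]$ does.
\end{itemize}
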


The use of EPA in this theorem is exactly its one-additive-coordinate
case, with arbitrary distinct complex centers. Its precise statement is
recorded in Proposition~\ref{pow:prop:epa}. We cite its proof from the
companion manuscript \cite[Section~3]{lemniscate}, rather than assert a
new proof of EPA here. The standard period normalization from that
manuscript is also recorded explicitly. Every additional estimate used
for the power extension is proved below. The two-root cubic power
argument \cite{cubicdraft} is not used as a theorem.

\section{The local chart and the inherited interpolation theorem}

Put \(\Lambda=\Omega(\Z+\ii\Z)\) and \(z_T=(1+\ii)\Omega/2\).
The period normalization in \cite[Section~2]{lemniscate} gives
\[
 g_2(\Lambda)=4,\qquad g_3(\Lambda)=0.
\]
Thus \(x(z)=\wp_\Lambda(z)\), \(y(z)=\wp'_\Lambda(z)/2\) satisfy
\[
 y^2=x^3-x,\qquad x'=2y,\qquad y'=3x^2-1,
 \qquad x(z_T)=y(z_T)=0.
\]
This normalization concerns the displayed integral \(\Omega\), not an
independently chosen positive rescaling of a square lattice.

Let \(x_0(t)\in\Z[[t]]\) be the centered solution of
\[
 x_0(t)^3-x_0(t)=t^2,\qquad x_0(0)=0,
\]
and define \(\ell(t)\in\Q[[t]]\) by
\[
 \ell(0)=0,\qquad
 \ell'(t)=\frac1{3x_0(t)^2-1}.
\]
The inverse series construction for \(X^3-X\) proves the integrality
of \(x_0\). Consequently
\begin{equation}\label{pow:eq:integralprimitive}
 j[t^j]\ell(t)\in\Z\quad(j\geq1),\qquad \ell'(0)=-1.
\end{equation}
Near \(s=0\), centered uniqueness and the differential equations give
\begin{equation}\label{pow:eq:germ}
 x_0(y(z_T+s))=x(z_T+s),\qquad
 \ell(y(z_T+s))=s.
\end{equation}
Indeed the cubic gives the first identity; differentiation of the
second gives \(1\), and its constant term is zero.

Fix positive rational numbers \(w_0,v_0,\theta\), with \(0<\theta<1\),
and an integer \(K\geq1\). For a positive rational additive weight \(w\),
define
\begin{align}
 \CC&=\{(a,b,\alpha): a,\alpha\in\N,\ b\in\{0,1\},
                  \ w_0(2a+3b)+w\alpha\leq H\},\label{pow:eq:columns}\\
 \BB&=\{(k,\beta)\in\N^2:
                    v_0k+(w/\theta)\beta<H\}.\label{pow:eq:packet}
\end{align}
The source is spanned by \(x^ay^bX^\alpha\) with index in \(\CC\).
The target consists of the coefficients of the packet \(\BB\) at
each center.

\begin{proposition}[EPA, one-coordinate case]\label{pow:prop:epa}
Suppose
\begin{equation}\label{pow:eq:epabase}
 K\theta<1,\qquad K(w_0/v_0)\theta<1.
\end{equation}
There is a rational threshold \(T_{\mathrm{EPA}}\), depending only on
\(K,w_0,v_0,\theta\), with the following property.
For every rational \(w>T_{\mathrm{EPA}}\), there is a positive integer
\(D\) such that, for every set of distinct centers \(c_1,\ldots,c_K\in\C\),
there is \(H_0\) for which the map
\[
 P\longmapsto
 \bigl([t^ku^\beta]\,
  P(x_0(t),t,c_j+u+\ell(t))\bigr)_{j,(k,\beta)\in\BB}
\]
is surjective whenever \(H\geq H_0\) and \(D\mid H\).
\end{proposition}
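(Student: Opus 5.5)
This is the one-additive-coordinate case ($m=1$) of the half-period EPA theorem of \cite[Theorem~3.1]{lemniscate}. The plan is to verify that the present data are exactly an instance of that theorem, rather than to reprove it. If a self-contained argument were needed, one would repeat, for the cubic half-period chart, the route of Sections~\ref{sec:origin-auxiliary}--\ref{sec:origin-geometric-assembly}: a uniform curve inequality, then compactification, blowup, ampleness and Serre vanishing.

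\textbf{Step 1: identify the data.} The chart is $t=y$, $x=x_0(t)$ at $z_T$, and the primitive $\ell$ satisfies \eqref{pow:eq:germ}. The source weights of $(x,y,X)$ are $(2w_0,3w_0,w)$ and the target weights are $(v_0,w/\theta)$. With $m=1$, the companion paper's base conditions
\[
 K\theta^m<1,\qquad K(w_0/v_0)\theta^m<1
\]
become exactly \eqref{pow:eq:epabase}.

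The successive-weight thresholds of the general statement reduce to a single lower bound $w>T_{\mathrm{EPA}}$. This bound depends only on $K,w_0,v_0,\theta$ through the choice of $\sigma$ and the separation constant, as in \eqref{eq:explicit-origin-threshold} with the empty product. The coordinatewise-distinct center hypothesis becomes plain distinctness of $c_1,\ldots,c_K$. The order of quantifiers is then: $w$ first, then a modulus $D$ making $D/(2w_0)$, $D/(3w_0)$, $D/w$, $D/v_0$ and $D\theta/w$ integers, then the centers, then $H_0$. This matches Theorem~\ref{thm:origin-polar-epa-geometry} and the cited theorem.

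\textbf{Step 2: check the sharp separation condition in one coordinate.} The only forbidden normal-basis pairs have $S=\{1\}$ and $T=\{0\}$. The required inequality is therefore
\[
 C\,v_0<w,
\]
for a constant $C$ depending on $\sigma$ and $m=1$, analogous to $C_4$. This holds once $w$ exceeds a threshold depending only on the base data.

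The ordinary-fiber input (Theorem~\ref{thm:acceptance-ordinary-fiber} with $m=1$) also needs $(1+\sigma)\theta<1$. This holds because $\theta<1$ and $\sigma$ is small.

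\textbf{Main obstacle.} The delicate point is confirming that the cited theorem really has the packet map in the displayed form. It must use the full target rows at cutoff $H$ and the strict weight inequality defining $\BB$. It must also use the additive shift $c_j+u+\ell(t)$ with the sign convention $\ell'(0)=-1$, not a reparametrized or sheared variant.

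Here Proposition~\ref{prop:strict-jet-reparametrization} is the tool. Substituting $s=\ell(t)$ is triangular on strict packets and preserves rank. So whether the cited surjectivity is phrased in $t$ or in the uniformizing parameter $s$, it transfers to the displayed map, and the sign of $\ell'(0)$ is immaterial. A separate shear of the additive variable is never used.
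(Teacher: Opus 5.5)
Your proposal is correct and follows the paper's route: the paper proves this proposition by citing \cite[Theorem~3.1]{lemniscate} with $m=1$, noting only that the threshold precedes the centers and that $H_0$ may depend on them. Your extra consistency checks (matching base conditions, the single separation inequality $Cv_0<w$, the quantifier order, and rank invariance under $s=\ell(t)$) are compatible with that citation but are not needed for it.
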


This is \cite[Theorem~3.1]{lemniscate} with \(m=1\). In particular the
threshold precedes the centers. The size \(H_0\) is allowed to depend
on them. No uniform bound on \(H_0\) will be required.

Put
\begin{equation}\label{pow:eq:truncations}
 U=\ceil{\frac{w}{\theta v_0}},\qquad T=U+1,\qquad
 S_H=\ceil{\frac{H}{v_0}},
\end{equation}
and write \(x_{<S_H}\), \(\ell_{<T}\) for the corresponding truncations.
Let \(N\geq1\), \(K=N^2\), and enumerate
\(\lambda_j\in\{a+\ii b:0\leq a,b<N\}\).
For any \(\omega\in\C\setminus\{0\}\), set \(c_j=\lambda_j\omega\).
The rows are \(\RR=\{1,\ldots,K\}\times\BB\), with \(M_H=|\RR|>0\).
Define the matrix
\begin{equation}\label{pow:eq:entry}
 A_H((j,k,\beta),(a,b,\alpha))
 =\binom{\alpha}{\beta}[t^k]
       x_{<S_H}(t)^a t^b
         (\lambda_j\omega+\ell_{<T}(t))^{\alpha-\beta}.
\end{equation}
An entry is defined to be zero if \(\beta>\alpha\).

\begin{lemma}\label{pow:lem:minors}
Under \eqref{pow:eq:epabase}, for \(w>T_{\mathrm{EPA}}\) there are cofinally
many positive integers \(H\) for which \(A_H\) has a nonzero square
minor using every row.
\end{lemma}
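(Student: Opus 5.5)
The approach is to derive the lemma from Proposition~\ref{pow:prop:epa} and then pass to the truncated matrix \(A_H\) by two determinant-preserving row operations, the same pattern as Lemma~\ref{lem:origin-filtered-truncation} in Part~\ref{part:cm}. The centers \(c_j=\lambda_j\omega\) are distinct because \(\omega\ne0\) and the \(\lambda_j\) are distinct Gaussian integers. With \(w>T_{\mathrm{EPA}}\), the proposition therefore supplies \(D\) and \(H_0\). For every \(H\geq H_0\) with \(D\mid H\), the full packet map \(P\mapsto([t^ku^\beta]P(x_0(t),t,c_j+u+\ell(t)))\) is surjective. Its matrix in the source basis \(x^ay^bX^\alpha\), \((a,b,\alpha)\in\CC\), thus has rank \(M_H\). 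Selecting \(M_H\) columns gives a nonzero square minor using every row. These \(H\) are cofinal.

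Next I identify that full matrix with \(A_H\) up to two whole-packet operations that do not depend on the selected columns. First, expand \((c_j+u+\ell(t))^\alpha\). The coefficient of \(u^\beta\) is \(\binom{\alpha}{\beta}(c_j+\ell(t))^{\alpha-\beta}\), so the full entry is \(\binom{\alpha}{\beta}[t^k]x_0^a t^b(\lambda_j\omega+\ell(t))^{\alpha-\beta}\), with zero when \(\beta>\alpha\). Second, replace \(x_0\) by \(x_{<S_H}\). This does not change any entry, because \(k\leq v_0k<H\leq v_0S_H\) and so \(t^k\) never sees terms of order \(\geq S_H\). This is the truncation-of-\(U\) remark from Part~\ref{part:cm}.

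Third, replace \(\ell\) by \(\ell_{<T}\). Put \(R=\ell-\ell_{<T}\) and consider the substitution \(u\mapsto u+R(t)\), \(t\mapsto t\) on the quotient of \(\C[[t,u]]\) by series of weight \(\geq H\), with \(t\) of weight \(v_0\) and \(u\) of weight \(w/\theta\). Every term of \(R\) has \(t\)-order \(\geq T>U\geq w/(\theta v_0)\), so it has weight \(>w/\theta\). Hence the substitution and its inverse preserve the weight ideal and induce inverse operators on each center's packet. In the monomial basis ordered by \(\beta\), the operator is block unitriangular: \(t^k(u+R)^\beta=t^ku^\beta+{}\)terms of lower \(u\)-degree. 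Its determinant is therefore \(1\). The operator carries the truncated packet to the full packet and is independent of the source polynomial, so the full and truncated matrices differ by a fixed invertible row operator on all rows.

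It follows that every minor using all rows and a fixed column selection is unchanged, and the nonzero minor from EPA is also a nonzero minor of \(A_H\). The main point needing care is this third step. The weight inequality \(T v_0>w/\theta\) must hold strictly enough that the shear preserves the strict-cutoff ideal; it does, since \(T=U+1\). The operator must also act on complete packets rather than on individual entries, so the conclusion applies only to minors that use every row, which is exactly what the lemma claims.
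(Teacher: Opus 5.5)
Your proof is correct and follows essentially the same route as the paper: distinct centers, EPA surjectivity on a divisibility progression, exactness of the base truncation because \(S_Hv_0\geq H\), and the determinant-one unitriangular shear \(u\mapsto u+\ell(t)-\ell_{<T}(t)\), justified by \(Tv_0>w/\theta\). One small slip: the chain \(k\leq v_0k<H\leq v_0S_H\) is garbled, since its first step needs \(v_0\geq1\) and the chain only yields \(k<v_0S_H\); what you actually need is \(v_0k<H\leq v_0S_H\), which gives \(k<S_H\) directly.
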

\begin{proof}
The centers \(\lambda_j\omega\) are distinct.
The base truncation is exact on the rows since \(S_Hv_0\geq H\).
The coordinate substitution
\(u\mapsto u+\ell(t)-\ell_{<T}(t)\) preserves the strict weighted
packet quotient: its tail starts at degree \(T\), and
\(Tv_0>w/\theta\). Its inverse has the same property. It is a
unitriangular change of packet coordinates.
Proposition~\ref{pow:prop:epa} therefore supplies full row rank for the
truncated matrix on an eventual divisibility progression.
Select \(M_H\) independent columns.
\end{proof}

Fix such a selection \(\iota:\RR\hookrightarrow\CC\), write
\(\del=\det(A_H(r,\iota(s)))_{r,s\in\RR}\), and put
\begin{equation}\label{pow:eq:meanW}
 \Wbar=\frac{w}{M_HH}\sum_{(j,k,\beta)\in\RR}\beta.
\end{equation}
Directly from the packet inequalities,
\begin{equation}\label{pow:eq:rowbounds}
 0\leq\Wbar\leq\theta,\qquad
 \sum_{r\in\RR}k_r\leq M_HH/v_0.
\end{equation}
Every estimate below holds for every allowed column selection; no
subsequent step selects different columns.

\section{Degree-aware clearing}

\begin{lemma}[An elementary prime estimate]\label{pow:lem:cheb}
For \(x\geq0\), put
\(\vartheta(x)=\sum_{p\leq x,\ p\ {\rm prime}}\log p\). Then
\(\vartheta(x)\leq6x\).
\end{lemma}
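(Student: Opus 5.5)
The plan is the classical Erd\H{o}s-style bound via central binomial coefficients, which in fact gives the stronger estimate $\vartheta(x)\leq x\log4<1.39\,x$; the stated constant $6$ then follows with room to spare. Since $\vartheta$ is a step function constant between consecutive integers and nondecreasing, it suffices to prove $\vartheta(n)\leq n\log4$ for every integer $n\geq0$. Then for real $x\geq0$ we get $\vartheta(x)=\vartheta(\lfloor x\rfloor)\leq\lfloor x\rfloor\log4\leq6x$. The cases $n=0,1,2$ are checked directly: $\vartheta(0)=\vartheta(1)=0$ and $\vartheta(2)=\log2\leq2\log4$.

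The key arithmetic input is, for every integer $k\geq1$,
\[
 \prod_{k+1<p\leq2k+1}p\ \Bigm|\ \binom{2k+1}{k},\qquad
 \binom{2k+1}{k}\leq4^k.
\]
For the divisibility, each such prime divides $(2k+1)!$ but divides neither $k!$ nor $(k+1)!$. For the size bound, $\binom{2k+1}{k}=\binom{2k+1}{k+1}$ are two equal terms of the expansion of $(1+1)^{2k+1}=2^{2k+1}$, so each is at most $2^{2k}=4^k$. Taking logarithms gives
\[
 \vartheta(2k+1)-\vartheta(k+1)\leq k\log4 .
\]

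Then I would proceed by strong induction on $n\geq3$. If $n$ is even, $n$ is not prime, so $\vartheta(n)=\vartheta(n-1)\leq(n-1)\log4$. If $n=2k+1$ with $k\geq1$, the displayed estimate and the induction hypothesis at $k+1<n$ give
\[
 \vartheta(n)\leq(k+1)\log4+k\log4=n\log4 .
\]
This closes the induction, and $\log4<6$ finishes the proof.

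There is no real obstacle here. The only points needing care are the base cases, and confirming that the divisibility uses primes strictly greater than $k+1$, so that $k+1$ itself (possibly prime) is handled by $\vartheta(k+1)$ rather than by the binomial coefficient. If the paper later needs the bound in the form $\vartheta(x)\leq6x$ for all real $x\geq0$, including non-integers, this is covered by the monotonicity remark at the start.
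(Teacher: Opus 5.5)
Your proof is correct. It uses the same basic input as the paper: the primes in a short interval divide a central binomial coefficient that is at most $4^m$. The bookkeeping is different.

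The paper uses only $\binom{2m}{m}$ at the dyadic points $m=2^j$ and telescopes to get $\vartheta(2^s)\le(2^s-1)\log4$. It then rounds $x$ up to the next power of two, $2^{s-1}<x\le2^s$. This costs a factor of two and gives $\vartheta(x)<2x\log4<4x$.

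Your version uses $\binom{2k+1}{k}$, strong induction, a parity split and base cases. It recovers the sharper Erd\H{o}s bound $\vartheta(x)\le x\log4$ with no rounding loss. The paper gives up that constant in exchange for a one-line telescoping sum, with no induction or case analysis.

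Only the constant $6$ is used later, in Lemma~\ref{pow:lem:Dsize}, so either constant suffices and the extra precision is not needed there.
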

\begin{proof}
For any integer \(m\geq1\), the product of the primes in \((m,2m]\)
divides \(\binom{2m}{m}\leq4^m\).
At dyadic arguments this gives
\(\vartheta(2^s)\leq(2^s-1)\log4\).
For \(x>1\), choose \(s\) with \(2^{s-1}<x\leq2^s\).
Then \(\vartheta(x)<2x\log4<4x<6x\). For \(0\leq x\leq1\) it is zero.
\end{proof}

\begin{lemma}[Prime-wise clearing]\label{pow:lem:clearing}
Fix \(w,v_0>0\) and \(0<\theta<1\), and define \(U\) by
\eqref{pow:eq:truncations}. For every positive integer \(H\), put
\[
 R_H=\floor{H/w},\qquad
 e_p(H)=\min\left(
       R_H\floor{\log_p U},\ \floor{\frac{H}{v_0p}}\right),\qquad
 \mathcal D_H=\prod_{\substack{p\leq U\\p\ {\rm prime}}}p^{e_p(H)}.
\]
Let \(L(t)=\sum_{j=1}^U(a_j/j)t^j\), with \(a_j\in\Z\).
If \(0\leq r\leq R_H\) and \(0\leq k\leq H/v_0\), then
\[
 \mathcal D_H[t^k]\bigl(F(t)L(t)^r\bigr)\in\Z
 \qquad\text{for every }F(t)\in\Z[[t]].
\]
\end{lemma}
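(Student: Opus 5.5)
The plan is to expand \(F(t)L(t)^r\) monomially and clear denominators term by term, one prime at a time. Write \(F=\sum_i f_it^i\) with \(f_i\in\Z\). Then
\[
 [t^k]\bigl(F(t)L(t)^r\bigr)=\sum_{i+j_1+\cdots+j_r=k} f_i\prod_{h=1}^r\frac{a_{j_h}}{j_h},
 \qquad 1\le j_h\le U .
\]
It therefore suffices to show that, for each index tuple \((j_1,\ldots,j_r)\) with \(\sum_h j_h\le k\), the product \(j_1\cdots j_r\) divides \(\mathcal D_H\). Equivalently, for every prime \(p\) we need \(\sum_h v_p(j_h)\le v_p(\mathcal D_H)\). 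The case \(r=0\) is trivial, since the empty product is \(1\).

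First dispose of primes \(p>U\). Every \(j_h\le U<p\), so \(v_p(j_h)=0\), and such primes require nothing. This is why \(\mathcal D_H\) only runs over \(p\le U\).

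For \(p\le U\), establish the two bounds whose minimum is \(e_p(H)\).
\begin{enumerate}
\item \emph{Size bound.} Since \(j_h\le U\), we have \(v_p(j_h)\le\floor{\log_pU}\). Summing over at most \(r\le R_H\) factors gives \(\sum_h v_p(j_h)\le R_H\floor{\log_pU}\).
\item \emph{Order bound.} Since \(p^{v_p(j_h)}\) divides \(j_h\), we have \(v_p(j_h)\le p^{v_p(j_h)}/p\le j_h/p\) when \(v_p(j_h)\ge1\), using \(e\le p^{e-1}\) for \(p\ge2\), \(e\ge1\). The inequality is trivial when \(v_p(j_h)=0\). Summing gives \(\sum_h v_p(j_h)\le\sum_h j_h/p\le k/p\le H/(v_0p)\).
\end{enumerate}
The left side is an integer, so taking floors preserves both bounds, and the sum is at most \(e_p(H)\).

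Combining the prime-by-prime estimates, each \(\prod_h j_h\) divides \(\mathcal D_H\). Multiplying the finite sum by \(\mathcal D_H\) then makes every term an integer, because the numerators \(f_i\prod_h a_{j_h}\) are integral.

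The only step needing care is the elementary inequality \(e\le p^{e-1}\). It holds for all primes \(p\ge2\) and \(e\ge1\) by induction, since \(p^{e}\ge 2p^{e-1}\ge p^{e-1}+1\). This inequality is what converts the coefficient-order constraint \(\sum j_h\le k\) into the bound \(k/p\). Nothing else is delicate, and the hypotheses on \(\theta\) and \(v_0\) enter only through the definitions of \(U\) and of \(e_p(H)\).
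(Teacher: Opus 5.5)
Your proof is correct and follows the paper's argument essentially step for step. You reduce to showing that each denominator $j_1\cdots j_r$ divides $\mathcal D_H$, and bound $\sum_h v_p(j_h)$ both by $R_H\lfloor\log_p U\rfloor$ and, via $v_p(j)\le j/p$ and $\sum_h j_h\le k\le H/v_0$, by $H/(v_0p)$, with integrality giving $e_p(H)$. Your explicit handling of primes $p>U$ and the induction for $e\le p^{e-1}$ only spell out details the paper leaves implicit.
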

\begin{proof}
Every contributing term has denominator dividing \(j_1\cdots j_r\),
where \(1\leq j_\ell\leq U\) and
\(\sum_\ell j_\ell\leq k\). For every prime \(p\),
\[
 \sum_\ell v_p(j_\ell)\leq
                  R_H\floor{\log_p U}.
\]
Also \(v_p(j)\leq j/p\): if \(j=p^a m\) and \(a\geq1\), then
\(a\leq p^{a-1}\leq j/p\); if \(a=0\) the assertion is immediate.
Hence
\[
 \sum_\ell v_p(j_\ell)\leq \frac{\sum_\ell j_\ell}{p}
                     \leq \frac{H}{v_0p}.
\]
The exponent is integral, so it is at most \(e_p(H)\).
Thus every contributing denominator divides \(\mathcal D_H\).
The case \(r=0\) is included.
\end{proof}

\begin{lemma}[Uniform logarithmic budget]\label{pow:lem:Dsize}
Fix \(v_0>0\) and \(0<\theta<1\). For all sufficiently large \(w\),
uniformly for every positive integer \(H\),
\begin{equation}\label{pow:eq:Dsize}
 \frac{\log\mathcal D_H}{H}
 \leq \frac{L_\theta}{v_0},\qquad
 L_\theta=13+6\log(2/\theta).
\end{equation}
\end{lemma}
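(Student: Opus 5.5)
Write \(\log\mathcal D_H=\sum_{p\le U}e_p(H)\log p\). The plan is to split the primes at a threshold \(P\) and use a different branch of the minimum defining \(e_p(H)\) on each side. For small primes \(p\le P\) we use \(e_p(H)\le H/(v_0p)\). For large primes \(P<p\le U\) we use \(e_p(H)\le R_H\lfloor\log_pU\rfloor\le (H/w)\log U/\log p\). This gives
\[
 \log\mathcal D_H\le \frac{H}{v_0}\sum_{p\le P}\frac{\log p}{p}
   +\frac{H}{w}\,\pi(U)\log U .
\]
Dividing by \(H\) makes both terms independent of \(H\), which is where the uniformity in \(H\) comes from. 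For \(H<w\) we have \(R_H=0\), so \(\mathcal D_H=1\) and that case is trivial.

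For the first sum, we apply the Mertens-type bound \(\sum_{p\le x}\log p/p\le\log x+O(1)\). To keep everything explicit and reuse Lemma~\ref{pow:lem:cheb}, we derive it by partial summation from \(\vartheta(x)\le6x\). A crude version \(\sum_{p\le x}\log p/p\le 6\log x+6\) follows from summing over dyadic blocks: primes in \((2^{s-1},2^s]\) contribute at most \(\vartheta(2^s)/2^{s-1}\le12\log 2\) per block. Either version suffices, provided the constants fit \(L_\theta\). The natural choice is \(P\) proportional to \(U/w\approx 1/(\theta v_0)\), taking for instance \(P=2U/w\) up to a constant. Then \(\log P\) is roughly \(\log(2/\theta)+\log(1/v_0)\).

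Here a difficulty appears. The bound \eqref{pow:eq:Dsize} has no \(\log v_0\) term, so \(P\) should not be tied to \(1/v_0\) in that way. The alternative is to handle large primes by comparing against the other branch. For \(p>P\), the second branch is at most \((H/w)\log U/\log p\). Since \(\vartheta(U)\le 6U\) and \(U\le w/(\theta v_0)+1\), the total over \(p\le U\) of \(\log U\) is at most \(\pi(U)\log U\), which is about \(\vartheta(U)\) times a factor. This gives roughly \(6(H/w)U\lesssim 6H/(\theta v_0)\), which is too large by a factor \(1/\theta\). We therefore split more carefully.

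For primes \(p>\sqrt U\) we have \(\lfloor\log_pU\rfloor=1\), so these contribute at most \(\min(H/w,\,H/(v_0p))\log p\). We use \(H/(v_0p)\) when \(p\le X:=w/v_0\), and \(H/w\) when \(p>X\). The first range gives \((H/v_0)\sum_{p\le X}\log p/p\), which again contains a \(\log X\) term. A better estimate is needed here, and this is the main obstacle.

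The key observation is that \(e_p(H)\) is an integer. For \(p>H/v_0\) the second branch \(\lfloor H/(v_0p)\rfloor\) vanishes, so only primes \(p\le H/v_0\) contribute. In the range \(X<p\le\min(U,H/v_0)\), the contribution is at most \((H/w)\vartheta(U)\le 6HU/w\le 6H/(\theta v_0)+6H/w\). The remaining issue is this \(1/\theta\) loss against the stated \(\log(2/\theta)\). To remove it we use the floor in \(\lfloor H/(v_0p)\rfloor\). For \(p\) in a dyadic range \(H/(v_0 2^{s})<p\le H/(v_0 2^{s-1})\), the exponent is at most \(2^s\) and also at most \(R_H\). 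The sum over primes up to \(U\) then telescopes to
\[
 \sum_s \min\bigl(2^s,\,H/w\bigr)\,\vartheta\bigl(H/(v_02^{s-1})\bigr)
   \le\sum_s\min\bigl(2^s,\,H/w\bigr)\cdot\frac{6H}{v_02^{s-1}} .
\]
Each term is at most \(12H/v_0\). Only the scales with \(H/(v_02^{s-1})\le U\) and \(2^s\le H/w\) can contribute, so the number of effective terms is bounded by \(\log_2(2Uw^{-1}\cdot v_0)+O(1)\), which is \(\log_2(2/\theta)+O(1)\).

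Including the small-prime Mertens part, where prime powers are handled by \(\lfloor\log_pU\rfloor\le\log U/\log p\) together with the same dyadic count, the total should come to \(\bigl(13+6\log(2/\theta)\bigr)H/v_0\) once \(w\) is large. The condition that \(w\) be large absorbs the \(+1\) in \(U\).

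Working out this dyadic bookkeeping with constants matching \(L_\theta\) is the main obstacle. The structural steps are the two-branch minimum, the vanishing of exponents for \(p>H/v_0\), and dyadic grouping combined with \(\vartheta(x)\le6x\).
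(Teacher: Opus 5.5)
There is a genuine gap, and it comes from applying the two branches of the minimum the wrong way round. Compare them: \(R_H\lfloor\log_pU\rfloor\approx(H/w)\log_pU\) is the smaller branch for \(p\) below roughly \(B:=w/v_0\), and \(H/(v_0p)\) is the smaller one above \(B\). Your opening split uses \(H/(v_0p)\) for small primes and the \(R_H\)-branch for large primes. Your refined split (\(H/(v_0p)\) for \(p\le X=w/v_0\), \(H/w\) for \(p>X\)) again takes the larger branch on each side. That is exactly what produces the spurious \(\log X\) (hence \(\log(1/v_0)\)) term and the \(1/\theta\) loss you then try to repair.

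With the branches swapped, the lemma follows directly.
\begin{itemize}
\item For \(p\le B\), use \(\lfloor\log_pU\rfloor=\#\{a\ge1:p^a\le U\}\). This gives \(\sum_{p\le B}\lfloor\log_pU\rfloor\log p\le\vartheta(B)+\sum_{2\le a\le\log_2U}\vartheta(U^{1/a})\le6B+6\sqrt U\log_2U\). Multiplying by \(R_H\le H/w\) gives \(6H/v_0\) plus a term \(6H\sqrt U\log_2U/w\), which is at most \(H/v_0\) for large \(w\) because \(U\approx w/(\theta v_0)\).
\item For \(B<p\le U\), use \(e_p\le H/(v_0p)\) and partial summation with \(\vartheta(t)\le6t\). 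This gives \(\sum_{B<p\le U}\log p/p\le6(1+\log(U/B))\), and \(U/B\to1/\theta\), so \(U/B\le2/\theta\) for large \(w\).
\end{itemize}
The Mertens-type sum runs only over \((B,U]\), whose logarithmic length is \(\log(U/B)\). So no \(\log v_0\) or \(\log w\) ever appears, and the total is \((6+1+6+6\log(2/\theta))/v_0=L_\theta/v_0\), uniformly in \(H\).

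The dyadic scheme you fall back on does not reach the stated constant. You bound each block by \(12H/v_0\) over roughly \(\log_2(2/\theta)\) blocks. That gives a coefficient of about \(12/\log2\approx17.3\) on \(\log(1/\theta)\) instead of \(6\). Rounding \(\lfloor H/(v_0p)\rfloor\) up to \(2^s\) costs a factor \(2\), and the dyadic count costs a further \(1/\log 2\) compared with the integral \(\int_B^U\vartheta(t)t^{-2}\,dt\). There are two further problems:
\begin{itemize}
\item Your claim that only scales with \(2^s\le H/w\) contribute is false. The scales with \(2^s>H/w\) carry the primes below about \(2B\) through the \(R_H\)-branch and contribute up to about \(12H/v_0\).
\item The prime powers for \(p\le\sqrt U\) are left unspecified.
\end{itemize}
The lemma fixes \(L_\theta=13+6\log(2/\theta)\), and this constant feeds the explicit numerical margins in Lemma~\ref{pow:lem:params}. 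A bound of the shape \(O(1+\log(2/\theta))/v_0\) is therefore not enough. With your per-block estimate the required inequality actually fails once \(\theta\) is small, as it is in the application. You acknowledge the constant matching as unfinished, but the real fix is the branch assignment above, not more careful dyadic bookkeeping.
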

\begin{proof}
Put \(B=w/v_0\) and assume \(B\geq2\).
For primes \(p\leq B\), use the first term in the minimum defining
\(e_p\). The prime-power decomposition and Lemma~\ref{pow:lem:cheb} give
\begin{align*}
 \sum_{p\leq B}\floor{\log_p U}\log p
 &\leq\vartheta(B)+
       \sum_{a=2}^{\floor{\log_2U}}\vartheta(U^{1/a})\\
 &\leq6B+6\sqrt U\,\log_2U.
\end{align*}
For \(B<p\leq U\), use the second term. Partial summation gives
\begin{align*}
 \sum_{B<p\leq U}\frac{\log p}{p}
 &=\frac{\vartheta(U)}U-\frac{\vartheta(B)}B+
              \int_B^U\frac{\vartheta(t)}{t^2}\,dt\\
 &\leq6(1+\log(U/B)).
\end{align*}
Since \(R_H/H\leq1/w\), these bounds imply, for every \(H>0\),
\begin{equation}\label{pow:eq:Dfinite}
 \frac{\log\mathcal D_H}{H}
 \leq\frac6{v_0}(2+\log(U/B))
                  +\frac{6\sqrt U\,\log_2U}{w}.
\end{equation}
For fixed \(\theta,v_0\) and \(w\to\infty\),
\[
 \frac UB\longrightarrow\frac1\theta,\qquad
 \frac{6v_0\sqrt U\,\log_2U}{w}\longrightarrow0.
\]
For sufficiently large \(w\), we have \(U/B\leq2/\theta\) and the
last term of \eqref{pow:eq:Dfinite} is at most \(1/v_0\).
This proves \eqref{pow:eq:Dsize}.
The threshold does not depend on \(H\).
\end{proof}

\begin{remark}
The total index constraint is essential. Clearing each factor
independently would discard \(\sum j_\ell\leq k\) and retain the much
larger cost \(O(1/(\theta v_0))\).
Only the elementary prime estimate above was used; no prime-number
asymptotic in arithmetic progressions is needed.
\end{remark}

\section{One Kummer root and the arithmetic lower bound}

Fix \(n\geq1\) and \(\nu>0\), and suppose
\begin{equation}\label{pow:eq:badapprox}
 \left|\Omega^n-\frac pq\right|\leq q^{-\nu},
 \qquad q\geq2,\quad p/q\geq1.
\end{equation}
Let \(\omega=(p/q)^{1/n}\geq1\), the positive real root.
Since \(\Omega>2\), the condition \(p/q\geq1\) holds automatically
under \eqref{pow:eq:badapprox}. The factorization of two powers gives
\begin{equation}\label{pow:eq:root}
 |\Omega-\omega|\leq\frac1n|\Omega^n-\omega^n|
                 \leq q^{-\nu},\qquad
 \omega\leq\Omega+1.
\end{equation}
For the last inequality use
\(\omega^n\leq\Omega^n+1\leq(\Omega+1)^n\).

Set
\[
 w=\ceil{\log q},\qquad
 L=\Q(\ii,\omega),\qquad d=[L:\Q(\ii)]\leq n.
\]
Every embedding \(\sigma:L\hookrightarrow\C\) fixing \(\Q(\ii)\)
sends \(\omega\) to another root of \(X^n-p/q\). Therefore
\begin{equation}\label{pow:eq:conjmod}
 |\sigma(\omega)|=\omega\leq\Omega+1.
\end{equation}

\begin{lemma}[Exact norm inequality]\label{pow:lem:norm}
If \(\del\ne0\), and
\[
 E=\sum_{s\in\RR}\alpha_{\iota(s)}
                         -\sum_{r\in\RR}\beta_r,
\]
then
\begin{equation}\label{pow:eq:norm}
 \mathcal D_H^{dM_H}\,q^{(d/n)E}
       \prod_{\sigma:L\hookrightarrow\C\,/\,\Q(\ii)}
                   |\sigma(\del)|\geq1.
\end{equation}
\end{lemma}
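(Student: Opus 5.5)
We show that \(\mathcal D_H^{M_H}q^{E/n}\del\) is a nonzero algebraic integer of \(L\), up to a unit-free factor, and then take its relative norm to \(\Q(\ii)\). That norm lies in \(\Z[\ii]\), and every nonzero Gaussian integer has modulus at least one.

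\emph{Step 1: entry clearing.} The entry \eqref{pow:eq:entry} is \(\binom{\alpha}{\beta}[t^k]\bigl(F(t)(\lambda_j\omega+\ell_{<T}(t))^{\alpha-\beta}\bigr)\) with \(F=x_{<S_H}^at^b\in\Z[t]\). Expand binomially in \(\omega\): it is \(\sum_{e}\binom{\alpha-\beta}{e}(\lambda_j\omega)^{\alpha-\beta-e}[t^k](F\ell_{<T}^e)\), with \(e\le\alpha\le H/w\), so \(e\le R_H\), and \(k\le H/v_0\). Lemma~\ref{pow:lem:clearing} applies with \(U=T-1\), because \(\ell_{<T}\) has degree \(<T\), i.e. at most \(U\), and \(ja_j\)-integrality holds by \eqref{pow:eq:integralprimitive}. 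It gives \(\mathcal D_H[t^k](F\ell_{<T}^e)\in\Z\). The powers of \(\omega\) remain. Since \(\omega^n=p/q\), \(\omega\) is not integral; instead \(\tilde\omega:=q^{1/n}\omega=(pq^{n-1})^{1/n}\) is an algebraic integer. It lies in \(L\) only up to the radical \(q^{1/n}\), so we work in \(L'=L(q^{1/n})\) and note that this extra radical cancels at the end. Multiplying the entry by \(\mathcal D_H\,(q^{1/n})^{\alpha-\beta}\) makes it an algebraic integer, because \((q^{1/n})^{\alpha-\beta-e}\omega^{\alpha-\beta-e}=\tilde\omega^{\alpha-\beta-e}\) and the leftover \((q^{1/n})^{e}\) is integral.

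\emph{Step 2: determinant.} Exactly as in Lemma~\ref{pow:lem:Dsize}'s counterpart, Lemma~\ref{lem:origin-grade-clearing}, each nonzero Leibniz term has total \(\alpha-\beta\) exponent \(E\ge0\). Multiplying the determinant by \(\mathcal D_H^{M_H}(q^{1/n})^{E}\) therefore gives a nonzero algebraic integer \(\gamma=\mathcal D_H^{M_H}q^{E/n}\del\). It is nonzero because \(\del\ne0\).

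\emph{Step 3: norm.} This step is the main obstacle, because \(\gamma\) lies in \(L'\), not in \(L\). Two routes handle it. One is to observe that \(\mathcal D_H^{M_H}q^{E/n}\) is a positive real number and \(\del\in L\). Then, for every embedding \(\sigma\) of \(L\) over \(\Q(\ii)\), extend \(\sigma\) to \(L'\). The image \(\sigma(\gamma)=\mathcal D_H^{M_H}\zeta q^{E/n}\sigma(\del)\) has modulus \(\mathcal D_H^{M_H}q^{E/n}|\sigma(\del)|\) for some root of unity \(\zeta\), so the modulus is independent of the extension. The other route is to compute \(N_{L'/\Q(\ii)}(\gamma)\in\Z[\ii]\setminus\{0\}\), which has modulus at least one. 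Its modulus equals \(\prod_\sigma(\mathcal D_H^{M_H}q^{E/n}|\sigma(\del)|)^{[L':L]}\). Taking the \([L':L]\)-th root then yields \(\prod_{\sigma}\mathcal D_H^{M_H}q^{E/n}|\sigma(\del)|\ge1\) over the \(d\) embeddings, which is \eqref{pow:eq:norm}. Care is needed in two places: extensions must be counted uniformly, since each \(\sigma\) has exactly \([L':L]\) extensions, and the \(\mathcal D_H\) and \(q\) factors must be checked to be real and positive, hence fixed in modulus.
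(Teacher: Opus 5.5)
Your proposal is correct and is essentially the paper's proof: adjoin \(\chi=q^{1/n}\), use that \(\chi\omega\) is an algebraic integer, clear entries with Lemma~\ref{pow:lem:clearing}, and take the relative norm from \(L(\chi)\) to \(\Q(\ii)\), using that each embedding of \(L\) has exactly \([L(\chi):L]\) extensions and \(|\tau(\chi)|=q^{1/n}\). The only difference is at the determinant step: the paper multiplies row \(r\) by \(\chi^{-\beta_r}\) and column \(s\) by \(\chi^{\alpha_{\iota(s)}}\), which needs no discussion of the sign of \(E\), while your Leibniz-term argument also works.

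Three small corrections. First, \(q^{1/n}\omega=p^{1/n}\), not \((pq^{n-1})^{1/n}\); both are algebraic integers, so nothing breaks. Second, \(\omega\) can be integral when \(p/q\) is not reduced, but your argument does not depend on that. Third, your ``first route'' is only the modulus observation that feeds the norm computation, not a standalone proof.
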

\begin{proof}
Adjoin \(\chi=q^{1/n}>0\) to \(L\), put
\(\widetilde L=L(\chi)\), and let \(d_2=[\widetilde L:L]\).
Both \(\chi\) and \(\pi_0=\chi\omega=p^{1/n}\) are algebraic integers.
For a nonzero entry with \(\alpha\geq\beta\),
\[
 \chi^{\alpha-\beta}
       (\lambda_j\omega+\ell_{<T}(t))^{\alpha-\beta}
 =\sum_{r=0}^{\alpha-\beta}\binom{\alpha-\beta}{r}
       (\lambda_j\pi_0)^{\alpha-\beta-r}
                       \chi^r\ell_{<T}(t)^r.
\]
Here \(r\leq\alpha\leq\floor{H/w}\), and the row order is
\(k<H/v_0\).
Lemma~\ref{pow:lem:clearing}, together with the integrality of
\(x_{<S_H}\), proves that
\(\mathcal D_H\chi^{\alpha-\beta}A_H(r,c)\) is integral over \(\Z\).
Zero entries cause no exception. Consequently
\[
 \Psi=\mathcal D_H^{M_H}\chi^E\del
\]
is a nonzero algebraic integer: it is the determinant of the entrywise
integral matrix obtained by multiplying row \(r\) by \(\chi^{-\beta_r}\)
and column \(s\) by \(\chi^{\alpha_{\iota(s)}}\).

The relative norm of \(\Psi\) from \(\widetilde L\) to \(\Q(\ii)\)
is a nonzero Gaussian integer, whose absolute value is at least \(1\).
Every embedding over \(\Q(\ii)\) has
\(|\sigma(\chi)|=q^{1/n}\). Each embedding of \(L\) has exactly \(d_2\)
extensions to \(\widetilde L\). Thus
\[
 1\leq |N_{\widetilde L/\Q(\ii)}\Psi|
   =\left(\mathcal D_H^{dM_H}q^{(d/n)E}
                           \prod_\sigma|\sigma(\del)|\right)^{d_2}.
\]
Taking the positive \(d_2\)-th root proves \eqref{pow:eq:norm}.
This argument does not assume \(E\geq0\) or that \(p/q\) is reduced.
\end{proof}

\begin{lemma}[Conjugate bound]\label{pow:lem:conj}
Put \(C_{\mathrm{conj}}=2\sqrt2(\Omega+1)+2\). Every embedding
\(\sigma:L\hookrightarrow\C\) over \(\Q(\ii)\) satisfies
\[
 \frac{\log|\sigma(\del)|}{M_HH}
 \leq\frac{\log2}{v_0}
       +\frac{\log(C_{\mathrm{conj}}K)}w+\frac{\log M_H}{H}.
\]
\end{lemma}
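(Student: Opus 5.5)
The plan is to bound $|\sigma(\del)|$ by the Leibniz expansion, bounding each entry of the conjugated matrix on the circle $|t|=1/2$ with Cauchy's inequality. The embedding $\sigma$ fixes $\Q(\ii)$, so it fixes every $\lambda_j$, every binomial coefficient and every rational coefficient of $x_{<S_H}$ and $\ell_{<T}$. Hence $\sigma(A_H)$ is obtained from \eqref{pow:eq:entry} by replacing $\omega$ with $\sigma(\omega)$. By \eqref{pow:eq:conjmod} and $|\lambda_j|\leq\sqrt2N\leq\sqrt2K$ we get $|\lambda_j\sigma(\omega)|\leq\sqrt2(\Omega+1)K$.

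First, establish a uniform disk estimate for the two truncated series: for $|t|\leq1/2$, and for every truncation order,
\[
 |x_{<S}(t)|\leq1,\qquad |\ell_{<T}(t)|\leq1 .
\]
Because truncations are involved, the estimate must bound the \emph{absolute values of the coefficients}. I would obtain this from a majorant series. Rewrite $x_0^3-x_0=t^2$ as $x_0=-t^2+x_0^3$. By induction on coefficients, $|[t^j]x_0|$ is dominated by the nonnegative solution $X$ of $X=t^2+X^3$. The radius of $X$ is $(4/27)^{1/2}>1/2$, and at $t=1/2$ its value is the small root of $X^3-X+1/4=0$, which is less than $1/2$. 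Since $X$ has nonnegative coefficients, the value $X(1/2)<1/2$ bounds every truncation of $x_0$ on the closed disk. For $\ell$, note that $\ell'=-(1-3x_0^2)^{-1}=-\sum_{k\ge0}(3x_0^2)^k$ is majorized by $(1-3X^2)^{-1}$. Integrating termwise gives $\sum_j|[t^j]\ell|2^{-j}\leq\int_0^{1/2}(1-3X(s)^2)^{-1}ds$, and $X(s)\le X(1/2)<1/2$ makes the integrand at most $4$, so the bound is $\le 2$. I expect this step to be the main obstacle: the paper's constant $C_{\mathrm{conj}}$ leaves only the additive room $2$, so I would aim to make the crude bound $|\ell_{<T}|\leq2$ suffice by absorbing it into that $+2$, giving $|\lambda_j\sigma(\omega)+\ell_{<T}(t)|\leq\sqrt2(\Omega+1)K+2\le C_{\mathrm{conj}}K/2$. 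If necessary I would sharpen the integral numerically, since $X(s)\approx s^2$ is small.

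Next, bound each entry by Cauchy on $|t|=1/2$ and $\binom{\alpha}{\beta}\leq2^{\alpha}$:
\[
 |\sigma(A_H)(r,c)|\leq 2^{k_r}\cdot1\cdot(1/2)^{b}\cdot2^{\alpha_c}\,(C_{\mathrm{conj}}K/2)^{\alpha_c-\beta_r}
 \leq 2^{k_r}(C_{\mathrm{conj}}K)^{\alpha_c}.
\]
Here $C_{\mathrm{conj}}K\geq1$, and the factors $x_{<S_H}(t)^a$ and $t^b$ contribute at most $1$.

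Finally, expand $\sigma(\del)$ over the $M_H!\leq M_H^{M_H}$ permutations. Each permutation product is at most
\[
 \prod_r2^{k_r}\prod_c(C_{\mathrm{conj}}K)^{\alpha_c}.
\]
By \eqref{pow:eq:rowbounds}, $\sum_rk_r\leq M_HH/v_0$. Every column in $\CC$ has $w\alpha_c\leq H$, so $\sum_c\alpha_c\leq M_HH/w$. Taking logarithms and dividing by $M_HH$ gives
\[
 \frac{\log|\sigma(\del)|}{M_HH}\leq\frac{\log2}{v_0}+\frac{\log(C_{\mathrm{conj}}K)}{w}+\frac{\log M_H}{H},
\]
which is the asserted bound. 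It holds for every embedding $\sigma$ and every column selection $\iota$.
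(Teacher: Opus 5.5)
Your argument follows the paper's proof step for step. It uses Cauchy's estimate on $|t|=1/2$, the nonnegative majorant $u=-x_0$ with $u=t^2+u^3$ and $-\ell'=(1-3u^2)^{-1}$, the bound $\binom{\alpha}{\beta}\le2^\alpha$, and the Leibniz expansion with $M_H!\le M_H^{M_H}$, $\sum_rk_r\le M_HH/v_0$ and $\alpha_c\le H/w$.

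Two numerical details are wrong as written and need repair.

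First, $(4/27)^{1/2}\approx0.385$ is less than $1/2$, not greater. That number is the radius in $s=t^2$; the radius in $t$ is $(4/27)^{1/4}\approx0.62$, so your conclusion survives. The paper avoids radii altogether: the Picard iterates $P_{r+1}=t^2+P_r^3$ satisfy $P_r(1/2)\le1/3$ because $\tfrac14+\tfrac1{27}<\tfrac13$, and this bounds every partial sum directly.

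Second, your crude estimate $|\ell_{<T}|\le2$ falls short of your own stated target $|\ell_{<T}|\le1$. The absorption $\sqrt2(\Omega+1)K+2\le C_{\mathrm{conj}}K/2=\sqrt2(\Omega+1)K+K$ holds only for $K\ge2$. At $K=1$ the needed bound survives only because the single grid point is $\lambda_1=0$.

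The clean fix is already available from your own computation. The small root $X(1/2)$ of $X^3-X+\tfrac14$ lies below $1/3$, since that cubic is negative at $\tfrac13$. Hence the integrand $(1-3X^2)^{-1}$ is at most $3/2$ on $[0,1/2]$, and $|\ell_{<T}(t)|\le3/4$ there. Then $|\lambda_j\sigma(\omega)+\ell_{<T}(t)|\le\sqrt2N(\Omega+1)+1\le C_{\mathrm{conj}}K/2$ for every $K\ge1$. This is exactly how the paper's constant $C_{\mathrm{conj}}=2\sqrt2(\Omega+1)+2$ arises.
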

\begin{proof}
We first justify uniform bounds for all truncation levels on
\(|t|\leq1/2\).
Put \(u=-x_0\); then \(u=t^2+u^3\).
The Picard iterates \(P_0=0\), \(P_{r+1}=t^2+P_r^3\) have nonnegative
coefficients and converge coefficientwise to \(u\).
Inductively \(P_r(1/2)\leq1/3\), because
\(1/4+(1/3)^3<1/3\).
Every finite coefficient sum of \(u\) at \(1/2\) is consequently at
most \(1/3\). Also
\[
 -\ell'=\frac1{1-3u^2}
\]
has nonnegative coefficients. Its finite sums at \(1/2\) are at most
\(3/2\), by the same Picard majorant and a geometric sum.
It follows that
\[
 |x_{<S_H}(t)|\leq1/3<1,\qquad
 |\ell_{<T}(t)|\leq(1/2)(3/2)=3/4<1.
\]
These are bounds for finite polynomials; no convergence of an
unjustified substituted series is being assumed.

By \eqref{pow:eq:conjmod} and \(|\lambda_j|\leq\sqrt2N\), Cauchy's
coefficient estimate on \(|t|=1/2\) bounds an entry of \(\sigma(A_H)\)
by
\[
 2^k\bigl(2\sqrt2N(\Omega+1)+2\bigr)^\alpha
       \leq2^k(C_{\mathrm{conj}}K)^\alpha.
\]
We used \(\binom{\alpha}{\beta}\leq2^\alpha\), and replaced
\(\alpha-\beta\) by \(\alpha\) in a factor at least \(1\).
Since \(k\leq H/v_0\) and \(\alpha\leq H/w\), the determinant
expansion and \(M_H!\leq M_H^{M_H}\) give the claimed estimate.
Every conjugate determinant is nonzero because \(\sigma\) is injective.
\end{proof}

\begin{proposition}[Arithmetic lower bound]\label{pow:prop:arith}
For \(w\) above the threshold in Lemma~\ref{pow:lem:Dsize}, every nonzero
selected minor satisfies
\begin{equation}\label{pow:eq:lower}
 \frac{\log|\del|}{M_HH}
 \geq -(1-\Wbar)-\Ear-\frac{n\log M_H}{H},
\end{equation}
where
\begin{equation}\label{pow:eq:Ear}
 \Ear=\frac{n(L_\theta+\log2)}{v_0}
            +\frac{n\log(C_{\mathrm{conj}}K)+\theta}{w}.
\end{equation}
\end{proposition}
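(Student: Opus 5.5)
We combine the exact norm inequality of Lemma~\ref{pow:lem:norm} with the conjugate bound of Lemma~\ref{pow:lem:conj}, isolating the identity embedding. Among the $d$ embeddings $\sigma:L\hookrightarrow\C$ over $\Q(\ii)$ exactly one is the inclusion, which sends $\del$ to itself; the other $d-1\le n-1$ are bounded by Lemma~\ref{pow:lem:conj}. Taking logarithms in \eqref{pow:eq:norm} and dividing by $M_HH$ gives
\[
 \frac{\log|\del|}{M_HH}\ \ge\ -\frac{d\log\mathcal D_H}{H}-\frac{(d/n)E\log q}{M_HH}
 -(d-1)\Bigl(\frac{\log2}{v_0}+\frac{\log(C_{\mathrm{conj}}K)}w+\frac{\log M_H}{H}\Bigr).
\]

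The three terms are then estimated separately. For the clearing term, Lemma~\ref{pow:lem:Dsize} (valid since $w$ exceeds its threshold) gives $d\log\mathcal D_H/H\le nL_\theta/v_0$. The conjugate contribution is at most $n\log2/v_0+n\log(C_{\mathrm{conj}}K)/w+n\log M_H/H$, using $d-1<n$. Together these produce all of $\Ear$ except the $\theta/w$ piece, plus the stated $n\log M_H/H$.

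The main obstacle is the $q$-exponent term, which must yield $(1-\Wbar)$ with only one-$n$-th of the naive size. Here $E=\sum_s\alpha_{\iota(s)}-\sum_r\beta_r$. Each selected column satisfies $w\alpha\le H$ by \eqref{pow:eq:columns}, so $w\sum_s\alpha_{\iota(s)}\le M_HH$, and by \eqref{pow:eq:meanW} $w\sum_r\beta_r=M_HH\Wbar$. Hence $wE\le M_HH(1-\Wbar)$. If $E\ge0$, then $d/n\le1$ together with $\log q\le w$ gives
\[
 \frac{(d/n)E\log q}{M_HH}\le 1-\Wbar .
\]
If $E<0$, the term has the favourable sign, but the bound $-(1-\Wbar)$ still requires care since the inequality $\log q\le w$ points the wrong way there. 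In that case I would bound $|E|\log q\le |E|w\le w\sum_r\beta_r\le M_HH\theta$ via \eqref{pow:eq:rowbounds}. This controls the term by a quantity of order $\theta$, and I expect the $\theta/w$ summand of $\Ear$ to absorb exactly this slack after comparing $\log q$ with $w-1$. Whichever sign occurs, assembling the pieces yields \eqref{pow:eq:lower} with $\Ear$ as in \eqref{pow:eq:Ear}.
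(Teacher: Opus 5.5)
Your proof is correct and has the same skeleton as the paper's: take logarithms in \eqref{pow:eq:norm}, isolate the inclusion embedding, bound the other $d-1$ conjugates by Lemma~\ref{pow:lem:conj}, apply Lemma~\ref{pow:lem:Dsize}, and use the budgets $w\sum_s\alpha_{\iota(s)}\le M_HH$ and $w\sum_r\beta_r=M_HH\Wbar$. It differs only in how you handle the $q$-exponent term.

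The paper avoids a case split. It applies $\log q\le w$ to the column sum and $\log q\ge w-1$ to the row sum, which gives, for either sign of $E$,
\[
 E\log q\le w\sum_s\alpha_{\iota(s)}-(w-1)\sum_r\beta_r\le M_HH\Bigl(1-\Wbar+\frac{\theta}{w}\Bigr).
\]
Because the right side is positive, the factor $d/n\le1$ can then be dropped. This uniform estimate is the only source of the $\theta/w$ summand in $\Ear$.

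Your sign split is equally valid, and slightly sharper: for $E\ge0$ you obtain $1-\Wbar$ with no slack. Your $E<0$ paragraph, however, is misdirected. There the term $-(d/n)E\log q/(M_HH)$ is positive, so it already exceeds $-(1-\Wbar)$, and the case is finished. The bound $|E|\log q\le |E|w\le M_HH\theta$ controls $E\log q$ from below, which is the side that does not matter for a lower bound on $\log|\del|$. No slack needs absorbing there, and in your version the $\theta/w$ term of $\Ear$ is simply unused.
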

\begin{proof}
Since \(w-1\leq\log q\leq w\), source and row budgets give
\[
 \frac{E\log q}{M_HH}
 \leq1-\Wbar+\frac{\theta}{w}.
\]
The right side is positive because \(\Wbar\leq\theta<1\).
Taking logarithms in \eqref{pow:eq:norm} and bounding all nonidentity
conjugates by Lemma~\ref{pow:lem:conj} yields
\[
 \frac{\log|\del|}{M_HH}
 \geq-d\frac{\log\mathcal D_H}{H}
      -\frac dn\frac{E\log q}{M_HH}
      -(d-1)\left(
        \frac{\log2}{v_0}+\frac{\log(C_{\mathrm{conj}}K)}w+
                                      \frac{\log M_H}{H}\right).
\]
Use \(d\leq n\), \(d/n\leq1\), positivity of
\(1-\Wbar+\theta/w\), and Lemma~\ref{pow:lem:Dsize}.
In particular no sign assumption on \(E\) is needed to replace
the factor \(d/n\).
\end{proof}

\section{The analytic estimate at algebraic centers}

We prove the needed analytic estimate for arbitrary \(\omega\) with
\(|\omega-\Omega|\leq q^{-\nu}\). This avoids treating a statement
restricted to rational centers as though it already quantified over
algebraic centers.

\subsection{A fixed entire multiplier}

For completeness, the multiplier of the companion paper can be used
without modification:
\[
 \Theta(\zeta)=\sum_{m\in\Z}e^{-\pi m^2+2\pi\ii m\zeta},\quad
 \zeta(z)=z/\Omega-(1+\ii)/2,\quad S(z)=\Theta(\zeta(z)).
\]
Absolute locally uniform convergence gives entire functions. Index shifts
give
\[
 \Theta(\zeta+1)=\Theta(\zeta),\quad
 \Theta(\zeta+\ii)=e^{\pi-2\pi\ii\zeta}\Theta(\zeta),\quad
 \Theta(-\zeta)=\Theta(\zeta).
\]
These are also the usual theta-three identities
\cite[Sections~20.2 and 23.6]{dlmf}.
They imply \(\Theta((1+\ii)/2)=0\), hence \(S\) vanishes on \(\Lambda\).
The modulus \(e^{-\pi(\operatorname{Im}\zeta)^2}|\Theta(\zeta)|\)
is periodic under \(1,\ii\).
At \(z_j=z_T+\Omega\lambda_j\),
\[
 |S(z_j)|=e^{\pi(\operatorname{Im}\lambda_j)^2}\Theta(0)
                       \geq\Theta(0)>0.
\]
Fix \(U_*\geq1\) with \(|S(z_j)|^{-1}\leq U_*\) for every grid.

The functions \(X_c=S^2x\), \(Y_c=S^3y\) extend to entire functions,
because the poles of \(x,y\) on \(\Lambda\) have orders \(2,3\).
For \(F=S,X_c,Y_c\), with respective weights \(h=1,2,3\), the
continuous modulus
\(e^{-h\pi(\operatorname{Im}\zeta(z))^2}|F(z)|\)
is lattice-periodic. Boundedness on a fundamental square therefore
gives constants \(C_S,C_X,C_Y\). With
\(C_g=\max(1,C_S,C_X,C_Y)\), for \(2a+3b\leq d\) the entire function
\[
 g_{d,a,b}=S^{d-2a-3b}X_c^aY_c^b
\]
satisfies
\begin{equation}\label{pow:eq:ggrowth}
 |g_{d,a,b}(z)|
 \leq C_g^d e^{\pi d(\operatorname{Im}\zeta(z))^2}
 \leq C_g^d e^{\pi d(|z|/\Omega+1)^2}.
\end{equation}
Off the lattice it equals \(S^d x^a y^b\).
All constants here precede \(n,N,w_0,v_0,\theta,w,H,\iota\).

\subsection{Exact transport and tag expansion}

Put \(d_H=\ceil{H/w_0}\) and \(\delta_j=\lambda_j(\omega-\Omega)\).
For each row \(r=(j,k,\beta)\) and selected column \(c=(a,b,\alpha)\),
define
\[
 \mathsf B(r,c)=
 [s^k]\binom{\alpha}{\beta}
    g_{d_H,a,b}(z_j+s)
       (z_j+s-z_T+\delta_j)^{\alpha-\beta},
\]
with zero when \(\beta>\alpha\).
The functions here are entire in the new base parameter \(s\).
The transverse coefficient indexed by \(\beta\) has already been extracted.

\begin{lemma}[Literal determinant identity]\label{pow:lem:transport}
For every allowed selection \(\iota\),
\[
 |\det\mathsf B|=
       \left(\prod_{j=1}^K|S(z_j)|^{d_H|\BB|}\right)|\del|.
\]
Consequently \(|\del|\leq U_*^{d_HM_H}|\det\mathsf B|\).
\end{lemma}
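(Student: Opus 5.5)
The plan mirrors Lemma~\ref{lem:cm-actual-minor-transport}: a chain of operations on each center's complete strict packet, each of which leaves the selected columns fixed and multiplies the determinant by a known factor.

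\textbf{Step 1 (undo the truncations).} As in Lemma~\ref{pow:lem:minors}, replacing \(x_{<S_H}\) by \(x_0\) changes no row coefficient, since \(S_Hv_0\geq H\). Replacing \(\ell_{<T}\) by \(\ell\) is induced by the substitution \(u\mapsto u+\ell(t)-\ell_{<T}(t)\). Its tail has weight at least \(Tv_0>w/\theta\), so it preserves the strict weighted ideal. Ordered by \(\beta\), the induced operator on each packet is unitriangular, with determinant one. So \(\del\) equals the determinant of the untruncated packet matrix
\[
 [t^ku^\beta]\,x_0(t)^at^b(\lambda_j\omega+u+\ell(t))^\alpha.
\]

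\textbf{Step 2 (reparametrize).} Substitute \(t=y(z_T+s)\). By \eqref{pow:eq:germ} this turns \(x_0(t)\) into \(x(z_T+s)\) and \(\ell(t)\) into \(s\). Since \(\ell'(0)=-1\), the inverse is \(s=\ell(t)\) with \(t=-s+O(s^2)\). For fixed \(\beta\) this substitution is triangular in the base degree, with diagonal \((-1)^k\). It and its inverse preserve the strict ideal, exactly as in Proposition~\ref{prop:strict-jet-reparametrization}, so the determinant changes only by a sign.

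\textbf{Step 3 (move to the grid point and write the displacement).} Periodicity gives \(x(z_T+s)=x(z_j+s)\) and \(y(z_T+s)=y(z_j+s)\) as germs, because \(z_j-z_T=\Omega\lambda_j\in\Lambda\). Also
\[
 \lambda_j\omega+s=z_j+s-z_T+\delta_j,\qquad \delta_j=\lambda_j(\omega-\Omega).
\]
Extracting \([u^\beta]\) from \((c+u+s)^\alpha\) gives \(\binom{\alpha}{\beta}(c+s)^{\alpha-\beta}\). Each packet is now
\[
 [s^k]\binom\alpha\beta x(z_j+s)^ay(z_j+s)^b(z_j+s-z_T+\delta_j)^{\alpha-\beta}.
\]

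\textbf{Step 4 (multiply by the theta factor).} Multiply each row series by \(S(z_j+s)^{d_H}\). This acts on the packet of center \(j\) as multiplication by a power series in \(s\) alone. It is therefore triangular in \(k\) within each \(\beta\), with diagonal \(S(z_j)^{d_H}\), and its determinant on that block is \(S(z_j)^{d_H|\BB|}\). For each selected column the source cutoff gives \(w_0(2a+3b)\leq H\leq w_0d_H\). Hence \(S^{d_H}x^ay^b=g_{d_H,a,b}\), and the products are exactly the entries \(\mathsf B(r,c)\). Combining the factors \(1\), \(\pm1\), \(1\) and \(\prod_j S(z_j)^{d_H|\BB|}\) gives the identity.

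The consequence follows from \(|S(z_j)|^{-1}\leq U_*\) and \(K|\BB|=M_H\).

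The only delicate point is Step 2. I need the reparametrization to preserve the \emph{strict} weighted ideal. This holds because the substitution only raises the \(t\)-degree and never touches the transverse index \(\beta\), unlike a shear in \(u\). I also need to apply these operations to entire packets, not to the selected columns, so that the row operators are well defined and independent of \(\iota\).
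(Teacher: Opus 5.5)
Your proposal is correct and follows the paper's proof essentially step for step. Both arguments remove the logarithm truncation by the determinant-one packet shear, with the base truncation exact. They then substitute \(t=y(z_T+s)\), a triangular change with diagonal \((-1)^k\). Periodicity and \(\lambda_j\omega+s=z_j+s-z_T+\delta_j\) identify the packets, and multiplication by \(S(z_j+s)^{d_H}\) is triangular with diagonal \(S(z_j)^{d_H}\) repeated \(|\BB|\) times, using \(d_H\ge 2a+3b\). Finally \(|S(z_j)|^{-1}\le U_*\) and \(M_H=K|\BB|\) give the inequality.
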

\begin{proof}
First remove the logarithm truncation by the determinant-one packet
shear from Lemma~\ref{pow:lem:minors}. The base truncation is coefficientwise
exact. Next substitute
\(t=y(z_T+s)\).
Its linear coefficient is \(-1\), by \(y'(z_T)=-1\).
It is a triangular change of every packet, with diagonal
\((-1)^k\), and \eqref{pow:eq:germ} gives
\(\ell(t)=s\).
Periodicity identifies the elliptic coordinates at \(z_T+s\) and
\(z_j+s\), while
\[
 \lambda_j\omega+s=z_j+s-z_T+\delta_j.
\]
Finally multiply the block of rows at \(j\) by the Taylor series of
\(S(z_j+s)^{d_H}\).
Multiplication is triangular on the packet, with diagonal
\(S(z_j)^{d_H}\) repeated \(|\BB|\) times.
Since \(d_H\geq2a+3b\) for every source column, it produces exactly the
entire function defining \(\mathsf B\).
Multiplying these determinants proves the identity. No source columns
were changed.
\end{proof}

Let \(\mathcal A_H=\{0,\ldots,\floor{H/w}\}\), \(T_H=|\mathcal A_H|\).
For \(\tau\in\mathcal A_H\), define the entire tag column
\[
 G_{c,\tau}(z)=\binom{\alpha}{\tau}
          g_{d_H,a,b}(z)(z-z_T)^{\alpha-\tau};
\]
it is zero if \(\tau>\alpha\).
The binomial identity
\[
 \binom{\alpha}{\beta}(X+\delta)^{\alpha-\beta}
  =\sum_{\tau=\beta}^{\alpha}
           \binom{\tau}{\beta}\delta^{\tau-\beta}
                     \binom{\alpha}{\tau}X^{\alpha-\tau}
\]
gives an exact finite row expansion. Thus
\begin{equation}\label{pow:eq:detexpansion}
 \det\mathsf B
 =\sum_{g:\RR\to\mathcal A_H}
       \left(\prod_{r=(j,k,\beta)}
          \binom{g(r)}{\beta}\delta_j^{g(r)-\beta}\right)
                                          \det\mathsf B_g,
\end{equation}
where a scalar is zero if \(g(r)<\beta\), and
\(\mathsf B_g(r,c)=[s^{k_r}]G_{\iota(c),g(r)}(z_j+s)\).

Since \(w\leq\log q+1\) and \(|\lambda_j|\leq\sqrt2N\),
\[
 |\delta_j|\leq e^{C_\delta-\nu w},\qquad
 C_\delta=\log(\sqrt2N)+\nu.
\]
For a nonzero scalar product in \eqref{pow:eq:detexpansion},
\(g(r)\geq\beta_r\) for every row, and its absolute value is at most
\begin{equation}\label{pow:eq:scalar}
 \exp\left(-\nu\left(w\sum_rg(r)-M_HH\Wbar\right)
                   +\frac{M_HH(\log2+C_\delta)}w\right).
\end{equation}

\subsection{Collision estimate and the finite-cutoff upper bound}

Set
\[
 r_*=2(|z_T|+\sqrt2\Omega+3/4),\qquad R_N=r_*N,\qquad
 \kappa=\frac{\log2}{4},\qquad s_0=2^{-1/2}.
\]
Then \(R_N\geq1\) and \(|z_j|+3/4\leq R_N/2\).
For \(|z|\leq R\), \eqref{pow:eq:ggrowth} and
\(\alpha\leq H/w\) give
\[
 |G_{c,\tau}(z)|\leq e^{H\mathcal H_H(R)},\qquad
 \mathcal H_H(R)=\frac{d_H}{H}
       \left(\log C_g+\pi(R/\Omega+1)^2\right)
           +\frac{\log(2(R+|z_T|+1))}{w}.
\]

\begin{lemma}[Collision]\label{pow:lem:collision}
For \(n_\tau=|g^{-1}(\tau)|\),
\[
 |\det\mathsf B_g|\leq
 \exp\left(-\kappa\sum_\tau n_\tau^2+
   M_HH\left(\mathcal H_H(R_N)+\frac{\log2}{v_0}+\rho_H\right)\right),
 \quad
 \rho_H=\frac{\log M_H+\kappa-\log(1-s_0)}H.
\]
\end{lemma}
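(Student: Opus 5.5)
The plan is to rerun the proof of Lemma~\ref{lem:cm-direct-additive-collision} in the present notation, keeping the extra factor $2^{k_r}$ coming from the row order. The only change from the CM version is that the rows now extract $[s^{k}]$ with $k\le H/v_0$, and this is where the term $\log2/v_0$ enters. For each tag $\tau$ and selected column $c$, expand the entire function $G_{\iota(c),\tau}$ about the origin:
\[
 G_{\iota(c),\tau}(z)=\sum_{\ell\ge0}a_{c,\tau,\ell}(z/R_N)^\ell .
\]
Cauchy's inequality on $|z|=R_N$, together with the growth bound $|G_{c,\tau}|\le e^{H\mathcal H_H(R_N)}$ already displayed, gives $|a_{c,\tau,\ell}|\le e^{H\mathcal H_H(R_N)}$. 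The entry of $\mathsf B_g$ in row $r=(j,k_r,\beta_r)$ is then
\[
 \sum_\ell b_{r,\ell}\,a_{c,g(r),\ell},\qquad
 b_{r,\ell}=[s^{k_r}]\bigl((z_j+s)/R_N\bigr)^\ell .
\]

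To bound $b_{r,\ell}$, apply Cauchy's estimate on the circle $|s|=3/4$. There $|z_j+s|\le|z_j|+3/4\le R_N/2$, so $|b_{r,\ell}|\le(4/3)^{k_r}2^{-\ell}\le 2^{k_r}2^{-\ell}$. This replaces the unit circle used in the CM lemma, and it is why $R_N$ was defined with the margin $3/4$. Everything converges absolutely, so multilinearity in the rows expands $\det\mathsf B_g$ as a sum over tuples $(\ell_r)_r$. In each term the row-$r$ kernel is the vector $(a_{c,g(r),\ell_r})_c$.

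If two rows share the same pair $(g(r),\ell_r)$, their vectors coincide and the term vanishes. Within a tag class of size $n_\tau$ the surviving $\ell$'s are therefore distinct, which forces $\sum_r\ell_r\ge\sum_\tau\binom{n_\tau}{2}$. Each surviving coefficient determinant is at most $M_H!\,e^{M_HH\mathcal H_H(R_N)}$. Split $2^{-\sum\ell_r}=s_0^{\sum\ell_r}s_0^{\sum\ell_r}$: use the lower bound on $\sum\ell_r$ in one factor, and sum the geometric series $\prod_r(1-s_0)^{-1}$ in the other.

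It remains to collect the factors. The row factors multiply to $\prod_r2^{k_r}\le 2^{M_HH/v_0}$ by \eqref{pow:eq:rowbounds}. We also have $M_H!\le M_H^{M_H}$, $\sum_\tau n_\tau=M_H$, and
\[
 s_0^{\sum\binom{n_\tau}{2}}=\exp\Bigl(-\kappa'\sum n_\tau^2+\kappa' M_H\Bigr)
\]
with $\kappa'=\log2/4$. Together these give exactly the stated bound, with $\rho_H$ absorbing $\log M_H$, $\kappa$ and $-\log(1-s_0)$. The only step needing care is the kernel bound: we must check that the circle of radius $3/4$ stays inside the disk where $|z_j+s|\le R_N/2$, and that only the entire functions $G$ are evaluated there, so the local inverse $y(z_T+s)$ never appears. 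Both follow from $|z_j|+3/4\le R_N/2$.
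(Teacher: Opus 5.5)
Your proposal is correct and follows the paper's proof: the same normalized Taylor expansion of the entire tag columns, the same row-kernel bound, the collision vanishing for repeated pairs $(g(r),\ell_r)$, the split $2^{-\sum\ell_r}=s_0^{\sum\ell_r}s_0^{\sum\ell_r}$, and the same final bookkeeping via \eqref{pow:eq:rowbounds}, $M_H!\le M_H^{M_H}$ and $2\sum_\tau\binom{n_\tau}{2}=\sum_\tau n_\tau^2-M_H$. The only difference is that you take the Cauchy circle $|s|=3/4$, which gives $(4/3)^{k_r}2^{-\ell}$, whereas the paper takes $|s|=1/2$ and gets $2^{k_r}2^{-\ell}$ directly; both radii are covered by $|z_j|+3/4\le R_N/2$, so the bound is unaffected.
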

\begin{proof}
Write
\(G_{c,\tau}(z)=\sum_{h\geq0}a_{c,\tau,h}(z/R_N)^h\).
Cauchy's estimate gives \(|a_{c,\tau,h}|\leq e^{H\mathcal H_H(R_N)}\).
On \(|s|=1/2\), the choice of \(R_N\) gives
\(|z_j+s|\leq R_N/2\), so
\[
 \left|[s^k]\left(\frac{z_j+s}{R_N}\right)^h\right|
                               \leq2^k2^{-h}.
\]
Expand the determinant by these Taylor degrees, independently for each
row. If two rows with the same tag use the same Taylor degree, their
coefficient vectors are proportional, so that determinant term is zero.
Thus in every surviving term the degree sum is at least
\(\sum_\tau\binom{n_\tau}{2}\).
Split \(2^{-\sum h}\) into two equal factors. Bound one by this
minimum degree sum and sum the other as a product of geometric series.
The result is
\[
 M_H!\,e^{M_HH\mathcal H_H(R_N)}2^{\sum_rk_r}
   2^{-\frac12\sum_\tau\binom{n_\tau}{2}}(1-s_0)^{-M_H}.
\]
Absolute convergence justifies the expansions.
Use \eqref{pow:eq:rowbounds}, \(M_H!\leq M_H^{M_H}\), and
\(2\sum_\tau\binom{n_\tau}{2}=\sum_\tau n_\tau^2-M_H\).
\end{proof}

Fix \(0<\eta<1\) and \(\theta<A(1-\eta)<A<1\). Put
\[
 L_H=\floor{AH/w}+1,\qquad
 c_H=\frac{\kappa\eta^2M_H}{HL_H}.
\]
For any nonzero summand in \eqref{pow:eq:detexpansion}, there are two cases.
If \(w\sum_rg(r)\geq A(1-\eta)M_HH\), the scalar estimate
\eqref{pow:eq:scalar} supplies the term
\(-\nu(A(1-\eta)-\Wbar)\).
Otherwise more than \(\eta M_H\) rows have \(wg(r)\leq AH\);
there are at most \(L_H\) such tags. Cauchy--Schwarz gives
\(\sum_\tau n_\tau^2\geq\eta^2M_H^2/L_H\), so the collision lemma
supplies \(-c_H\). In this second case the scalar bound loses no
negative weight because \(g(r)\geq\beta_r\).

Define the fixed lattice constants
\begin{equation}\label{pow:eq:analyticconstants}
 C_0=\log(C_gU_*)+\pi(r_*/\Omega+1)^2,\qquad
 C_w=4\sqrt2(r_*+|z_T|+1).
\end{equation}

\begin{proposition}[Upper bound for the same algebraic-center minor]
\label{pow:prop:analytic}
If \(|\omega-\Omega|\leq q^{-\nu}\), every nonzero selected minor satisfies
\begin{equation}\label{pow:eq:upper}
 \frac{\log|\del|}{M_HH}
 \leq \Ean+e_H+
              \max\{-c_H,-\nu(A(1-\eta)-\Wbar)\},
\end{equation}
where
\begin{align}
 \Ean&=\frac{C_0K}{w_0}+\frac{\log2}{v_0}
                  +\frac{\nu+\log(C_wK)}w,\label{pow:eq:Ean}\\
 e_H&=\frac{C_0K}{H}+\rho_H+\frac{\log T_H}{H}.\label{pow:eq:eremainder}
\end{align}
For fixed finite data, \(e_H\to0\), \(\log M_H/H\to0\), and
\begin{equation}\label{pow:eq:climit}
 c_H\longrightarrow
       c_\infty=\frac{\kappa\eta^2K\theta}{2v_0A}.
\end{equation}
\end{proposition}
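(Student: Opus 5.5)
The argument mirrors the proof of Theorem~\ref{thm:cm-origin-same-minor-bound}, now with the lemniscate multiplier and algebraic centers. Throughout, write \(\mathsf C_\omega=\frac{\log2+C_\delta}{w}\) for the scalar-loss coefficient.

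\textbf{Step 1: transport and expansion.} Lemma~\ref{pow:lem:transport} gives
\[
 \log|\del|\leq d_HM_H\log U_*+\log|\det\mathsf B|.
\]
Since \(d_H\leq H/w_0+1\), dividing by \(M_HH\) makes the first term at most \(\frac{\log U_*}{w_0}+\frac{\log U_*}{H}\). Next I expand \(\det\mathsf B\) by \eqref{pow:eq:detexpansion}. There are at most \(T_H^{M_H}\) assignments \(g\), and this count contributes \(\log T_H/H\) after normalization. For each assignment with nonzero scalar product, I multiply the scalar bound \eqref{pow:eq:scalar} by the collision bound of Lemma~\ref{pow:lem:collision}. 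This gives
\[
 \frac{\log|\det\mathsf B_g\cdot\text{scalar}|}{M_HH}
 \leq -\nu\Bigl(\tfrac{w\sum_r g(r)}{M_HH}-\Wbar\Bigr)
 +\mathsf C_\omega
 -\frac{\kappa\sum_\tau n_\tau^2}{M_HH}
 +\mathcal H_H(R_N)+\frac{\log2}{v_0}+\rho_H .
\]

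\textbf{Step 2: two-case split.} I use the dichotomy already described before the proposition.
\begin{enumerate}
\item If \(w\sum_r g(r)\geq A(1-\eta)M_HH\), I keep the scalar gain and drop the collision term, obtaining \(-\nu(A(1-\eta)-\Wbar)\).
\item Otherwise I drop the scalar's \(\nu\)-term, which is nonpositive because \(g(r)\geq\beta_r\) gives \(w\sum_r g(r)\geq M_HH\Wbar\). Cauchy--Schwarz over the at most \(L_H\) low tags then gives \(\sum_\tau n_\tau^2\geq\eta^2M_H^2/L_H\), so the collision term contributes \(-c_H\).
\end{enumerate}
Taking the maximum of the two cases and applying the triangle inequality over assignments yields the max term in \eqref{pow:eq:upper}.

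\textbf{Step 3: constant bookkeeping (the main obstacle).} The remaining work is to show that
\[
 \mathcal H_H(R_N)+\mathsf C_\omega+\frac{d_H\log U_*}{H}+\frac{\log2}{v_0}
\]
fits inside \(\Ean+e_H\). Using \(R_N=r_*N\), \(N\geq1\) and \(K=N^2\), I get \((R_N/\Omega+1)^2\leq K(r_*/\Omega+1)^2\). Combined with \(d_H/H\leq 1/w_0+1/H\) and \(\log(C_gU_*)\leq K\log(C_gU_*)\), all multiplier contributions are bounded by \(C_0K/w_0+C_0K/H\). For the \(1/w\) terms I expect
\[
 \log2+\nu+\log(\sqrt2N)+\log\bigl(2(r_*N+|z_T|+1)\bigr)\leq \nu+\log(C_wK),
\]
using \(r_*N+|z_T|+1\leq N(r_*+|z_T|+1)\). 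This is where the definition \(C_w=4\sqrt2(r_*+|z_T|+1)\) should match exactly. The difficulty here is not conceptual but in tracking that every factor of \(N\) has been absorbed into \(K\); the exact constants are easy to misplace.

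\textbf{Step 4: limits.} For fixed data, lattice-point counting gives \(M_H\sim K\theta H^2/(2v_0w)\), \(L_H\sim AH/w\), and \(T_H=O(H)\). Hence \(c_H\to\kappa\eta^2K\theta/(2v_0A)\), and the polynomial growth of \(M_H\) and \(T_H\) gives \(\log M_H/H\to0\) and \(\log T_H/H\to0\). Since \(\rho_H\to0\) and \(C_0K/H\to0\), it follows that \(e_H\to0\).
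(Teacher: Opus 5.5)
Your proposal is correct and follows the paper's proof essentially step for step. The shared steps are: transport via Lemma~\ref{pow:lem:transport}; the finite tag expansion with $T_H^{M_H}$ assignments; the same two-case split combining \eqref{pow:eq:scalar} with Lemma~\ref{pow:lem:collision}; the same absorptions $\log(C_gU_*)+\pi(R_N/\Omega+1)^2\le C_0K$ and $4\sqrt2N(R_N+|z_T|+1)\le C_wK$; and the same lattice-point counts for the limits. Your Step~3 bookkeeping checks out exactly.
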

\begin{proof}
Combine the two cases above, the collision lemma, and
\eqref{pow:eq:scalar}. There are \(T_H^{M_H}\) assignments.
Lemma~\ref{pow:lem:transport} adds
\((d_H/H)\log U_*\). Now
\[
 \log(C_gU_*)+\pi(R_N/\Omega+1)^2\leq C_0K,\quad
 \frac{d_H}{H}\leq\frac1{w_0}+\frac1H.
\]
The remaining radius and binomial factors are bounded by
\[
 4\sqrt2N(R_N+|z_T|+1)\leq C_wK.
\]
This proves \eqref{pow:eq:upper}--\eqref{pow:eq:eremainder}.
Elementary triangle-lattice counting gives
\[
 |\BB|=\frac{\theta H^2}{2v_0w}+O(H+1),\qquad
 L_H=\frac{AH}{w}+O(1),\qquad T_H=\frac{H}{w}+O(1).
\]
For example summing
\(\ceil{(H-(w/\theta)\beta)/v_0}\) over
\(0\leq\beta<\theta H/w\) proves the first formula, with error \(O(H+1)\).
All data are fixed in these asymptotics.
Thus \(M_H\asymp H^2\); the asserted remainders vanish and
\eqref{pow:eq:climit} follows. Rationality of \(\omega\) was used nowhere
in this analytic proof.
\end{proof}

\section{Parameters and the final contradiction}

Let \(G\) be as in \eqref{pow:eq:mainC}. For \(n\geq1\), set
\begin{equation}\label{pow:eq:params}
 \begin{gathered}
 h_n=\ceil{\log(2n)},\qquad M_0=10^{10}G^2,\qquad
 \theta=\frac1{M_0nh_n},\qquad A=2\theta,\qquad \eta=\frac13,\\
 v_0=1000Gnh_n,\qquad
 N=\ceil{\sqrt{1000v_0}},\quad K=N^2,\qquad w_0=8GK,\\
 \nu_0=\frac6\theta=6\cdot10^{10}G^2nh_n.
 \end{gathered}
\end{equation}
These are admissible rational base data and an exact Gaussian-square
grid cardinality. None depends on a rational approximation.

\begin{lemma}[All fixed margins]\label{pow:lem:params}
The data \eqref{pow:eq:params} satisfy
\[
 0<\theta<A(1-\eta)<A<1,\qquad K\theta<1,\qquad
 K(w_0/v_0)\theta<1,\qquad c_\infty>4.
\]
Moreover
\begin{equation}\label{pow:eq:fixederrors}
 \frac{n(L_\theta+\log2)}{v_0}
        +\frac{C_0K}{w_0}+\frac{\log2}{v_0}<\frac13,
 \qquad
 \nu_0(A(1-\eta)-\theta)=2.
\end{equation}
\end{lemma}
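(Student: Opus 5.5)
The plan is to verify each inequality directly from \eqref{pow:eq:params}, using only elementary bounds on \(N\), \(K\), \(h_n\) and \(G\). First I fix the size of \(K\). Since \(v_0\geq1000\), we have \(\sqrt{1000v_0}\geq1000\), and therefore
\[
 1000v_0\leq K\leq\bigl(\sqrt{1000v_0}+1\bigr)^2\leq1000v_0(1+10^{-3})^2<1003\,v_0 .
\]
All later estimates use only this two-sided bound together with \(G\geq1\), \(h_n\geq1\), \(\log n\leq h_n\) and \(\log h_n\leq h_n\).

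Next come the order relations and the two EPA base inequalities. With \(\eta=1/3\) and \(A=2\theta\) we get \(A(1-\eta)=\tfrac43\theta\). Hence \(\theta<A(1-\eta)<A\), and \(A<1\) holds because \(M_0\geq10^{10}\). For the first base inequality,
\[
 K\theta<\frac{1003\cdot1000\,Gnh_n}{10^{10}G^2nh_n}<10^{-3}.
\]
The second is the tight one. I would write
\[
 K\frac{w_0}{v_0}\theta=\frac{8GK^2\theta}{v_0}<\frac{8G(1003)^2v_0\theta}{1}\cdot\frac{1}{10^{0}}\cdot\frac{1}{1}
\]
and substitute \(v_0\theta=1000G/M_0=10^{-7}/G\). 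This gives
\[
 K\frac{w_0}{v_0}\theta<8G\cdot1.006\cdot10^{6}\cdot\frac{10^{-7}}{G}\approx0.805<1 .
\]
The collision limit \eqref{pow:eq:climit} simplifies, since \(A=2\theta\), to \(c_\infty=\kappa K/(36v_0)\). Using \(K\geq1000v_0\) and \(\kappa=\log2/4\approx0.173\) gives \(c_\infty\geq1000\kappa/36>4.8>4\).

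For \eqref{pow:eq:fixederrors} I would bound the three terms separately. The identity is immediate: \(\nu_0\bigl(\tfrac43\theta-\theta\bigr)=(6/\theta)(\theta/3)=2\). For the first term, \(L_\theta=13+6\log(2M_0nh_n)\), and
\[
 \log(2M_0nh_n)\leq\log(2\cdot10^{10})+2\log G+\log n+\log h_n\leq23.8+2G+2h_n .
\]
Therefore \(n(L_\theta+\log2)/v_0\leq(156+12G+12h_n)/(1000Gh_n)\leq0.18\). For the second term, \(C_0K/w_0=C_0/(8G)\leq1/8\) because \(G\geq C_0\). For the third, \(\log2/v_0<10^{-3}\). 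The total is about \(0.306<1/3\).

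The main obstacle is bookkeeping rather than ideas. The constant \(0.805\) in the second EPA inequality and the margin \(0.306\) against \(1/3\) are both fairly tight. I would therefore keep explicit numerical factors, in particular \(1003\) for \(K/v_0\) and \(23.8\) for \(\log(2\cdot10^{10})\), instead of using \(O(\cdot)\) estimates. I would also make sure that the ceiling in \(N\) is absorbed through \(v_0\geq1000\), and not through a bound that weakens as \(n\) grows.
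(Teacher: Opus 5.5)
Your proposal is correct and follows the paper's proof essentially step for step: the same two-sided bound $1000\le K/v_0<1003$, the substitution $v_0\theta=10^{-7}/G$ giving $K(w_0/v_0)\theta<8\cdot1003^2\cdot10^{-7}<1$, the reduction $c_\infty=\kappa K/(36v_0)\ge1000\kappa/36$, and the same three-term bound using $G\ge C_0$. The only differences are that the paper uses $\log2>2/3$ and $\log10<5/2$ where you use decimal values, and it arrives at $1/5+1/8+1/1000$. There are two small numerical slips, and neither affects the conclusion. From $\log(2\cdot10^{10})\le23.8$ the constant is $13+6\cdot23.8+\log2\approx156.5$ rather than $156$, so the first term is only $\le0.1805$ when $G=h_n=1$. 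Also, $1.006\cdot10^{6}$ is slightly below $1003^2$, but your own bound $K\le1002.001\,v_0$ justifies using it. The totals $\approx0.805<1$ and $\approx0.307<1/3$ still hold.
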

\begin{proof}
Since \(v_0\geq1000\),
\[
 1000\leq\frac K{v_0}
 \leq1000+2\sqrt{1000/v_0}+1/v_0<1003.
\]
Consequently
\[
 K\theta<\frac{1003}{10^7G}<1,\qquad
 K(w_0/v_0)\theta
   <\frac{8\cdot1003^2}{10^7}=0.8048072<1.
\]
The order inequalities for \(A,\theta,\eta\) follow directly.
With \(A=2\theta,\eta=1/3\), \eqref{pow:eq:climit} becomes
\(c_\infty=\kappa K/(36v_0)\).
The elementary inequality \(\log2>2/3\) implies \(\kappa>1/6\), hence
\(c_\infty>1000/216>4\).
For instance \(\log2>2/3\) follows by integrating the strict tangent
lower bound for the convex function \(1/(1+t)\) at \(t=1/2\).

Because \(h_n\geq1\), \(\log(2n)\leq h_n\), \(\log h_n\leq h_n\),
\(\log G\leq G\), \(\log10<5/2\), and \(\log2<1\), we have
\begin{align*}
 L_\theta+\log2
 &=13+\log2+
       6\log(2\cdot10^{10}G^2nh_n)\\
 &<164+12G+12h_n
 \leq188Gh_n<200Gh_n.
\end{align*}
Thus the three terms in the first expression in
\eqref{pow:eq:fixederrors} are bounded respectively by
\(1/5,1/8,1/1000\), whose sum is \(163/500<1/3\).
Finally \(A(1-\eta)-\theta=\theta/3\), proving the last equality.
\end{proof}

\begin{proof}[Proof of Theorem~\ref{pow:thm:main}]
Fix \(n\geq1\) and the data \eqref{pow:eq:params}.
Suppose, for contradiction, that there are arbitrarily large integers
\(q\) with some \(p\in\Z\) satisfying \eqref{pow:eq:badapprox} for
\(\nu=\nu_0\). Choose one such approximation so large that
\(w=\ceil{\log q}\) exceeds the EPA threshold, the threshold in
Lemma~\ref{pow:lem:Dsize}, and a threshold ensuring
\begin{equation}\label{pow:eq:werrors}
 \frac{n\log(C_{\mathrm{conj}}K)+\theta+
                         \nu_0+\log(C_wK)}w<\frac16.
\end{equation}
The root \(\omega=(p/q)^{1/n}\) satisfies \eqref{pow:eq:root}.
All base parameters were fixed before this choice.

For these now fixed finite data, Lemma~\ref{pow:lem:minors} gives cofinally
many \(H\) with a nonzero selected minor. By
Proposition~\ref{pow:prop:analytic}, choose such an \(H\) large enough that
\begin{equation}\label{pow:eq:Herrors}
 c_H>3,\qquad e_H+\frac{n\log M_H}{H}<\frac16.
\end{equation}
Set
\[
 V_H=\frac{\log|\del|}{M_HH},\qquad
 \mathcal E=\Ear+\Ean+e_H+\frac{n\log M_H}{H}.
\]
Equations \eqref{pow:eq:fixederrors}, \eqref{pow:eq:werrors},
\eqref{pow:eq:Herrors} give \(\mathcal E<2/3\).
The lower and upper estimates \eqref{pow:eq:lower}, \eqref{pow:eq:upper}
cannot both hold. To see this, consider the branch attaining the maximum.

On the collision branch they imply
\[
 c_H\leq1-\Wbar+\mathcal E\leq1+\mathcal E<5/3,
\]
contrary to \(c_H>3\).

On the displacement branch they imply
\[
 \nu_0(A(1-\eta)-\Wbar)-(1-\Wbar)\leq\mathcal E.
\]
However \(\nu_0\geq1\), \(0\leq\Wbar\leq\theta\), and
\eqref{pow:eq:fixederrors} give
\begin{align*}
 \nu_0(A(1-\eta)-\Wbar)-(1-\Wbar)
 &=2+(\nu_0-1)(\theta-\Wbar)-1+\theta\\
 &\geq1,
\end{align*}
again a contradiction.

Thus the bad denominators are bounded, proving the displayed strict
eventual lower bound with exponent \(\nu_0=Cnh_n\).
It applies to all representations \(p/q\), not just reduced ones.
If \(\Omega^n=a/b\in\Q\), the representations
\((ak)/(bk)\) would contradict it for arbitrarily large \(k\).
Hence \(\Omega^n\) is irrational, and the definition of the irrationality
exponent gives \(\mu(\Omega^n)\leq\nu_0\).
Since \(h_n\leq\log(2n)+1\), this is \(O(n\log(2n))\).
\end{proof}

\section{Dependency and quantifier audit}

The order of choices is
\[
 \text{fixed lattice constants};\quad
 n;\quad(\theta,v_0,w_0,N,A,\eta,\nu_0);\quad
 \text{weight thresholds};\quad(p,q,\omega,w);\quad H;\quad\iota .
\]
Here the last two choices can equivalently be made together using a
cofinal nonzero-minor assertion. All estimates are uniform in
\(\iota\), so the analytic estimate does not require a different minor.
The clearing estimate is uniform in \(H\) after \(w\) is chosen.
The limits in \(H\) are only taken after \(q,w,\omega\) are fixed.
The auxiliary radical degree \(d_2\) cancels exactly; the only remaining
degree bound is \(d\leq n\).
The numerator \(p\), of approximate size \(q\Omega^n\), introduces no
new archimedean height term because every conjugate of its ratio root
has modulus \(\omega\leq\Omega+1\).

The inherited mathematical inputs are the actual-period normalization
and EPA as stated in Section~2. The analytic extension to algebraic
centers, coefficient clearing, relative norm, conjugate estimate,
counting, parameter margins, and contradiction are proved in this note.
The theta identities are classical and were also derived from the
displayed series. The complete checked power endpoint is now
\lean{QuadraticPi.LemniscatePowerExponentConclusion.lemniscate_power_main},
including a single fixed positive constant, strict eventual bounds for
all denominators, irrationality and the ordinary exponent bound.

The finite experiments accompanying the earlier working calculation
are consistency checks only. They are not premises of this proof.
The quasi-period is not included in the theorem here; extending the
argument to it requires writing out its second-kind analytic transport.

\clearpage
\appendix
\section{Current formalization ledger and exact endpoints}
\label{sec:ledger}

This appendix records the state through verification checkpoint 170,
October 21, 2026. The original consolidated manuscript date remains
October 20. The complete original source and PDF are retained in the
recovery set; earlier status notes describe their own historical runs.
The present update preserves every mathematical theorem, proposition,
lemma, corollary, definition and proof environment verbatim.

\subsection{The real CM period endpoint}
The public paper-facing declaration is
\begin{quote}
\lean{QuadraticPi.EllipticCMPaperNormalization.paper_period_main_of_noninteger_CM}.
\end{quote}
It accepts an actual rational Weierstrass curve $W$ with
\lean{W.IsElliptic}, a full oriented period pair $L$ normalized to
Mathlib's short model of $W$, a classical theta simple-zero input,
an actual lattice-preserving multiplier outside $\Z$, a nonzero
rational differential factor $r$, and a nonzero real member $\xi$
of the full scaled lattice $rL$. It proves
\[
 \operatorname{Irrational}(\xi),\qquad \mu(\xi)=2,
\]
and the actual all-denominator eventual lower bound for every
$\nu>2$. Short-model nonsingularity, nonrealness of the CM multiplier,
integer coefficient normalization and the compatible rational scale
are derived, not extra certificates supplied to the endpoint.
Section~\ref{sec:classical-boundary} identifies the classical
existence and period-identification inputs that remain explicit.

For the integral normalized model, the strict all-denominator bound
is \lean{QuadraticPi.EllipticCMExponentConclusion.strict_eventual_period_bound}.
The actual contradiction preceding it is
\lean{QuadraticPi.EllipticCMNormContradiction.no_unbounded_approximations}.
Neither theorem assumes a nonzero minor, an analytic upper estimate,
a rank statement, or an irrationality conclusion.

\subsection{The power endpoint and its constant}
The complete retained declaration is
\lean{QuadraticPi.LemniscatePowerExponentConclusion.lemniscate_power_main}.
It proves positivity of a single fixed constant and, for every $n>0$,
the strict all-denominator bound, irrationality of $\Omega^n$, and
\[
 2\leq\mu(\Omega^n)\leq
 Cn\lceil\log(2n)\rceil,\qquad C=6\cdot10^{10}G^2.
\]
Here the actual fixed integer $G$ is
\lean{QuadraticPi.LemniscatePowerFixedParameters.growthIndex}, and
$C$ is \lean{QuadraticPi.LemniscatePowerExponentConclusion.powerBoundConstant}.
The endpoint uses the actual integral-defined
\lean{QuadraticPi.Lemniscate.Omega}; a free period-normalization or
EPA hypothesis is not an input to this public power declaration.
It remains separate from the general-CM origin chart.

\subsection{Verified stages of the CM argument}
The table names actual application-level conclusions; the exact types,
including every hypothesis, are included in the checkpoint audit.
\begin{longtable}{@{}>{\raggedright\arraybackslash}p{.28\textwidth}>{\raggedright\arraybackslash}p{.64\textwidth}@{}}
\toprule
Stage & Checked declaration\\
\midrule
\endhead
Quartic persistence and matched comparison &
\lean{QuadraticPi.EllipticOriginQuarticPersistence.exists_bounded_persistent_comparison}.\\[4pt]
Actual curve inequality &
\lean{QuadraticPi.EllipticOriginQuarticCurveInequality.weighted_curve_inequality}.\\[4pt]
Actual compactification and bounded EPA &
\lean{QuadraticPi.EllipticOriginQuarticEPA.epa}.\\[4pt]
Exact full/truncated determinant &
\lean{QuadraticPi.EllipticOriginQuarticLogDeterminant.det_fullSelectedMinor_eq_truncated}.\\[4pt]
Actual full-minor arithmetic lower &
\lean{QuadraticPi.EllipticOriginQuarticLogDeterminant.fullSelectedMinor_arithmetic_lower}.\\[4pt]
Same-selected-column analytic transport &
\lean{QuadraticPi.EllipticOriginMinorTransport.norm_analyticMinor_eq}.\\[4pt]
Finite analytic bound &
\lean{QuadraticPi.EllipticOriginPaperUpper.same_minor_paper_upper}.\\[4pt]
Actual cofinal nonzero full minors &
\lean{QuadraticPi.EllipticCMCofinalMinors.cofinal_full_minors}.\\[4pt]
Norm contradiction &
\lean{QuadraticPi.EllipticCMNormContradiction.no_unbounded_approximations}.\\[4pt]
Rational coefficient endpoint &
\lean{QuadraticPi.EllipticCMRationalPeriod.rational_period_main}.\\[4pt]
Curve/differential/full-lattice endpoint &
\lean{QuadraticPi.EllipticCMPaperNormalization.paper_period_main_of_noninteger_CM}.\\
\bottomrule
\end{longtable}
The finite analytic theorem uses the same selected source columns and
complete strict packets as the arithmetic determinant. The full-lattice
shifted-theta clearing in Lean is an alternative to the sigma presentation
in the written argument; its required identities and estimates are proved
directly. No identification of the two clearing functions is assumed.
All counting limits are taken after the finite data are fixed.

\section{Classical inputs and the specified differential}
\label{sec:classical-boundary}

The new argument is formally verified \emph{relative to explicitly
displayed classical inputs}. This qualification has a precise meaning.
The checked endpoint does not internally construct a complex period
lattice by integrating an abstract algebraic differential, or prove the
classical theta simple-zero theorem. Those are standard mathematical
inputs, not proposed new results or hidden rank/analytic assumptions.

\paragraph{Uniformization and CM lifting.}
Specified-differential complex uniformization supplies the full lattice
for the short model and the identities $g_2=-4A$, $g_3=-4B$; an oriented
basis can be chosen. A CM endomorphism outside integer multiplications
lifts to an actual lattice-preserving scalar outside $\Z$.
The cited statements are Milne, Chapter III, Proposition 3.3,
Theorem 3.10 and Remark 3.11 \cite{milne}. Milne uses $y=\wp'$ and
$dx/y$; in the short model here $y=\wp'/2$, so the corresponding
differential is $dx/(2y)$. Rational differential rescaling identifies
its full period group with $r\Lambda$.

\paragraph{Theta simplicity and notation.}
Mathlib's two-variable function \(\texttt{jacobiTheta}_{2}\) corresponds to
conventional $\theta_3(\pi z\mid\tau)$, not to conventional
$\theta_2(z\mid\tau)$. The shifted multiplier used in the checked
clearing is
\[
 S_L(z)=\theta_3\!\left(
 \pi\left(\frac{z}{\omega_1}-\frac{1+\tau}{2}\right)\Bigm|\tau\right),
 \qquad \tau=\frac{\omega_2}{\omega_1},\quad\Im\tau>0.
\]
The classical half-period translation and derivative product formulas
\cite[(20.2.11), (20.4.2)]{dlmf} imply $S_L'(0)\ne0$:
the half-period zero corresponds to the simple zero of $\theta_1$
at zero, with nonzero exponential and $\pi/\omega_1$ factors.
This classical implication remains an explicit input, not a new
internally proved Lean theorem. Its exact transport under lattice
scaling is checked in
\lean{QuadraticPi.EllipticCMThetaScale.simple_zero_scale_iff}.

\paragraph{Normalization is checked.}
Mathlib's existing rational short-normal-form change is reused.
The actual integral-model construction and differential coefficient
factor are
\lean{QuadraticPi.EllipticRationalIntegralModel.exists_integral_short_model}
and
\lean{QuadraticPi.EllipticRationalIntegralModel.invariant_differential_factor}.
For $D=\operatorname{den}(A)\operatorname{den}(B)>0$, denominator clearing
has $A_1=D^4A$, $B_1=D^6B$ and new lattice $D^{-1}\Lambda$.
The original canonical differential is $D$ times the new one.
The checked endpoint transports actual invariants, orientation,
CM action, theta derivative and the nonzero real period, applies the
integral theorem, then inverts the rational scale. Arbitrary algebraic
scaling is not used. Every nonzero real member of the full rationally
scaled lattice is covered; a primitive positive real period is unnecessary.

\section{Build evidence, declaration types and audit scope}
\label{sec:p1-audit}

The current pinned environment uses Lean 4.34.1 and mathlib commit
\lean{d13f23b723b8a846827a245b89c10fc7d3f11612}, with nine pinned
dependencies. The official compiler binary is checked by SHA-256
in the verification scripts. Its digest, package pins, exact commands,
logs, declaration types and transitive dependency reports accompany
the cumulative recovery checkpoint.

Checkpoint 169 checked 25 new theorem types, four definitions and
37 transitive dependency reports in 13 focused runs, with zero
warnings and all 3,172 prior guarded files unchanged. Checkpoint 168
records the actual CM norm-contradiction assembly; checkpoint 149
records the complete power endpoint. The current checkpoint 170
rechecks the named paper-reference interfaces and their exact types
and transitive dependencies. It introduces no new mathematical Lean
source theorem and does not count editorial changes as new proofs.

The permitted reported axioms are
\lean{propext}, \lean{Classical.choice} and \lean{Quot.sound}.
No custom unproved axiom or placeholder proof is added. These
standard-only dependency reports do not discharge the classical
hypotheses visibly supplied to the CM endpoint. They also do not
constitute a claim of an empty-cache or clean-room rebuild.
The full cumulative rebuild evidence at checkpoint 154 and focused
subsequent evidence are retained with their original scopes.

The October 12 emergency and reference audits are historical evidence,
not current completion claims. Their original ledgers and counts,
and the October 20 source-integrity checks, are preserved unchanged.
This version updates their former prospective status statements in
light of the checked application-level endpoints now available.

\section{Checked power stages and remaining scope}
\label{sec:new-formal-plan}

The former prospective power obligations are now represented by actual
checked results. The coefficient-clearing theorem is
\lean{QuadraticPi.LemniscatePowerCoefficientClearing.paper_prime_wise_clearing}.
The relative norm and its auxiliary-radical cancellation are checked in
\lean{QuadraticPi.LemniscatePowerNormDescent.descended_log_norm_lower}.
Actual selected-minor relative-norm and normalized lower bounds are
\lean{QuadraticPi.LemniscatePowerRelativeNorm.selectedMinor_relative_norm_lower}
and
\lean{QuadraticPi.LemniscatePowerNormalizedNormLower.normalized_lower_of_product}.
The fixed parameters include
\lean{QuadraticPi.LemniscatePowerFixedParameters.fixed_error_margin}.
The actual final obstruction is
\lean{QuadraticPi.LemniscatePowerNormContradiction.exists_weight_threshold_no_power_approximation},
leading to the power endpoint named in Appendix~\ref{sec:ledger}.
The theorem applies to all sufficiently large denominators, without
a coprimality requirement on the displayed $p/q$.

Under the stated use of established classical inputs, the new CM and
power arguments are verified at the exact boundaries just described.
A completely internal implementation of curve-to-lattice uniformization,
differential integration and theta simplicity would be additional
foundational work. Non-CM periods, arbitrary algebraic rescaling,
general CM-period power bounds and quasi-periods are not claimed by
these endpoints. No completion percentage is inferred from module counts.

\section{Version, provenance and preservation}
\label{sec:consolidation}
This version retains the October 20 consolidated paper's title,
manuscript date and mathematical theorem/proof environments. David V.
Chudnovsky has been added as coauthor for submission on October 21, 2026. The revised abstract,
formalization notes and appendices describe the checked state through
October 21. Both source versions, their hashes, an editorial change
record, the current paper-reference audit, compilation logs and restart
instructions accompany the cumulative checkpoint. No historical source
or build evidence is overwritten.

The current record distinguishes mathematical proof, kernel-checked
application-level Lean proof, and cited classical input. Internal
verification and source/layout checks do not replace independent
mathematical scrutiny. The exact public theorem hypotheses determine
the formal claim; a compiled conditional interface alone would not
prove its application hypotheses.

\section*{Acknowledgments and provenance}
The separated-weight interpolation program was inspired by OpenAI's
manuscript \cite{openai}. OpenAI ChatGPT assisted with exploration,
proof exposition, Lean development, manuscript consolidation, and
internal reviews. Gregory V. Chudnovsky and David V. Chudnovsky are
the human authors responsible for the present manuscript. The older drafts and their
verification records remain available in the research recovery set;
historical verification counts describe those recorded runs and do
not certify a stronger theorem than the exact declarations checked.

\end{document}